\documentclass[11pt,a4paper,reqno]{amsart}

\usepackage{amsmath,amssymb,amsthm,graphicx,amsfonts}
\usepackage{mathrsfs}
\usepackage{rotating}
\usepackage{amsxtra}
\usepackage{float}
\usepackage[colorlinks,citecolor=red,linkcolor=blue]{hyperref}
\usepackage[usenames,dvipsnames]{xcolor}
\usepackage[all]{xy}
\usepackage{geometry,array}
\usepackage{url}
\usepackage{tikz}\usetikzlibrary{matrix}
\usetikzlibrary{matrix,positioning,decorations.markings,arrows,decorations.pathmorphing,backgrounds,fit,positioning,shapes.symbols,chains,shadings,fadings,calc}
\tikzset{->-/.style={decoration={  markings,  mark=at position #1 with
    {\arrow{>}}},postaction={decorate}}}
\tikzset{-<-/.style={decoration={  markings,  mark=at position #1 with
    {\arrow{<}}},postaction={decorate}}}
\usepackage[markup=underlined]{ch
anges}
\definechangesauthor[name=zy, color=cyan]{zy}
\definechangesauthor[name=hp, color=purple]{hp}

\numberwithin{equation}{section}

\theoremstyle{plain}
\newtheorem{theorem}{Theorem}[section]

\newtheorem{lemma}[theorem]{Lemma}
\newtheorem{corollary}[theorem]{Corollary}
\newtheorem{proposition}[theorem]{Proposition}
\theoremstyle{definition}
\newtheorem{definition}[theorem]{Definition}
\newtheorem{construction}[theorem]{Construction}

\newtheorem{remark}[theorem]{Remark}

\numberwithin{equation}{section}


\def\surf{\mathbf{S}}                       
\def\surfi{\surf^\circ}   
\def\T{\mathbf{T}}
\def\PP{\mathbf{P}}
\def\R{\mathbf{R}}
\def\D{\mathcal{D}}
\def\MM{\mathbf{M}}

\def\TA(\surf){\mathbf{A}^{\times}(\surf)}

\def\<{\langle}
\def\>{\rangle}
\def\Y{{\D_G}}

\renewcommand{\k}{\mathbf{k}}

\newcommand{\U}{\mathbf{U}}
\newcommand{\V}{\mathbf{V}}

\newcommand{\N}{\mathbf{N}}

\newcommand{\C}{\mathcal{C}}    

\renewcommand{\mod}{\operatorname{mod}}
\newcommand{\ind}{\operatorname{ind}} 

\newcommand{\Int}{\operatorname{Int}}
\newcommand{\Hom}{\operatorname{Hom}}

\newcommand{\Ext}{\operatorname{Ext}}

\newcommand{\End}{\operatorname{End}}   


\newcommand{\Fac}{\operatorname{Fac}}

\newcommand{\rank}{\operatorname{rank}}

\newcommand{\st}{\mathrm{s}\tau\operatorname{-tilt}}

\newcommand{\trp}{\tau\operatorname{-rigidp}}

\renewcommand{\r}{\operatorname{rigid-}}
\newcommand{\tr}{\tau\operatorname{-rigid}}
\newcommand{\mr}{\operatorname{max}\operatorname{rigid-}}
\newcommand{\ymr}{\operatorname{max}\operatorname{rigid}_G\operatorname{-}}
\newcommand{\xmr}[1]{\operatorname{max}\operatorname{rigid}_{#1}\operatorname{-}}

\newcommand{\eg}{\operatorname{EG}^\times}
\newcommand{\add}{\operatorname{add}}

\def\PTT{\operatorname{R}^{\times}}
\def\TA{\operatorname{A}^\times}

\newcommand{\stTA}[1]{\operatorname{A}^\times_{#1\operatorname{-st}}}
\newcommand{\cstTA}[1]{\operatorname{A}^\times_{#1\operatorname{-co-st}}}
\def\jiaodian{\mathfrak{q}}

\begin{document}

\title{Mutation graph of support $\tau$-tilting modules over a skew-gentle algebra}
	
\date{\today}

\author{Ping He}
\address{Yanqi Lake Beijing Institute of Mathematical Sciences and Applications, 
	101408,
	Beijing, China}
\email{pinghe@bimsa.cn}

\author{Yu Zhou}
\address{Yau Mathematical Sciences Center,
	Tsinghua University, 100084,
	Beijing, China}
\email{yuzhoumath@gmail.com}

\author{Bin Zhu}
\address{Department of Mathematical Sciences,
	Tsinghua University, 100084,
	Beijing, China}
\email{zhu-b@mail.tsinghua.edu.cn}
	
\thanks{The work was supported by National Natural Science Foundation of China (Grants No. 11801297, 11671221).}
	
\keywords{punctured marked surfaces; partial tagged triangulations; standard arcs; support $\tau$-tilting modules; mutation exchange graphs}

\begin{abstract}
	Let $\D$ be a Hom-finite, Krull-Schmidt, 2-Calabi-Yau triangulated category with a rigid object $R$. Let $\Lambda=\End_{\D}R$ be the endomorphism algebra of $R$. We introduce the notion of mutation of maximal rigid objects in the two-term subcategory $R\ast R[1]$ via exchange triangles, which is shown to be compatible with mutation of support $\tau$-tilting $\Lambda$-modules. In the case that $\D$ is the cluster category arising from a punctured marked surface, it is shown that the graph of mutations of support $\tau$-tilting $\Lambda$-modules is isomorphic to the graph of flips of certain collections of tagged arcs on the surface, which is moreover proved to be connected. As a direct consequence, the mutation graph of support $\tau$-tilting modules over a skew-gentle algebra is connected.
\end{abstract}
	
\maketitle
\tableofcontents
	
\section*{Introduction}

Cluster algebras were introduced by Fomin and Zelevinsky \cite{FZ} around 2000. The geometric aspect of cluster theory was explored and developed by Fomin, Shapiro and Thurston \cite{FST} and Labardini-Fragoso \cite{LF1}, where they construct a quiver with potential \cite{DWZ} from any triangulation of a marked surface. On the other hand, cluster categories of acyclic quivers were introduced by Buan, Marsh, Reineke, Reiten and Todorov \cite{BMRRT} in order to categorify cluster algebras, which were generalized later by Amiot \cite{A1} to cluster categories of quivers with potential. 

The indecomposable objects in the cluster category from a marked surface without punctures are classified via curves by Br\"{u}stle and Zhang \cite{BZ}, where the Auslander-Reiten translation is realized by the rotation. The dimension of $\Ext^1$ between certain indecomposable objects is shown by Zhang, Zhou and Zhu \cite{ZZZ} to equal the intersection number between the corresponding curves, and the middle terms between such extensions are explicitly described by Canakci and Schroll \cite{CS} via smoothing. The Calabi-Yau reduction introduced by Iyama and Yoshino \cite{IY} in this case is interpreted via the cutting of surface by Marsh and Palu \cite{MP}. For the punctured case (with non-empty boundary), Br\"{u}stle and Qiu \cite{BQ} realize the Auslander-Reiten translation via the tagged rotation, Qiu and Zhou \cite{QZ} classify certain indecomposable objects via tagged curves and show the equality between the dimension of $\Ext^1$ and the intersection number, and Amiot and Plamondon \cite{AP} give another approach via group actions and orbifolds.

The Jacobian algebra of the quiver with potential associated to a certain triangulation of a marked surface with non-empty boundary is a skew-gentle algebra with some properties (e.g. Gorenstein dimension at most one) \cite{ABCP,GLS,QZ}. Skew-gentle algebras were introduced by Gei{\ss} and de~la~Pe\~{n}a \cite{GP}, whose indecomposable modules are classified by Bondarenko \cite{B}, Crawley-Boevey \cite{CB} and Deng \cite{D}, and whose morphism spaces are described by Gei{\ss} \cite{G}. In a previous work \cite{HZZ}, we give a geometric model of the module category of an arbitrary skew-gentle algebra, inspired by the geometric model \cite{QZ} of cluster categories of punctured marked surfaces and the geometric model of the module categories of gentle algebras given by Baur and Sim\~{o}es \cite{BCS}. There are also some work on geometric models of the derived categories of gentle/skew-gentle algebras, cf. e.g.  \cite{HKK,LP,OPS,O,APS,AB,A2,LSV}.

Adachi, Iyama and Reiten \cite{AIR} introduced $\tau$-tilting theory to generalize the cluster structure to arbitrary finite dimensional algebras, via mutation of support $\tau$-tilting modules. The support $\tau$-tilting modules have been found to be deeply connected with other contents of representation theory, such as functorially finite torsion classes, 2-term silting objects, cluster tilting objects and immediate t-structures. In contrast to the classical tilting case, where an almost complete tilting module may have exactly one complement, any support $\tau$-tilting module can be always mutated at an arbitrary indecomposable direct summand to obtain a new support $\tau$-tilting module. The exchange graph $\operatorname{EG}(\st A)$ of support $\tau$-tilting modules of a finite dimensional algebra $A$ has (isoclasses of) basic support $\tau$-tilting modules over $A$ as vertices and has mutations as edges. One important problem is to count the number of connected components of $\operatorname{EG}(\st A)$.

In our previous work \cite{HZZ}, using a geometric model, we classify support $\tau$-tilting modules of skew-gentle algebras via certain dissections of marked surfaces. In the current paper, the main result is the connectedness of the exchange graph $\operatorname{EG}(\st A)$ for $A$ a skew-gentle algebra. For this, we need to generalize the geometric model from skew-gentle algebras to the endomorphism algebras of rigid objects in the cluster categories arising from punctured marked surfaces. This forces us to establish a framework on the theory of mutation in two-term subcategories of a 2-Calabi-Yau triangulated category with respect to rigid objects. We note that the connectedness of $\operatorname{EG}(\st A)$ for the case that $A$ is a gentle algebra is obtained in \cite{FGLZ}, and it is shown in \cite{As} that $\operatorname{EG}(\st A)$ has one or two components in the case that $A$ is a complete gentle (or more generally, special biserial) algebra.

The paper is organized as follows. In Section~\ref{sec:category}, we introduce and investigate mutation in two-term subcategories of a 2-Calabi-Yau triangulated category. In Section~\ref{sec:pms}, we recall basic notions and results on the cluster categories from punctured marked surfaces. In Section~\ref{sec:geo mod}, we give a geometric model for the endomorphism algebra of a rigid object in the cluster category arising from a punctured marked surface and show that this includes the class of skew-gentle algebras. Moreover, we classify support $\tau$-tilting modules via certain dissections. In Section~\ref{sec:eg}, we introduce the notion of flip of dissections and show that it is compatible with mutation of support $\tau$-tilting modules. As an application, the connectedness of the exchange graph of support $\tau$-tilting modules over a skew-gentle algebra is obtained.
	
\subsection*{Convention}
Throughout this paper, we assume $\k$ to be an algebraically closed field. Any additive category $\D$ in this paper is assumed to be
\begin{enumerate}
    \item $\k$-linear and Hom-finite, i.e., $\Hom_\D(X,Y)$ is a finite-dimensional vector space over $\k$ for any pair of objects $X,Y$, and
    \item Krull-Schmidt, i.e., any object is isomorphic to a finite direct sum of objects whose endomorphism rings are local.
\end{enumerate}
We use $X\in\D$ to denote that $X$ is an object in $\D$. For any $X\in\D$, denote by
\begin{enumerate}
    \item$|X|$ the number of isomorphism classes of indecomposable direct summands of $X$,
    \item $\add X$ the additive hull of $X$, i.e., the smallest subcategory of $\D$, which contains $X$ and is closed under isomorphisms, finite direct sums and direct summands, and
    \item ${}^\perp X$ and $X^\perp$ the full subcategories of $\D$ consisting of all objects $Y$ such that $\Hom_\D(Y,X)=0$ and $\Hom_\D(X,Y)=0$, respectively. 
\end{enumerate}
We call $X\in\D$ \emph{basic} if $|X|$ is the number of indecomposable direct summands of $X$, i.e., any two distinct indecomposable direct summands of $X$ are not isomorphic. For any object $X\in\D$ and any direct summand $Y$ of $X$, we denote by $X\setminus Y$ the direct summand of $X$ such that $X=Y\oplus(X\setminus Y)$.

We call a morphism $g\in\Hom_{\D}(X,Y)$ \emph{right minimal} if for any $h\in\Hom_{\D}(X,X)$ such that $g\circ h=g$, we have that $h$ is an isomorphism. For any subcategory $\mathcal{T}$ of $\D$, we call $f\in\Hom_\D(X,Y)$ a \emph{right $\mathcal{T}$-approximation} of $Y\in\D$ if $X\in\mathcal{T}$ and 
\[\Hom_{\D}(-,X)\xrightarrow{\Hom_\D(-,f)}\Hom_{\D}(-,Y)\longrightarrow0\]
is exact as functors on $\mathcal{T}$. A right $\mathcal{T}$-approximation is said to be \emph{minimal} if it is right minimal. We call $\mathcal{T}$ a \emph{contravariantly finite subcategory} of $\D$ if any $Y\in\D$ admits a right $\mathcal{T}$-approximation. The \emph{left minimal} maps, \emph{(minimal) left $\mathcal{T}$-approximations} and \emph{covariantly finite subcategories} are defined dually. A subcategory is said to be \emph{functorially finite} if it is both covariantly and contravariantly finite.

When $\D$ is a triangulated category, for any $X,Y\in\D$, denote by $X\ast_{\D} Y$ the full subcategory of $\D$ consisting of all $M\in\D$ such that there is a triangle
\[X_M\to M\to Y_M\to X[1]\] 
with $X_M\in\add X$ and $Y_M\in\add Y$. When there is no confusion arising, we simply denote $X\ast Y=X\ast_{\D} Y$.

\section{Categorical Interpretation}\label{sec:category}
Throughout this section, let $\D$ be a triangulated category and we use $\Hom(X,Y)$ to simply denote $\Hom_\D(X,Y)$. We assume that $\D$ is 2-Calabi-Yau, i.e., there exists a bi-functorial isomorphism
\[\Hom(X,Y)\cong D\Hom(Y,X[2]),\]
for any $X,Y\in\D$, where $D=\Hom_\k(-,\k)$. 

\begin{definition}
    An object $R\in\D$ is called
    \begin{enumerate}
        \item \emph{rigid} provided that $\Hom(R,R[1])=0$,
        \item \emph{maximal rigid} if it is maximal with respect to the rigid property, i.e., $R$ is rigid and for any object $N\in\D$ with $N\oplus R$ rigid, we have $N\in\add R$,
        \item \emph{cluster tilting} if $R$ is rigid and for any object $N\in\D$ with $\Hom(R,N[1])=0$, we have $N\in\add R$.
    \end{enumerate}
\end{definition}

Note that any cluster tilting object is maximal rigid, but the converse is not true in general (cf. \cite{BIKR,KZ}). We also note that the triangulated category $\D$ may not admit any cluster tilting object (cf. \cite{BMV}). If $\D$ admits a cluster tilting object, then any maximal rigid object is cluster tilting (see \cite{ZZ}). 

For a rigid object $R\in\D$, the full subcategory $R\ast R[1]$ of $\D$ is called the \emph{two-term subcategory} with respect to $R$. In this section, we will extend the theory of mutation of cluster tilting objects, or more generally, mutation of maximal objects in $\D$ \cite{IY,BMRRT, ZZ} to $R\ast R[1]$. 

By \cite[Proposition~2.1]{IY}, $R\ast R[1]$ is closed under taking direct summands. Any object $M\in R\ast R[1]$ admits an \emph{$R$-presentation}, i.e., a triangle
\begin{equation}\label{eq:pre}
	R^1_M\to R^0_M\overset{\iota_0}{\longrightarrow}M\overset{\iota_1}{\longrightarrow}R^1_M[1]
\end{equation}
with $R^0_M, R^1_M\in\add R$. Since $R$ is rigid, we have that $\iota_0$ is a right $\add R$-approximation of $M$ and $\iota_1$ is a left $\add R[1]$-approximation of $M$. Moreover, in \eqref{eq:pre}, $\iota_0$ can be chosen to be right minimal, or equivalently, $\iota_1$ can be chosen to be left minimal. In such case, we call \eqref{eq:pre} a \emph{minimal} $R$-presentation.

\begin{proposition}\label{prop:DK1}
    If \eqref{eq:pre} is a minimal $R$-presentation, then $R^1_M$ and $R^0_M$ do not have an indecomposable direct summand in common.
\end{proposition}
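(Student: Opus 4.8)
The plan is to argue by contradiction: suppose the minimal $R$-presentation \eqref{eq:pre} has a common indecomposable direct summand $R'$ of $R^0_M$ and $R^1_M$, and show that this contradicts the right minimality of $\iota_0$. Write $R^0_M = R' \oplus \widetilde{R}^0$ and $R^1_M = R' \oplus \widetilde{R}^1$, and let $f\colon R^1_M \to R^0_M$ be the first map of the triangle, so that $\iota_0$ is the cokernel map and $R^0_M \to M \to R^1_M[1] \to R^0_M[1]$ is the rotation. The idea is that the component of $f$ from the summand $R'$ of $R^1_M$ to the summand $R'$ of $R^0_M$ must be an isomorphism; otherwise, since $\End_\D(R')$ is local, it is nilpotent, and one can show the map $f$ could be modified to a non-right-minimal presentation — equivalently, $\iota_0$ would fail to be right minimal.

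The key steps, in order, are as follows. First I would set up the matrix form of $f\colon R'\oplus\widetilde R^1 \to R'\oplus\widetilde R^1\,'$ — wait, more carefully, $f\colon R'\oplus\widetilde R^1 \to R'\oplus\widetilde R^0$ — and isolate the component $f_{R',R'}\colon R'\to R'$. Second, I would observe that if $f_{R',R'}$ is an isomorphism, then by a standard change-of-basis argument (composing with automorphisms of $R^1_M$ and $R^0_M$) one can split off a contractible direct summand $R' \xrightarrow{\;\mathrm{id}\;} R'$ from the triangle, producing a strictly smaller $R$-presentation of $M$; this contradicts the minimality (specifically the right minimality of $\iota_0$, since a non-minimal summand survives). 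Third, I would handle the case $f_{R',R'}$ not an isomorphism: since $R'$ is indecomposable, $\End_\D(R')$ is local, so $f_{R',R'}$ lies in the radical. Here I want to use rigidity of $R$ together with the fact that $\iota_0$ is right minimal to derive a contradiction — the cleanest route is to show that right minimality of $\iota_0$ forces $f$ to have no ``radical endomorphism'' components landing a summand of $R^1_M$ isomorphically-adjacent to a summand of $R^0_M$ with a non-isomorphism; combined with Step 2, no common summand can occur at all. Concretely, I expect to invoke that $f$ being a left (minimal) $\add R$-approximation forces any automorphism $\phi$ of $R^1_M$ with $f\circ\phi$ ``equal to $f$ up to the approximation'' to be an isomorphism, and to translate this into the statement about matrix components.

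The main obstacle I anticipate is the bookkeeping in Step 2–3: making precise the claim that a common summand with an isomorphism component can be cancelled, and that a common summand with only radical components is also impossible. The slick way is to recall (or reprove) the general lemma that for a right minimal map $g\colon X\to Y$, there is no nonzero summand $X'$ of $X$ with $g|_{X'} = 0$; and more refined, that right minimality of $\iota_0$ in \eqref{eq:pre} is equivalent to $f\colon R^1_M\to R^0_M$ being a ``minimal'' map in the sense that it has no direct summand of the form $R'\xrightarrow{\mathrm{id}}R'$ or $R'\to 0$ or $0\to R'$ with $R'$ indecomposable — i.e. $f$ is both left and right minimal as a morphism between objects of $\add R$. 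Granting this characterization, a common indecomposable summand $R'$ would either yield an identity sub-map (contradicting minimality directly) or, after a change of basis using that the $R'$-to-$R'$ component is in the radical and hence — by a Fitting/Nakayama argument on the local ring $\End_\D(R')$ — the full map restricted to the $R'$ summands is still not right minimal. So the real work is the linear algebra over the local endomorphism rings; everything else is formal. I would therefore structure the write-up as: (1) reduce to the statement ``$f$ is left and right minimal as a map in $\add R$''; (2) prove that a minimal map between objects of $\add R$ cannot have isomorphic indecomposable summands on both sides sharing a component, using the matrix/Fitting argument.
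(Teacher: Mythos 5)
The decisive step of your argument --- the ``Fitting/Nakayama'' claim in Step 3 that a common indecomposable summand $R'$ whose $(R',R')$-component of $f\colon R^1_M\to R^0_M$ lies in the radical still contradicts right minimality of $\iota_0$ --- is not true, and no argument from minimality alone can close this gap. Right minimality of $\iota_0$ is equivalent to exactly one condition on $f$: it is a radical morphism of $\add R$, i.e.\ no summand of the form $R'\xrightarrow{\cong}R'$ splits off. (Your proposed stronger characterisation is also off: a direct summand $R'\to 0$, resp.\ $0\to R'$, of $f$ corresponds to $M$ having a direct summand $R'[1]$, resp.\ $R'$, and neither violates right minimality of $\iota_0$.) A radical map between objects of $\add R$ can perfectly well have source and target sharing an indecomposable summand. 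For instance, for any indecomposable summand $R'$ of $R$, the direct sum of the triangles $0\to R'\xrightarrow{1}R'\to 0$ and $R'\to 0\to R'[1]\xrightarrow{1}R'[1]$ gives a minimal $R$-presentation $R'\xrightarrow{0}R'\to R'\oplus R'[1]\to R'[1]$ of $M=R'\oplus R'[1]$ with $R^0_M=R^1_M=R'$; likewise, if $\End_\D(R')$ is local but not a division ring and $0\neq r\in\operatorname{rad}\End_\D(R')$, the cone of $r$ has minimal $R$-presentation $R'\xrightarrow{r}R'\to M\to R'[1]$. These show the conclusion is not a formal consequence of minimality; it genuinely requires the rigidity of $M$ (read literally for arbitrary $M\in R\ast R[1]$ the statement fails on $M=R'\oplus R'[1]$, but every application in the paper is to a minimal $U[-1]$-presentation of the rigid object $R$), and your proposal never uses any property of $M$.

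The proof the paper invokes (Dehy--Keller, Proposition~2.1; the same mechanism as Adachi--Iyama--Reiten's result that $P_0$ and $P_1$ in a minimal presentation of a $\tau$-rigid module share no summand) runs as follows. Applying $\Hom(-,M)$ to the triangle and using $\Hom(M,M[1])=0$ shows that $\Hom(f,M)\colon\Hom(R^0_M,M)\to\Hom(R^1_M,M)$ is surjective. If $R'$ were a common summand, the composite $g\colon R^1_M\twoheadrightarrow R'\hookrightarrow R^0_M\xrightarrow{\iota_0}M$ would equal $h\circ f$ for some $h\colon R^0_M\to M$; writing $h=\iota_0\circ\tilde h$ (since $\iota_0$ is a right $\add R$-approximation and $R^0_M\in\add R$) and using exactness of $\Hom(R^1_M,R^1_M)\to\Hom(R^1_M,R^0_M)\to\Hom(R^1_M,M)$, one obtains that the map $R^1_M\twoheadrightarrow R'\hookrightarrow R^0_M$ equals $\tilde h\circ f+f\circ v$ for some $v$, hence is radical because $f$ is --- contradicting the fact that its $(R',R')$-entry is the identity of $R'$. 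So the isomorphism-component case of your Step 2 is fine, but Step 3 must be replaced by this rigidity argument; linear algebra over the local endomorphism rings cannot do the job.
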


\begin{proof}
    The proof of \cite[Proposition~2.1]{DK} also works here.
\end{proof}

\subsection{Rigid objects in two-term subcategories}
Throughout the rest of this section, let $R$ be a basic rigid object in $\D$. We introduce the notion of maximal rigid objects with respect to $R\ast R[1]$.

\begin{definition}
	An object $U\in R\ast R[1]$ is called \emph{maximal rigid} with respect to $R\ast R[1]$ provided that it is rigid and for any object $N\in R\ast R[1]$ with $N\oplus U$ rigid, we have $N\in\add U$.
	
	Denote by $\r(R\ast R[1])$ the set of (isoclasses of) basic rigid objects in $R\ast R[1]$, and by $\mr(R\ast R[1])$ the set of (isoclasses of) basic maximal rigid objects with respect to $R\ast R[1]$.
\end{definition}

In the case that $R$ is maximal rigid (resp. cluster tilting), by \cite[Corollary~2.5]{ZZ}, any rigid object in $\D$ also belongs to $R\ast R[1]$. Therefore, the maximal rigid objects with respect to $R\ast R[1]$ are exactly the maximal rigid (resp. cluster tilting) objects in $\D$.

\begin{lemma}\label{lem:dual}
    For any $U\in\mr(R\ast R[1])$, we have $R\in\r(U[-1]\ast U)$. For any triangle
    \begin{equation}\label{eq:lem dual 1}
        U^0_R[-1]\overset{\iota_0}{\longrightarrow}R\overset{\iota_1}{\longrightarrow}U^1_R\overset{\iota}{\longrightarrow}U^0_R,
    \end{equation}
    the following hold.
    \begin{enumerate}
        \item If $\iota_0$ is a right $\add U[-1]$-approximation of $R$, then $U^1_R\in\add U$.
        \item If $\iota_1$ is a left $\add U$-approximation of $R$, then $U^0_R\in\add U$.
    \end{enumerate}
\end{lemma}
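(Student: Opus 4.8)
The claim is twofold: first, that $R \in \r(U[-1] \ast U)$ whenever $U \in \mr(R \ast R[1])$, meaning $R$ admits a triangle presentation with terms in $\add U[-1]$ and $\add U$, and $R$ is rigid (which is given); second, the two approximation statements. I would treat the dual statements (1) and (2) as the substantive content, since they immediately imply $R \in U[-1] \ast U$ once one knows that a triangle of the form \eqref{eq:lem dual 1} with $\iota_0$ a right $\add U[-1]$-approximation actually exists. So my plan is: (a) show such a triangle exists; (b) prove (1); (c) obtain (2) by a dual argument; (d) conclude $R \in \r(U[-1]\ast U)$.

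**Existence of the triangle.**

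First I would argue that $R \ast R[1] \subseteq U[-1] \ast U$. Here is the idea: since $U$ is maximal rigid with respect to $R\ast R[1]$ and $R$ itself is rigid and lies in $R \ast R[1]$, we have $R \in \add U$; more importantly, every indecomposable of $R \ast R[1]$ that is rigid relative to $U$ lies in $\add U$. Actually the cleaner route: take any $M \in R\ast R[1]$. Since $R\ast R[1]$ is closed under direct summands (cited from \cite[Proposition~2.1]{IY}) and is functorially finite in the relevant sense, and $U$ is maximal rigid in it, one expects the analogue of \cite[Theorem/Corollary~2.5]{ZZ} — that $M$ admits a triangle $U^1 \to U^0 \to M \to U^1[1]$ with $U^0, U^1 \in \add U$, i.e., $R\ast R[1] = U \ast U[1]$ as subcategories. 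Applying this to $M = R[1]$ and rotating gives a triangle $U^0_R[-1] \to R \to U^1_R \to U^0_R$ with $U^0_R, U^1_R \in \add U$. Taking it minimal makes $\iota_0$ a right $\add U[-1]$-approximation (and $\iota_1$ a left $\add U$-approximation) automatically. This handles existence and the "nice" case; the content of (1) and (2) is that the approximation hypothesis alone — without minimality — already forces the conclusion.

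**Proving (1) (and dually (2)).**

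For (1): assume $\iota_0 \colon U^0_R[-1] \to R$ is a right $\add U[-1]$-approximation; I want $U^1_R \in \add U$. Decompose $U^1_R = V \oplus W$ with $V$ the maximal direct summand lying in $\add U$ (note $\add U$ is closed under summands since $U$ is basic), and I must show $W = 0$. The strategy is to show $W \oplus U$ is rigid, hence $W \in \add U$, hence $W = 0$. Rigidity of $W$ itself and $\Hom(U, W[1]) = 0$, $\Hom(W, U[1]) = 0$ should all follow from: (i) $U^1_R \in R \ast R[1]$ — which needs checking, but follows because $U^1_R$ is the cone of a map between objects of $\add R$ shifted appropriately, or more directly because $R \ast R[1]$ is closed under extensions and cones of the relevant maps; once $U^1_R \in R\ast R[1]$, so is its summand $W$; (ii) then $\Hom(W, W[1])$, $\Hom(U, W[1])$, $\Hom(W, U[1])$ are computed by applying $\Hom(-, ?[1])$ and $\Hom(?, -[1])$ to the triangle \eqref{eq:lem dual 1}, using that $R$ is rigid, that $\Hom(U[-1], U) = \Hom(U, U[1])$-type terms vanish by rigidity of $U$, and 2-Calabi-Yau duality to flip the remaining $\Hom$'s into ones that vanish because $\iota_0$ is an approximation. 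The approximation hypothesis is exactly what kills the obstruction term: applying $\Hom(U[-1], -)$ to \eqref{eq:lem dual 1} and using that $\iota_0$ is a right $\add U[-1]$-approximation shows $\Hom(U[-1], U^1_R) \to \Hom(U[-1], U^0_R)$ has the right exactness, which combined with $\Hom(U[-1], R[1]) \cong D\Hom(R[1], U[1]) = D\Hom(R, U) $ ... — this is the delicate diagram chase. Then $W \oplus U$ rigid plus $W \in R\ast R[1]$ plus maximality of $U$ gives $W \in \add U$, contradicting maximality of $V$ unless $W = 0$. Statement (2) is proved by the dual argument, applying $\Hom(-, U)$ and using that $\iota_1$ is a left $\add U$-approximation; alternatively one can deduce (2) from (1) by applying (1) in the opposite category or to the shift, since \eqref{eq:lem dual 1} is self-dual under $[1]$ up to relabeling $U^0_R \leftrightarrow U^1_R$.

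**Main obstacle.**

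The main obstacle I anticipate is the bookkeeping in showing $W \oplus U$ is rigid — specifically, pinning down which of the six $\Hom$-groups $\Hom(W,W[1])$, $\Hom(U,W[1])$, $\Hom(W,U[1])$ vanish directly from rigidity of $U$ and $R$, and which require invoking the approximation hypothesis together with 2-Calabi-Yau duality. One must be careful that $\Hom(U, U^1_R[1])$ need not vanish a priori (only $\Hom(U,U[1]) = 0$ is given), so the argument must route through the triangle \eqref{eq:lem dual 1} and the fact that $\iota_0$ being a right approximation makes $\Hom(U[-1], \iota_0)$ surjective, which via duality controls the problematic Ext. A secondary, more routine point is verifying $U^1_R, U^0_R \in R \ast R[1]$ so that maximality of $U$ is applicable to their summands; this should follow from the closure properties of $R\ast R[1]$ established via \cite[Proposition~2.1]{IY} and the fact that $R, R[1]$ and hence the cone constructions stay inside it.
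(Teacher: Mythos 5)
Your overall skeleton matches the paper's proof: reduce everything to statements (1) and (2), prove (1) by showing that $U^1_R$ lies in $R\ast R[1]$ and that $U^1_R\oplus U$ is rigid, then invoke maximality of $U$. The sub-steps you identify correctly are also the paper's: $U^0_R\in\add U\subseteq R\ast R[1]$ and hence $U^1_R\in R\ast U^0_R\subseteq R\ast R[1]$; the vanishing $\Hom(U[-1],U^1_R)=0$, read off from the exact sequence obtained by applying $\Hom(U[-1],-)$ to \eqref{eq:lem dual 1} together with surjectivity of $\Hom(U[-1],\iota_0)$ and $\Hom(U[-1],U^0_R)=0$; and 2-Calabi--Yau duality to convert this into $\Hom(U^1_R,U[1])=0=\Hom(U,U^1_R[1])$. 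The genuine gap is at the point you yourself flag as the ``delicate diagram chase'': the vanishing of $\Hom(U^1_R,U^1_R[1])$. Your proposed mechanism --- compute the relevant $\Hom$-groups from long exact sequences and kill them using rigidity of $R$ and $U$, the approximation hypothesis, and 2-Calabi--Yau duality to ``flip the remaining $\Hom$'s into ones that vanish'' --- does not close: applying $\Hom(U^1_R,-)$ to the rotated triangle only yields a surjection $\Hom(U^1_R,R[1])\twoheadrightarrow\Hom(U^1_R,U^1_R[1])$, and the long exact sequences do not force $\Hom(U^1_R,R[1])$ or $\Hom(U^0_R,R[1])=\Hom(U^0_R[-1],R)$ (which contains $\iota_0$) to vanish. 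The paper's argument is instead a three-step factorization of a \emph{given} self-extension $f\in\Hom(U^1_R,U^1_R[1])$: since $\iota[1]\circ f\in\Hom(U^1_R,U^0_R[1])=0$ one writes $f=\iota_1[1]\circ f_1$; since $f_1\circ\iota_1\in\Hom(R,R[1])=0$ one writes $f_1=f_2\circ\iota$; since $\iota_0[1]$ is a right $\add U$-approximation of $R[1]$ one writes $f_2=\iota_0[1]\circ f_3$; and finally $f=\iota_1[1]\circ\iota_0[1]\circ f_3\circ\iota=0$ because $\iota_1[1]\circ\iota_0[1]=0$ as consecutive maps in a triangle. That last cancellation is the missing idea; without it, no amount of bookkeeping of vanishing $\Hom$-groups will finish the proof.

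Two smaller points. Your existence step is more roundabout than necessary and borders on circularity: you invoke an unproved analogue of the Zhou--Zhu identity $R\ast R[1]=U\ast U[1]$, which is essentially what the lemma establishes. A triangle \eqref{eq:lem dual 1} with $\iota_0$ a right $\add U[-1]$-approximation exists automatically by Hom-finiteness and the Krull--Schmidt property (take the approximation and complete it to a triangle); the whole content is that its cone then lies in $\add U$, which is statement (1). Also, the parenthetical claim that maximality forces $R\in\add U$ is false in general (for instance $U=R[1]$ is maximal rigid with respect to $R\ast R[1]$, but $\Hom(R,R[2])\cong D\Hom(R,R)\neq0$); you abandon this route immediately, so no harm is done, and your decomposition $U^1_R=V\oplus W$ is likewise an unnecessary detour, since once $U^1_R\oplus U$ is rigid and $U^1_R\in R\ast R[1]$ the definition of maximal rigid with respect to $R\ast R[1]$ gives $U^1_R\in\add U$ directly.
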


\begin{proof}
    Due to the existence of right $\add U[-1]$-approximations (resp. left $\add U$-approximations) of $R$, the assertion (1) (resp. (2)) implies $R\in\r(U[-1]\ast U)$. So it suffices to show (1) and (2). We only prove (1), since (2) can be proved dually. 
	
    Since $\iota_0$ is a right $\add U[-1]$-approximation, we have $U^0_R\in\add U$. Since $U\in R\ast R[1]$ and $R\ast R[1]$ is closed under taking direct summands, we have $U^0_R\in R\ast R[1]$. Hence $U^1_R\in R\ast U^0_R\subseteq R\ast R[1]$. Applying $\Hom(U[-1],-)$ to the triangle~\eqref{eq:lem dual 1}, we get a long exact sequence in $\D$
    \[\begin{array}{rl}
        &\Hom(U[-1],U^0_R[-1])\xrightarrow{\Hom(U[-1],\iota_0)}\Hom(U[-1],R)\xrightarrow{\Hom(U[-1],\iota_1)}\Hom(U[-1],U^1_R)  \\
	\to & \Hom(U[-1],U^0_R).
    \end{array}\]
    Since $U$ is rigid, the last term $\Hom(U[-1],U^0_R)=0$. Since $\iota_0$ is a right $\add U[-1]$-approximation of $R$, the morphism $\Hom(U[-1],\iota_0)$ is surjective. So $\Hom(U[-1],U^1_R)=0$, which implies $\Hom(U^1_R,U[1])=0$ by the 2-Calabi-Yau property. In particular, $\Hom(U^1_R,U^0_R[1])=0$.
		
    For any $f\in\Hom(U^1_R,U^1_R[1])$, consider the following diagram.
    $$\xymatrix@R=2em@C=3em{
        U^0_R[-1]\ar[r]&R\ar[r]^-{\iota_1}&U^1_R\ar@{.>}[dl]_(.35){f_1}\ar[r]^-{\iota}\ar[d]^(.7){f}&U^0_R\ar@{.>}[dll]^(.3){f_2}\ar@{.>}[dlll]_(.8){f_3}\\
        U^0_R\ar[r]_-{\iota_0[1]}&R[1]\ar[r]_-{\iota_1[1]}&U^1_R[1]\ar[r]_-{\iota[1]}&U^0_R[1]
    }$$
    Since $\iota[1]\circ f\in\Hom(U^1_R,U^0_R[1])=0$, there exists $f_1\in\Hom(U^1_R,R[1])$ such that $f=\iota_1[1]\circ f_1$. Since $f_1\circ \iota_1\in\Hom(R,R[1])=0$, there exists $f_2\in\Hom(U^0_R,R[1])$ such that $f_1=f_2\circ\iota$. Since $\iota_0[1]$ is a right $\add U$-approximation of $R[1]$, there exists $f_3\in\Hom(U^0_R,U^0_R)$ such that $f_2=(\iota_0[1])f_3$. Hence, we have $f=\iota_1[1]\circ\iota_0[1]\circ f_3\circ\iota$, which is zero since $\iota_1[1]\circ\iota_0[1]=0$. So $U^1_R$ is rigid, and hence $\Hom((U^1_R\oplus U),(U^1_R\oplus U)[1])=0$. Since $U$ is maximal rigid with respect to $R\ast R[1]$, we have $U^1_R\in\add U$.
\end{proof}

We use $K_0^{\mbox{sp}}(R)$ to denote the split Grothendieck group of $\add R$. For any $M\in R\ast R[1]$, define the \emph{index of $M$ with respect to $R$} as the element in $K_0^{\mbox{sp}}(R)$
\begin{equation}\label{eq:index}
    \ind_RM=[R^0_M]-[R^1_M],
\end{equation}
where $R^1_M\to R^0_M\to M\to R^1_M[1]$ is an $R$-presentation of $M$. We write $R=\oplus^n_{i=1}R_i$ with $R_i$ indecomposable. Then $[R_i],1\leq i\leq n$, form a $\mathbb{Z}$-basis of $K_0^{\mbox{sp}}(R)$. We write $$\ind_RM=\sum_{i=1}^{n}[\ind_RM:R_i][R_i].$$

\begin{remark}\label{rmk:cto}
    We refer to \cite[Section~2.3]{DK}, \cite[Section~2.1]{Pal} and \cite[Section~2.5]{Pla2} for a similar definition of index with respect to cluster tilting objects. Moreover, if $R$ is a direct summand of a cluster tilting object $T$, then for any object $M\in\D$, we have $M\in R\ast R[1]$ if and only if $[\ind_T M:X]=0$ for any indecomposable direct summand $X$ of $T\setminus R$.
\end{remark}

\begin{proposition}\label{prop:DK2}
    Let $U=\oplus^m_{i=1}U_i$ be a basic rigid object in $R\ast R[1]$ with $U_i, 1\leq i\leq m$, indecomposable. Then the elements $\ind_RU_i, 1\leq i\leq m$, are linearly independent in $K_0^{\mbox{sp}}(R)$.
\end{proposition}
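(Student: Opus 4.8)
The plan is to follow the standard argument for the analogous statement about cluster tilting objects (as in \cite{DK}), adapting it to the two-term subcategory $R\ast R[1]$. The key tool is the pairing on $K_0^{\mathrm{sp}}(R)$ coming from Euler-type forms of $R$-presentations, together with the rigidity of $U$ and the $2$-Calabi-Yau property, which together force a certain matrix to be invertible.

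First I would introduce, for $M,N\in R\ast R[1]$ with $R$-presentations $R^1_M\to R^0_M\to M\to R^1_M[1]$ and similarly for $N$, a bilinear form on $K_0^{\mathrm{sp}}(R)$ by applying $\Hom(-,N)$ to the presentation of $M$ (or, more symmetrically, using the presentations of both). The natural candidate is
\[
\langle M,N\rangle \;=\; \dim\Hom(R^0_M,N)-\dim\Hom(R^1_M,N),
\]
which depends only on $\ind_R M$ (and on $N$), since $\iota_0$ is a right $\add R$-approximation and $\iota_1$ a left $\add R[1]$-approximation; I would check well-definedness by a short diagram chase using that $R$ is rigid. The next step is to relate $\langle M,N\rangle$ to $\langle N,M\rangle$: using the $2$-Calabi-Yau property $\Hom(X,Y)\cong D\Hom(Y,X[2])$ and the long exact sequences obtained by applying $\Hom(M,-)$ and $\Hom(-,N)$ to the two presentations, one gets a formula of the shape
\[
\langle M,N\rangle + \langle N,M\rangle \;=\; \text{(something symmetric in $M,N$, e.g. $\dim\Hom(M,N)-\dim\Hom(M,N[1]) + \dim\Hom(N,M) - \dim\Hom(N,M[1])$ suitably arranged)}.
\]
The precise bookkeeping here is the routine-but-delicate part; I would not grind through it, but the upshot I want is: the antisymmetrization, or alternatively the value $\langle U_i,U_j\rangle - \langle U_j,U_i\rangle$, is controlled purely by $\dim\Hom(U_i,U_j[1])$ and $\dim\Hom(U_j,U_i[1])$, both of which vanish since $U$ is rigid.

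With that in hand, the core of the argument is: suppose $\sum_i a_i\,\ind_R U_i = 0$ in $K_0^{\mathrm{sp}}(R)$ with $a_i\in\mathbb{Z}$. Pairing on the right with each $U_j$ gives $\sum_i a_i\langle U_i,U_j\rangle = \langle 0,U_j\rangle = 0$ for all $j$, so the integer matrix $B=(\langle U_i,U_j\rangle)_{i,j}$ has the vector $(a_i)$ in its left kernel. I claim $B$ is invertible. Indeed, by rigidity of $U$ and the symmetry formula above, the off-diagonal entries satisfy $\langle U_i,U_j\rangle = \langle U_j,U_i\rangle$ when $i\neq j$ (the antisymmetric part being $\pm\dim\Hom(U_i,U_j[1])=0$), while on the diagonal $\langle U_i,U_i\rangle$ has a distinguished value — one computes, using Proposition~\ref{prop:DK1} that in a \emph{minimal} $R$-presentation $R^0_{U_i}$ and $R^1_{U_i}$ share no summand, that $\langle U_i,U_i\rangle = \dim\End(U_i) - \dim\Hom(U_i,U_i[1]) + (\text{a nonnegative contribution}) = 1$ plus a nonnegative term, so in particular $B$ is symmetric with positive diagonal; more precisely I expect $B$ to be (conjugate to) a symmetric positive-definite matrix, forcing $\det B\neq 0$. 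Hence $(a_i)=0$ and the $\ind_R U_i$ are linearly independent.

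The main obstacle I anticipate is establishing the symmetry/positivity of the matrix $B$ cleanly. The delicate points are: (i) making sure the bilinear form is genuinely well-defined on indices (not just on objects), which needs the approximation properties of $\iota_0,\iota_1$ and the rigidity of $R$; and (ii) extracting positive-definiteness — the honest statement one can always get is that $B$ is symmetric (from $2$-CY plus rigidity of $U$) and that its diagonal entries are positive integers, which already does not suffice for invertibility of an arbitrary integer matrix; so one genuinely needs the stronger fact that $B$ equals, up to the obvious symmetrization, the "$E$-matrix" $(\dim\Hom(U_i,U_j)-\dim\Hom(U_i,U_j[1]))$ of the rigid object $U$, and that such a matrix is positive-definite. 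I would prove the latter by the usual trick: it is the Gram matrix of $\ind_R U_i$ with respect to a form that is positive-definite on the subspace they span, because for a rigid object the relevant $\Ext$-vanishing makes the quadratic form $\langle U,U\rangle = \dim\End(U) > 0$ for every nonzero $U$ in the additive span, hence nondegenerate there. Alternatively, one can sidestep positivity entirely and argue by induction on $m$ using that $U$ is an "almost complete" rigid object after removing one summand, but I expect the bilinear-form route to be shorter given the $2$-CY hypothesis.
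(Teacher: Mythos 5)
Your strategy --- pair a hypothetical relation $\sum_i a_i\,\ind_R U_i=0$ against the $U_j$ and show the resulting matrix $B=(\langle U_i,U_j\rangle)$ is invertible --- cannot work, and the two structural claims you lean on are both false. Take $U=R[1]$, a basic rigid object in $R\ast R[1]$ whose summands have presentations $R_i\to 0\to R_i[1]\to R_i[1]$; then $\langle R_i[1],R_j[1]\rangle=-\dim\Hom(R_i,R_j[1])=0$ for all $i,j$ by rigidity of $R$, so $B$ is the zero matrix even though the indices $\ind_R(R_i[1])=-[R_i]$ are plainly independent. Thus pairing against $\add U$ simply does not separate points of $K_0^{\mbox{sp}}(R)$, and invertibility of $B$ is not a viable route. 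The intermediate claims fail as well: for $U=R$ a cluster tilting object one has $\langle U_i,U_j\rangle=\dim\Hom(U_i,U_j)$, the Cartan matrix of $\End R$, which is already non-symmetric in type $A_2$ --- the honest $2$-Calabi--Yau antisymmetry identity (Palu) involves $\dim\Hom(U_i,U_j)-\dim\Hom(U_j,U_i)$ and not only the $\Ext^1$-terms, so rigidity of $U$ does not kill the antisymmetric part --- and its symmetrization need not be positive definite (for the Kronecker quiver it is $\left(\begin{smallmatrix}1&1\\1&1\end{smallmatrix}\right)$, which is singular).

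The paper's proof is a one-line citation of Dehy--Keller, whose argument is genuinely different and is the one to use here: given $\sum_i a_i\,\ind_RU_i=0$, set $X=\bigoplus_{a_i>0}U_i^{a_i}$ and $Y=\bigoplus_{a_i<0}U_i^{-a_i}$; these lie in $\add U$, so $X\oplus Y$ is rigid, they share no indecomposable summand, and $\ind_RX=\ind_RY$. One then proves that two rigid objects with $\Hom(X,Y[1])=0=\Hom(Y,X[1])$ and equal index are isomorphic: by Proposition~\ref{prop:DK1} the minimal presentations have no common summands, so the positive and negative parts of the index recover $R^0$ and $R^1$ separately, and the $\Ext$-vanishing lets one compare the two presentation triangles and transport an isomorphism to the cones. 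Hence $X\cong Y$, forcing $X=Y=0$ and all $a_i=0$. Your fallback of "induction on $m$ via almost complete rigid objects" would moreover be circular in this paper, since the theory of completions of almost maximal rigid objects (Proposition~\ref{prop:rank}, Lemma~\ref{lem:unique}) is built on the present proposition.
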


\begin{proof}
    The proof of \cite[Theorem~2.4]{DK} also works here.
\end{proof}

We have the following criterion of a rigid object in $R\ast R[1]$ to be maximal rigid with respect to $R\ast R[1]$, by counting rank.

\begin{proposition}\label{prop:rank}
    For any $U\in\r(R\ast R[1])$, we have $|U|\leq|R|$, where equality holds if and only if $U\in\mr(R\ast R[1])$.
\end{proposition}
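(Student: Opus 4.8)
The plan is to prove the two assertions together by exploiting the index map $\ind_R\colon R\ast R[1]\to K_0^{\mathrm{sp}}(R)$ and the duality of Lemma~\ref{lem:dual}. First I would establish the inequality $|U|\le|R|$: by Proposition~\ref{prop:DK2}, if $U=\oplus_{i=1}^m U_i$ is a basic rigid object in $R\ast R[1]$ with the $U_i$ indecomposable, then the elements $\ind_R U_i$ are linearly independent in $K_0^{\mathrm{sp}}(R)$, which is free of rank $|R|$; hence $m=|U|\le|R|$. This is the easy direction and is essentially immediate from the results already proved.

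For the equivalence, I would argue both implications. For the ``only if'' direction, suppose $U\in\mr(R\ast R[1])$; I want $|U|=|R|$. By Lemma~\ref{lem:dual}, $R\in\r(U[-1]\ast U)$, i.e., $R$ is a basic rigid object in the two-term subcategory $U[-1]\ast U$ with respect to the (maximal) rigid object $U$. Applying the already-proved inequality with the roles of $R$ and $U$ exchanged — more precisely, applying Proposition~\ref{prop:DK2} in the category with respect to $U$ — gives that the elements $\ind_U R_i$ are linearly independent in $K_0^{\mathrm{sp}}(U)$, so $|R|\le|U|$. Combined with $|U|\le|R|$ this forces $|U|=|R|$. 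For the ``if'' direction, suppose $U\in\r(R\ast R[1])$ with $|U|=|R|=n$. Let $N\in R\ast R[1]$ be such that $N\oplus U$ is rigid; I must show $N\in\add U$. It suffices to treat $N$ indecomposable. Then $U\oplus N$ is a basic-up-to-repetition rigid object in $R\ast R[1]$, so by Proposition~\ref{prop:DK2} the set $\{\ind_R U_1,\dots,\ind_R U_n,\ind_R N\}$ would be linearly independent in $K_0^{\mathrm{sp}}(R)$ if $N\notin\add U$ — but a free $\mathbb{Z}$-module of rank $n$ cannot contain $n+1$ linearly independent elements. Hence $\ind_R N$ lies in the span of $\ind_R U_1,\dots,\ind_R U_n$; a small argument (e.g. that $\ind_R$ is injective on indecomposable summands of a fixed rigid object, or directly that linear dependence with a basic rigid $U$ forces $N$ to be a summand) then yields $N\in\add U$, so $U$ is maximal rigid with respect to $R\ast R[1]$.

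The main obstacle I expect is the last step of the ``if'' direction: deducing $N\in\add U$ from the index relation. Over $K_0^{\mathrm{sp}}(R)\cong\mathbb{Z}^n$, linear dependence of $n+1$ vectors does not a priori mean one of them is an (integer, or even rational) combination of the others with the ``right'' sign pattern, so one cannot immediately read off a summand relation. The clean fix is to mimic the proof of \cite[Theorem~2.4]{DK} (cited in Proposition~\ref{prop:DK2}): that argument actually shows that $\ind_R$ restricted to the indecomposable summands of any rigid object in $R\ast R[1]$ is injective and that the images extend to a basis, so a rigid $U$ with $|U|=|R|$ has $\{\ind_R U_i\}$ a $\mathbb{Z}$-basis; then for rigid $U\oplus N$ the element $\ind_R N$ being in the $\mathbb{Z}$-span of the $\ind_R U_i$, together with injectivity of $\ind_R$ on the summands of $U\oplus N$, is incompatible with $N$ being a new indecomposable summand. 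Alternatively — and perhaps more robustly — one can avoid the index entirely here and invoke the bijection between basic rigid objects in $R\ast R[1]$ and certain combinatorial/categorical data once it is set up, but within the present section the index argument following \cite{DK} is the natural route, and transcribing it carefully is the only non-formal part of the proof.
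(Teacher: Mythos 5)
Your proposal is correct and follows essentially the same route as the paper: Proposition~\ref{prop:DK2} gives $|U|\leq|R|$, Lemma~\ref{lem:dual} plus the same inequality with the roles of $R$ and $U[-1]$ exchanged gives $|U|=|R|$ for maximal rigid $U$, and a rank count gives the converse. The ``main obstacle'' you flag at the end is not a real gap: your argument is by contradiction, so from ``if $N\notin\add U$ then $U\oplus N$ is a basic rigid object in $R\ast R[1]$ with $|R|+1$ indecomposable summands, contradicting Proposition~\ref{prop:DK2}'' you may conclude $N\in\add U$ directly, with no need to extract a summand relation from a linear dependence in $K_0^{\mathrm{sp}}(R)$ (the paper phrases this same step as $|U|=|U\oplus N|$, which immediately forces $N\in\add U$).
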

\begin{proof}
	Let $U=\oplus^m_{i=1}U_i$ be a basic rigid object in $R\ast R[1]$ with $U_i,1\leq i\leq m$, indecomposable. By Proposition~\ref{prop:DK2}, we have  
	\[|U|=\rank\{\ind_RU_j|1\leq j\leq m\}\leq\rank\{\ind_R R_i|1\leq i\leq n\}=|R|,\]
	where $\rank X$ denotes the rank of a set $X\subseteq K_0^{\mbox{sp}}(R)$.
	
	Let $U$ be a maximal rigid object with respect to $R\ast R[1]$. On the one hand, $U$ is rigid in $R\ast R[1]$, which implies $|U|\leq|R|$. On the other hand, by Lemma~\ref{lem:dual}, we have $R\in\r(U[-1]\ast U)$, which implies $|R|\leq|U|$. Hence we have $|R|=|U|$. Conversely, let $U\in\r(R\ast R[1])$ with $|U|=|R|$. For any $N\in R\ast R[1]$ such that $U\oplus N$ is rigid, we have $|U\oplus N|\leq|R|$. Then $|U|=|U\oplus N|$, which implies $N\in\add U$. So $U$ is maximal rigid with respect to $R\ast R[1]$.
\end{proof}

As a consequence of Lemma~\ref{lem:dual} and Proposition~\ref{prop:rank}, we have the following dual relation between two rigid objects.

\begin{corollary}\label{cor:obj dual}
    Let $U$ and $R$ be rigid objects in $\D$. Then $U\in\mr(R\ast R[1])$ if and only if $R\in\mr(U[-1]\ast U)$.
\end{corollary}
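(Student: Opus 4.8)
The plan is to prove both implications by combining the two previous results, which are essentially tailored for this purpose. First suppose $U\in\mr(R\ast R[1])$. By Lemma~\ref{lem:dual} we already know $R\in\r(U[-1]\ast U)$, so it only remains to upgrade ``rigid'' to ``maximal rigid'' in the two-term subcategory $U[-1]\ast U$. For this I would invoke the rank criterion of Proposition~\ref{prop:rank}, applied with the roles of the basic rigid object played by $U$ (in place of $R$). That criterion says a rigid object in $U[-1]\ast U$ is maximal rigid precisely when its number of indecomposable summands equals $|U|$. Since $U\in\mr(R\ast R[1])$, Proposition~\ref{prop:rank} (applied to $R$ and $U$) gives $|U|=|R|$. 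Thus $R\in\r(U[-1]\ast U)$ has exactly $|R|=|U|$ indecomposable summands, so by the rank criterion again (now for the subcategory $U[-1]\ast U$) we conclude $R\in\mr(U[-1]\ast U)$.

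For the converse, suppose $R\in\mr(U[-1]\ast U)$. One cannot literally apply the forward direction verbatim, because that would conclude $U\in\mr((U[-1])[-1]\ast U[-1])$ rather than $U\in\mr(R\ast R[1])$. Instead I would first apply the forward implication with $(R,U)$ replaced by $(U[-1],R)$: since $R\in\mr(U[-1]\ast U)$, the already-proven direction yields $U[-1]\in\mr(R[-1]\ast R)$. Shifting this statement by $[1]$ — noting that $[1]$ is a triangle autoequivalence, so it preserves rigidity, the operation $X\ast Y$, and maximality within two-term subcategories — gives $U\in\mr(R\ast R[1])$, as desired. Alternatively, and perhaps more cleanly, one can argue symmetrically: apply Lemma~\ref{lem:dual} in the form ``$R\in\mr(U[-1]\ast U)$ implies $U[-1]\in\r(R[-1]\ast R)$,'' shift by $[1]$ to get $U\in\r(R\ast R[1])$, then use Proposition~\ref{prop:rank} together with $|R|=|U|$ (which follows from $R\in\mr(U[-1]\ast U)$ via Proposition~\ref{prop:rank}) to conclude $U\in\mr(R\ast R[1])$.

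The only genuine subtlety — hardly an obstacle — is bookkeeping the shift by $[1]$ correctly and making sure Proposition~\ref{prop:rank} and Lemma~\ref{lem:dual} are being applied with a legitimate choice of ``basic rigid object'' in each instance; in particular one should note at the outset that $U[-1]$ is basic and rigid whenever $U$ is, and that $R\ast R[1]=(R[-1]\ast R)[1]$ as subcategories of $\D$. With these observations in place the proof is a short formal deduction from Lemma~\ref{lem:dual} and Proposition~\ref{prop:rank}, using only that $|U|=|R|$ in the maximal-rigid situation.
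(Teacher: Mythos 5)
Your proposal is correct and follows essentially the same route as the paper: the forward direction combines Lemma~\ref{lem:dual} with the rank criterion of Proposition~\ref{prop:rank} (using $|U|=|R|=|U[-1]|$), and the converse is obtained by applying the forward implication with $R$ replaced by $U[-1]$ and shifting, which is exactly what the paper's phrase ``switching $R$ with $U[-1]$'' abbreviates. Your explicit remarks about $[1]$ being a triangle autoequivalence merely spell out what the paper leaves implicit.
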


\begin{proof}
    For any $U\in\mr(R\ast R[1])$, by Lemma~\ref{lem:dual}, we have $R\in\r(U[-1]\ast U)$, and by Proposition~\ref{prop:rank} we have $|U|=|R|$. Then $|R|=|U[-1]|$. So by Proposition~\ref{prop:rank} again, we have $R\in\mr(U[-1]\ast U)$. The opposite implication can be obtained by switching $R$ with $U[-1]$.
\end{proof}

The following lemma is useful.

\begin{lemma}\label{cor:non-zero}
    Let $U$ and $R$ be basic rigid objects in $\D$. If $R\in\mr(U[-1]\ast U)$, then for any indecomposable summand $Y$ of $U$, we have $[\ind_{U[-1]}R:Y[-1]]\neq 0$.
\end{lemma}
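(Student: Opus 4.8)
The plan is to argue by contradiction. Suppose $R\in\mr(U[-1]\ast U)$ but there is an indecomposable summand $Y$ of $U$ with $[\ind_{U[-1]}R:Y[-1]]=0$. By Corollary~\ref{cor:obj dual}, the hypothesis is equivalent to $U\in\mr(R\ast R[1])$, so both $U$ and $R$ have the same number of indecomposable summands, say $|U|=|R|=n$, and we may write $U=\oplus_{i=1}^n U_i$ with $U_i$ indecomposable and, after reindexing, $Y=U_1$.

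First I would write out a minimal $U[-1]$-presentation of $R$: a triangle
\[
U^1_R[-1]\to U^0_R[-1]\overset{\iota_0}{\longrightarrow}R\overset{\iota_1}{\longrightarrow}U^1_R
\]
with $U^0_R[-1], U^1_R[-1]\in\add U[-1]$, chosen minimal; equivalently this is the triangle \eqref{eq:lem dual 1} with $\iota_0$ a minimal right $\add U[-1]$-approximation and (by Lemma~\ref{lem:dual}) with $U^0_R, U^1_R\in\add U$. Then $\ind_{U[-1]}R=[U^0_R[-1]]-[U^1_R[-1]]$, and the assumption $[\ind_{U[-1]}R:Y[-1]]=0$ says that $U_1=Y$ occurs with the same multiplicity in $U^0_R$ and in $U^1_R$; by Proposition~\ref{prop:DK1} (applied in the category $U[-1]\ast U$, i.e. to the minimal $U[-1]$-presentation of $R$), $U^0_R[-1]$ and $U^1_R[-1]$ share no indecomposable summand, so that common multiplicity must be $0$. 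Hence $Y=U_1$ is a summand of neither $U^0_R$ nor $U^1_R$; in other words, the whole of $R$ lies in $(U\setminus Y)[-1]\ast(U\setminus Y)$.

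Now I would exploit this to produce a contradiction with maximality. Since $R\in (U\setminus Y)[-1]\ast(U\setminus Y)$, I can form the set of indices $\ind_{U[-1]}R_j$ ($1\le j\le n$) for the indecomposable summands $R_j$ of $R$; by the previous paragraph each of these lies in the sublattice of $K_0^{\mathrm{sp}}(U[-1])$ spanned by the classes $[U_i[-1]]$ with $i\ne 1$, which has rank $n-1$. On the other hand, by Proposition~\ref{prop:DK2} applied to the rigid object $R$ in $U[-1]\ast U$, the elements $\ind_{U[-1]}R_j$, $1\le j\le n$, are linearly independent in $K_0^{\mathrm{sp}}(U[-1])$ — so they span a rank-$n$ sublattice. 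This is the desired contradiction, since $n>n-1$. Therefore $[\ind_{U[-1]}R:Y[-1]]\ne 0$ for every indecomposable summand $Y$ of $U$.

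The main obstacle, I expect, is making sure that the "two-term" machinery (minimal presentations, Proposition~\ref{prop:DK1}, Proposition~\ref{prop:DK2}, the index) is legitimately applied with the roles of $R$ and $U$ interchanged, i.e. in the category $U[-1]\ast U$ with respect to the rigid object $U[-1]$. This is exactly what Lemma~\ref{lem:dual} and Corollary~\ref{cor:obj dual} are set up to guarantee: $R\in\r(U[-1]\ast U)$, $U[-1]$ is a basic rigid object, and all of the cited statements were proved for an arbitrary basic rigid object. One should also double-check the bookkeeping that $[\ind_{U[-1]}R:Y[-1]]$ is literally the coefficient of $[Y[-1]]$ in $[U^0_R[-1]]-[U^1_R[-1]]$ and hence equals (multiplicity of $Y$ in $U^0_R$) minus (multiplicity of $Y$ in $U^1_R$); granting that, the argument above is essentially a rank count and should go through cleanly.
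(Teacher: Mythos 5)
Your proof is correct and follows essentially the same route as the paper: from $[\ind_{U[-1]}R:Y[-1]]=0$ one deduces via the minimal presentation and Proposition~\ref{prop:DK1} that $R$ admits an $(U\setminus Y)[-1]$-presentation, and then a rank count contradicts $|R|=|U|$. The only cosmetic difference is that the paper cites Proposition~\ref{prop:rank} for the final contradiction, whereas you unfold its proof by invoking Proposition~\ref{prop:DK2} directly.
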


\begin{proof}
    Let $N=U\setminus Y$. If $[\ind_{U[-1]}R:Y[-1]]= 0$ then, by the definition of index, $R$ admits an $N[-1]$-presentation. So by Proposition~\ref{prop:rank}, we have $|R|\leq |N|<|U|$. Since $R\in\mr(U[-1]\ast U)$, again by Proposition~\ref{prop:rank}, we have $|R|=|U|$, a contradiction.
\end{proof}

\subsection{Mutation in two-term subcategories}
By Proposition~\ref{prop:rank}, any rigid object in $R\ast R[1]$ can be completed to a maximal rigid object with respect to $R\ast R[1]$.

\begin{definition}
    A basic rigid object $N$ in $R\ast R[1]$ is called \emph{almost maximal rigid} with respect to $R\ast R[1]$ if $|N|=|R|-1$. 
    
    Let $N$ be an almost maximal rigid object with respect to $R\ast R[1]$. An indecomposable object $Y$ is called a \emph{completion} of $N$ if $N\oplus Y$ is maximal rigid with respect to $R\ast R[1]$.
\end{definition}

\begin{lemma}\label{lem:unique}
    Let $N$ be an almost maximal rigid object with respect to $R\ast R[1]$, and $Y,Y'$ be two non-isomorphic completions of $N$. Then $$[\ind_{(Y'\oplus N)[-1]}R:Y'[-1]][\ind_{(Y\oplus N)[-1]}R:Y[-1]]<0.$$
\end{lemma}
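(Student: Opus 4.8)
The plan is to pass to the dual picture furnished by Corollary~\ref{cor:obj dual} and then reduce the sign statement to two incompatible Krull--Schmidt decompositions. Set $U:=Y\oplus N$ and $U':=Y'\oplus N$. Since $Y$ and $Y'$ are completions of $N$, neither is a direct summand of $N$ (otherwise $N$ itself would be maximal rigid with respect to $R\ast R[1]$, contradicting $|N|=|R|-1$ by Proposition~\ref{prop:rank}), so $U$ and $U'$ are basic and both lie in $\mr(R\ast R[1])$. By Corollary~\ref{cor:obj dual} we get $R\in\mr(U[-1]\ast U)$ and $R\in\mr((U')[-1]\ast U')$, and since $Y[-1]$ and $Y'[-1]$ are indecomposable summands of $U[-1]$ and $(U')[-1]$, Lemma~\ref{cor:non-zero} shows that
\[c:=[\ind_{U[-1]}R:Y[-1]]\quad\text{and}\quad c':=[\ind_{(U')[-1]}R:Y'[-1]]\]
are both nonzero. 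Hence $cc'\neq0$, and it remains to rule out $c,c'>0$ and, symmetrically, $c,c'<0$.

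Next I would unwind a minimal $U[-1]$-presentation
\[U^1_R[-1]\longrightarrow U^0_R[-1]\xrightarrow{\ p\ }R\xrightarrow{\ q\ }U^1_R\]
of $R$ — it exists because $U[-1]$ is rigid (as $U$ is) and $R\in(U[-1])\ast(U[-1])[1]$ — so that $p$ is a minimal right $\add(U[-1])$-approximation, $q$ a minimal left $\add U$-approximation, and $\ind_{U[-1]}R=[U^0_R[-1]]-[U^1_R[-1]]$. By Proposition~\ref{prop:DK1}, $U^0_R$ and $U^1_R$ have no common indecomposable summand; combined with $c\neq0$ this forces exactly one of: (a) $c>0$, with $Y$ a direct summand of $U^0_R$ of multiplicity $c$ and $U^1_R\in\add N$; or (b) $c<0$, with $Y$ a direct summand of $U^1_R$ of multiplicity $-c$ and $U^0_R\in\add N$. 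The same dichotomy applies to a minimal $(U')[-1]$-presentation $(U')^1_R[-1]\to(U')^0_R[-1]\xrightarrow{p'}R\xrightarrow{q'}(U')^1_R$, with $Y',c'$ in place of $Y,c$.

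Now suppose $c,c'>0$. Then $q\colon R\to U^1_R$ and $q'\colon R\to(U')^1_R$ are minimal left approximations whose targets lie in $\add N$, which is contained both in $\add U$ and in $\add U'$. Hence $q$ factors through $q'$ and $q'$ through $q$, say $q=hq'$ and $q'=h'q$; applying left minimality of $q$ to $hh'$ and of $q'$ to $h'h$ shows that $h$ is an isomorphism, so $q$ and $q'$ have isomorphic fibres, i.e.\ $U^0_R[-1]\cong(U')^0_R[-1]$, whence $U^0_R\cong(U')^0_R$. But $Y$ is an indecomposable summand of $U^0_R$ with $Y\notin\add N$ and $Y\not\cong Y'$, whereas the indecomposable summands of $(U')^0_R$ are $Y'$ together with objects of $\add N$; this contradicts uniqueness of decomposition. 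If instead $c,c'<0$, I would run the mirror argument using the right approximations $p\colon U^0_R[-1]\to R$ and $p'\colon(U')^0_R[-1]\to R$, whose sources lie in $\add(N[-1])\subseteq\add(U[-1])\cap\add((U')[-1])$: mutual factorization and right minimality make the comparison maps isomorphisms, so the cones agree, i.e.\ $U^1_R\cong(U')^1_R$, which is again impossible because $Y$ is a summand of exactly one of them. Therefore $cc'<0$.

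The step I expect to be the main obstacle is the implication ``mutual factorization $\Rightarrow$ isomorphism'': it works only because the $R$-presentations are genuinely minimal (so that $p$ is right minimal and $q$ is left minimal simultaneously) and because $N$, being a common direct summand of $U$ and $U'$, yields the inclusions $\add N\subseteq\add U\cap\add U'$ and $\add(N[-1])\subseteq\add(U[-1])\cap\add((U')[-1])$; one must also be careful that in cases (a), (b) the relevant term of the presentation really is $Y$-free. Everything else — the identification of index coefficients with summand multiplicities and the final Krull--Schmidt step — is routine. An alternative route would be to first build the exchange triangles relating $Y$ and $Y'$ and read the sign change of the index coefficient off them, but the direct argument above appears more economical.
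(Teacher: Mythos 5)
Your proposal is correct and follows essentially the same route as the paper: assume the two index coefficients have the same sign, use Proposition~\ref{prop:DK1} to locate $Y$ (resp.\ $Y'$) in one term of a minimal presentation of $R$ with the other term forced into $\add N$, observe that the resulting approximation maps are then minimal $\add N$- (or $\add N[-1]$-) approximations and hence isomorphic, and conclude $Y\cong Y'$ by Krull--Schmidt. The only cosmetic difference is that you spell out the positive case via left approximations while the paper details the negative case via right approximations, each deferring the mirror case.
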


\begin{proof}
    Let $[\ind_{(Y\oplus N)[-1]}R:Y[-1]]=t$ and $[\ind_{(Y'\oplus N)[-1]}R:Y'[-1]]=s.$ Assume conversely $ts\geq 0$. By Lemma~\ref{cor:non-zero}, we have $t\neq 0$ and $s\neq 0$. So either $t<0$ and $s<0$, or $t>0$ and $s>0$. We only make a contradiction for the case $t<0$ and $s<0$, since the other case is similar. Let $U=N\oplus Y$ and $U'=N\oplus Y'$. Consider the following diagram
	$$\xymatrix{
		U_R^0[-1]\ar[r]\ar@{-->}[d]_-{\psi[-1]}&U_R^0[-1]\ar[r]^-{\delta}\ar@{-->}[d]_-{\phi}&R\ar@{=}[d]\ar[r]& U_R^1\ar@{-->}[d]_-{\psi}\\
		{U'}^0_R[-1]\ar[r]&{U'}^0_R[-1]\ar[r]_-{\delta'}&R\ar[r]& {U'}^1_R
	}$$
	where the first (resp. second) row is a minimal $U[-1]$-presentation (resp. $U'[-1]$-presentation) of $R$. Since $t<0$ and $s<0$, by Proposition~\ref{prop:DK1}, both $U_R^0$ and ${U'}^0_R$ belong to $\add N$, and $Y$ and $Y'$ are direct summands of $U^1_R$ and ${U'}^1_R$, respectively. So both $\delta$ and $\delta'$ are minimal right $\add N[-1]$-approximations of $R$. Hence there exists an isomorphism $\phi:U_R^0[-1]\to {U'}^0_R[-1]$ such that the middle square of the above diagram commutes. It follows that there exists an isomorphism $\psi:U_R^1\to {U'}_R^1$, and hence $Y\cong Y'$, a contradiction.  
\end{proof}

It follows from Lemma~\ref{lem:unique} that any almost maximal rigid object with respect to $R\ast R[1]$ has at most two completions. In what follows, we shall prove that the number of completions is exactly two. For this, we need the following notion of left/right mutation of a basic rigid object in $\D$ at an indecomposable summand, introduced in \cite[Definition~2.5]{IY}. 

\begin{definition}\label{def:mutation in sub}
Let $U=N\oplus Y$ be a basic rigid object in $\D$, with $Y$ an indecomposable direct summand of $U$. The \emph{right mutation} $\mu^+_Y(U)=N\oplus Z$ and the \emph{left mutation} $\mu^-_Y(U)=N\oplus W$ of $U$ at $Y$ are defined respectively by the triangles
\begin{equation}\label{eq:ritri}
    Z\xrightarrow{\alpha_N}N_0\xrightarrow{\alpha_Y}Y\xrightarrow{\alpha_Z}Z[1],
\end{equation}
\begin{equation}\label{eq:letri}
    Y\xrightarrow{\beta_Y}N_1\xrightarrow{\beta_N}W\xrightarrow{\beta_W}Y[1],
\end{equation}
where $\alpha_Y$ and $\beta_Y$ are minimal right and left $\add N$-approximations of $Y$, respectively. The triangles \eqref{eq:ritri} and \eqref{eq:letri} are called the \emph{right} and \emph{left exchange triangles} of $U$ at $Y$, respectively.
\end{definition}

In Definition~\ref{def:mutation in sub}, both $Z$ and $W$ are indecomposable and not isomorphic to $Y$, both $\mu^+_Y(U)$ and $\mu^-_Y(U)$ are rigid, and $|\mu^+_Y(U)|=|\mu^-_Y(U)|=|U|$, cf. \cite[Section~2.1]{MP}. Hence by Proposition~\ref{prop:rank}, we have the following result.

\begin{lemma}\label{lem:ind}
    Let $U$ be a basic rigid object in $\D$, with $Y$ an indecomposable direct summand of $U$. If $U$ is a basic maximal rigid object with respect to $R\ast R[1]$, then so is $\mu^\varepsilon_Y(U)$, provided that it is in $R\ast R[1]$, where $\varepsilon\in\{+,-\}$.
\end{lemma}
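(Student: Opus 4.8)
The statement to prove is Lemma~\ref{lem:ind}: if $U$ is a basic maximal rigid object with respect to $R\ast R[1]$, then so is $\mu^\varepsilon_Y(U)$, provided it lies in $R\ast R[1]$.

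The plan is to reduce everything to the rank criterion of Proposition~\ref{prop:rank}. First I would recall the two facts cited just before the statement: the mutation $\mu^\varepsilon_Y(U)$ is again a \emph{basic rigid object in $\D$}, and $|\mu^\varepsilon_Y(U)|=|U|$. These are purely about the ambient triangulated category (from \cite{IY,MP}) and do not reference $R\ast R[1]$ at all, so they are available unconditionally.

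Next, the hypothesis of the lemma supplies two things. Since $U\in\mr(R\ast R[1])$, Proposition~\ref{prop:rank} gives $|U|=|R|$. And the proviso of the statement says $\mu^\varepsilon_Y(U)\in R\ast R[1]$. Combining: $\mu^\varepsilon_Y(U)$ is a basic rigid object lying in $R\ast R[1]$ with $|\mu^\varepsilon_Y(U)|=|U|=|R|$. That is exactly the hypothesis ``$U\in\r(R\ast R[1])$ with $|U|=|R|$'' appearing in the equality case of Proposition~\ref{prop:rank}, so that proposition immediately yields $\mu^\varepsilon_Y(U)\in\mr(R\ast R[1])$, as desired.

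There is essentially no obstacle here: the content has already been packaged into Proposition~\ref{prop:rank} and the external mutation facts, and the lemma is a one-line bookkeeping consequence. The only point requiring a moment's care is making explicit that ``basic rigid in $\D$ and lying in $R\ast R[1]$'' is the same as ``an element of $\r(R\ast R[1])$'' — i.e. that rigidity inside $R\ast R[1]$ is just rigidity in $\D$ of an object that happens to lie in $R\ast R[1]$ — which is immediate from the definitions. Accordingly the proof is short: invoke the $\D$-level statements that $\mu^\varepsilon_Y(U)$ is basic rigid with $|\mu^\varepsilon_Y(U)|=|U|$, use $|U|=|R|$ from Proposition~\ref{prop:rank} applied to $U$, use the proviso $\mu^\varepsilon_Y(U)\in R\ast R[1]$, and then apply the ``equality implies maximal'' direction of Proposition~\ref{prop:rank} to conclude.
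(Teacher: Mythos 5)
Your proof is correct and is exactly the paper's argument: the paper records that $\mu^\varepsilon_Y(U)$ is basic rigid with $|\mu^\varepsilon_Y(U)|=|U|$ (citing \cite{IY,MP}) and then deduces the lemma from the rank criterion of Proposition~\ref{prop:rank}, just as you do. No gaps.
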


Note that any of $\mu^+_Y(U)$ and $\mu^-_Y(U)$ may not be in $R\ast R[1]$.

\begin{proposition}\label{prop:mutation in sub}
    Let $U=N\oplus Y\in\mr(R\ast R[1])$ and $Y$ be an indecomposable summand of $U$. We use the triangles~\eqref{eq:ritri} and \eqref{eq:letri}. The following are equivalent.
    \begin{enumerate}
        \item $[\ind_{U[-1]}R:Y[-1]]>0$.
        \item $\Hom(R,\alpha_Y)$ is surjective.
        \item $\mu^+_Y(U)\in\mr(R\ast R[1])$ and $[\ind_{\mu^+_Y(U)[-1]}R:Z[-1]]<0$.
    \end{enumerate}
    Dually, the following are equivalent.
    \begin{enumerate}
        \item[(1')] $[\ind_{U[-1]}R:Y[-1]]<0$.
        \item[(2')] $\Hom(\beta_Y,R[1])$ is surjective.
        \item[(3')] $\mu^-_Y(U)\in\mr(R\ast R[1])$ and $[\ind_{\mu^-_Y(U)[-1]}R:W[-1]]>0$.
    \end{enumerate}
\end{proposition}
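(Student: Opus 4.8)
The plan is to establish the chain of equivalences (1) $\Leftrightarrow$ (2) $\Leftrightarrow$ (3) first, and then obtain the primed statements by invoking Corollary~\ref{cor:obj dual}, which allows us to switch the roles of $R$ and $U[-1]$, turning right mutation into left mutation. For the implication (1) $\Rightarrow$ (2), I would take a minimal $U[-1]$-presentation of $R$, say $U^0_R[-1]\to U^1_R\to R\to U^0_R$ in the notation of Lemma~\ref{lem:dual}. The hypothesis $[\ind_{U[-1]}R:Y[-1]]>0$ means (via Proposition~\ref{prop:DK1}, since the minimal presentation has no common summands) that $Y$ appears in $U^1_R$ and not in $U^0_R$, i.e. $U^0_R\in\add N$. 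Then the map $U^1_R\to R$ is a minimal right $\add U$-approximation of $R$; I would compare this with the right exchange triangle \eqref{eq:ritri}, $Z\to N_0\to Y\to Z[1]$, where $\alpha_Y$ is the minimal right $\add N$-approximation of $Y$. The point is that a right $\add U$-approximation of $R$ that genuinely involves the summand $Y$ forces every map $R\to Y$ to factor through $N_0$, which is exactly the surjectivity of $\Hom(R,\alpha_Y)$. Conversely, if $\Hom(R,\alpha_Y)$ is surjective, I would build the minimal right $\add U$-approximation of $R$ and read off that its $\add Y$-part is nonzero, giving $[\ind_{U[-1]}R:Y[-1]]>0$ after checking (via Lemma~\ref{cor:non-zero}, which guarantees the coefficient is nonzero) that it cannot be negative.

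For (2) $\Rightarrow$ (3): assuming $\Hom(R,\alpha_Y)$ surjective, I first want to show $Z=\mu^+_Y(U)\setminus N$ lies in $R\ast R[1]$, equivalently that $\mu^+_Y(U)\in R\ast R[1]$; then Lemma~\ref{lem:ind} upgrades this to $\mu^+_Y(U)\in\mr(R\ast R[1])$. To see $Z\in R\ast R[1]$, I would use the right exchange triangle together with an $R$-presentation of $N$ and $Y$: since $N$ and $Y$ are both in the two-term subcategory and $R\ast R[1]$ is closed under extensions in the appropriate sense (here one uses that $Z$ sits in a triangle with $N_0\in\add N\subseteq R\ast R[1]$ and $Y[-1]$), I need to verify the relevant $\Hom(R,Z[1])$ vanishing or, more precisely, that $Z$ admits an $R$-presentation. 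The cleaner route is: apply $\Hom(R,-)$ to \eqref{eq:ritri}; surjectivity of $\Hom(R,\alpha_Y)$ yields $\Hom(R,Z[1])\hookrightarrow\Hom(R,N_0[1])=0$ in the relevant range, and combined with the fact that $Z\in R[1]\ast$ something, one concludes $Z\in R\ast R[1]$. Finally, for the index claim $[\ind_{\mu^+_Y(U)[-1]}R:Z[-1]]<0$: having $\mu^+_Y(U)=N\oplus Z$ as a maximal rigid object in $R\ast R[1]$, I apply Corollary~\ref{cor:obj dual} to get $R\in\mr((N\oplus Z)[-1]\ast(N\oplus Z))$, and then I take a minimal $(N\oplus Z)[-1]$-presentation of $R$ and show that $Z$ occurs in the $0$-term (not the $1$-term) — this is the dual picture to what happened with $U$, and it follows by rotating the right exchange triangle \eqref{eq:ritri} to express the presentation of $R$ over $(N\oplus Z)[-1]$ in terms of the one over $U[-1]$. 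By Lemma~\ref{cor:non-zero} the coefficient is nonzero, so it must be negative.

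For (3) $\Rightarrow$ (1): if $\mu^+_Y(U)\in\mr(R\ast R[1])$ with $[\ind_{\mu^+_Y(U)[-1]}R:Z[-1]]<0$, then $U$ and $\mu^+_Y(U)$ are two distinct completions of the almost maximal rigid object $N$ (they share $N$ and differ in the last summand, $Y$ versus $Z$). By Lemma~\ref{lem:unique}, the product of the two relevant index coefficients is strictly negative, so $[\ind_{U[-1]}R:Y[-1]]$ and $[\ind_{\mu^+_Y(U)[-1]}R:Z[-1]]$ have opposite signs; since the latter is negative, the former is positive, which is (1). This closes the cycle.

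The primed equivalences follow formally. Applying Corollary~\ref{cor:obj dual}, $U\in\mr(R\ast R[1])$ is equivalent to $R\in\mr(U[-1]\ast U)$, and under this duality the left exchange triangle \eqref{eq:letri} of $U$ at $Y$ corresponds to the right exchange triangle of (a shift of) the dual picture, while $\Hom(\beta_Y,R[1])$ surjective is the 2-Calabi-Yau dual of a surjectivity statement of the form treated in (2). So the primed chain is obtained by applying the unprimed chain in the dual category-theoretic setup, with signs reversed throughout. I expect the main obstacle to be the careful bookkeeping in (2) $\Rightarrow$ (3) — specifically, verifying that $\mu^+_Y(U)$ actually lands in $R\ast R[1]$ (not merely that it is rigid) and pinning down the sign of the new index coefficient by correctly matching up the exchange triangle with the minimal presentation of $R$ over the mutated object; the sign conventions and the minimality of the various approximations are where errors would creep in.
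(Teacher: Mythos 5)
Your overall architecture --- proving $(1)\Rightarrow(2)\Rightarrow(3)\Rightarrow(1)$, invoking Lemma~\ref{lem:unique} for the last implication, and obtaining the primed chain by duality --- is exactly the paper's, and your $(3)\Rightarrow(1)$ coincides with the paper's argument. However, two of the three steps have genuine problems. In $(1)\Rightarrow(2)$ the mechanism is inverted: with the convention $\ind=[R^0]-[R^1]$, the hypothesis $[\ind_{U[-1]}R:Y[-1]]>0$ together with Proposition~\ref{prop:DK1} places $Y$ in $U^0_R$ and forces $U^1_R\in\add N$, and the map one must use is the \emph{left} $\add U$-approximation $\iota_1\colon R\to U^1_R$; it is precisely because this approximation does \emph{not} involve $Y$ that every $g\in\Hom(R,Y)$ factors through $\add N$ and hence through $\alpha_Y$. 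You assert the opposite placement of $Y$ and try to deduce the factorization from a right $\add U$-approximation of $R$ ``that genuinely involves $Y$''; a right approximation of $R$ gives no control over maps out of $R$, so that step does not go through as written.

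The more serious gap is in $(2)\Rightarrow(3)$, at the point of showing $\mu^+_Y(U)\in R\ast R[1]$. Your ``cleaner route'' rests on $\Hom(R,N_0[1])=0$ and on the implication $\Hom(R,Z[1])=0\Rightarrow Z\in R\ast R[1]$, and neither holds for a general rigid $R$: from an $R$-presentation $R^1\to R^0\to N_0\to R^1[1]$ one only gets an exact sequence $0=\Hom(R,R^0[1])\to\Hom(R,N_0[1])\to\Hom(R,R^1[2])\cong D\Hom(R^1,R)$, whose last term is typically nonzero; and vanishing of $\Hom(R,-[1])$ characterizes $\add T$ only for $T$ cluster tilting, not membership in $R\ast R[1]$ (an object orthogonal to $R$ in all degrees satisfies the vanishing but lies outside $R\ast R[1]$). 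The missing idea is the paper's octahedral argument: surjectivity of $\Hom(R,\alpha_Y)$ lets one factor the minimal right $\add R$-approximation $\iota^0_Y\colon R^0_Y\to Y$ through $\alpha_Y$, and the octahedral axiom applied to this factorization exhibits $Z\in R^1_Y\ast M$ with $M\in N_0\ast R^0_Y[1]\subseteq R\ast R[1]$, whence $Z\in R\ast R[1]$ and Lemma~\ref{lem:ind} applies. Finally, your index computation for (3) again has the sign reversed: placing $Z[-1]$ in the $0$-term of the presentation would give a \emph{positive} coefficient; what one actually shows (in the paper, via the 2-Calabi--Yau duality $\Hom(\alpha_N,R[1])$ surjective, so every map $Z[-1]\to R$ factors through $N_0[-1]$) is that $Z[-1]$ can be kept out of the $0$-term, which together with Lemma~\ref{cor:non-zero} yields $[\ind_{\mu^+_Y(U)[-1]}R:Z[-1]]<0$.
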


\begin{proof}
    We only prove the equivalences between (1), (2) and (3), since the equivalences between (1'), (2') and (3') can be proved dually.
    
    $(1)\Rightarrow(2)$. Take a minimal $U[-1]$-presentation of $R$:
    \[U^1_R[-1]\longrightarrow U^0_R[-1]\xrightarrow{\iota_0} R\xrightarrow{\iota_1} U^1_R.\]
    Since $\iota_1$ is a left $\add U$-approximation of $R$, for any morphism $g\in\Hom(R,Y)$, there exists $h\in\Hom(U^1_R,Y)$ such that $g=h\circ\iota_1$. Since $[\ind_{U[-1]}R:Y[-1]]>0$, by Proposition~\ref{prop:DK1}, we have $U^1_R\in\add N$. Since $\alpha_Y$ is a right $\add N$-approximation of $Y$, there exists $h'\in\Hom(U^R_1,N_0)$ such that $h=\alpha_Y\circ h'$. Then we have $g=\alpha_Y\circ h'\circ\iota_1$, which implies $\Hom(R,\alpha_Y)$ is surjective.
    
    $(2)\Rightarrow(3)$. Let
	\[R^1_Y\to R^0_Y\overset{\iota^0_Y}{\longrightarrow}Y\overset{\iota^1_Y}{\longrightarrow} R^1_Y[1]\]
    be a minimal $R$-presentation of $Y$. Since $\Hom(R,\alpha_Y)$ is surjective, there exists $\iota'_0\in\Hom(R^0_Y,N_0)$ such that  $\iota^0_Y=\alpha_Y\circ\iota'_0$. So by the octahedral axiom, we have the following commutative diagram of triangles.
	$$\xymatrix{
		&Z\ar@{=}[r]\ar[d]&Z\ar[d]&\\
		R^0_Y\ar[r]^-{\iota'_0}\ar@{=}[d]&N_0\ar[r]\ar[d]^{\alpha_Y}&M\ar[r]\ar[d]&R^0_Y[1]\ar@{=}[d]\\
		R^0_Y\ar[r]_{\iota^0_Y}&Y\ar[r]\ar[d]&R^1_Y[1]\ar[r]\ar[d]&R^0_Y[1]\\
		&Z[1]\ar@{=}[r]&Z[1]&}$$
	Since $M\in N_0\ast R^0_Y[1]\subseteq R\ast R[1]$, we have $Z\in R^1_Y\ast M\subseteq R\ast R[1]$. Thus by Lemma~\ref{lem:ind}, we have $\mu^+_Y(U)=N\oplus Z\in\mr(R\ast R[1])$. 
	
	Applying $\Hom(R,-)$ to the right exchange triangle~\eqref{eq:ritri}, we have the following exact sequence
    \[\Hom(R,N_0)\xrightarrow{\Hom(R,\alpha_Y)}\Hom(R,Y)\longrightarrow\Hom(R,Z[1])\xrightarrow{\Hom(R,\alpha_N[1])}\Hom(R,N_0[1]).\]
    Since $\Hom(R,\alpha_Y)$ is surjective, we have $\Hom(R,\alpha_N[1])$ is injective. So by the 2-Calabi-Yau property, the morphism $\Hom(\alpha_N,R[1])$ is surjective. Hence, any $g\in\Hom(Z[-1],R)$ factors through $N_0[-1]$. So any right $\add N[-1]$-approximation of $R$ is also a right $\add\mu^+_Y(U)[-1]$-approximation of $R$. By Lemma~\ref{cor:non-zero}, it follows that $[\ind_{\mu^+_Y(U)[-1]}R:Z[-1]]<0$.
    
    $(3)\Rightarrow(1)$. Since $Z\not\cong Y$, this implication follows from Lemma~\ref{lem:unique} directly.
\end{proof}

Now we are ready to show that each almost maximal rigid object with respect to $R\ast R[1]$ has exactly two completions.

\begin{theorem}\label{thm:completion}
    Let $N$ be an almost maximal rigid object with respect to $R\ast R[1]$. Then there are exactly two complements $Y$ and $Y'$ of $N$. Moreover, we have 
    $$[\ind_{(N\oplus Y)[-1]}R:Y[-1]][\ind_{(N\oplus Y')[-1]}R:Y'[-1]]<0.$$
    In the case $[\ind_{(N\oplus Y)[-1]}R:Y[-1]]>0$ and $[\ind_{(N\oplus Y')[-1]}R:Y'[-1]]<0$, there is a triangle
    $$Y'\to E\to Y\to Y'[1],$$
    with $E\in \add N$, and which under the functor $\Hom(R,-)$ becomes an exact sequence
    $$\Hom(R,Y')\to\Hom(R,E)\to\Hom(R,Y)\to 0.$$
\end{theorem}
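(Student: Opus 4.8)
The plan is to build the proof on the machinery already developed in this section, especially Proposition~\ref{prop:mutation in sub}, Lemma~\ref{lem:unique} and Lemma~\ref{cor:non-zero}. First I would establish existence of the two completions. By Proposition~\ref{prop:rank}, $N$ can be completed to some $U = N\oplus Y \in \mr(R\ast R[1])$. By Lemma~\ref{cor:non-zero} (applied via Corollary~\ref{cor:obj dual}, which gives $R\in\mr(U[-1]\ast U)$), the integer $t := [\ind_{U[-1]}R:Y[-1]]$ is nonzero, so either $t>0$ or $t<0$. If $t>0$, Proposition~\ref{prop:mutation in sub} gives that $U' := \mu^+_Y(U) = N\oplus Z$ lies in $\mr(R\ast R[1])$ with $[\ind_{U'[-1]}R:Z[-1]]<0$, so $Z$ is a second completion with $Z\not\cong Y$; if $t<0$ the dual statement (1')$\Rightarrow$(3') produces $\mu^-_Y(U) = N\oplus W$ with the opposite sign. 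Either way $N$ has at least two completions, and by Lemma~\ref{lem:unique} it has at most two, so there are exactly two, say $Y$ and $Y'$, and Lemma~\ref{lem:unique} gives the sign product inequality $[\ind_{(N\oplus Y)[-1]}R:Y[-1]]\,[\ind_{(N\oplus Y')[-1]}R:Y'[-1]]<0$.

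Next I would produce the exchange triangle in the case where the first index is positive and the second negative. Since $[\ind_{(N\oplus Y)[-1]}R:Y[-1]]>0$, part (3) of Proposition~\ref{prop:mutation in sub} applies: $\mu^+_Y(N\oplus Y) = N\oplus Z \in \mr(R\ast R[1])$ and $[\ind_{(N\oplus Z)[-1]}R:Z[-1]]<0$. Now both $N\oplus Z$ and $N\oplus Y'$ are maximal rigid with respect to $R\ast R[1]$, both contain $N$, and both have the sign of the relevant index coefficient negative; by the uniqueness of completions just established, $Z\cong Y'$. Hence the right exchange triangle~\eqref{eq:ritri} of $N\oplus Y$ at $Y$, namely $Z\xrightarrow{\alpha_N}N_0\xrightarrow{\alpha_Y}Y\xrightarrow{\alpha_Z}Z[1]$, becomes, after identifying $Z$ with $Y'$ and setting $E := N_0 \in \add N$, a triangle $Y'\to E\to Y\to Y'[1]$ of the desired shape.

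Finally I would check the exactness statement after applying $\Hom(R,-)$. Applying $\Hom(R,-)$ to the triangle $Y'\to E\to Y\xrightarrow{\alpha_Z}Y'[1]$ gives the exact sequence $\Hom(R,Y')\to\Hom(R,E)\xrightarrow{\Hom(R,\alpha_Y)}\Hom(R,Y)\xrightarrow{\Hom(R,\alpha_Z)}\Hom(R,Y'[1])$. It therefore suffices to show $\Hom(R,\alpha_Y)$ is surjective (equivalently, $\Hom(R,\alpha_Z) = 0$), and this is exactly the content of the implication (1)$\Rightarrow$(2) in Proposition~\ref{prop:mutation in sub}, using the hypothesis $[\ind_{(N\oplus Y)[-1]}R:Y[-1]]>0$. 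That yields the claimed exact sequence $\Hom(R,Y')\to\Hom(R,E)\to\Hom(R,Y)\to 0$.

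The only genuinely delicate point is the identification $Z\cong Y'$, i.e.\ making sure that the second completion produced by mutation really is the same indecomposable as the abstract "other completion" $Y'$; this rests on the at-most-two-completions bound from Lemma~\ref{lem:unique} together with the sign information attached to each completion, so it is not really an obstacle once the bookkeeping of signs is done carefully. Everything else is a direct assembly of the quoted results, so I would expect the write-up to be short.
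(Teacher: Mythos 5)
Your proposal is correct and follows essentially the same route as the paper: completion via Proposition~\ref{prop:rank}, non-vanishing of the index via Lemma~\ref{cor:non-zero}, production of the second complement and the exchange triangle via Proposition~\ref{prop:mutation in sub}, with Lemma~\ref{lem:unique} bounding the number of completions by two. You merely make explicit two steps the paper leaves implicit, namely the identification of the mutated summand $Z$ with $Y'$ and the surjectivity of $\Hom(R,\alpha_Y)$ from the implication $(1)\Rightarrow(2)$, both of which are handled correctly.
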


\begin{proof}
    By Proposition~\ref{prop:rank}, there is a completion $X$ of $N$. By Lemma~\ref{cor:non-zero}, we have $[\ind_{(N\oplus X)[-1]}R:X[-1]]\neq 0$.  If $[\ind_{(N\oplus X)[-1]}R:X[-1]]>0$, we take $Y=X$ and $Y'=\mu^+_X(N\oplus X)\setminus N$. Then by Proposition~\ref{prop:mutation in sub}, the triangle \eqref{eq:ritri} becomes the required one. If $[\ind_{(N\oplus X)[-1]}R:X[-1]]<0$, we take $Y'=X$ and $Y=\mu^-_{X}(N\oplus X)\setminus N$. Then by Proposition~\ref{prop:mutation in sub}, the triangle \eqref{eq:letri} becomes the required one.
\end{proof}

An alternative description of Theorem~\ref{thm:completion} is the following mutation version.

\begin{corollary}\label{cor:mut}
    Let $U\in\mr(R\ast R[1])$ and $Y$ be an indecomposable summand of $U$. Then there is a unique (up to isomorphism) object $\mu_Y(U)$ in $\mr(R\ast R[1])$ such that $\mu_Y(U)$ contains $U\setminus Y$ as a direct summand and is not isomorphic to $U$. Moreover, 
    $$\mu_Y(U)=\begin{cases}
    \mu_Y^+(U) & \text{if $[\ind_{U[-1]}R:Y[-1]]>0$,}\\
    \mu_Y^-(U) & \text{if $[\ind_{U[-1]}R:Y[-1]]<0$.}
    \end{cases}$$
\end{corollary}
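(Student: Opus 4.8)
The plan is to deduce this directly from Theorem~\ref{thm:completion} together with Proposition~\ref{prop:mutation in sub}. First I would set $N=U\setminus Y$; since $U\in\mr(R\ast R[1])$, the object $N$ is an almost maximal rigid object with respect to $R\ast R[1]$, because $|N|=|U|-1=|R|-1$ by Proposition~\ref{prop:rank}. Thus Theorem~\ref{thm:completion} applies and produces exactly two complements of $N$, one of which must be $Y$ itself; call the other one $Y'$, and set $\mu_Y(U)=N\oplus Y'$. This already gives existence of an object in $\mr(R\ast R[1])$ containing $N$ as a summand and not isomorphic to $U$ (the latter because $Y'\not\cong Y$, as the two complements are non-isomorphic by the first part of Theorem~\ref{thm:completion}).

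For uniqueness, suppose $V\in\mr(R\ast R[1])$ contains $N$ as a direct summand and $V\not\cong U$. Since $|V|=|R|=|N|+1$ and $V$ is basic, $V=N\oplus X$ for some indecomposable $X$ not isomorphic to any summand of $N$; in particular $X$ is a complement of $N$. By Theorem~\ref{thm:completion} the only complements of $N$ are $Y$ and $Y'$, and $X\cong Y$ would force $V\cong U$, contradiction; hence $X\cong Y'$ and $V\cong N\oplus Y'=\mu_Y(U)$.

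It remains to identify $\mu_Y(U)$ with $\mu_Y^+(U)$ or $\mu_Y^-(U)$ according to the sign of $[\ind_{U[-1]}R:Y[-1]]$, which is nonzero by Lemma~\ref{cor:non-zero}. If $[\ind_{U[-1]}R:Y[-1]]>0$, then by the implication $(1)\Rightarrow(3)$ of Proposition~\ref{prop:mutation in sub}, $\mu_Y^+(U)\in\mr(R\ast R[1])$; it contains $N$ as a summand and is not isomorphic to $U$ (since $Z\not\cong Y$), so by the uniqueness just established $\mu_Y(U)=\mu_Y^+(U)$. If $[\ind_{U[-1]}R:Y[-1]]<0$, the dual implication $(1')\Rightarrow(3')$ gives $\mu_Y^-(U)\in\mr(R\ast R[1])$ with the same properties, so $\mu_Y(U)=\mu_Y^-(U)$. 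No step here is a serious obstacle; the only point requiring a little care is confirming that a maximal rigid object in $R\ast R[1]$ containing $N$ decomposes as $N$ plus a single indecomposable complement, which is exactly the rank count from Proposition~\ref{prop:rank} combined with the basic and Krull--Schmidt hypotheses.
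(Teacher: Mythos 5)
Your argument is correct and matches the paper's intent exactly: the paper states Corollary~\ref{cor:mut} as a direct reformulation of Theorem~\ref{thm:completion} (whose proof already constructs the second complement via Proposition~\ref{prop:mutation in sub}), and your derivation --- existence and uniqueness from the ``exactly two complements'' statement plus the rank count of Proposition~\ref{prop:rank}, and the sign identification via the implications $(1)\Rightarrow(3)$ and $(1')\Rightarrow(3')$ --- is precisely that reformulation spelled out. No gaps.
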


\begin{remark}\label{rem:total1}
    By Proposition~\ref{prop:DK1}, $[\ind_{U[-1]}R:Y[-1]]>0$ (resp. $<0$) if and only if $[\ind_{U[-1]}G:Y[-1]]>0$ (resp. $<0$) for some indecomposable direct summand $G$ of $R$.
\end{remark}

\subsection{Compatibility with \texorpdfstring{$\tau$}s-tilting theory}
In this subsection, we show that the mutation in $\mr(R\ast R[1])$ is compatible with the mutation of $\tau$-tilting pairs over the endomorphism algebra $\End R$.

We briefly recall the $\tau$-tilting theory from \cite{AIR}. Let $\Lambda$ be a finite dimensional algebra. Denote by $\mod\Lambda$ the category of finitely generated right $\Lambda$-modules, and by $\tau$ the Auslander-Reiten translation in $\mod\Lambda$. For any $M\in\mod\Lambda$, we denote by $\Fac(M)$ the subcategory of $\mod\Lambda$ consisting of factor modules of direct sums of copies of $M$.

\begin{definition}
    Let $M,P\in\mod\Lambda$ with $P$ projective.
    \begin{enumerate}
        \item The module $M$ is called \emph{$\tau$-rigid} if $\Hom_{\Lambda}(M,\tau M)=0$. 
        \item The pair $(M,P)$ is called a \emph{$\tau$-rigid pair} if $M$ is $\tau$-rigid and $\Hom_{\Lambda}(P,M)=0$. 
        \item The pair $(M,P)$ is called a \emph{$\tau$-tilting} pair if it is a $\tau$-rigid pair and $|M|+|P|=|\Lambda|$. In this case, $M$ is called a \emph{support $\tau$-tilting} module. 
        \item The pair $(M,P)$ is called an \emph{almost complete $\tau$-tilting} pair if it is a $\tau$-rigid pair and $|M|+|P|=|\Lambda|-1$. In this case, $M$ is called an \emph{almost complete support $\tau$-tilting module}.
    \end{enumerate}
\end{definition}

For any basic support $\tau$-tilting module $M$, there is a unique $P$ (up to isomorphism) such that $(M,P)$ is a basic $\tau$-tilting pair. Hence, one can identify basic support $\tau$-tilting modules with basic $\tau$-tilting pairs. There is a partial order on the set of basic support $\tau$-tilting modules, given by $M\geq N$ if and only if $N\in\Fac M$.

\begin{theorem}[{\cite[Theorem~2.18 and Definition-Proposition 2.28]{AIR}}]\label{thm:air}
    Any basic almost complete $\tau$-tilting pair $(N,Q)$ is a direct summand of exactly two non-isomorphic basic $\tau$-tilting pairs $(M,P)$ and $(M',P')$. Moreover, either $M>M'$ or $M'>M$. 
\end{theorem}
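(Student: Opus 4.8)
Since this statement is quoted verbatim from \cite{AIR} — the existence and uniqueness of the two completions being \cite[Theorem~2.18]{AIR}, and the comparability being part of \cite[Definition-Proposition~2.28]{AIR} — the honest plan is to invoke that reference; what follows is the outline I would reproduce. The backbone is the order-preserving bijection $M\mapsto\Fac M$ between basic support $\tau$-tilting $\Lambda$-modules and functorially finite torsion classes of $\mod\Lambda$ from \cite{AIR}, under which the order $M\geq N\Leftrightarrow N\in\Fac M$ becomes inclusion of torsion classes. Through this dictionary the statement becomes: the basic $\tau$-tilting pairs that contain the fixed almost complete pair $(N,Q)$ as a direct summand correspond to the elements of an interval of functorially finite torsion classes, and that interval has exactly two elements; the comparability assertion ``$M>M'$ or $M'>M$'' is then part of the same picture, one of the two completions being the Bongartz completion and hence the larger.

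For \emph{existence}, I would first reduce to $Q=0$ in the standard way: support $\tau$-tilting pairs whose projective part contains the projective attached to a fixed idempotent $e$ are identified with support $\tau$-tilting modules over $\Lambda/\Lambda e\Lambda$, so one may assume $(N,Q)=(N,0)$ with $N$ an almost complete support $\tau$-tilting module. The \emph{largest} completion is then the Bongartz completion of the $\tau$-rigid module $N$: the full subcategory $\{X\in\mod\Lambda\mid\Hom_\Lambda(X,\tau N)=0\}$ is a functorially finite torsion class, it contains $\Fac N$, and it corresponds to a $\tau$-tilting pair $(M,P)$ having $N$ as a summand; moreover it dominates every other completion, since $N$ being a summand of a $\tau$-rigid $T$ forces $\Hom_\Lambda(T,\tau N)=0$, hence $\Fac T\subseteq\{X\mid\Hom_\Lambda(X,\tau N)=0\}$. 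Writing $(M,P)=(N,0)\oplus(X,0)$ or $(N,0)\oplus(0,X)$ with $X$ indecomposable, the \emph{other} completion $(M',P')$ is obtained by a single exchange step at $X$: a minimal left $\add N$-approximation governing $X$ yields an exchange sequence of $\Lambda$-modules with middle term in $\add N$ (its precise shape depending on whether that approximation is injective and on whether the replaced summand is a module or an indecomposable projective), and a diagram chase with the half-exact functor $\Hom_\Lambda(-,\tau-)$ shows $(M',P')$ is again a $\tau$-tilting pair containing $(N,0)$ and lying strictly below $(M,P)$.

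The real content — and the step I expect to be the main obstacle — is the word \emph{exactly}: there is no third completion. Any completion $(T,P'')$ of $(N,0)$ satisfies $\Fac M'\subseteq\Fac T\subseteq\Fac M$, so it suffices to show that no functorially finite torsion class lies strictly between $\Fac M'$ and $\Fac M$, and this is where the fine shape of the exchange sequence enters: a module in $\Fac M\setminus\Fac M'$ must admit an epimorphism onto a copy of $X$ not factoring through $\add N$, and feeding this into $\tau$-rigidity of $T$ forces $X$ itself to be a summand of $(T,P'')$, whence $(T,P'')=(M,P)$; a symmetric argument at the other end forces $(T,P'')=(M',P')$ otherwise. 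The genuinely delicate part is running this uniformly through the several configurations (the approximation injective or not, the replaced summand a module or an indecomposable projective), which is the case analysis of \cite[Section~2]{AIR}. I would emphasise that this whole strategy is the module-theoretic shadow of Theorem~\ref{thm:completion} and Lemma~\ref{lem:unique} above, where the same ``exactly two completions'' phenomenon is obtained on the categorical side by tracking the sign of $[\ind_{U[-1]}R:Y[-1]]$ instead.
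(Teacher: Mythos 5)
The paper offers no proof of this statement: it is imported verbatim from \cite{AIR} (Theorem~2.18 and Definition-Proposition~2.28), and your proposal correctly identifies this and likewise defers to that reference. Your accompanying sketch of the argument in \cite{AIR} --- the torsion-class dictionary, the Bongartz completion as the larger complement, the exchange sequence producing the second one, and the interval argument ruling out a third --- is a faithful outline of how that source proceeds, so there is nothing to compare against here beyond the citation itself.
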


In the setting of Theorem~\ref{thm:air}, suppose $M>M'$. Then $(M,P)$ is called the \emph{right mutation} of $(M',P')$ at $(N,Q)$ and denote $(M,P)=\mu^+_{(N,Q)}(M',P')$. Dually, $(M',P')$ is called the \emph{left mutation} of $(M,P)$ at $(N,Q)$ and denote $(M',P')=\mu^-_{(N,Q)}(M,P)$.

\begin{definition}
    The exchange graph $\operatorname{EG}(\st\Lambda)$ of support $\tau$-tilting modules over $\Lambda$ has basic $\tau$-tilting pairs as vertices and has mutations as edges.
\end{definition}

Let $\Lambda_R=\End R$. The following result establishes a link between $R\ast R[1]$ and $\mod\Lambda_R$.

\begin{theorem}[{\cite[Proposition~6.2]{IY}, \cite[Proposition~2.2, Theorem~3.2]{CZZ}}]\label{thm:CZZ}
	The functor $\Hom(R,-):\D\to\mod\Lambda_R$ induces an equivalence
	\[R\ast R[1]/R[1]\overset{\simeq}{\longrightarrow}\mod\Lambda_R,\]
	such that for any $X\in R\ast R[1]$ without non-zero common direct summands with $R[1]$, we have \[\tau\Hom(R,X)=\Hom(R,X[1]).\]
	Moreover, this equivalence induces a bijection
	\[\begin{array}{rccc}
	     \Phi\colon & \r(R\ast R[1]) & \to & \trp\mod\Lambda_R  \\
	     & X_1\oplus X_2 & \mapsto & (\Hom(R,X_1),\Hom(R,X_2[-1]))
	\end{array}
	\]
	where $\trp\mod\Lambda_R$ is the set of (isoclasses of) basic $\tau$-rigid pairs in $\mod\Lambda_R$, $X_2\in\add R[1]$ and $X_1$ has no non-zero common direct summands with $R[1]$. This bijection restricts to a bijection from $\mr(R\ast R[1])$ to the set of (isoclasses of) basic $\tau$-tilting pairs.
\end{theorem}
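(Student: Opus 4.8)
\emph{Overview and Step 1 (the equivalence).} The statement packages three facts, and my plan is to re-derive them in order: the additive equivalence $R\ast R[1]/R[1]\simeq\mod\Lambda_R$, the Auslander--Reiten formula $\tau\Hom(R,X)=\Hom(R,X[1])$, and the bijection $\Phi$ together with its restriction. I would begin from the standard fact that $F:=\Hom(R,-)$ restricts to an equivalence $\add R\xrightarrow{\ \sim\ }\operatorname{proj}\Lambda_R$, hence is fully faithful and reflects isomorphisms there. Applying $F$ to an $R$-presentation $R^1_M\to R^0_M\to M\to R^1_M[1]$ and using $\Hom(R,R^1_M[1])=0$ shows that $F$ sends $R$-presentations to projective presentations. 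To obtain fullness on $R\ast R[1]$, I would lift a module map $FM\to FN$ to a map of projective presentations, then, using fullness on $\add R$, to a map of the defining triangles, hence to a morphism $M\to N$. For faithfulness modulo $R[1]$, a diagram chase shows that $g\colon M\to N$ with $F(g)=0$ factors through $R^1_M[1]\in\add R[1]$ (precompose with a minimal right $\add R$-approximation of $M$, factor through the approximation of $N$, and use that $F$ of a minimal $R$-presentation is a projective cover); the converse inclusion is clear. Density follows by lifting a projective presentation to $\add R$ and taking a cone. This recovers \cite[Proposition~6.2]{IY} and \cite[Proposition~2.2]{CZZ}.

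\emph{Step 2 (the translate).} For $X\in R\ast R[1]$ with no summand in $\add R[1]$, I would first check that the minimal $R$-presentation $R^1_X\xrightarrow{f}R^0_X\to X\to R^1_X[1]$ becomes under $F$ a \emph{minimal} projective presentation $P_1\to P_0\to FX\to 0$ with $P_i=\Hom(R,R^i_X)$: minimality holds because $R^1_X$ and $R^0_X$ share no indecomposable summand (Proposition~\ref{prop:DK1}) and $X$ has no summand in $\add R[1]$, which rules out the two ways in which a projective presentation can fail to be minimal. Then, applying $\Hom_\D(-,R)$ to the presentation triangle, using the identification $\Hom_{\Lambda_R}(\Hom(R,R^i_X),\Lambda_R)\cong\Hom_\D(R^i_X,R)$ and the rigidity of $R$ to annihilate the connecting term $\Hom_\D(R^0_X,R[1])$, I would identify $\operatorname{Tr}FX=\operatorname{coker}\!\bigl(\Hom_\D(R^0_X,R)\xrightarrow{-\circ f}\Hom_\D(R^1_X,R)\bigr)$ with $\Hom_\D(X,R[1])$. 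Taking $D(-)$ and the $2$-Calabi--Yau isomorphism $D\Hom(X,R[1])\cong\Hom(R,X[1])$ yields $\tau FX\cong F(X[1])$, naturally in $X$; the hypothesis on $X$ is exactly what makes the presentation minimal, and the formula genuinely fails without it (e.g.\ for $X\in\add R[1]$).

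\emph{Step 3 (the bijection, and the main obstacle).} Given Steps 1--2, I would unwind $\Phi$: for rigid $X=X_1\oplus X_2$ with $X_2\in\add R[1]$ and $X_1$ free of such summands, $F(X_2[-1])$ is projective; rigidity of $X$ forces $\Hom(X_2,X_1[1])=0$, hence $\Hom_{\Lambda_R}(F(X_2[-1]),FX_1)=0$, i.e.\ the support condition; and $\Hom_{\Lambda_R}(FX_1,\tau FX_1)=\Hom_{\Lambda_R}(FX_1,F(X_1[1]))$ vanishes since $\Hom(X_1,X_1[1])=0$. So $\Phi(X)=(FX_1,F(X_2[-1]))$ is a basic $\tau$-rigid pair; injectivity and essential surjectivity of $\Phi$ follow from the equivalence of Step~1 and the classification of basic projective $\Lambda_R$-modules. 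Finally, by Proposition~\ref{prop:rank}, $U\in\mr(R\ast R[1])$ exactly when $|U|=|R|=|\Lambda_R|$, and since $\Phi$ preserves the total number of indecomposable summands, it carries $\mr(R\ast R[1])$ bijectively onto the $\tau$-rigid pairs $(M,P)$ with $|M|+|P|=|\Lambda_R|$, i.e.\ the basic $\tau$-tilting pairs. The step I expect to be the real obstacle is the $\tau$-rigidity vanishing above: it requires knowing that $F$ induces a surjection $\Hom_\D(X_1,X_1[1])\twoheadrightarrow\Hom_{\Lambda_R}(FX_1,F(X_1[1]))$ even though $X_1[1]$ no longer lies in $R\ast R[1]$, which forces one to extend the fullness/faithfulness bookkeeping of Step~1 one "term" further; this is the technical core of \cite[Theorem~3.2]{CZZ}, whereas the remainder reduces to approximation arguments and standard $\tau$-tilting theory \cite{AIR}.
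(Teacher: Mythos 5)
The paper offers no proof of this statement: it is imported verbatim from \cite[Proposition~6.2]{IY} and \cite[Proposition~2.2, Theorem~3.2]{CZZ}, so there is no in-paper argument to compare against. Judged on its own, your reconstruction follows the standard route of those references and is essentially sound. Step~1 is the usual approximation/cone argument for the equivalence $R\ast R[1]/R[1]\simeq\mod\Lambda_R$. Step~2 correctly reduces $\tau\Hom(R,X)\cong\Hom(R,X[1])$ to computing $\operatorname{Tr}$ from the image of a minimal $R$-presentation together with the $2$-Calabi--Yau duality, and your justification of minimality of the induced projective presentation (Proposition~\ref{prop:DK1} rules out an isomorphic common summand, and the absence of summands of $X$ in $\add R[1]$ rules out a summand of the form $Q\to 0$) is exactly the right bookkeeping.

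The one place where the argument is not self-contained is the point you yourself flag in Step~3: to get $\Hom_{\Lambda_R}(FX_1,\tau FX_1)=0$ from $\Hom_{\D}(X_1,X_1[1])=0$ you need the induced map $\Hom_{\D}(X_1,X_1[1])\to\Hom_{\Lambda_R}(FX_1,F(X_1[1]))$ to be surjective, and for essential surjectivity of $\Phi$ you need the converse, namely that vanishing of $\Hom_{\Lambda_R}(FX_1,\tau FX_1)$ forces vanishing of all of $\Hom_{\D}(X_1,X_1[1])$, including the part factoring through $\add R[1]$. Both follow from the decomposition $\Hom_{\D}(X,Y[1])\cong\Hom_{\Lambda_R}(FX,\tau FY)\oplus D\Hom_{\Lambda_R}(FY,\tau FX)$ of \cite[Lemma~2.3]{CZZ}, which this paper itself invokes later in the proof of Theorem~\ref{thm:geo}. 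You correctly identify this as the technical core of \cite[Theorem~3.2]{CZZ} but defer it rather than prove it; since the paper likewise defers the entire statement to \cite{CZZ}, this is a clearly labelled reliance on the literature rather than a hidden gap, though a fully self-contained proof would have to establish that isomorphism by the same presentation-lifting diagram chase you use in Step~1, pushed one degree further.
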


This allows us to apply our results of mutation on $R\ast R[1]$ to the $\tau$-tilting theory in $\mod\Lambda_R$.

\begin{theorem}\label{thm:mutation compatible}
    For any $U=N\oplus Y\in\mr(R\ast R[1])$ with $Y$ an indecomposable summand of $U$, we have
	\[\begin{cases}
	    \Phi(\mu^+_Y(U))=\mu^+_{\Phi(Y)}(\Phi(U))&\text{if $\mu^+_Y(U)\in R\ast R[1]$,}\\
	    \Phi(\mu^-_Y(U))=\mu^-_{\Phi(Y)}(\Phi(U))&\text{if $\mu^-_Y(U)\in R\ast R[1]$.}
	\end{cases}\]    
\end{theorem}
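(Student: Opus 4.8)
The plan is to reduce the statement to the mutation theory already developed on $R\ast R[1]$ (Corollary~\ref{cor:mut} and Proposition~\ref{prop:mutation in sub}) and transport it across the bijection $\Phi$ of Theorem~\ref{thm:CZZ}, using the characterization of mutation of $\tau$-tilting pairs via the partial order $M\geq N\Leftrightarrow N\in\Fac M$ from Theorem~\ref{thm:air}. First I would treat the right-mutation case: suppose $\mu^+_Y(U)=N\oplus Z\in\mr(R\ast R[1])$. By Proposition~\ref{prop:mutation in sub}, this is equivalent to $[\ind_{U[-1]}R:Y[-1]]>0$, and in that case the right exchange triangle \eqref{eq:ritri} is $Z\xrightarrow{\alpha_N}N_0\xrightarrow{\alpha_Y}Y\xrightarrow{\alpha_Z}Z[1]$ with $\Hom(R,\alpha_Y)$ surjective. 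Writing $\bar{X}:=\Hom(R,X)$, applying $\Hom(R,-)$ to this triangle and using Theorem~\ref{thm:CZZ} (which identifies $\Hom(R,-[1])$ with $\tau$) gives an exact sequence $\bar{Z}\to\bar{N_0}\xrightarrow{\bar{\alpha_Y}}\bar{Y}\to\tau\bar{Z}$ with $\bar{\alpha_Y}$ surjective; this realizes $\bar{U}=\bar{N}\oplus\bar{Y}$ and $\Phi(\mu^+_Y(U))$ as two $\tau$-tilting pairs sharing the almost complete $\tau$-tilting pair $\Phi(N)$, so by Theorem~\ref{thm:air} they are related by a mutation, and it remains to fix the direction.

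The direction is where the main work lies, and I expect it to be the principal obstacle. I would argue that $\Phi(U)\geq\Phi(\mu^+_Y(U))$ in the support-$\tau$-tilting order, i.e.\ that the module underlying $\Phi(\mu^+_Y(U))$ lies in $\Fac$ of the module underlying $\Phi(U)$. The surjection $\Hom(R,E)\twoheadrightarrow\Hom(R,Y)$ from Theorem~\ref{thm:completion} (with $E\in\add N$) exhibits $\bar Y$ as a factor of $\bar{E}\in\add\bar N$, hence $\bar Y\in\Fac\bar U$; combined with $\bar N\in\Fac\bar U$ trivially, this should give that the module part of $\Phi(U)$ generates (under $\Fac$) the module part of $\Phi(\mu^+_Y(U))$ — one must also track the projective parts of the pairs, using that passing to $\mu^+_Y$ can only enlarge the support (equivalently, shrink the projective summand) in the relevant sense, which is exactly what the sign condition $[\ind_{\mu^+_Y(U)[-1]}R:Z[-1]]<0$ in Proposition~\ref{prop:mutation in sub}(3) encodes via the bijection $\Phi$. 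Once $\Phi(U)>\Phi(\mu^+_Y(U))$ is established, Theorem~\ref{thm:air}'s definition of right mutation forces $\Phi(\mu^+_Y(U))=\mu^+_{\Phi(Y)}(\Phi(U))$, since $\Phi(Y)$ is precisely the summand of $\Phi(U)$ not shared with $\Phi(\mu^+_Y(U))$ (here one uses that $Z\not\cong Y$, hence $\bar Z\not\cong\bar Y$, so the exchanged summand is genuinely $\Phi(Y)$, with the caveat that if $Y\in\add R[1]$ the exchanged summand is a projective summand of the pair, handled identically).

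The left-mutation case $\mu^-_Y(U)\in R\ast R[1]$ is entirely dual: by Proposition~\ref{prop:mutation in sub}(1')--(3') it corresponds to $[\ind_{U[-1]}R:Y[-1]]<0$, the left exchange triangle \eqref{eq:letri} yields $\Phi(U)<\Phi(\mu^-_Y(U))$ after applying $\Hom(R,-)$ and invoking Theorem~\ref{thm:completion} again (now with the roles of $Y,Y'$ swapped), and Theorem~\ref{thm:air} then identifies $\Phi(\mu^-_Y(U))$ with $\mu^-_{\Phi(Y)}(\Phi(U))$. In fact the two cases can be unified: by Corollary~\ref{cor:mut}, whichever of $\mu^+_Y(U),\mu^-_Y(U)$ lands in $R\ast R[1]$ is the unique mutation $\mu_Y(U)$ of $U$ in $\mr(R\ast R[1])$, and by Theorem~\ref{thm:air} $\Phi(U)$ has a unique mutation at $\Phi(Y)$ in $\operatorname{EG}(\st\Lambda_R)$; since $\Phi$ is a bijection preserving the underlying exchange combinatorics, $\Phi$ of the former must be the latter, and the superscript $\pm$ is then pinned down by the sign of $[\ind_{U[-1]}R:Y[-1]]$ matching the order relation. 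I would present the unified argument but spell out the order-preservation step carefully, as that is the only non-formal ingredient.
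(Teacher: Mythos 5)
Your strategy is the same as the paper's: reduce to identifying the direction of the mutation via Corollary~\ref{cor:mut} and Theorem~\ref{thm:air}, and settle the direction by applying $\Hom(R,-)$ to the exchange triangle supplied by Theorem~\ref{thm:completion}. The ingredients are all present, but the order comparison --- which you correctly identify as the only non-formal step --- is written backwards, and as stated it fails. From the triangle $Y'\to E\to Y\to Y'[1]$ with $E\in\add N$ and the exact sequence $\Hom(R,E)\to\Hom(R,Y)\to 0$, what follows is $\Hom(R,Y)\in\Fac\Hom(R,N)$, hence $\Hom(R,U)\in\Fac\Hom(R,N)\subseteq\Fac\Hom(R,\mu^+_Y(U))$, i.e.\ $\Phi(\mu^+_Y(U))\geq\Phi(U)$. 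This is the \emph{opposite} of the inequality $\Phi(U)\geq\Phi(\mu^+_Y(U))$ you set out to prove. Your intermediate claim that the module part of $\Phi(\mu^+_Y(U))$ lies in $\Fac$ of the module part of $\Phi(U)$ would require $\Hom(R,Z)\in\Fac\Hom(R,U)$; this does not follow from $\Hom(R,Y)\in\Fac\Hom(R,N)$ (which says nothing about $Z$), and it is in fact false in general, since by antisymmetry of the partial order it would force $\Phi(U)\cong\Phi(\mu^+_Y(U))$.

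There is a second, compensating reversal in your application of Theorem~\ref{thm:air}: with the paper's convention ($M>M'$ means $(M,P)=\mu^+_{(N,Q)}(M',P')$), the right mutation of $\Phi(U)$ at $\Phi(Y)$ is the pair \emph{larger} than $\Phi(U)$, so the conclusion $\Phi(\mu^+_Y(U))=\mu^+_{\Phi(Y)}(\Phi(U))$ requires exactly $\Phi(\mu^+_Y(U))>\Phi(U)$, not $\Phi(U)>\Phi(\mu^+_Y(U))$. The two sign errors cancel, so your final assertion is correct, but the written argument contains a false intermediate claim and a misapplied definition; the repair is to keep the inequality your own evidence actually establishes, $\Phi(\mu^+_Y(U))>\Phi(U)$, and invoke the definition of right mutation directly --- at which point the proof coincides with the paper's. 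One further simplification: the worry about ``tracking the projective parts'' is unnecessary, because the order on $\tau$-tilting pairs is defined purely through $\Fac$ on the module parts, so once the $\Fac$-inclusion of module parts is in hand, Theorem~\ref{thm:air} finishes the argument with no extra bookkeeping.
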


\begin{proof}
    Let $Y'$ be another completion of $N$. If $\mu^+_Y(U)\in R\ast R[1]$, then it is maximal rigid with respect to $R\ast R[1]$ by Lemma~\ref{lem:ind}. So by Corollary~\ref{cor:mut}, we have $\mu^+_Y(U)=N\oplus Y'$ and $[\ind_{U[-1]}R:Y[-1]]>0$. Then by Theorem~\ref{thm:completion}, there is a triangle
    $$Y'\to E\to Y\to Y'[1],$$
    with $E\in \add N$ and such that there is an exact sequence
    $$\Hom(R,Y')\to\Hom(R,E)\to\Hom(R,Y)\to 0.$$
    In particular, we have $\Hom(R,U)\in\Fac\Hom(R,N)\subseteq\Fac\Hom(R,\mu^+_Y(U))$. By definition, we have $\Phi(\mu^+_Y(U))>\Phi(U)$, which implies $\Phi(\mu^+_Y(U))=\mu^+_{\Phi(Y)}(\Phi(U))$ as required. The case $\mu^-_Y(U)\in R\ast R[1]$ can be proved dually.
\end{proof}

\subsection{Relations with reductions}

Let $G$ be a rigid object in $\D$. Denote by $\D_G={}^{\perp}G[1]/\<\add G\>$ the quotient of the full subcategory ${}^{\perp}G[1]$ by the ideal $\<\add G\>$ consisting of morphisms that factor through objects in $\add G$. For any morphism $f$ in ${}^{\perp}G[1]$, denote by $\bar{f}$ the image of $f$ in the quotient $\D_G$.

\begin{theorem}[{\cite[Theorem~4.2]{IY}}]\label{thm:IY}
    The category $\D_G$ is a triangulated category whose suspension functor $\<1\>_G$ and it inverse $\<-1\>_G$ are given by the triangles in $\D$
    \[Y\xrightarrow{\beta_Y}G_1\to Y\<1\>_G\to Y[1]\]
    and 
    \[Y[-1]\to Y\<-1\>_G\to G_0\xrightarrow{\alpha_Y}Y\]
    respectively, where $\beta_X$ (resp. $\alpha_Y$) is a left (resp. right) $\add G$-approximation of $Y$. The triangles in $\D_G$ are isomorphic to
    \[A\xrightarrow{\bar{f}} B\xrightarrow{\bar{g}} C\xrightarrow{\bar{h}} X\<1\>_G\]
    induced by the following diagram of triangles in $\D$
    $$\xymatrix{
	A\ar[r]^-{f}\ar@{=}[d] & B\ar[r]^-{g}\ar[d] & C\ar[d]^{h}\ar[r] & A[1]\ar@{=}[d]\\
	A\ar[r]_-{\beta_A} & N_A\ar[r] & A\<1\>\ar[r] & A[1]
    }$$
    with $A,B,C\in{}^{\perp}G[1]$.
\end{theorem}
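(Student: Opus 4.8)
The plan is to follow the method of Iyama and Yoshino and present $\D_G$ as the subfactor triangulated category attached to a mutation pair. Put $\mathcal{Z}={}^\perp G[1]$; by the 2-Calabi-Yau isomorphism one also has $\mathcal{Z}=(G[-1])^\perp=\{X\in\D:\Hom(G,X[1])=0\}$, and $\add G\subseteq\mathcal{Z}$ because $G$ is rigid. The first block of elementary checks is that $\mathcal{Z}$ is closed under direct summands (additivity of $\Hom$) and under extensions (apply $\Hom(-,G[1])$ to a triangle with outer terms in $\mathcal{Z}$). The essential structural point is that $(\mathcal{Z},\mathcal{Z})$ is an $\add G$-mutation pair: for $Y\in\mathcal{Z}$, completing a left $\add G$-approximation $\beta_Y\colon Y\to G_1$ to a triangle $Y\xrightarrow{\beta_Y}G_1\to Y\langle 1\rangle_G\to Y[1]$ gives $Y\langle 1\rangle_G\in\mathcal{Z}$, and dually a right $\add G$-approximation $\alpha_Y\colon G_0\to Y$ yields $Y\langle -1\rangle_G\in\mathcal{Z}$ via $Y[-1]\to Y\langle -1\rangle_G\to G_0\xrightarrow{\alpha_Y}Y$. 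Both follow by applying $\Hom(-,G[1])$ to the relevant triangle: rigidity of $G$ kills the $\add G$-terms and the approximation property makes the adjacent connecting map surjective (for $\langle 1\rangle_G$) resp.\ injective (for $\langle -1\rangle_G$); in the latter case one first rewrites $\Hom(Y[-1],G[1])\cong D\Hom(G,Y)$ via the 2-Calabi-Yau property, so that the map in question becomes the dual of the surjection $\Hom(G,\alpha_Y)$.

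Next I would build the suspension. On objects take $Y\mapsto Y\langle 1\rangle_G$; any two approximation triangles of $Y$ are linked by an isomorphism in $\D$ which becomes canonical after passing to the quotient, so this is a well-defined object of $\D_G$. On morphisms, a map $a\colon Y\to Y'$ in $\D$ extends, using that $\beta_{Y'}\circ a$ factors through the approximation $\beta_Y$, to a morphism of the two approximation triangles, and its third component gives $Y\langle 1\rangle_G\to Y'\langle 1\rangle_G$. The key lemma here is that this component is independent of the choices \emph{modulo} the ideal $\langle\add G\rangle$ and sends a morphism vanishing in $\D_G$ to $0$; granting it, $\langle 1\rangle_G\colon\D_G\to\D_G$ is a well-defined additive functor. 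Defining $\langle -1\rangle_G$ dually from right approximations, and splicing a forward and a backward approximation triangle together by the octahedral axiom in $\D$, shows $\langle 1\rangle_G$ and $\langle -1\rangle_G$ are mutually quasi-inverse, so $\langle 1\rangle_G$ is an autoequivalence of $\D_G$.

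Then I would declare the distinguished triangles of $\D_G$ to be exactly those isomorphic to $A\xrightarrow{\bar f}B\xrightarrow{\bar g}C\xrightarrow{\bar h}A\langle 1\rangle_G$ coming from the diagram displayed in the statement with $A,B,C\in\mathcal{Z}$ (so the ``Moreover'' clause is the definition), and verify the axioms. For TR1, represent $\bar f$ by $f\colon A\to B$, let $\beta_A\colon A\to G_1$ be a left $\add G$-approximation, and complete the morphism $A\to B\oplus G_1$ with components $f$ and $\beta_A$ to a triangle in $\D$; its cone lies in $\mathcal{Z}$ by extension-closedness, and since that morphism composed with the projection onto $G_1$ equals $\beta_A$, the triangle is already of the displayed form with $B\oplus G_1$ as middle term, and in $\D_G$ one has $B\oplus G_1\cong B$ (the morphism becoming $f$), so $\bar f$ lies in a standard triangle. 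TR2 (rotation) and TR3 (filling in a commuting square of triangle maps) are obtained by lifting to $\D$, using the corresponding axioms there, and then correcting the $\add G$-components through the chosen approximations via a mutation-pair lifting lemma --- any triangle $X\to G'\to X'\to X[1]$ with $X,X'\in\mathcal{Z}$ and $G'\in\add G$ can be threaded compatibly through the approximation triangles of $X$ and $X'$ --- so that the outcome is again of standard form, the corrections being invisible in $\D_G$. TR4 (the octahedral axiom) is the heaviest: given composable $\bar f\colon A\to B$ and $\bar g\colon B\to C$, one fixes compatible left $\add G$-approximations of $A$ and $B$, realises the three standard diagrams for $f$, $g$ and $gf$ at once, and assembles the required octahedron of standard triangles from several octahedra in $\D$.

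I expect the genuine difficulty to be the bookkeeping imposed by the quotient. The right first move is to isolate and prove a short list of lifting lemmas for the mutation pair (the two mentioned above, plus: a morphism killed in $\D_G$ becomes $0$ after applying $\langle 1\rangle_G$); with these in hand the well-definedness of $\langle 1\rangle_G$ and the verifications of TR1--TR4 become diagram chases, of which TR4 is by far the longest and most error-prone. Note that the earlier results of this section concern $R\ast R[1]$, not the quotient $\D_G$, so they are not directly available; the only inputs are the 2-Calabi-Yau property, rigidity of $G$, functorial finiteness of $\add G$, and the axioms of the triangulated category $\D$.
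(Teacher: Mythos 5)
This theorem is not proved in the paper at all: it is quoted verbatim from Iyama--Yoshino \cite[Theorem~4.2]{IY}, and your outline is exactly their argument, realizing $\D_G$ as the subfactor triangulated category of the $\add G$-mutation pair $({}^\perp G[1],{}^\perp G[1])$, with the shift given by approximation triangles and the distinguished triangles defined by the displayed diagram. So the proposal is correct and takes the same route as the (cited) proof; the only point to watch is that in your TR1 step the cone of $\binom{f}{\beta_A}\colon A\to B\oplus G_1$ lies in ${}^\perp G[1]$ not by bare extension-closedness but after first using the octahedral axiom to exhibit it as an extension of $A\<1\>_G$ by $B$ (or, equivalently, by rerunning the approximation computation).
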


We simply denote $\<1\>=\<1\>_G$ and $\<-1\>=\<-1\>_G$, if there is no confusion arising.

\begin{remark}\label{rem:mu}
    Let $U=N\oplus Y$ be a rigid object in $\D$. Comparing triangles~\ref{eq:ritri} and \ref{eq:letri} with Theorem~\ref{thm:IY}, we have $\mu^+_Y(U)=N\oplus Y\<-1\>_N$ and $\mu^-_Y(U)=N\oplus Y\<1\>_N$.
\end{remark}

Now consider the case that $G$ is a direct summand of $R$. Denote by $\ymr(R\ast_\D R[1])$ the subset of $\mr(R\ast_\D R[1])$ consisting of those objects that admit $G$ as a direct summand.

\begin{proposition}\label{prop:reduction}
	Let $G$ be a direct summand of $R$. Then there is a bijection 
	$$\Psi\colon \ymr(R\ast_\D R[1])\to\mr(R\ast_\Y R\<1\>),$$
	sending $U$ to $U\setminus G$. Moreover, for any $U\in\ymr(R\ast_\D R[1])$ and any indecomposable summand $Y$ of $U\setminus G$, we have that 
	$\mu^{\varepsilon}_Y(U)\in\mr(R\ast_\D R[1])$ if and only if $\mu^{\varepsilon}_{\Psi(Y)}(\Psi(U))\in\mr(R\ast_\Y R\<1\>),$
	for any $\varepsilon\in\{+,-\}$, and in this case, $\Psi(\mu^{\varepsilon}_Y(U))=\mu^{\varepsilon}_{\Psi(Y)}(\Psi(U))$.
\end{proposition}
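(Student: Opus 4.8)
The plan is to combine the Iyama--Yoshino reduction (Theorem~\ref{thm:IY}) with the characterization of $R\ast R[1]$ via indices (Remark~\ref{rmk:cto}, Proposition~\ref{prop:rank}), treating $\D_G = {}^\perp G[1]/\<\add G\>$ as the ambient $2$-Calabi--Yau triangulated category and $R$ (viewed in $\D_G$, which makes sense since $R$ is rigid, hence $R\in{}^\perp G[1]$) as the new rigid object there. Note first that since $G$ is a direct summand of $R$ and $R$ is rigid, we have $R\<1\>_G$ equal to the image of the relevant left-approximation triangle; by Remark~\ref{rem:mu}, the two-term subcategory $R\ast_{\D_G} R\<1\>_G$ is the natural target category. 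The bijection $\Psi$ will be: given $U\in\ymr(R\ast_\D R[1])$, write $U = (U\setminus G)\oplus G$ and send it to $U\setminus G$, viewed in $\D_G$. I would first check this is well-defined, i.e. $U\setminus G\in R\ast_{\D_G}R\<1\>_G$ and is maximal rigid with respect to it.

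First I would establish the object-level bijection. For well-definedness: since $U\in R\ast_\D R[1]$ and $R\ast_\D R[1]$ is closed under summands (Proposition~2.1 of \cite{IY}), $U\setminus G\in{}^\perp G[1]$, so it lives in $\D_G$; and from an $R$-presentation of $U\setminus G$ in $\D$ one reads off (via Theorem~\ref{thm:IY}, turning the $\add R[1]$-part into the $\add R\<1\>_G$-part by factoring out $\add G$) an $R$-presentation in $\D_G$, giving $U\setminus G\in R\ast_{\D_G}R\<1\>_G$. For rigidity and maximality I would use that $\Hom_{\D_G}(X,Y\<1\>_G)$ is a quotient of $\Hom_\D(X,Y[1])$ for $X,Y\in{}^\perp G[1]$ (again Theorem~\ref{thm:IY}), so $U\setminus G$ is rigid in $\D_G$; then count: by Proposition~\ref{prop:rank}, $|U| = |R|$, hence $|U\setminus G| = |R| - |G| = |R\setminus G|$, and since $R\setminus G$ is the image of $R$ in $\D_G$ (with $G$ becoming zero), $|R\setminus G|$ is the rank of the new rigid object, so $U\setminus G$ has maximal cardinality and is therefore maximal rigid with respect to $R\ast_{\D_G}R\<1\>_G$ by Proposition~\ref{prop:rank} again. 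Surjectivity of $\Psi$: given $V\in\mr(R\ast_{\D_G}R\<1\>_G)$, lift it to $V\in{}^\perp G[1]\subseteq\D$ with no summand in $\add G$; then $V\oplus G$ is rigid in $\D$ (the cross terms $\Hom_\D(V,G[1])$ and $\Hom_\D(G,V[1])$ vanish because $V\in{}^\perp G[1]$ and $G[1]\in({}^\perp G[1])^\perp$... more precisely one uses $\Hom_\D(G,V[1])\cong D\Hom_\D(V,G[1])=0$ by 2-CY and $V\in{}^\perp G[1]$), lies in $R\ast_\D R[1]$ (assemble the $\D$-presentation from the $\D_G$-presentation plus $G$), and has cardinality $|V|+1 = |R\setminus G| + 1 = |R|$, hence is in $\ymr(R\ast_\D R[1])$ and maps to $V$. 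Injectivity is immediate since $\Psi(U)$ together with $G$ recovers $U$.

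For the compatibility with mutation I would argue via Remark~\ref{rem:mu}. Fix $U = N\oplus Y$ with $Y$ indecomposable, $Y\notin\add G$, so $G$ is a summand of $N$, and write $N = N'\oplus G$. By Remark~\ref{rem:mu}, $\mu^-_Y(U) = N\oplus Y\<1\>_N$ and the left exchange triangle \eqref{eq:letri} uses the minimal left $\add N$-approximation $\beta_Y\colon Y\to N_1$; likewise $\mu^-_{\Psi(Y)}(\Psi(U)) = N'\oplus Y\<1\>_{N'}$ in $\D_G$ (here $N' = N\setminus G$ is the image of $N$ in $\D_G$), whose exchange triangle uses a minimal left $\add N'$-approximation of $Y$ \emph{in $\D_G$}. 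The key point is that a minimal left $\add N$-approximation of $Y$ in $\D$ and a minimal left $\add N'$-approximation of $Y$ in $\D_G$ are related: passing $\beta_Y$ to $\D_G$ and deleting the $\add G$-part of $N_1$ gives a left $\add N'$-approximation in $\D_G$, because factoring through objects of $\add N = \add N'\oplus\add G$ in $\D$ is, modulo $\<\add G\>$, the same as factoring through $\add N'$. Hence $Y\<1\>_N$ and $Y\<1\>_{N'}$ agree as objects of $\D_G$ (after deleting $G$ on the $\D$ side), i.e. $\Psi(\mu^-_Y(U)) = \mu^-_{\Psi(Y)}(\Psi(U))$ whenever the left-hand side is defined; and symmetrically for $\mu^+$. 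The condition ``$\mu^\varepsilon_Y(U)\in\mr(R\ast_\D R[1])$'' translates, via Corollary~\ref{cor:mut} and the index criterion, to ``$[\ind_{U[-1]}R:Y[-1]]$ has the appropriate sign'', and I would check this index in $\D$ equals the index $[\ind_{\Psi(U)\<-1\>}R:\Psi(Y)\<-1\>]$ computed in $\D_G$ — this follows because the $R$-presentations match up summand-by-summand under the reduction functor and $G$ contributes nothing to the $Y$-coefficient. Thus $\mu^\varepsilon_Y(U)\in\mr(R\ast_\D R[1]) \iff \mu^\varepsilon_{\Psi(Y)}(\Psi(U))\in\mr(R\ast_{\D_G}R\<1\>)$.

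The main obstacle I expect is the bookkeeping around minimality of approximations under the reduction functor: the functor $\D\to\D_G$ is only additive (not exact on the nose — triangles get corrected as in Theorem~\ref{thm:IY}), so one must be careful that deleting the $\add G$-summand from an approximation in $\D$ really yields a \emph{minimal} approximation in $\D_G$, and that no spurious $\add G$-summands appear or disappear when going back and forth. Concretely, the subtle step is showing that the exchange triangle of $\mu^\varepsilon_Y(U)$ in $\D$, after applying the reduction and stripping $G$, is isomorphic in $\D_G$ to the exchange triangle of $\mu^\varepsilon_{\Psi(Y)}(\Psi(U))$ — here one invokes the description of triangles in $\D_G$ from Theorem~\ref{thm:IY} together with the uniqueness of minimal exchange triangles (their appearance in \cite[Section~2.1]{MP}). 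Everything else is a direct, if slightly tedious, translation between the index calculus in $\D$ and in $\D_G$.
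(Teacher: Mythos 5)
Your overall strategy is the same as the paper's: pass to the Iyama--Yoshino reduction $\Y={}^{\perp}G[1]/\<\add G\>$, obtain the object-level bijection by transferring rigidity and counting ranks via Proposition~\ref{prop:rank}, and obtain the mutation statement by matching indices of $R$ with respect to $U[-1]$ and $\Psi(U)\<-1\>$ and then invoking the sign criterion (Proposition~\ref{prop:mutation in sub}, Corollary~\ref{cor:mut}). The paper simply packages the two technical inputs you re-derive by hand as citations: \cite[Lemma~4.8]{IY} for the two-way transfer of rigidity, and \cite[Proposition~4.4(2)]{IY} for the commutation of mutation with reduction, which is precisely the exchange-triangle bookkeeping you flag as the delicate point.

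There is, however, one genuine gap in your surjectivity argument. You justify rigidity in $\Y$ by the claim that $\Hom_{\Y}(X,Y\<1\>)$ is a \emph{quotient} of $\Hom_{\D}(X,Y[1])$; that goes the right way for well-definedness of $\Psi$, but is useless in the reverse direction. When you lift $V\in\mr(R\ast_\Y R\<1\>)$ back to $\D$ and assert that $V\oplus G$ is rigid, you only verify the cross terms $\Hom_{\D}(V,G[1])$ and $\Hom_{\D}(G,V[1])$; the term $\Hom_{\D}(V,V[1])$ is never addressed, and it cannot be deduced from a quotient statement. What is actually true (and is the content of \cite[Lemma~4.8]{IY}, which the paper cites) is an isomorphism $\Hom_{\Y}(X,Y\<1\>)\cong\Hom_{\D}(X,Y[1])$ for $X,Y\in{}^{\perp}G[1]$: applying $\Hom_{\D}(X,-)$ to the triangle $Y\to G_1\to Y\<1\>\to Y[1]$ gives $\Hom_{\D}(X,Y[1])\cong\Hom_{\D}(X,Y\<1\>)/\im\Hom_{\D}(X,G_1)$, and by the $2$-Calabi--Yau property every morphism from an object of $\add G$ to $Y\<1\>$ factors through $G_1$, so the subgroup being quotiented out is exactly $\<\add G\>(X,Y\<1\>)$. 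With this isomorphism your surjectivity closes, and the rank count also needs the companion fact (stated explicitly in the paper's proof) that an indecomposable of $R\ast_{\D}R[1]$ becomes zero in $\Y$ only if it lies in $\add G$, so that distinct indecomposable summands of $U\setminus G$ stay distinct and nonzero; without these the argument as written does not go through.
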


\begin{proof}
    Let $U=G\oplus X$ be a basic object in $R\ast R[1]$. By \cite[Lemma~4.8]{IY}, $U$ is rigid in $\D$ if and only if $X$ is rigid in $\Y$. Since an indecomposable object in $R\ast R[1]$ is isomorphic to zero in $\Y$ if and only if it is isomorphic to a direct summand of $G$, we have $|U|=|R|$ in $\D$ if and only if $|X|=|R\setminus G|$ in $\Y$. So by Proposition~\ref{prop:rank}, we have that $U\in\ymr(R\ast_\D R[1])$ if and only if $X\in\mr(R\ast_\Y R\<1\>)$. Thus, we get the bijection $\Psi$.

    By Theorem~\ref{thm:IY}, a minimal $U$-presentation
    \[R\xrightarrow{f} U^1_R\xrightarrow{g} U^0_R\to R[1]\]
    of $R[1]$ in $\D$ gives rise to a minimal $X$-presentation
    \[R\xrightarrow{\overline{f}} U^1_R\xrightarrow{\overline{g}} U^0_R\to R\<1\>\]
    of $R\<1\>$ in $\Y$. So for any indecomposable summand $Y$ of $X$, we have 
    $$[\ind_{U[-1]}R:Y[-1]]=[\ind_{U\<-1\>}R:Y\<-1\>].$$ Then for any $\varepsilon\in\{+,-\}$, by Proposition~\ref{prop:mutation in sub}, we have $\mu^{\varepsilon}_Y(U)\in\ymr(R\ast_\D R[1])$ if and only if $\mu^{\varepsilon}_{\Psi(Y)}(\Psi(U))\in\mr(R\ast_\Y R\<1\>)$. Then the last assertion follows from \cite[Proposition 4.4~(2)]{IY}.
\end{proof}

\section{Cluster categories arising from punctured marked surfaces}\label{sec:pms}
In this section, we recall the cluster categories arising from punctured marked surfaces.

\subsection{Punctured marked surfaces and triangulations}\label{subsec:pms}
We recall from \cite{FST} and \cite{QZ} some notions about punctured marked surfaces.

A \emph{punctured marked surface} $\surf$ is a triple $(S,\MM,\PP)$, where 
\begin{itemize}
    \item $S$ is a compact oriented surface with non-empty boundary $\partial S$,
    \item $\MM\subseteq\partial S$ is a finite set of \emph{marked points} such that each component of $\partial S$ contains at least one marked point in $\MM$, and
    \item $\PP\subseteq(S\setminus\partial S)$ is a finite set of \emph{punctures}.
\end{itemize}
Denote $\surfi:=S\setminus(\partial S\cup\PP)$ the interior of $\surf$. The closures of connected components in $\partial S\setminus\MM$ are called \emph{boundary segments}. Denote by $B$ the set of boundary segments of $\surf$.
	
An \emph{arc} on $S$ is an immersion $\gamma:[0,1]\to S$ such that
\begin{enumerate}
	\item $\gamma(0),\gamma(1)\in\MM\cup\PP$, $\gamma(t)\in \surfi$ for any $t\in(0,1)$,
	\item $\gamma$ is neither null-homotopic nor homotopic to a boundary segment, and
	\item $\gamma$ has no self-intersections in $\surfi$.
\end{enumerate}
Any arc is considered up to homotopy relative to its endpoints.

\begin{definition}[{\cite[Definition~2.6]{FST}}]
    An \emph{ideal triangulation} of $\surf$ is a maximal collection of arcs on $\surf$ that do not intersect each other in $\surfi$. 
\end{definition}

Any ideal triangulation divides $S$ into triangles. A triangle is called \emph{self-folded} if two of its sides coincide, as shown in the left picture of  Figure~\ref{fig:self-fold}, where we use $\epsilon'$ to denote the folded side of a self-folded triangle whose non-folded side is $\epsilon$. Note that the point $p$ enclosed by $\epsilon$ is always in $\PP$.

\begin{definition}\label{def:ideal tri}
    A \emph{partial ideal triangulation} $\R$ is a subset of an ideal triangulation $\T$ such that for any self-folded triangle of $\T$, if its non-folded side is in $\R$ then so is its folded side. 
\end{definition}

\begin{definition}[{\cite[Definition~7.1]{FST}}]\label{def:tag arc}
    A \emph{tagged arc} on $\surf$ is an arc $\gamma$ that does not cut out a once-punctured monogon by a self-intersection in $\MM\cup\PP$, and equipped with a map 
    $$\kappa_\gamma:\{t|\gamma(t)\in\PP\}\to\{-1,1\}$$
    such that $\kappa_\gamma(0)=\kappa_\gamma(1)$ if $\gamma(0)=\gamma(1)\in\PP$. The value $\kappa(t)$ is called the \emph{tagging} of $\gamma$ at the end $\gamma(t)$. Denote by $\TA(\surf)$ the set of tagged arcs on $\surf$. 
\end{definition}

In figures, we use the symbol $\times$ on one end of a tagged arc $\gamma$ to stand for that the value of $\kappa_\gamma$ at this end is taken to be $-1$.

\begin{remark}\label{rmk:crp}
    Each arc $\gamma$ can be viewed as a tagged arc $\gamma^\times$ whose underlying arc is $\gamma$ and whose tagging is 1 at each punctured end.
    
    For an arc $\epsilon$ which is a loop enclosing a puncture $p$, although it can not be completed to a tagged arc by adding tagging directly, we still associate a tagged arc $\epsilon^\times$ to it, whose underlying arc is $\epsilon'$, the unique arc in the interior of the once-puncture monogon enclosed by $\epsilon$, and whose tagging is $-1$ on the end at $p$. See Figure~\ref{fig:self-fold}.
\end{remark}

\begin{figure}[htpb]\centering
	\begin{tikzpicture}[xscale=1,yscale=1]
		\draw[red,thick](0,1)to[out=-135,in=60](-1,0)to[out=-120,in=180](0,-1)to[out=0,in=-60](1,0)node[left]{$\epsilon$}to[out=120,in=-45](0,1);
		\draw(0,1)node{$\bullet$};
		\draw(0,-.5)node{$\bullet$};
		\draw[red,thick](0,1)to(0,-.5);
		\draw(2.2,0)node{$\Leftrightarrow$}(3.85,0)node[red]{$\times$};
		\draw[red,thick,bend right=20](4,1)node[black]{$\bullet$}to(4,-.5)node[black]{$\bullet$}to(4,1);
		\draw(0,-.5)node[below]{$p$}(4,-.5)node[below]{$p$}(0,0)node[red][left]{$\epsilon'$}(3.2,.5)node[red]{$\epsilon^\times$}(4.7,.5)node[red]{$(\epsilon')^\times$};
	\end{tikzpicture}
	\caption{From ideal triangulations to tagged triangulations}
	\label{fig:self-fold}
\end{figure}
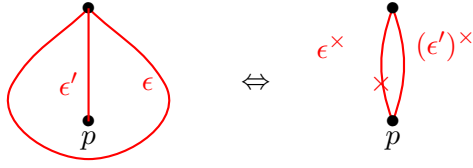

\begin{definition}[{\cite[Definition~3.3]{QZ}}]\label{def:intersection}
    Let $\gamma_1$ and $\gamma_2$ be tagged arcs in minimal position.
    \begin{enumerate}
        \item Any pair $(t_1,t_2)$ with $0<t_1,t_2< 1$ and $\gamma_1(t_1)=\gamma_2(t_2)$ is called an \emph{interior intersection} between $\gamma_1$ and $\gamma_2$.
        \item A pair $(t_1,t_2)$ with $t_1,t_2\in\{0,1\}$ and $\gamma_1(t_1)=\gamma_2(t_2)\in\PP$ is called a \emph{tagged intersection} between $\gamma_1$ and $\gamma_2$ if the following conditions hold.
        \begin{enumerate}
		\item $\kappa_{\gamma_1}(t_1)\neq\kappa_{\gamma_2}(t_2)$.
		\item If $\gamma_1|_{t_1\to(1-t_1)}\sim\gamma_2|_{t_2\to(1-t_2)}$, where $\gamma_i|_{t_i\to(1-t_i)}$ denotes the orientation of $\gamma_i$ from $t_i$ to $1-t_i$, for $i=1,2$, then $\gamma_1(1-t_1)=\gamma_2(1-t_2)\in\PP$ and $\kappa_{\gamma_1}(1-t_1)\neq\kappa_{\gamma_2}(1-t_2)$.
	\end{enumerate}
    \end{enumerate}
    Denote by $\cap(\gamma_1,\gamma_2)$ the set of interior intersections and tagged intersections, and by $\Int(\gamma_1,\gamma_2)=|\cap(\gamma_1,\gamma_2)|$ the \emph{intersection number} between the tagged arcs $\gamma_1$ and $\gamma_2$.
\end{definition}

For any two tagged arcs $\gamma_1$ and $\gamma_2$, $\Int(\gamma_1,\gamma_2)=0$ if and only if they are compatible in the sense of \cite[Definition~7.4]{FST}.

\begin{definition}
    A \emph{tagged triangulation} of $\surf$ {\cite[Section~7]{FST}} is a maximal collection $\T$ of tagged arcs on $\surf$ such that $\Int(\gamma_1,\gamma_2)=0$ for any $\gamma_1,\gamma_2\in\T$. A {partial tagged triangulation} of $\surf$ is a subset of a tagged triangulation.
\end{definition}

Let $\R$ be a partial tagged triangulation of $\surf$. We denote by $\PP_\R$ the subset of $\PP$ consisting of punctures $p$ that is an endpoint of an arc in $\R$. When $\R=\T$ is a tagged triangulation, we have $\PP_\T=\PP$. We define a map $\kappa_\R:\PP_\R\to \{-1,0,1\}$ as
\[
    \kappa_\R(p)=\begin{cases}
	-1&\emph{if any arc in $\R$ incident to $p$ has tagging $-1$ there,}\\
	1&\emph{if any arc in $\R$ incident to $p$ has tagging $1$ there,}\\
	0&\emph{otherwise.}
\end{cases}
\]

\begin{remark}\label{rmk:id}
    Using the correspondence $\gamma\mapsto\gamma^\times$ given in Remark~\ref{rmk:crp}, each partial ideal triangulation $\R$ corresponds to a partial tagged triangulation $\R^\times$.  By identifying $\R$ with $\R^\times$, we regard a partial ideal triangulation $\R$ as a partial tagged triangulation such that $\kappa_\R(p)\geq 0$ for any $p\in\PP_\R$. In particular, we regard each ideal triangulation as a tagged triangulation in such a way.
\end{remark}

Conversely, we associate a partial ideal triangulation with any partial tagged triangulation as follows.

\begin{construction}\label{cons:tag to ideal}
    Let $\R$ be a partial tagged triangulation. We construct a partial ideal triangulation $\R^\circ$ by
    \begin{enumerate}
        \item for $p\in\PP_\R$ with $\kappa_\R(p)\neq 0$, removing the taggings of tagged arcs in $\R$ at $p$, and
        \item for $p\in\PP_\R$ with $\kappa_\R(p)=0$, replacing the two tagged arcs in $\R$ incident to $p$ by a self-folded triangle enclosing $p$, as shown in Figure~\ref{fig:self-fold} (from right to left).
    \end{enumerate}
    Note that when $\R=\T$ is a tagged triangulation, $\T^\circ$ is an ideal triangulation. For any tagged arc $\gamma\in\R$, we denote by $\gamma^{\circ_\R}$ the arc in $\R^\circ$ corresponding to $\gamma$. Usually, we simply denote $\gamma^{\circ_\R}$ by $\gamma^{\circ}$, when there is no confusion arising.
\end{construction}

Note that we have $(\R^\times)^\circ=\R$ for any partial ideal triangulation $\R$, but $(\R^\circ)^\times\neq \R$ for a partial tagged triangulation $\R$ in general, where the only difference is the taggings of arcs at $p\in\PP_\R$ with $\kappa_\R(p)<0$.

\begin{construction}\label{cons:change tag}
    Let $\R$ be a partial tagged triangulation. For any tagged arc $\delta$, we construct a tagged arc $\delta^\R$, which is obtained from $\delta$ by changing taggings at $p\in\PP_\R$ with $\kappa_\R(p)<0$.
\end{construction}

\begin{remark}\label{rmk:good completion}
    For any partial tagged triangulation $\R$, there exists a tagged triangulation $\T$ such that $\R\subset\T$ and $$\kappa_{\T}(p)=\begin{cases}
    \kappa_{\R}(p)&\text{if $p\in\PP_\R$,}\\
    1&\text{otherwise.}\end{cases}$$
    Then we have
    $\gamma^{\circ_\R}=\gamma^{\circ_\T}$ for any $\gamma\in\R$, and $\delta^\T=\delta^\R$ for any $\delta\in\TA(\surf)$.
\end{remark}


\subsection{Laminates and shear coordinates}\label{subsec:lam}

We recall from \cite{FT} (cf. also \cite{Rea,Y}) the notion of laminates and their shear coordinates. The \emph{elementary laminate} $e(\delta)$ of a tagged arc $\delta\in\TA(\surf)$ is defined as follows.
\begin{itemize}
    \item $e(\delta)$ is an arc running along $\delta$ in a small neighbourhood of it;
    \item If $\delta$ has an endpoint $m$ on $\partial S$, then the corresponding endpoint of $e(\delta)$ is located near $m$ on $\partial S$ in the clockwise direction as in the left picture of Figure~\ref{fig:elt lam};
    \item If $\delta$ has an endpoint at a puncture $p$, then the corresponding end of $e(\delta)$ is a spiral around $p$ clockwise (resp. anticlockwise) if $\kappa_\delta$ takes value $1$ (resp. $-1$) at $p$ as in the right picture of Figure~\ref{fig:elt lam}.
\end{itemize}
The \emph{co-elementary laminate} $e^{op}(\delta)$ of $\delta$ is defined in the opposite direction.
\begin{figure}[htpb]\centering
	\begin{tikzpicture}[xscale=1,yscale=.5]
		\draw[ultra thick](-3,1)to[out=0,in=90](-2,0)node{$\bullet$}to[out=-90,in=0](-3,-1);
		\draw[ultra thick](3,1)to[out=180,in=90](2,0)node{$\bullet$}to[out=-90,in=180](3,-1);
		\draw[blue,thick](-2,0)to(2,0);
		\draw[blue](-1.5,.2)node{$\delta$};
		\draw[blue,thick](-2.3,-.7)to[out=0,in=180](2.3,.7);
		\draw[blue](1.,.8)node{$e(\delta)$};
	\end{tikzpicture}
	\qquad
	\begin{tikzpicture}
		\draw[blue,thick](-2,0)node[black]{$\bullet$}to(2,0)node[black]{$\bullet$};
		\draw[blue,thick](-2,.3)to[out=0,in=0](-2,-.3)to[out=180,in=180](-2,.5)to[out=0,in=180](2,.5)to[out=0,in=0](2,-.3)to[out=180,in=180](2,.3);
		\draw(-1.5,0)node[blue]{$\times$}(2.7,0)node[blue]{$e(\delta)$}(0,-.3)node[blue]{$\delta$};
	\end{tikzpicture}
	\caption{Elementary laminate}	\label{fig:elt lam}
\end{figure}
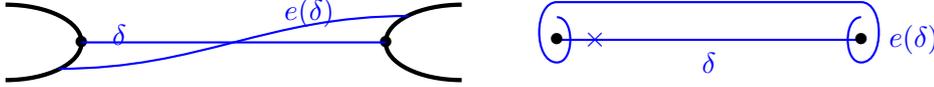

\begin{definition}[{\cite[Definition~3.4]{BQ}}]\label{def:BQ}
    The tagged rotation $\rho(\gamma)$ of a tagged arc $\gamma\in\TA(\surf)$ is obtained from $\gamma$ by moving each endpoint of $\gamma$ that is in $\MM$ along the boundary anticlockwise to the next marked point and changing the tagging as $\kappa_{\rho(\gamma)}(t)=-\kappa_{\gamma}(t)$ for any $t$ with $\gamma(t)\in\PP$.
\end{definition}

\begin{remark}\label{rmk:erho}
    For any $\delta\in\TA(\surf)$, we have $e(\rho(\delta))=e^{op}(\delta)$ and $e^{op}(\rho^{-1}(\delta))=e(\delta)$.
\end{remark}

\begin{definition}\label{def:shear}
    Let $L$ be the elementary laminate $e(\delta)$ or the co-elementary laminate $e^{op}(\delta)$ of a tagged arc $\delta$. Let $\R$ be a partial tagged triangulation.  Define $L^{\R}=e(\delta^\R)$ for the case $L=e(\delta)$, and $L^{\R}=e^{op}(\delta^\R)$ for the case $L=e^{op}(\delta)$, where $\delta^\R$ is given in Construction~\ref{cons:change tag}. We call $L$ \emph{shears} $\R$ provided that each segment of $L^\R$ divided by arcs in $\R^\circ$ cuts out an angle between two arcs in $\R^\circ\cup B$. 
\end{definition}

Note that when $\R=\T$ is a tagged triangulation, $L$ always shears $\T$.

Using the notations in Definition~\ref{def:shear}, if $L$ shears $\R$, the \emph{shear coordinate vector}
$$b_\R(L)=(b_{\gamma,\R}(L))_{\gamma\in\R}\in \mathbb{Z}^{\R}$$
of $L$ with respect to $\R$ is defined as follows.

For the case that $\R$ is a partial ideal triangulation, let $\gamma$ be an arc in $\R$ and $q$ an intersection between $\gamma$ and $L$. If $\gamma$ is not the folded side of a self-folded triangle of $\R$, then $\gamma$ is the common edge of the two angles of $\R$ that $L$ cuts out consecutively. The \emph{contribution} $b_{q,\gamma,\R}(L)$ of $q$ is defined as shown in Figure~\ref{fig:shear}.
\begin{figure}[htpb]
    \begin{tikzpicture}[scale=1.5]
		\draw[red,thick,bend right](-1,.5)node[black]{$\bullet$}to(0,1)node[black]{$\bullet$}to(1,0.5)node[black]{$\bullet$} (1,-0.5)node[black]{$\bullet$}to(0,-1)node[black]{$\bullet$}to(-1,-.5)node[black]{$\bullet$};
		\draw[red,thick](0,1)to(0,-1);
		\draw[blue,thick,bend right=10](-1,-1)to(1,1);
		\draw (0,-1.5)node{$b_{q,\gamma,\R}(L)=1$};
		\draw[red] (0,.1)node[left]{$\gamma$};
		\draw[blue] (.7,1)node{$L$};
		\draw (.15,-.25)node{$q$};
		\draw (0,-.2)node{$\boldsymbol{\cdot}$};
	\end{tikzpicture}
	\qquad
	\begin{tikzpicture}[scale=1.5]
		\draw[red,thick,bend right](-1,.5)node[black]{$\bullet$}to(0,1)node[black]{$\bullet$}to(1,0.5)node[black]{$\bullet$} (1,-0.5)node[black]{$\bullet$}to(0,-1)node[black]{$\bullet$}to(-1,-.5)node[black]{$\bullet$};
		\draw[red,thick](0,1)to(0,-1);
		\draw[blue,thick,bend right=10](-1,1)to(1,-1);	
		\draw (0,-1.5)node{$b_{q,\gamma,\R}(L)=-1$};
		\draw[red] (0,.1)node[right]{$\gamma$};
		\draw[blue] (-.7,1)node{$L$};
		\draw (-.15,-.25)node{$q$};
		\draw (0,-.2)node{$\boldsymbol{\cdot}$};
	\end{tikzpicture}
	\qquad
	\begin{tikzpicture}[scale=1.5]
		\draw[red,thick,bend right](-1,.5)node[black]{$\bullet$}to(0,1)node[black]{$\bullet$}to(1,0.5)node[black]{$\bullet$} (1,-0.5)node[black]{$\bullet$}to(0,-1)node[black]{$\bullet$}to(-1,-.5)node[black]{$\bullet$};
		\draw[red,thick](0,1)to(0,-1);
		\draw[blue,thick,bend right](-1,.7)to(1,.7);	
        \draw (0,-1.5)node{$b_{q,\gamma,\R}(L)=0$};
		\draw[red] (0,0)node[right]{$\gamma$};
		\draw[blue] (-.7,.9)node{$L$};
		\draw (-.15,.3)node{$q$};
		\draw (0,.4)node{$\boldsymbol{\cdot}$};
	\end{tikzpicture}
    \caption{Contribution of an intersection $q$ between $L$ and $\gamma\in\R$}\label{fig:shear}
\end{figure}
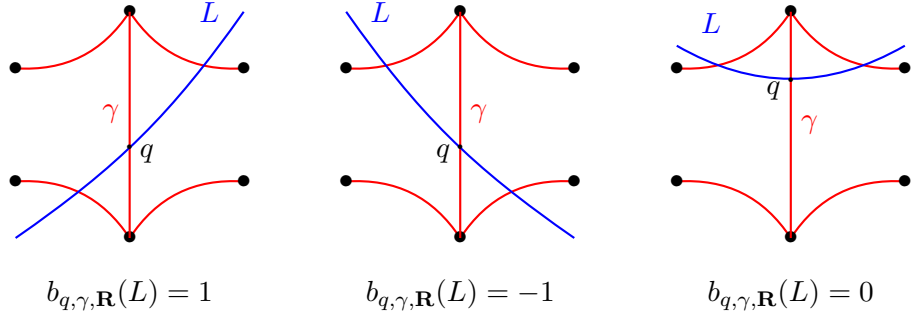
In this case, define
$$b_{\gamma,\R}(L)=\sum_{q\in\gamma\cap L} b_{q,\gamma,\R}(L),$$
where $\gamma\cap L$ is the set of intersections between $\gamma$ and $L$. Since only finitely many $q$ contribute nonzero values, the sum is well-defined. If $\gamma=\epsilon'$ is the folded side of a self-folded triangle of $\R$ whose non-folded side is $\epsilon$ and whose enclosing puncture is $p$ (cf. the left picture of Figure~\ref{fig:self-fold}), define
$$b_{\gamma,\R}(L)=b_{\epsilon,\R}(L^{(p)}),$$
where $L^{(p)}$ is obtained from $L$ by changing the directions of its spirals at $p$ if they exist.

For the case that $\R$ is a partial tagged triangulation, for any $\gamma\in\R$, define
$$b_{\gamma,\R}(L)=b_{\gamma^\circ,\R^{\circ}}(L^\R),$$
where $\R^\circ$ and $\gamma^\circ$ are defined in Construction~\ref{cons:tag to ideal}, and $L^\R$ is defined in Definition~\ref{def:shear}. By definition, for any tagged triangulation $\T$ containing $\R$ and for any $\gamma\in\R$, we have
\begin{equation}\label{eq:R=T}
    b_{\gamma,\R}(L)=b_{\gamma,\T}(L).
\end{equation}

\begin{remark}\label{rem:total}
    Let $\R$ be a partial tagged triangulation of $\surf$ and $\gamma\in\R$. Let $\delta$ a tagged arc and let $L=e(\delta)$ or $e^{op}(\delta)$. If there exists an intersection $q$ between $\gamma$ and $L$ such that $b_{q,\gamma,\R}(L)=1$ (resp. $-1$), then for any intersection $q'$ between $\gamma$ and $L$, we have $b_{q',\gamma,\R}(L)\geq 0$ (resp. $\leq 0$). This is because $L$ has no self-intersections.
\end{remark}

\subsection{Cluster categories from punctured marked surfaces}\label{subsec:QP and cluster}

A quiver is a 4-tuple $Q=(Q_0,Q_1,s,t)$, where $Q_0$ a set of vertices, $Q_1$ a set of arrows, and $s,t:Q_1\to Q_0$ are the maps sending an arrow to its start and terminal, respectively. A quiver with potential is a pair $(Q,W)$ with $Q$ a quiver and $W$ a linear combination of cycles of $Q$. For more details on quivers with potential, we refer to \cite{DWZ}.

Let $\surf$ be a punctured marked surface. To a tagged triangulation $\T$ of $\surf$, there is an associated quiver with potential $(Q^\T,W^\T)$ \cite{FST,LF1,LF3}, with $Q_0^\T=\T$. By \cite[Theorem~5.7]{LF3}, the Jacobian algebra $\Lambda^\T$ of $(Q^\T,W^\T)$ is finite dimensional and is isomorphic to 
$$\k Q^\T/\<\partial W^\T\>,$$
where $\k Q^\T$ is the path algebra of $Q^\T$ over the field $\k$ and $\<\partial W^\T\>$ is the ideal of $\k Q^\T$ generated by $\partial W^\T=\{\partial_a W^\T\mid a\in Q^\T_1\}$. Then by \cite{A1}, there is a $\k$-linear, Hom-finite, Krull-Schmidt, 2-Calabi-Yau triangulated category $\C(Q^\T,W^\T)$, called (generalized) cluster category associated to $(Q^\T,W^\T)$, with a cluster tilting object $T^\T$ such that there is an equivalence of categories
$$\C(Q^\T,W^\T)/\add T^\T[1]\simeq\mod\Lambda^\T.$$
Moreover, by \cite{FST,LF1,LF3,KY}, up to equivalence, the category $\C(Q^\T,W^\T)$ does not depend on the choice of the triangulation $\T$. So we can use $\C(\surf)$ to denote this category.
\begin{theorem}[\cite{QZ}]\label{thm:QZ}
    There is a bijection 
    $$X:\TA(\surf)\to\ind\operatorname{rigid}\C(\surf)$$ 
    from the set $\TA(\surf)$ of tagged arcs on $\surf$ to the set $\ind\operatorname{rigid}\C(\surf)$ of isoclasses of indecomposable rigid objects in $\C(\surf)$, such that the following hold.
    \begin{enumerate}
        \item For any $\gamma\in\TA{(\surf)}$, we have
        $$X(\rho(\gamma))=X(\gamma)[1].$$
        \item For any tagged arcs $\gamma_1,\gamma_2$, we have
        $$\Int(\gamma_1,\gamma_2)=\dim_\k\Hom(X(\gamma_1),X(\gamma_2)[1]).$$
        \item The bijection $X$ induces a bijection 
        $$\begin{array}{rccc}
        X:&\PTT(\surf)&\to&\r\C(\surf)\\ &\R&\mapsto&\bigoplus_{\gamma\in\R}X(\gamma)
        \end{array}$$
        from the set $\PTT(\surf)$ of partial tagged triangulations of $\surf$ to the set $\r\C(\surf)$ of isoclasses of basic rigid objects in $\C(\surf)$, such that $\R$ is a tagged triangulation if and only if $X(\R)$ is a cluster tilting object.
        \item[(4)] The cluster exchange graph of $\C(\surf)$, i.e. the graph whose vertices are cluster tilting objects in $\C(\surf)$ and whose edges are mutations, is connected.
    \end{enumerate} 
\end{theorem}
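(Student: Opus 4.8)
The plan is to read this statement as a compilation of the main results of \cite{QZ}, supplemented by the combinatorics of tagged triangulations from \cite{FST} and by the quiver-with-potential / cluster category dictionary of \cite{LF1,LF3,KY}; I would not reprove the hard part from scratch. Fix once and for all a tagged triangulation $\T_0$ together with the identification $\C(\surf)\simeq\C(Q^{\T_0},W^{\T_0})$. The existence of the bijection $X\colon\TA(\surf)\to\ind\operatorname{rigid}\C(\surf)$ and properties (1) and (2) are then exactly the content of \cite{QZ}: indecomposable rigid objects are matched with tagged arcs through the string combinatorics of the Jacobian algebra $\Lambda^{\T_0}$ together with the shift; the identity $X(\rho(\gamma))=X(\gamma)[1]$ encodes the realization of the suspension (equivalently, of the Auslander--Reiten translation up to shift, since $\D$ is $2$-Calabi--Yau) by the tagged rotation, as in \cite{BQ}; and $\Int(\gamma_1,\gamma_2)=\dim_\k\Hom(X(\gamma_1),X(\gamma_2)[1])$ is the intersection--dimension formula of \cite{QZ}.

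Granting (1) and (2), I would deduce (3) as follows. A basic object $\bigoplus_{\gamma\in\R}X(\gamma)$ is rigid if and only if $\Hom(X(\gamma),X(\delta)[1])=0$ for all $\gamma,\delta\in\R$, which by (2) is equivalent to $\Int(\gamma,\delta)=0$ for all $\gamma,\delta\in\R$, i.e. to $\R$ being a partial tagged triangulation (vanishing of the intersection number is precisely compatibility in the sense of \cite{FST}). The induced map $\R\mapsto\bigoplus_{\gamma\in\R}X(\gamma)$ is surjective onto basic rigid objects because every indecomposable rigid object is some $X(\gamma)$ and a basic rigid object is the direct sum of its (necessarily indecomposable rigid) summands, and it is injective because $X$ is. For the ``tagged triangulation $\Leftrightarrow$ cluster tilting'' clause I would compare cardinalities: any tagged triangulation has the same number of arcs as $\T_0$, say $n=|\T_0|$, and a partial tagged triangulation is a full tagged triangulation exactly when it has $n$ arcs; on the other side, since $\C(\surf)$ admits a cluster tilting object, a basic rigid object with $n$ indecomposable summands is maximal rigid, hence cluster tilting, while one with fewer summands is not. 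Hence $\R$ is a tagged triangulation iff $X(\R)$ has $n$ indecomposable summands iff $X(\R)$ is cluster tilting.

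For (4) I would use that the flip graph of tagged triangulations of $\surf$ is connected by \cite{FST} (reduce to ideal triangulations, whose flip graph is connected, and then control the taggings), and that under the dictionary above a flip of a tagged triangulation corresponds to a mutation of the associated quiver with potential \cite{LF1,LF3}, hence to a mutation of cluster tilting objects in $\C(\surf)$ \cite{KY}. Consequently $X$ transports the connected flip graph of tagged triangulations bijectively onto the cluster exchange graph of $\C(\surf)$, which is therefore connected. The only genuinely difficult input in the whole argument is the intersection--dimension formula (2), which is the technical heart of \cite{QZ}; once it is available, parts (1), (3) and (4) are essentially bookkeeping with the surface combinatorics of \cite{FST} and the standard cluster-category machinery.
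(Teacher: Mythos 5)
Your proposal is correct, and it matches the paper's treatment: the paper offers no proof of this theorem at all, simply importing it from \cite{QZ} (with the auxiliary inputs from \cite{FST,LF1,LF3,KY,A1} that you identify), so there is nothing to diverge from. Your derivations of (3) from (2) (rigidity of a basic sum reduces to pairwise and self vanishing of $\Ext^1$, hence to compatibility, with the cluster-tilting clause settled by counting summands against $|\T_0|$ using that maximal rigid equals cluster tilting once one cluster tilting object exists) and of (4) from the connectedness of the tagged flip graph are sound and consistent with how the paper later uses these facts.
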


Recall from \eqref{eq:index} the definition of index $\ind_TM$ of an object $M$ in $\C(\surf)$ with respect to a cluster tilting object $T$.

\begin{proposition}\label{prop:index}
    Let $\T$ be a tagged triangulation of $\surf$. For any tagged arc $\delta$, we have
    \begin{equation}\label{eq:indb1}
        \ind_{X(\T)}X(\delta)=-b_{\T}(e(\delta)),
    \end{equation}
    and
    \begin{equation}\label{eq:indb2}
        \ind_{X(\T)[-1]}X(\delta)=-b_{\T}(e^{op}(\delta)).
    \end{equation}
\end{proposition}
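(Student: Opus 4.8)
The plan is to prove the two identities \eqref{eq:indb1} and \eqref{eq:indb2} by relating the index with respect to the cluster tilting object $X(\T)$ to the shear coordinates of the (co-)elementary laminate. The natural reference framework here is the combinatorial formula for $g$-vectors / $c$-vectors in cluster algebras from surfaces: shear coordinates of elementary laminates coincide (up to sign and transpose) with $g$-vectors, and by Theorem~\ref{thm:air} together with Theorem~\ref{thm:CZZ} (or rather the cluster-category analogue), $g$-vectors are exactly indices with respect to the cluster tilting object. So the first step is to set up a common coordinate system: both sides live in $K_0^{\mathrm{sp}}(X(\T))=\mathbb{Z}^{\T}$, with basis $[X(\gamma)]$, $\gamma\in\T$, matched to the basis of $\mathbb{Z}^{\R}$ indexed by $\gamma\in\R$ used in the shear coordinate vector; equation \eqref{eq:R=T} lets us pass freely between $\R$ and any tagged triangulation $\T\supseteq\R$, so it suffices to treat the case $\R=\T$.

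First I would establish \eqref{eq:indb1} on a generating set of tagged arcs and then propagate it along mutation. The base case is $\delta\in\T$ itself: then $X(\delta)$ is a direct summand of $X(\T)$, so $\ind_{X(\T)}X(\delta)=[X(\delta)]=e_\delta$, and one checks directly from Figure~\ref{fig:shear} that $e(\delta)$ runs parallel to $\delta$, meets only $\delta$ among the arcs of $\T^\circ$ (after the reduction to ideal triangulations via Construction~\ref{cons:tag to ideal} and the $L^\R$ convention of Definition~\ref{def:shear}), and contributes $b_{\delta,\T}(e(\delta))=-1$ with all other coordinates zero; hence $-b_\T(e(\delta))=e_\delta$ as well. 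For a general tagged arc $\delta$, use connectedness of the cluster exchange graph (Theorem~\ref{thm:QZ}(4)) to connect $\T$ to a tagged triangulation $\T'$ containing $\delta$ by a sequence of flips, and argue that both sides of \eqref{eq:indb1} transform by the \emph{same} piecewise-linear mutation rule under a flip of $\T$: on the index side this is the behaviour of indices under mutation of the cluster tilting object, i.e. the exchange triangle rule \eqref{eq:ritri}--\eqref{eq:letri} read in $K_0^{\mathrm{sp}}$ (cf.\ Remark~\ref{rmk:cto} and the index change formula from \cite{DK,Pal}); on the shear-coordinate side this is precisely the mutation rule for shear coordinates of laminations established in \cite{FT}. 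Remark~\ref{rem:total} guarantees the sign-coherence needed for the two piecewise-linear rules to have the same "tropical sign" at each step, so the two transformation laws agree term by term, and the identity propagates from the base case. Equation \eqref{eq:indb2} then follows from \eqref{eq:indb1} by applying $[-1]$: by Theorem~\ref{thm:QZ}(1), $X(\delta)=X(\rho^{-1}(\delta))[1]$, so $\ind_{X(\T)[-1]}X(\delta)=\ind_{X(\T)}X(\rho^{-1}(\delta))=-b_\T(e(\rho^{-1}(\delta)))=-b_\T(e^{op}(\delta))$, where the last equality is Remark~\ref{rmk:erho}.

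The main obstacle I expect is matching the two mutation rules \emph{with correct signs and conventions}, rather than either of them individually. The cluster-category side and the surface side each carry several sign choices — the direction of spirals for $\kappa=\pm1$, the clockwise convention in the definition of $e(\delta)$, the orientation in Figure~\ref{fig:shear}, and the $-$ sign in \eqref{eq:indb1} itself — and one has to verify that these are compatible simultaneously with the direction of the exchange triangles in Definition~\ref{def:mutation in sub} and with the self-folded-triangle bookkeeping ($b_{\epsilon',\R}(L)=b_{\epsilon,\R}(L^{(p)})$, and the $\delta\mapsto\delta^\R$, $\R\mapsto\R^\circ$ reductions). The cleanest way to control this is to do the base-case computation and \emph{one} generic flip computation very explicitly — pinning down all signs there — and then invoke the abstract matching of piecewise-linear recursions, using \cite{FT} for the surface side and the index-mutation formula for the categorical side, so that no further case analysis of self-folded triangles is needed beyond what is already packaged into the definition of $b_{\gamma,\R}$.
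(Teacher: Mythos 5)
Your proposal is essentially correct but takes a genuinely different, more hands-on route than the paper. The paper's proof of \eqref{eq:indb1} is a short citation-chaining argument: it forms the commutative triangle of bijections between tagged arcs, cluster variables of the cluster algebra $\mathcal{A}$ of $Q^{\T}$, and indecomposable rigid objects, so that $x(\delta)=cc(X(\delta))$; it then quotes \cite[Proposition~5.2]{Rea} (or \cite[Theorem~7.1]{LF2}) for ``the $g$-vector of $x(\delta)$ equals $-b_{\T}(e(\delta))$'' and \cite[Proposition~3.6]{Pla2} for ``the $g$-vector of $cc(X(\delta))$ equals $\ind_{T^{\T}}X(\delta)$'', and \eqref{eq:indb1} drops out with no computation. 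What you propose — base case $\delta\in\T$ plus propagation along flips by matching the index-mutation recursion of \cite{DK,Pal} against the shear-coordinate mutation rule of \cite{FT} — is in effect a re-derivation of the content of those two cited results rather than an appeal to them. It is a legitimate strategy (it is how Reading and Plamondon prove their theorems), and it buys self-containedness and an explicit handle on the sign conventions you rightly worry about; the cost is that the two tropical recursions and their sign-compatibility must actually be verified, which your write-up defers. Two small corrections: the branch-matching at each flip does not need an external sign-coherence input such as Remark~\ref{rem:total} — the inductive hypothesis already forces the two recursions to take the same branch, since it identifies the coordinate whose sign governs both. And in your derivation of \eqref{eq:indb2} you have two compensating slips: since $\ind_{R[-1]}M=\ind_{R}(M[1])$, the correct chain is $\ind_{X(\T)[-1]}X(\delta)=\ind_{X(\T)}X(\rho(\delta))=-b_{\T}(e(\rho(\delta)))=-b_{\T}(e^{op}(\delta))$, using $e(\rho(\delta))=e^{op}(\delta)$ from Remark~\ref{rmk:erho}; your version with $\rho^{-1}$ is wrong in both intermediate steps (note $e(\rho^{-1}(\delta))\neq e^{op}(\delta)$ in general) even though the endpoints agree. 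This is exactly the rotation argument the paper uses for \eqref{eq:indb2}, just with the direction of the rotation fixed.
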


\begin{proof}
    Starting from the quiver $Q^\T$, there is a cluster algebra $\mathcal{A}$ \cite{FZ}. By \cite[Theorem~7.11]{FST}, there is a bijection $x$ from the set $\TA(\surf)$ of tagged arcs on $\surf$ to the set of cluster variables of $\mathcal{A}$, such that $x$ induces a bijection from the set of tagged triangulations to the set of clusters of $\mathcal{A}$, which sends $\T$ to the initial cluster and commutes with flips of tagged triangulations and mutations of clusters.
    
    For the 2-Calabi-Yau category $\C(\surf)$ with cluster tilting object $T^\T$, by \cite{Pal} and Theorem~\ref{thm:QZ}~(4), there is a bijection $cc$ (called cluster character) from the set of indecomposable objects in $\C(\surf)$ to the set of cluster variables of $\mathcal{A}$, such that $cc$ induces a bijection from the set of basic cluster tilting objects to the set of reachable clusters of $\mathcal{A}$, which sends $T^\T$ to the initial cluster and commutes with mutations.
    
    Then by Theorem~\ref{thm:QZ}, we have the following commutative diagram of bijections
    $$\xymatrix{
    &\{\text{cluster variables in }\mathcal{A}\}\\
    \TA(\surf)\ar[ru]^{x}\ar[rr]_{X}&&\ind\operatorname{rigid}\C(\surf)\ar[lu]_{cc}
    }$$
    So for any tagged arc $\delta$, $x(\delta)=cc(X(\delta))$. By \cite[Proposition~5.2]{Rea} (see also \cite[Theorem~7.1]{LF2}), the $g$-vector of $x(\delta)$ is $-b_\T(e(\delta))$. On the other hand, by \cite[Proposition~3.6]{Pla2}, the $g$-vector of $cc(X(\delta))$ is $\ind_{T^\T}X(\delta)$. Thus, we get \eqref{eq:indb1}.
    
    For \eqref{eq:indb2}, we have 
    \begin{align*}
        \ind_{X(\T)[-1]}X(\delta)&{}=\ind_{X(\T)}X(\delta)[1]\\&=\ind_{X(\T)}X(\rho(\delta)){}\\
        &=-b_{\T}(e(\rho(\delta)))\\&=-b_{\T}(e^{op}(\delta))
    \end{align*}
    where the second equality holds by Theorem~\ref{thm:QZ}~(1), the third one is \eqref{eq:indb1}, and the last one is due to Remark~\ref{rmk:erho}.
\end{proof}

\begin{corollary}\label{cor:iff}
    Let $\T$ be a tagged triangulation of $\surf$ and $\R\subseteq\T$. Then for any $\delta\in\TA(\surf)$, $b_{\gamma,\T}(e(\delta))=0$ for all $\gamma\in\T\setminus\R$ if and only if $X(\delta)\in X(\R)\ast X(\R)[1]$.
\end{corollary}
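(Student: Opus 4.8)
The plan is to combine Proposition~\ref{prop:index} with Remark~\ref{rmk:cto}, which characterizes membership in $X(\R)\ast X(\R)[1]$ via the vanishing of certain index coordinates. Since $\R\subseteq\T$ and $X(\T)$ is a cluster tilting object (by Theorem~\ref{thm:QZ}~(3)), the object $X(\R)$ is a direct summand of $X(\T)$, with $X(\T)\setminus X(\R)=\bigoplus_{\gamma\in\T\setminus\R}X(\gamma)$ and each $X(\gamma)$ indecomposable (again by Theorem~\ref{thm:QZ}). Thus Remark~\ref{rmk:cto} applies and tells us that, for $\delta\in\TA(\surf)$, the object $X(\delta)$ lies in $X(\R)\ast X(\R)[1]$ if and only if $[\ind_{X(\T)}X(\delta):X(\gamma)]=0$ for every $\gamma\in\T\setminus\R$.

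The next step is to translate the index coordinates into shear coordinates. By \eqref{eq:indb1} of Proposition~\ref{prop:index}, we have $\ind_{X(\T)}X(\delta)=-b_{\T}(e(\delta))$, and under the $\mathbb{Z}$-basis of $K_0^{\mathrm{sp}}(X(\T))$ given by the classes $[X(\gamma)]$, $\gamma\in\T$, the coordinate $[\ind_{X(\T)}X(\delta):X(\gamma)]$ equals $-b_{\gamma,\T}(e(\delta))$. Hence $[\ind_{X(\T)}X(\delta):X(\gamma)]=0$ if and only if $b_{\gamma,\T}(e(\delta))=0$. Putting this together with the previous paragraph yields exactly the claimed equivalence: $b_{\gamma,\T}(e(\delta))=0$ for all $\gamma\in\T\setminus\R$ $\iff$ $X(\delta)\in X(\R)\ast X(\R)[1]$.

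I expect this proof to be essentially a bookkeeping argument once the right ingredients are lined up; there is no serious obstacle. The only point requiring a little care is matching the indexing conventions: one must confirm that the shear coordinate vector $b_{\T}(e(\delta))=(b_{\gamma,\T}(e(\delta)))_{\gamma\in\T}$ is expressed in the basis dual to $\{[X(\gamma)]\}_{\gamma\in\T}$ in the same way that the index $\ind_{X(\T)}X(\delta)=\sum_{\gamma\in\T}[\ind_{X(\T)}X(\delta):X(\gamma)][X(\gamma)]$ is, so that the identity $\ind_{X(\T)}X(\delta)=-b_{\T}(e(\delta))$ really is a coordinatewise identity $[\ind_{X(\T)}X(\delta):X(\gamma)]=-b_{\gamma,\T}(e(\delta))$ for each fixed $\gamma$. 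This is immediate from the way Proposition~\ref{prop:index} is stated (the bijection $\T\leftrightarrow Q_0^\T$ identifies the two index sets), so the corollary follows directly. One could also, if desired, phrase the forward direction without Remark~\ref{rmk:cto} by noting that if all the coordinates $b_{\gamma,\T}(e(\delta))$ for $\gamma\in\T\setminus\R$ vanish, then $\ind_{X(\T)}X(\delta)$ lies in the subgroup of $K_0^{\mathrm{sp}}(X(\T))$ generated by $\{[X(\gamma)]:\gamma\in\R\}$, so a minimal $X(\T)$-presentation of $X(\delta)$ has both outer terms in $\add X(\R)$, giving $X(\delta)\in X(\R)\ast X(\R)[1]$; but invoking Remark~\ref{rmk:cto} is cleaner.
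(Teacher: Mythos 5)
Your proof is correct and follows exactly the paper's own argument: the paper likewise combines Proposition~\ref{prop:index} (to identify $[\ind_{X(\T)}X(\delta):X(\gamma)]$ with $-b_{\gamma,\T}(e(\delta))$) with Remark~\ref{rmk:cto} (to characterize membership in $X(\R)\ast X(\R)[1]$ by the vanishing of those index coordinates for $\gamma\in\T\setminus\R$). Your extra remarks on indexing conventions and the alternative forward direction are just elaborations of the same two-step argument.
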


\begin{proof}
    By Proposition~\ref{prop:index}, $b_{\gamma,\T}(e(\delta))=0$ if and only if $[\ind_{X(\T)}X(\delta):X(\gamma)]=0$. Then by Remark~\ref{rmk:cto}, we get this assertion.
\end{proof}

\begin{corollary}\label{cor:shear}
    Let $\T$ be a tagged triangulation of $\surf$. Then for any $\gamma,\delta\in\T$, we have
    $$b_{\gamma,\T}(e(\delta))=\begin{cases}
    -1&\text{if $\gamma=\delta$,}\\
    0&\text{otherwise,}
    \end{cases}$$
    and
    $$b_{\gamma,\T}(e^{op}(\delta))=\begin{cases}
    1&\text{if $\gamma=\delta$,}\\
    0&\text{otherwise.}
    \end{cases}$$
\end{corollary}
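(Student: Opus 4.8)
The statement is the special case $\delta \in \T$ of Proposition~\ref{prop:index}, so the plan is to extract it directly from that proposition together with the identification of the index with respect to a cluster tilting object. First I would invoke Proposition~\ref{prop:index}: for any tagged triangulation $\T$ and any tagged arc $\delta$, we have $\ind_{X(\T)}X(\delta) = -b_{\T}(e(\delta))$ and $\ind_{X(\T)[-1]}X(\delta) = -b_{\T}(e^{op}(\delta))$. It therefore suffices to compute the two index vectors $\ind_{X(\T)}X(\gamma)$ and $\ind_{X(\T)[-1]}X(\gamma)$ for $\gamma$ an element of the triangulation $\T$ itself, and to read off the coefficient at each $\delta \in \T$.

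The key observation is that $T = X(\T)$ is a cluster tilting object by Theorem~\ref{thm:QZ}~(3), and $X(\gamma)$ for $\gamma\in\T$ is one of its indecomposable direct summands. For an indecomposable summand $T_i$ of a cluster tilting object $T$, a minimal $T$-presentation of $T_i$ in the sense of \eqref{eq:pre} is simply the split triangle $0 \to T_i \xrightarrow{\mathrm{id}} T_i \to 0$, so $R^0_{T_i} = T_i$ and $R^1_{T_i} = 0$; hence $\ind_T T_i = [T_i]$, i.e. the $i$-th basis vector of $K_0^{\mathrm{sp}}(T)$. Applying this with $T = X(\T)$ and $T_i = X(\gamma)$ gives $[\ind_{X(\T)}X(\gamma):X(\delta)] = \delta_{\gamma\delta}$ (Kronecker delta), which by \eqref{eq:indb1} yields $b_{\delta,\T}(e(\gamma)) = -\delta_{\gamma\delta}$; after relabelling $\gamma\leftrightarrow\delta$ this is exactly the first displayed formula. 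For the second formula, I would either argue symmetrically — noting that $\ind_{X(\T)[-1]}X(\gamma) = \ind_{X(\T)}X(\gamma)[1] = \ind_{X(\T)}X(\gamma)$ since $X(\gamma)[1]$ is not in $X(\T)\ast X(\T)[1]$, so I should instead directly compute a minimal $X(\T)[-1]$-presentation of $X(\gamma)$, namely $X(\gamma) \xrightarrow{\mathrm{id}} X(\gamma) \to 0 \to X(\gamma)[1]$ viewed with terms in $\add X(\T)[-1]$, which gives $\ind_{X(\T)[-1]}X(\gamma) = -[X(\gamma)[-1]]$ — and then apply \eqref{eq:indb2} to get $b_{\delta,\T}(e^{op}(\gamma)) = \delta_{\gamma\delta}$. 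Alternatively, one can use $e(\rho(\gamma)) = e^{op}(\gamma)$ from Remark~\ref{rmk:erho} together with $X(\rho(\gamma)) = X(\gamma)[1]$ and the fact that flipping signs under tagged rotation corresponds to the shift; but the direct presentation computation is cleanest.

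I do not anticipate a genuine obstacle here, since everything is a formal consequence of Proposition~\ref{prop:index} and the triviality of $T$-presentations of summands of $T$. The only point requiring a little care is the sign bookkeeping in the second formula: one must make sure that the minimal presentation of $X(\gamma)$ with respect to $\add X(\T)[-1]$ is read off with the correct homological degree, so that $\ind_{X(\T)[-1]}X(\gamma)$ picks up the coefficient $-1$ rather than $+1$, matching \eqref{eq:indb2}. Once that is pinned down, both cases follow immediately and the corollary is proved.
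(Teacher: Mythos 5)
Your proposal is correct and follows essentially the same route as the paper: the paper likewise reads off $[\ind_{X(\T)}X(\delta):X(\gamma)]=\delta_{\gamma\delta}$ and $[\ind_{X(\T)[-1]}X(\delta):X(\gamma)[-1]]=-\delta_{\gamma\delta}$ from the trivial presentations of an indecomposable summand of the cluster tilting object $X(\T)$, and then applies Proposition~\ref{prop:index}. (The discarded aside claiming $X(\gamma)[1]\notin X(\T)\ast X(\T)[1]$ is false --- the triangle $X(\gamma)\to 0\to X(\gamma)[1]\to X(\gamma)[1]$ shows the contrary --- but since you abandon that route in favour of the direct presentation computation, which yields the correct $\ind_{X(\T)[-1]}X(\gamma)=-[X(\gamma)[-1]]$, it does not affect the argument.)
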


\begin{proof}
    By Theorem~\ref{thm:QZ} and the definition of index, we have $$[\ind_{X(\T)}X(\delta):X(\gamma)]=\begin{cases}1&\text{if $\gamma=\delta$,}\\0&\text{otherwise,}\end{cases}$$
    and
    $$[\ind_{X(\T)[-1]}X(\delta):X(\gamma)[-1]]=\begin{cases}-1&\text{if $\gamma=\delta$,}\\0&\text{otherwise.}\end{cases}$$
    Then by Proposition~\ref{prop:index}, the required formulas follow.
\end{proof}

\section{A geometric model of surface rigid algebras}\label{sec:geo mod}
In this section, we give a geometric model for the module category of the endomorphism algebra of a rigid object in the cluster category of a punctured marked surface. We also show that any skew-gentle algebra is contained in this case.
\begin{definition}\label{def:surf alg}
    A finite dimensional algebra over $\k$ is called a \emph{surface rigid algebra} if it is isomorphic to the endomorphism algebra $\Lambda_R:=\End_{\C(\surf)}R$ of a rigid object $R$ in the cluster category $\C(\surf)$ of a punctured marked surface $\surf$.
\end{definition}

See \cite{RR} for an unpunctured version of the above notion.

By Theorem~\ref{thm:QZ}~(3), each surface rigid algebra can be realized as the endomorphism algebra $\Lambda_\R:=\Lambda_{X(\R)}$ of $X(\R)$ for a partial tagged triangulation $\R$ of a punctured marked surface $\surf=(S,\MM,\PP)$. 

\subsection{Standard arcs and dissections}
For any tagged arc $\delta\in\TA(\surf)$ whose both endpoints are in $\PP$, the tagged arc $\rho(\delta)=\rho^{-1}(\delta)$ is said to be \emph{adjoint to} $\delta$.

For any partial ideal triangulation $\R$ of $\surf$ and any curve $\delta$ in a minimal position with (the arcs in) $\R$, an \emph{arc segment} of $\delta$ (with respect to $\R$) is a segment of $\delta$ between two neighboring intersections between $\delta$ and $\R\cup B$, where recall that $B$ is the set of boundary segments of $S$. An arc segment of $\delta$ is called an \emph{end} arc segment if it has an endpoint in $\MM\cup\PP$.

\begin{definition}\label{def:standard ideal}%
    Let $\R$ be a partial ideal triangulation of $\surf$. A tagged arc $\delta\in\TA(\surf)$ is called \emph{$\R$-standard} if and only if one of the following holds.
    \begin{enumerate}
        \item $\delta\in\R^\times$ or $\delta$ is adjoint to some arc in $\R^\times$.
        \item $\delta$ itself is an arc segment and cuts out an angle $\theta$ between two arcs in $\R\cup B$, such that the tagging of $\delta$ has the form as shown in the first picture of Figure~\ref{fig:std}, where the left endpoint of $\delta$ is a puncture with tagging $-1$, and the right endpoint of $\delta$ is either a marked point or a puncture with tagging $1$.
        \item $\delta$ is divided by $\R$ into at least two arc segments, satisfying that each arc segment $\eta$ cuts out an angle $\theta$ between two arcs in $\R\cup B$ (see the last three pictures of Figure~\ref{fig:std}), and in case $\eta$ having an endpoint $\jiaodian$ in $\PP\cup\MM$, the edge of $\theta$ with vertex $\jiaodian$ is the first arc in $\R\cup B$ next to $\eta$ in the anticlockwise order around $\jiaodian$ if $\jiaodian\in\PP$ and the tagging of $\eta$ at this end is $-1$ (see the third picture of Figure~\ref{fig:std}), or in the clockwise order otherwise (see the last picture of Figure~\ref{fig:std}).
	\end{enumerate}
	\begin{figure}[htpb]
		\begin{tikzpicture}[xscale=1.5,yscale=1.5]
			\draw[red,thick,bend right=10](0,1)to(-.8,.3)node[black]{$\bullet$};
			\draw[red,thick,bend left=10](0,1)to(.8,.3);
			\draw[blue,thick,bend right](-.8,.3)to(.8,.3)node[black]{$\bullet$};
			\draw[orange,bend right](-.3,.8)tonode[below]{$\theta$}(.3,.8);
			\draw[blue,thick](0,0)[below]node{$\delta$};
			\draw[blue](-.68,.24)node[rotate=15]{$+$};
			\draw(0,1)node{$\bullet$};
		\end{tikzpicture}\quad
		\begin{tikzpicture}[xscale=1.5,yscale=1.5]
			\draw[red,thick,bend right=10](0,1)to(-1,0);
			\draw[red,thick,bend left=10](0,1)to(1,0);
			\draw[blue,thick,bend right](-.8,.3)to(.8,.3);
			\draw[orange,bend right](-.3,.8)tonode[below]{$\theta$}(.3,.8);
			\draw[blue,thick](0,0)[below]node{$\eta$};
			\draw(0,1)node{$\bullet$};
		\end{tikzpicture}\quad
		\begin{tikzpicture}[xscale=1.5,yscale=1.5]
			\draw[red,thick,bend right=10](0,1)to(-.8,.3)node[black]{$\bullet$};
			\draw[blue](-.68,.24)node[rotate=15]{$+$};
			\draw[red,thick,bend left=10](0,1)to(1,0);
			\draw[blue,thick,bend right](-.8,.3)to(.8,.3);
			\draw[orange,bend right](-.3,.8)tonode[below]{$\theta$}(.3,.8);
			\draw[blue,thick](0,0)[below]node{$\eta$};
			\draw(0,1)node{$\bullet$}(-.9,.5)node[black]{$\jiaodian$};
		\end{tikzpicture}\quad
		\begin{tikzpicture}[xscale=1.5,yscale=1.5]
			\draw[red,thick,bend right=10](0,1)to(-1,0);
			\draw[red,thick,bend left=10](0,1)to(.8,.3)node[black]{$\bullet$};
			\draw[blue,thick,bend right](-.8,.3)to(.8,.3);
			\draw[orange,bend right](-.3,.8)tonode[below]{$\theta$}(.3,.8);
			\draw[blue,thick](0,0)[below]node{$\eta$};
			\draw(0,1)node{$\bullet$}(.9,.5)node[black]{$\jiaodian$};
		\end{tikzpicture}
		\caption{Arc segments of standard tagged arcs}\label{fig:std}
	\end{figure}
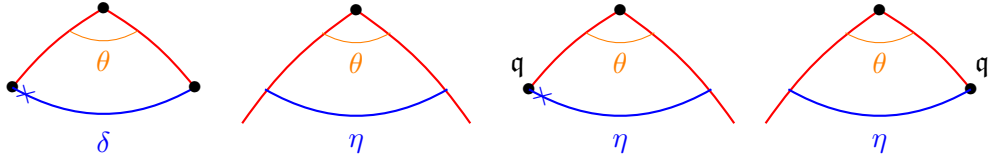
	
	Dually, a tagged arc $\delta\in\TA(\surf)$ is called \emph{$\R$-co-standard} if and only if one of the following holds.
	\begin{enumerate}
		\item  $\delta\in\R^\times$ or $\delta$ is adjoint to some arc in $\R^\times$.
		\item $\delta$ itself is an arc segment and cuts out an angle $\theta$ between two arcs in $\R\cup B$, such that the tagging of $\delta$ has the form as shown in the first picture of Figure~\ref{fig:cstd}, where the right endpoint of $\delta$ is a puncture with tagging $-1$, and the left endpoint of $\delta$ is either a marked point or a puncture with tagging $1$.
		\item $\delta$ is divided by $\R$ into at least two arc segments, satisfying that each arc segment $\eta$ cuts out an angle $\theta$ between two arcs in $\R\cup B$, and in case $\eta$ having an endpoint $\jiaodian\in\PP\cup\MM$, the edge of $\theta$ which is incident to the vertex $\jiaodian$ is the first arc in $\R\cup B$ next to $\eta$ in the clockwise order around $\jiaodian$ if $\jiaodian\in\PP$ and the tagging of $\eta$ at this end is $-1$, or in the anticlockwise order otherwise. See the last three pictures of Figure~\ref{fig:cstd}.
	\end{enumerate}
	\begin{figure}[htpb]
		\begin{tikzpicture}[xscale=1.5,yscale=1.5]
			\draw[red,thick,bend right=10](0,1)to(-.8,.3)node[black]{$\bullet$};
			\draw[red,thick,bend left=10](0,1)to(.8,.3);
			\draw[blue,thick,bend right](-.8,.3)to(.8,.3)node[black]{$\bullet$};
			\draw[orange,bend right](-.3,.8)tonode[below]{$\theta$}(.3,.8);
			\draw[blue,thick](0,0)[below]node{$\delta$};
			\draw[blue](.68,.24)node[rotate=-15]{$+$};
			\draw(0,1)node{$\bullet$};
		\end{tikzpicture}\quad
		\begin{tikzpicture}[xscale=1.5,yscale=1.5]
			\draw[red,thick,bend right=10](0,1)to(-1,0);
			\draw[red,thick,bend left=10](0,1)to(1,0);
			\draw[blue,thick,bend right](-.8,.3)to(.8,.3);
			\draw[orange,bend right](-.3,.8)tonode[below]{$\theta$}(.3,.8);
			\draw[blue,thick](0,0)[below]node{$\eta$};
			\draw(0,1)node{$\bullet$};
		\end{tikzpicture}\quad
		\begin{tikzpicture}[xscale=1.5,yscale=1.5]
			\draw[red,thick,bend right=10](0,1)to(-1,0);
			\draw[red,thick,bend left=10](0,1)to(.8,.3)node[black]{$\bullet$};
			\draw[blue](.68,.24)node[rotate=-15]{$+$};
			\draw[blue,thick,bend right](-.8,.3)to(.8,.3);
			\draw[orange,bend right](-.3,.8)tonode[below]{$\theta$}(.3,.8);
			\draw[blue,thick](0,0)[below]node{$\eta$};
			\draw(0,1)node{$\bullet$}(.9,.5)node[black]{$\jiaodian$};
		\end{tikzpicture}\quad
		\begin{tikzpicture}[xscale=1.5,yscale=1.5]
			\draw[red,thick,bend right=10](0,1)to(-.8,.3)node[black]{$\bullet$};
			\draw[red,thick,bend left=10](0,1)to(1,0);
			\draw[blue,thick,bend right](-.8,.3)to(.8,.3);
			\draw[orange,bend right](-.3,.8)tonode[below]{$\theta$}(.3,.8);
			\draw[blue,thick](0,0)[below]node{$\eta$};
			\draw(0,1)node{$\bullet$}(-.9,.5)node[black]{$\jiaodian$};
		\end{tikzpicture}
		\caption{Arc segments of co-standard tagged arcs}\label{fig:cstd}
	\end{figure}

    Let $\R$ be a partial tagged triangulation of $\surf$. A tagged arc $\delta\in\TA(\surf)$ is called \emph{$\R$-standard} (resp. \emph{$\R$-co-standard}) if $\delta^\R$ is $\R^\circ$-standard (resp. $\R^\circ$-co-standard). Denote by $\stTA{\R}$ (resp. $\cstTA{\R}$) the set of $\R$-standard (resp. $\R$-co-standard) tagged arcs.
\end{definition}

\begin{remark}\label{rmk:std}
    In Definition~\ref{def:standard ideal}, since $\delta$ is a tagged arc, the two edges of $\theta$ are not from the same arc in $\R$, unless in the second pictures of Figures~\ref{fig:std} and \ref{fig:cstd}, where they may be the different ends of an arc in $\R$.
\end{remark}


\begin{figure}[htpb]
    \begin{tikzpicture}[xscale=1.5,yscale=1.6]
        \draw[ultra thick,bend left=10](-1.05,.4)to(-1.05,-.4) ;
        \draw[ultra thick,bend left=10](1.05,-.4)to(1.05,.4);
        \draw[red,thick,bend right=10](-1,0)to(1,0);
        
		\draw[blue,thick,bend right=10](1,0)to(-1,0)node[black]{$\bullet$};
		\draw[blue,thick,dashed](-1.04,-.3)to[out=0,in=180](1.04,.3);
		\draw(-1,0)node{$\bullet$}(-.15,.25)node[blue]{$\delta$}(-.15,-.3)node[blue]{$e(\delta)$};
		\draw(1,0)node{$\bullet$};
		
		\draw[red,thick,dashed](-.6,-.4)to(-1,0)to(-.6,.4) (.6,-.4)to(1,0)to(.6,.4);
	\end{tikzpicture}\qquad
	\begin{tikzpicture}[xscale=1.5,yscale=1.6]
		\draw[ultra thick,bend right=10](-1.05,-.4)to(-1.05,.4) ;
		\draw[red,thick,bend right=10](-1,0)to(1,0);
		\draw(-.15,.25)node[blue]{$\delta$}(.7,.4)node[blue]{$e(\delta)$}(-1,0)node{$\bullet$}(1,0)node[black]{$\bullet$};
		\draw[blue,thick,bend left=10](-1,0)to(1,0);
		
		\draw[blue,thick,dashed](-1.04,-.3)to[out=0,in=180](1,.25)to[out=0,in=90](1.2,0)to[out=-90,in=0](1,-.15)to[out=180,in=-90](.85,0)to[out=90,in=180](1,.15)to[out=0,in=90](1.13,0);
		\draw[red,thick,dashed](-.6,-.4)to(-1,0)to(-.6,.4) (1.4,-.4)to(1,0)to(1.4,.4);
	\end{tikzpicture}\qquad
	\begin{tikzpicture}[xscale=1.5,yscale=1.6]
        \draw[red,thick,bend right=10](-1,0)to(1,0);
        \draw(-.15,.25)node[blue]{$\delta$}(.7,.4)node[blue]{$e(\delta)$}(-1,0)node{$\bullet$}(1,0)node[black]{$\bullet$}(-1,-.4)node{};
        \draw[blue,thick,bend left=10](-1,0)to(1,0);
        \draw[red,thick,dashed](-1.4,-.4)to(-1,0)to(-1.4,.4) (1.4,-.4)to(1,0)to(1.4,.4);
        
		\draw[blue,thick,dashed](-1.1,0)to[out=-90,in=180](-1,-.15)to[out=0,in=-90](-.8,0)to[out=90,in=0](-1,.2)to[out=180,in=90](-1.2,0)to[out=-90,in=180](-1,-.25)to[out=0,in=180](1,.25)to[out=0,in=90](1.2,0)to[out=-90,in=0](1,-.15)to[out=180,in=-90](.85,0)to[out=90,in=180](1,.15)to[out=0,in=90](1.1,0);
	\end{tikzpicture}

	\begin{tikzpicture}[xscale=2,yscale=1.6]
		\draw[ultra thick,bend right=10](-.4,1)to(.4,1);
		\draw(0,.95)node{$\bullet$};
		\draw[red,thick](0,.95)to[out=-60,in=90](.4,0)to[out=-90,in=0](0,-.5)to[out=180,in=-90](-.4,0)to[out=90,in=-120](0,.95);
		\draw[red,thick,bend right=10](0,.95)to(0,0)node[black]{$\bullet$};
		\draw[red,thick,dashed,bend right=5](0,.95)to(-.5,.4);
		\draw[red,thick,dashed,bend right=5](0,.95)to(-.5,.7);
		
		\draw[blue,thick,bend left=10](0,.95)to(0,0);
		\draw[blue](.03,.3)node{$\times$}(.15,.45)node{$\delta$}(0,-.34)node[blue]{$e(\delta)$};
		\draw[blue,thick,dashed](-.3,1)to[out=-90,in=180](0,-.2)to[out=0,in=-90](.15,0)to[out=90,in=0](0,.15)to[out=180,in=90](-.13,0)to[out=-90,in=180](0,-.13)to[out=0,in=-90](.1,0);
		\draw(0,1.2)node{};
	\end{tikzpicture}\qquad
	\begin{tikzpicture}[xscale=2,yscale=1.6]
		\draw(0,.95)node{$\bullet$};
		\draw[red,thick](0,.95)to[out=-60,in=90](.4,0)to[out=-90,in=0](0,-.5)to[out=180,in=-90](-.4,0)to[out=90,in=-120](0,.95);
		\draw[red,thick,bend right=10](0,.95)to(0,0)node[black]{$\bullet$};
		\draw[red,thick,dashed](-.4,1.1)to(0,.95)to(.4,1.1);
		
		\draw[blue,thick,bend left=10](0,.95)to(0,0);
		\draw[blue](.03,.3)node{$\times$}(.15,.45)node{$\delta$}(0,-.34)node[blue]{$e(\delta)$};
		\draw[blue,thick,dashed](0,1.05)to[out=180,in=90](-.1,.95)to[out=-90,in=180](0,.8)to[out=0,in=-90](.15,1)to[out=90,in=0](0,1.15)to[out=180,in=90](-.2,1)to[out=-90,in=90](-.2,0)to[out=-90,in=180](0,-.2)to[out=0,in=-90](.15,0)to[out=90,in=0](0,.15)to[out=180,in=90](-.13,0)to[out=-90,in=180](0,-.13)to[out=0,in=-90](.1,0);
		\draw(0,1.2)node{};
	\end{tikzpicture}\qquad
	\begin{tikzpicture}[xscale=2,yscale=1.6]
		\draw[ultra thick,bend right=10](-.4,1)to(.4,1);
		\draw(0,.95)node{$\bullet$};
		\draw[red,thick](0,.95)to[out=-60,in=90](.4,0)to[out=-90,in=0](0,-.5)to[out=180,in=-90](-.4,0)to[out=90,in=-120](0,.95);
		\draw[red,thick,bend right=10](0,.95)to(0,0)node[black]{$\bullet$};
		\draw[red,thick,dashed,bend right=5](0,.95)to(-.5,.4);
		\draw[red,thick,dashed,bend right=5](0,.95)to(-.5,.7);
		
		\draw[blue,thick,bend left=10](0,.95)to(0,0);
		\draw[blue](.15,.45)node{$\delta$}(0,-.34)node[blue]{$e(\delta)$};
		\draw[blue,thick,dashed](-.3,1)to[out=-90,in=90](.15,0)to[out=-90,in=0](0,-.2)to[out=180,in=-90](-.15,0)to[out=90,in=180](0,.15)to[out=0,in=90](.1,0)to[out=-90,in=0](0,-.13)to[out=180,in=-90](-.1,0);        
		\draw(0,1.2)node{};
	\end{tikzpicture}\qquad
	\begin{tikzpicture}[xscale=2,yscale=1.6]
		\draw(0,.95)node{$\bullet$};
		\draw[red,thick](0,.95)to[out=-60,in=90](.4,0)to[out=-90,in=0](0,-.5)to[out=180,in=-90](-.4,0)to[out=90,in=-120](0,.95);
		\draw[red,thick,bend right=10](0,.95)to(0,0)node[black]{$\bullet$};
		\draw[red,thick,dashed](-.4,1.1)to(0,.95)to(.4,1.1);
		
		\draw[blue,thick,bend left=10](0,.95)to(0,0);
		\draw[blue](.15,.45)node{$\delta$}(0,-.34)node[blue]{$e(\delta)$};
		\draw[blue,thick,dashed](0,1.05)to[out=180,in=90](-.1,.95)to[out=-90,in=180](0,.8)to[out=0,in=-90](.15,1)to[out=90,in=0](0,1.15)to[out=180,in=90](-.2,1)to[out=-90,in=90](.15,0)to[out=-90,in=0](0,-.2)to[out=180,in=-90](-.15,0)to[out=90,in=180](0,.15)to[out=0,in=90](.1,0)to[out=-90,in=0](0,-.13)to[out=180,in=-90](-.1,0);  
		\draw(0,1.2)node{};
	\end{tikzpicture}
    \\
    \begin{tikzpicture}[xscale=1.5,yscale=1.6]
    	\draw[red,thick,bend right=10](-1,0)to(1,0);
    	\draw(.2,.25)node[blue]{$\delta$}(-.5,.4)node[blue]{$e(\delta)$}(-1,0)node{$\bullet$}(1,0)node[black]{$\bullet$};
    	\draw[blue,thick,bend left=10](-1,0)to(1,0);
    	\draw[blue](-.7,.05)node[rotate=10]{$\times$}(.7,.05)node[rotate=-10]{$\times$}(0,-.6)node{};
    	\draw[red,thick,dashed](-1.4,-.4)to(-1,0)to(-1.4,.4) (1.4,-.4)to(1,0)to(1.4,.4);
    	
    	\draw[blue,thick,dashed](-1.1,0)to[out=90,in=180](-1,.15)to[out=0,in=90](-.85,0)to[out=-90,in=0](-1,-.2)to[out=180,in=-90](-1.2,0)to[out=90,in=180](-1,.25)to[out=0,in=180](1,-.25)to[out=0,in=-90](1.2,0)to[out=90,in=0](1,.2)to[out=180,in=90](.85,0)to[out=-90,in=180](1,-.15)to[out=0,in=-90](1.1,0);
    \end{tikzpicture}\qquad
    \begin{tikzpicture}[xscale=2,yscale=1.6]
    	\draw(0,.95)node{$\bullet$};
    	\draw[red,thick](0,.95)to[out=-60,in=90](.4,0)to[out=-90,in=0](0,-.5)to[out=180,in=-90](-.4,0)to[out=90,in=-120](0,.95);
    	\draw[red,thick,bend left=10](0,.95)to(0,0)node[black]{$\bullet$};
    	\draw[red,thick,dashed](-.4,1.1)to(0,.95)to(.4,1.1);
    	
    	\draw[blue,thick,bend right=10](0,.95)to(0,0);
    	\draw[blue](-.06,.6)node{$\times$}(-.17,.45)node{$\delta$}(0,-.34)node[blue]{$e(\delta)$};
    	\draw[blue,thick,dashed](0,1.1)to[out=0,in=90](.1,.95)to[out=-90,in=0](0,.8)to[out=180,in=-90](-.15,1)to[out=90,in=180](0,1.15)to[out=0,in=90](.15,1)to[out=-90,in=90](.15,0.05)to[out=-90,in=0](0,-.2)to[out=180,in=-90](-.15,0)to[out=90,in=180](0,.15)to[out=0,in=90](.1,0)to[out=-90,in=0](0,-.13)to[out=180,in=-90](-.1,0);
    	\draw(0,1.2)node{};
    \end{tikzpicture}\qquad
	\begin{tikzpicture}[xscale=2,yscale=1.6]
		\draw(0,.95)node{$\bullet$};
		\draw[red,thick](0,.95)to[out=-60,in=90](.4,0)to[out=-90,in=0](0,-.5)to[out=180,in=-90](-.4,0)to[out=90,in=-120](0,.95);
		\draw[red,thick,bend left=10](0,.95)to(0,0)node[black]{$\bullet$};
		\draw[red,thick,dashed](-.4,1.1)to(0,.95)to(.4,1.1);
		
		\draw[blue,thick,bend right=10](0,.95)to(0,0);
		\draw[blue](-.06,.6)node{$\times$}(-.05,.25)node{$\times$}(-.17,.45)node{$\delta$}(0,-.34)node[blue]{$e(\delta)$};
		\draw[blue,thick,dashed](0,1.1)to[out=0,in=90](.1,.95)to[out=-90,in=0](0,.8)to[out=180,in=-90](-.15,1)to[out=90,in=180](0,1.15)to[out=0,in=90](.15,1)to[out=-90,in=90](-.15,0.05)to[out=-90,in=180](0,-.2)to[out=0,in=-90](.15,0)to[out=90,in=0](0,.15)to[out=180,in=90](-.1,0)to[out=-90,in=180](0,-.13)to[out=0,in=-90](.1,0);
		\draw(0,1.2)node{};
	\end{tikzpicture}
    \\
	\begin{tikzpicture}[xscale=1.5,yscale=1.6]
		\draw[red,thick,bend right=10](0,1)node[black]{$\bullet$}to(-.8,.15)node[black]{$\bullet$};
		\draw[red,thick,bend left=10](0,1)to(.8,.15)node[black]{$\bullet$};
		\draw[blue,thick,dashed](-.8,.05)to[out=180,in=-90](-.9,.15)to[out=90,in=180](-.8,.3)to[out=0,in=90](-.6,.15)to[out=-90,in=0](-.8,-.05)to[out=180,in=-90](-1,.15)to[out=90,in=180](-.8,.35)to[out=0,in=180](.8,.35)to[out=0,in=90](1,.15)to[out=-90,in=0](.8,0)to[out=180,in=-90](.65,.15)to[out=90,in=180](.8,.3)to[out=0,in=90](.9,.15)to[out=-90,in=0](.8,.05);
		\draw(0,-.15)node[blue]{$\delta$}(-.5,.09)node[blue,rotate=-8]{$\times$};
		\draw[blue,thick,bend right=10](-.8,.15)to(.8,.15);
		\draw[red,thick,dashed](-1,.5)to(-.8,.15)to(-1,-.2) (1,.5)to(.8,.15)to(1,-.2);
		
		\draw(.8,-.1)node{$\jiaodian$}(.65,.35)node{$\bullet$}(.78,.5)node{$q$}(.4,.8)node[red]{$\gamma$}(0,.52)node[blue]{$e(\delta)$};
	\end{tikzpicture}\qquad
	\begin{tikzpicture}[xscale=1.5,yscale=1.6]
	    \draw[ultra thick,bend right=15]plot(.88,.5)to(.88,0);
		\draw[red,thick,bend right=10](0,1)node[black]{$\bullet$}to(-.8,.15)node[black]{$\bullet$};
		\draw[red,thick,bend left=10](0,1)to(.8,.15)node[black]{$\bullet$};
		\draw[blue,thick,dashed](-.8,.05)to[out=180,in=-90](-.9,.15)to[out=90,in=180](-.8,.3)to[out=0,in=90](-.6,.15)to[out=-90,in=0](-.8,-.05)to[out=180,in=-90](-1,.15)to[out=90,in=180](-.8,.35)to[out=0,in=180](.85,.35);
		\draw(0,-.15)node[blue]{$\delta$}(-.5,.09)node[blue,rotate=-8]{$\times$};
		\draw[blue,thick,bend right=10](-.8,.15)to(.8,.15);
		\draw[red,thick,dashed](-1,.5)to(-.8,.15)to(-1,-.2) (.8,.15)to(.75,.7);
		
		\draw(.7,-.05)node{$\jiaodian$}(.65,.35)node{$\bullet$}(.53,.22)node{$q$}(.4,.8)node[red]{$\gamma$}(0,.52)node[blue]{$e(\delta)$};
	\end{tikzpicture}\qquad
	\begin{tikzpicture}[xscale=1.5,yscale=1.6]
		\draw[red,thick,bend right=10](0,1)to(-.9,0);
		\draw[red,thick,bend left=10](0,1)to(.9,0);
		\draw[blue,thick,dashed,bend right=10](-.7,.3)to(.7,.3);
		\draw(0,-.15)node[blue]{$\delta$};
		\draw(0,1)node{$\bullet$};
		\draw[blue,thick,bend right=10](-.8,.15)to(.8,.15);
		
		\draw(.68,0)node{$\jiaodian$}(.7,.3)node{$\bullet$}(.78,.15)node{$\bullet$}(.8,.4)node{$q$}(.4,.8)node[red]{$\gamma$}(0,.38)node[blue]{$e(\delta)$};
	\end{tikzpicture}
    \\
	\begin{tikzpicture}[xscale=-1.5,yscale=1.5]
        \draw[ultra thick,bend left=10]plot(-.5,.5)to(-.5,-.2);
        \draw(-.4,-.2)node{$\jiaodian$}(-.2,.3)node{$\bullet$}(-.08,.1)node{$q$};
        \draw[red,thick,dashed,bend right=10](-.45,0)to(-.35,.7);
        
        \draw[red,thick,bend left=10](-.45,0)node[black]{$\bullet$}to(.5,1)node[black]{$\bullet$};
        \draw[red,thick,bend left=10](.5,1)to(1,-.2);
        \draw[blue,thick,bend left=10](1,0)to(-.45,0);
        \draw[blue,thick,dashed,bend left=10](.9,.2)to(-.5,.4);
        \draw(.3,-.25)node[blue]{$\delta$}(0,.75)node[red]{$\gamma$}(.35,.45)node[blue]{$e(\delta)$};
    \end{tikzpicture}\qquad
	\begin{tikzpicture}[xscale=1.5,yscale=1.7]
		\draw[red,thick,bend right=10](0,1)to(-1,0);
		\draw[red,thick,bend left=10](0,1)to(.5,.3)node[black]{$\bullet$};
		\draw[blue,thick,bend right=10](-.8,.3)to(.5,.3);
		\draw(0,1)node{$\bullet$};
		\draw(-.2,.1)node[blue]{$\delta$};
		\draw[red,thick,dashed](1,1)to(.5,.3)to(1,-.2);
		\draw[red,thick,dashed](.5,.3)to(0.2,-.2);
		\draw[blue,thick,dashed](-.6,.45)to[out=10,in=180](.5,.6)to[out=0,in=90](.8,.3)to[out=-90,in=0](.5,0)to[out=180,in=-90](.2,.3)to[out=90,in=-180](.5,.55)to[out=0,in=90](.75,.3)to[out=-90,in=0](.5,.05)to[out=180,in=-90](.25,.3);
		\draw[blue,thick,dotted](.25,.3)to[out=90,in=180](.5,.5);
		
		\draw(.63,.3)node{$\jiaodian$}(.32,.6)node{$\bullet$}(.4,.75)node{$q$}(.25,.85)node[red]{$\gamma$}(1.1,.3)node[blue]{$e(\delta)$};
	\end{tikzpicture}\quad
	\begin{tikzpicture}[xscale=1.5,yscale=1.7]
		\draw[red,thick,bend right=10](0,1)to(-.5,.3)node[black]{$\bullet$};
		\draw[blue](-.36,.28)node{$\times$};
		\draw[red,thick,bend left=10](0,1)to(1,0);
		\draw[blue,thick,bend right=10](-.5,.3)to(.8,.3);
		\draw(0,1)node{$\bullet$};
		\draw(.2,.1)node[blue]{$\delta$};
		\draw[red,thick,dashed](-1,1)to(-.5,.3)to(-1,-.2);
		\draw[red,thick,dashed](-.5,.3)to(-.2,-.2);
		\draw[blue,thick,dashed](.6,.45)to[out=170,in=0](-.5,.6)to[out=180,in=90](-.8,.3)to[out=-90,in=180](-.5,0)to[out=0,in=-90](-.2,.3)to[out=90,in=0](-.5,.55)to[out=180,in=90](-.75,.3)to[out=-90,in=180](-.5,.05)to[out=0,in=-90](-.25,.3);
		\draw[blue,thick,dotted] (-.25,.3)to[out=90,in=0](-.5,.5);
		
		\draw(-.63,.3)node{$\jiaodian$}(-.32,.6)node{$\bullet$}(-.4,.78)node{$q$}(.4,.8)node[red]{$\gamma$}(-1.1,.3)node[blue]{$e(\delta)$};
	\end{tikzpicture}
	\caption{The elementary laminate of a standard tagged arc}
	\label{fig:lam of std}
\end{figure}
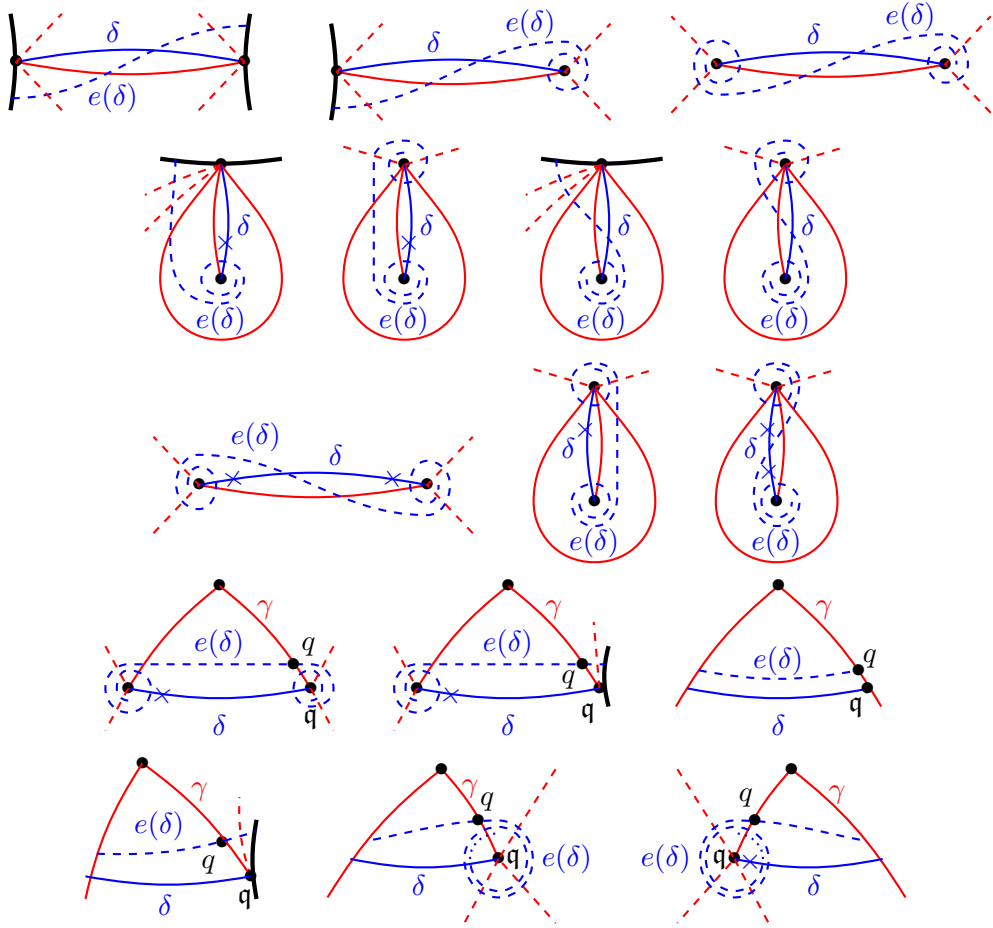

In the following, we give sufficient and necessary conditions on tagged arcs to be standard/co-standard.

\begin{proposition}\label{lem:=}
    Let $\R$ be a partial tagged triangulation of $\surf$ and $\delta$ a tagged arc. Then the following are equivalent.
    \begin{enumerate}
        \item[(1)] $\delta$ is $\R$-standard (resp. $\R$-co-standard).
        \item[(2)] $e(\delta)$ (resp. $e^{op}(\delta)$) shears $\R$ (see Definition~\ref{def:shear}).
        \item[(3)] For some/any tagged triangulation $\T$ such that $\R\subset\T$, we have $b_{\gamma,\T}(e(\delta))=0$ (resp. $b_{\gamma,\T}(e^{op}(\delta))=0$) for any $\gamma\in\T\setminus\R$.
        \item[(4)] $X(\delta)\in X(\R)\ast X(\R)[1]$.
    \end{enumerate}
\end{proposition}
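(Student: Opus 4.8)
The plan is to prove the chain of equivalences $(1)\Leftrightarrow(2)\Leftrightarrow(3)\Leftrightarrow(4)$ for the ``standard'' assertions; the ``co-standard'' ones then follow by running every argument verbatim with $e(\delta)$ replaced by the co-elementary laminate $e^{op}(\delta)$ and each clockwise order replaced by the anticlockwise one, since the defining data of co-standard arcs and of $e^{op}$ are mirror-symmetric to those of standard arcs and of $e$. The step $(3)\Leftrightarrow(4)$ is essentially Corollary~\ref{cor:iff}: for the standard version it is exactly that statement, and for the co-standard version one rewrites $b_{\gamma,\T}(e^{op}(\delta))=b_{\gamma,\T}(e(\rho(\delta)))$ using Remark~\ref{rmk:erho} and $X(\rho(\delta))=X(\delta)[1]$ using Theorem~\ref{thm:QZ}~(1), so that Corollary~\ref{cor:iff} applied to $\rho(\delta)$ turns $(3')$ into the condition $X(\delta)[1]\in X(\R)\ast X(\R)[1]$, i.e.\ $X(\delta)\in X(\R)[-1]\ast X(\R)$. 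The ``some/any $\T$'' clause of $(3)$ will drop out of $(2)\Leftrightarrow(3)$, because $(2)$ does not mention $\T$ and a completion $\T\supset\R$ always exists by Remark~\ref{rmk:good completion}.

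So the substance is in $(1)\Leftrightarrow(2)$ and $(2)\Leftrightarrow(3)$, and in both I would first reduce to a partial ideal triangulation. By definition ``$\delta$ is $\R$-standard'' means ``$\delta^\R$ is $\R^\circ$-standard'', while ``$e(\delta)$ shears $\R$'' means that every segment of $e(\delta)^\R=e(\delta^\R)$ cut out by $\R^\circ$ bounds an angle of $\R^\circ\cup B$, i.e.\ that $e(\delta^\R)$ shears the partial ideal triangulation $\R^\circ$. For $(3)$, by Remark~\ref{rmk:good completion} we may choose the completion $\T$ so that $\delta^\T=\delta^\R$, $\gamma^{\circ_\T}=\gamma^{\circ_\R}$ for $\gamma\in\R$, and hence $\R^\circ\subset\T^\circ$, giving $b_{\gamma,\T}(e(\delta))=b_{\gamma^{\circ_\T},\T^\circ}(e(\delta^\R))$. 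After these reductions everything takes place on the partial ideal triangulation $\R^\circ$ sitting inside the ideal triangulation $\T^\circ$, where shear coordinates and shearing have their primitive meaning.

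For $(1)\Leftrightarrow(2)$ (so $\R$ is now a partial ideal triangulation and $\delta=\delta^\R$) I would go through the three cases of Definition~\ref{def:standard ideal} and draw $e(\delta)$ in each, as in Figure~\ref{fig:lam of std}. In case~(1) the laminate $e(\delta)$ runs in a small neighbourhood of an arc of $\R^\circ$ and meets $\R^\circ$ only in the spirals or boundary shifts near the endpoints, which one checks always bound angles of $\R^\circ\cup B$. In cases~(2) and~(3) the arc segments of $\delta$ with respect to $\R^\circ$ are in bijection with those of $e(\delta)$, and the content is that the tagging prescriptions in Definition~\ref{def:standard ideal} are exactly what put the boundary end of $e(\delta)$ on the clockwise side of its marked point and make the punctured end spiral clockwise when $\kappa_\delta=1$ and anticlockwise when $\kappa_\delta=-1$; combined with the anticlockwise/clockwise rule for the edge of the angle $\theta$ at $\jiaodian$, this says precisely that each segment of $e(\delta)$ cut by $\R^\circ$ bounds an angle of $\R^\circ\cup B$, and reading the same correspondence backwards gives the converse. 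Matching each clause of the combinatorial definition against one feature of the laminate, uniformly over all the boundary, puncture and self-folded configurations, is the most delicate and lengthy part, and is the main obstacle.

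For $(2)\Leftrightarrow(3)$ write $L=e(\delta)$ and compare $\R^\circ$ with $\T^\circ$. If $L$ shears $\R^\circ$, then inside each region of $\surf\setminus\R^\circ$ the laminate is a disjoint union of segments each hugging a single vertex $v$; subdividing that region by the arcs of $\T^\circ\setminus\R^\circ$, such a segment crosses only the fan of arcs at $v$, and at each crossing $q$ with an arc $\gamma$ of that fan the two angles cut by $L$ immediately on either side of $q$ both sit at $v$, so $b_{q,\gamma,\T^\circ}(L)=0$ (the third case of Figure~\ref{fig:shear}); hence $b_{\gamma,\T^\circ}(L)=0$ for every $\gamma\in\T^\circ\setminus\R^\circ$. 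Conversely, $L$ always shears $\T^\circ$, so if all of these vanish then by Remark~\ref{rem:total} every contribution $b_{q,\gamma,\T^\circ}(L)$ vanishes, which forces $L$ to hug one endpoint of each $\gamma\in\T^\circ\setminus\R^\circ$ it crosses; following $L$ through a region of $\surf\setminus\R^\circ$ the hugged vertex is then constant, so the $\T^\circ$-corner segments concatenate into a single $\R^\circ$-corner segment and $L$ shears $\R^\circ$. The folded side $\epsilon'$ of a self-folded triangle has to be handled through the defining identity $b_{\epsilon',\R^\circ}(L)=b_{\epsilon,\R^\circ}(L^{(p)})$, but the same hugging argument applies after reversing the spirals of $L$ at $p$. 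Since the intermediate condition does not involve the particular $\T^\circ$, this also yields the ``some/any'' form of $(3)$.
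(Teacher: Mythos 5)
Your reductions, your proof of $(3)\Leftrightarrow(4)$ via Corollary~\ref{cor:iff} (including the correct identification of the co-standard version of (4) as $X(\delta)\in X(\R)[-1]\ast X(\R)$ via Remark~\ref{rmk:erho}), your case analysis for $(1)\Rightarrow(2)$, and your argument for $(2)\Rightarrow(3)$ all match the paper. Where you diverge is in closing the cycle: the paper proves $(3)\Rightarrow(1)$ directly, whereas you propose $(3)\Rightarrow(2)\Rightarrow(1)$. Your $(3)\Rightarrow(2)$ step --- vanishing of all contributions via Remark~\ref{rem:total}, then concatenating $\T^\circ$-corner segments that hug a common vertex into a single $\R^\circ$-corner segment --- is a legitimate alternative and arguably cleaner than what it replaces.

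The genuine gap is in $(2)\Rightarrow(1)$, which in your architecture carries the entire burden that the paper assigns to its long proof of $(3)\Rightarrow(1)$, and which you dispatch by ``reading the same correspondence backwards.'' This does not work as stated, for two reasons. First, the claimed bijection between arc segments of $\delta$ and of $e(\delta)$ fails precisely in the hard cases: when $\delta$ has a punctured endpoint, $e(\delta)$ spirals and crosses every arc of $\R$ incident to that puncture, producing segments of $e(\delta)$ with no counterpart on $\delta$; and when $\delta$ has no interior intersection with $\R^\circ$ at all, $\delta$ is a single segment while $e(\delta)$ may be cut into many. Second, in that intersection-free case the conclusion you must reach is that $\delta$ lies in $\R^\times$, or is adjoint to an arc of $\R^\times$, or has the very specific form of Definition~\ref{def:standard ideal}(2); none of this is visible from a segment-by-segment dictionary. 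This is exactly where the paper's proof does most of its work (subcases (i)--(iii) around Figure~\ref{fig:one-int}), and it does so by shear-coordinate computations such as $b_{\delta,\T_1}(e(\delta))=-1$ from Corollary~\ref{cor:shear} to force $\delta\in\R$, and by ruling out the configuration $\delta=\rho(\delta_0)$ with $\delta_0$ forced into $\R$ --- arguments that use the quantitative information in (3) and do not obviously translate into the purely qualitative shearing condition (2). You would need to supply genuinely new geometric arguments for these cases (e.g., showing directly that if $\delta$ is compatible with $\R^\times$ but not in it and not of type (2), then some segment of $e(\delta)$ fails to cut out a corner of $\R^\circ\cup B$), or else revert to proving $(3)\Rightarrow(1)$ as the paper does.
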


\begin{proof}
    We only show the equivalences between statements for the $\R$-standard case, since the $\R$-co-standard case can be shown dually. By Corollary~\ref{cor:iff}, (4) is equivalent to (3). By definition, (1) is equivalent to that $\delta^\R$ is $\R^\circ$-standard, (2) is equivalent to that $e(\delta^\R)$ shears $\R^\circ$. By Remark~\ref{rmk:good completion}, (3) is equivalent to that for some/any tagged triangulation $\T$ satisfying the condition in Remark~\ref{rmk:good completion} and $\R\subset\T$, we have $b_{\gamma^\circ,\T^\circ}(e(\delta^\R))=0$ for any $\gamma\in\T\setminus\R$. So we may assume that $\R$ is a partial ideal triangulation.  

    ``(1)$\Rightarrow$(2)": We list all the possible cases of arc segments of an $\R$-standard tagged arc $\delta$ and its elementary laminate $e(\delta)$ in Figure~\ref{fig:lam of std}, where the pictures in the first row are for $\delta\in\R^\times$ and $\delta^\circ$ not a side of a self-folded triangle, those in the second row are for $\delta\in\R^\times$ and $\delta^\circ$ a side of a self-folded triangle, those in the third row are for $\delta$ adjoint to $\R^\times$ but not in $\R^\times$, those in the fourth row are for the arc segments in the first two pictures of Figure~\ref{fig:std}, and those in the last row are for the arc segments in the last two pictures of Figure~\ref{fig:std}. In each case, any segment of $e(\delta)$ cuts out an angle between two arcs in $\R\cup B$. Hence (2) holds.
    
    ``(2)$\Rightarrow$(3)": By (2), $e(\delta)$ is divided by $\R$ into segments, each of which cuts out an angle between two arcs in $\R\cup B$. Let $\T$ be an ideal triangulation containing $\R$, $\gamma\in\T\setminus\R$ and $q$ an intersection between $\gamma$ and a segment $\eta$ of $e(\delta)$. Then $\gamma$ divides the angle cut by $\eta$ into two parts. Hence $b_{q,\gamma,\T}(e(\delta))=0$ (cf. the third picture in Figure~\ref{fig:shear}). Thus, we have $b_{\gamma,\T}(e(\delta))=0$.
	
    ``(3)$\Rightarrow$(1)": If $\delta$ has no interior intersections with any arc in $\R$, then we have the following three subcases.
    \begin{enumerate}
        \item[(i)] $\Int(\delta,\gamma^\times)=0$ for any $\gamma\in\R$. Then there is a tagged triangulation $\T_1$ containing $\R^\times$ and $\delta$. By Corollary~\ref{cor:shear}, we have 
		$b_{\delta,\T_1}(e(\delta))=-1$. Hence by (3), we have $\delta\in\R^\times$, which implies $\delta$ is $\R$-standard.
        \item[(ii)] Exactly one endpoint of $\delta$, say $\delta(0)$, is a tagged intersection with some arc in $\R^\times$. Then $\delta(0)\neq\delta(1)$. If $\kappa_\delta(0)=1$, then there is an arc $\epsilon\in\R$ such that $\epsilon^\times(0)=\delta(0)$ and $\kappa_{\epsilon^\times}(0)=-1$. Then $\epsilon$ is a loop enclosing $\delta(0)$. Note that $\delta$ has no interior intersections with any arc in $\R$. So $\delta$ is in the once-punctured monogon enclosed by $\epsilon$. Thus, $\delta$ is homotopic to $\epsilon$. Then by the definition of tagged intersections, $\delta(1)$ is also a tagged intersection between $\delta$ and $\epsilon^\times$, a contradiction with the setting of subsection (ii). Hence $\kappa_\delta(0)=-1$. Let $\delta_0$ be the tagged arc homotopic to $\delta$ and whose tagging at each tagged end is $1$. Since $\delta_0$ has zero (if $\kappa_\delta(1)=1$) or two (if $\kappa_\delta(1)=-1$) tagged intersections with $\delta$, there are arcs in $\R$ which have $\delta(0)$ as an endpoint and is not homotopic to $\delta_0$. Take $\gamma$ to be the first arc among them next to $\delta_0$ in the anticlockwise order around $\delta(0)$. Let $\gamma'$ be the arc or the boundary segment such that $\Delta=\{\gamma',\delta_0,\gamma\}$ is a (possibly self-folded) triangle with the angle from $\delta_0$ to $\gamma$ in the anticlockwise order as an inner angle. Since $\delta_0$ has no interior intersections with arcs in $\R$, neither does $\gamma'$. Let $\T_2$ be an ideal triangulation containing $\R$, $\delta_0$ and $\gamma'$. If $\Delta$ is self-folded, since $\gamma$ is not homotopic to $\delta$ and $\delta$ is a tagged arc (which implies that $\delta$ is not the non-folded side of a self-fold triangle), we have that $\gamma'$ and $\delta_0$ are folded, and $\gamma$ is the non-folded side (cf. the last picture in Figure~\ref{fig:one-int}). So $\delta(1)$ is a tagged intersection with either $\gamma^\times$ (if $\kappa_\delta(1)=1$) or ${\gamma'}^\times=\delta_0$ (if $\kappa_\delta(1)=-1$). But since $\gamma\in\R$, by Definition~\ref{def:ideal tri}, we have $\gamma'\in\R$, a contradiction with the setting of subsection (ii). Hence the triangle $\Delta$ is not self-folded. In particular, none of $\gamma$, $\delta_0$ and $\gamma'$ is the folded side of a self-folded triangle in $\T_2$. 
        \begin{enumerate}
            \item If $\gamma'$ is a boundary segment, see the first picture in Figure~\ref{fig:one-int}, by Definition~\ref{def:standard ideal}~(2) (see the first picture in Figure~\ref{fig:std}), $\delta$ is $\R$-standard. 
            \item If $\gamma'$ is not a boundary segment and either $\delta(1)\in\MM$ or $\delta(1)\in\PP$ with $\kappa_\delta(1)=1$, see the second and the third pictures in Figure~\ref{fig:one-int}, by definition, $b_{q,\gamma',\T_2}(e(\delta))\neq 0$. Then by Remark~\ref{rem:total}, we have $b_{\gamma',\T_2}(e(\delta))\neq 0$, which implies $\gamma'\in\R$ by (3). Then by Definition~\ref{def:standard ideal}~(2) (see the first picture in Figure~\ref{fig:std}), $\delta$ is $\R$-standard. 
            \item If $\delta(1)\in\PP$ and $\kappa_\delta(1)=-1$, see the fourth picture in Figure~\ref{fig:one-int}, we have $\delta=\rho(\delta_0)$. So by Corollary~\ref{cor:shear}, $b_{\delta_0,\T_2}(e(\delta))=-1$. Then by (3), $\delta_0\in\R$, a contradiction with the setting of subsection (ii), since $\delta(1)$ is a tagged intersection between $\delta$ and $\delta^\times_0\in\R^\times$.
		\end{enumerate}
		\begin{figure}[htpb]
			\begin{tikzpicture}[scale=1.9]
				\draw[red,thick,bend left=10](-1,0)node[black]{$\bullet$}to(0,1)node[black]{$\bullet$};
				\draw[ultra thick,bend left=10](0,1)to(1,0)node[black]{$\bullet$};
				\draw(-.5,.8)node[red]{$\gamma$}(.5,.8)node{$\gamma'$}(-.65,0.05)node[blue]{$\times$};
				\draw[blue,thick,bend left=10,->-=.5,>=stealth](-1,0)to(1,0);
				\draw(0,.25)node[blue]{$\delta$}(0,-.25)node[red]{$\delta_0$};
				\draw[red,thick,bend right=10](-1,0)to(1,0);
			\end{tikzpicture}\quad
			\begin{tikzpicture}[scale=1.9]
				\draw[red,thick,bend left=10](-1,0)node[black]{$\bullet$}to(0,1)node[black]{$\bullet$};
				\draw[red,thick,bend left=10](0,1)to(1,0)node[black]{$\bullet$};
				\draw(-.5,.8)node[red]{$\gamma$}(.5,.8)node[red]{$\gamma'$}(-.65,0.05)node[blue]{$\times$};
				\draw[ultra thick,bend left=5](1.05,-.3)to(1,.5);
				\draw[blue,thick,bend left=10,->-=.5,>=stealth](-1,0)to(1,0);
				\draw(.4,.18)node[blue]{$\delta$}(0,-.25)node[red]{$\delta_0$};
				\draw[red,thick,bend right=10](-1,0)to(1,0);
				\draw[blue,thick,dashed](-.8,.2)to[out=-45,in=0](-1,-.3)to[out=180,in=180](-1,.3)to[out=0,in=180](1,.3);
				\draw(0,.4)node[blue]{$e(\delta)$}(.77,.3)node{$\bullet$}(.83,.43)node{$q$};
			\end{tikzpicture}\quad
			\begin{tikzpicture}[scale=1.9]
				\draw[red,thick,bend left=10](-1,0)node[black]{$\bullet$}to(0,1)node[black]{$\bullet$};
				\draw[red,thick,bend left=10](0,1)to(1,0)node[black]{$\bullet$};
				\draw(-.5,.8)node[red]{$\gamma$}(.5,.8)node[red]{$\gamma'$}(-.65,0.05)node[blue]{$\times$};
				\draw[blue,thick,bend left=10,->-=.5,>=stealth](-1,0)to(1,0);
				\draw(.4,.18)node[blue]{$\delta$}(0,-.25)node[red]{$\delta_0$};
				\draw[red,thick,bend right=10](-1,0)to(1,0);
				\draw[blue,thick,dashed](-.8,.2)to[out=-45,in=0](-1,-.3)to[out=180,in=180](-1,.3)to[out=0,in=180](1,.3)to[out=0,in=0](1,-.3)to[out=180,in=-135](.8,.2);
				\draw(0,.4)node[blue]{$e(\delta)$}(.77,.3)node{$\bullet$}(.83,.43)node{$q$};
			\end{tikzpicture}\quad
			\begin{tikzpicture}[scale=1.9]
				\draw[red,thick,bend left=10](-1,0)node[black]{$\bullet$}to(0,1)node[black]{$\bullet$};
				\draw[red,thick,bend left=10](0,1)to(1,0)node[black]{$\bullet$};
				\draw(-.5,.8)node[red]{$\gamma$}(.5,.8)node[red]{$\gamma'$}(-.65,0.05)node[blue]{$\times$}(.65,0.05)node[blue]{$\times$};
				\draw[blue,thick,bend left=10,->-=.6,>=stealth](-1,0)to(1,0);
				\draw(0,.2)node[blue]{$\delta$}(0,-.25)node[red]{$\delta_0$};
				\draw[red,thick,bend right=10](-1,0)to(1,0);
			\end{tikzpicture}\quad
			\begin{tikzpicture}[scale=1.9]
				\draw[red,thick,bend right=10](-1,.5)node[black]{$\bullet$}to(.8,.5)node[black]{$\bullet$};
				\draw[red,thick](-1,.5)to[out=30,in=90](1.2,.5)to[out=-90,in=-30](-1,.5);
				\draw[blue,thick,bend left=5,->-=.5,>=stealth](-1,.5)to(.8,.5);
				\draw(.8,1.1)node[red]{$\gamma$}(.2,.25)node[red]{$\gamma'\sim\delta_0$}(-.55,.53)node[blue]{$\times$}(.4,.55)node[blue]{$?$}(0,.65)node[blue]{$\delta$};
			\end{tikzpicture}
			\caption{No interior intersections but one tagged intersection}
			\label{fig:one-int}
		\end{figure}
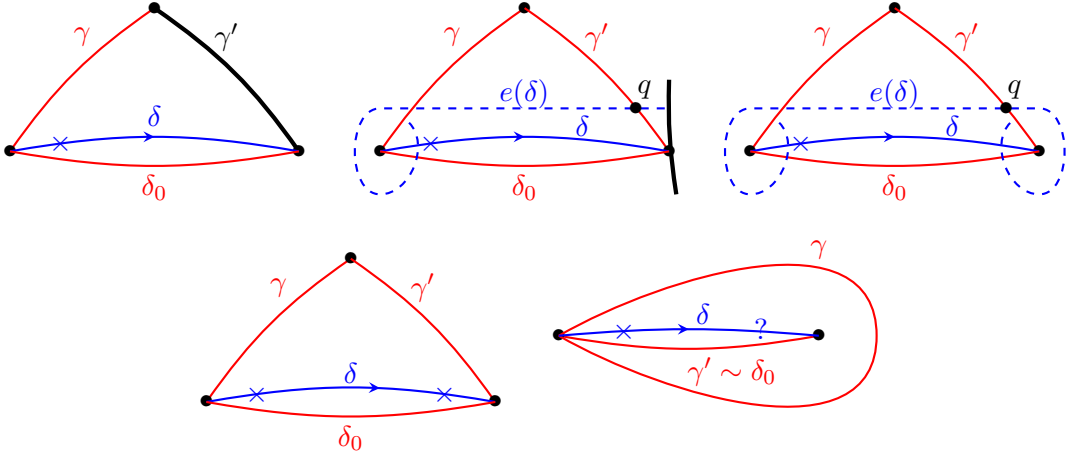
		
		\item[(iii)] Both endpoints of $\delta$ are tagged intersections between $\delta$ and some arcs in $\R^\times$. Let $\delta_0$ be the underlying arc of $\delta$. Note that $\delta_0$ has no intersections with $\R$. Let $\T_3$ be an ideal triangulation containing $\R$ and $\delta_0$. Then $b_{\delta_0,\T_3}(e(\delta))\neq 0$ by Corollary~\ref{cor:shear}, which by (3) implies $\delta_0\in\R$. So $\delta$ is adjoint to $\delta^\times_0\in\R^\times$ and hence is $\R$-standard.
	\end{enumerate}
	
    If $\delta$ has interior intersections with some arc in $\R$, then $\delta$ is divided by $\R$ into at least two arc segments. Let $\gamma$ be the first arc in $\T\cup B$ next to $\delta$ around $\delta(0)$ in the anticlockwise order if $\kappa_\delta(0)=-1$ or in the clockwise order otherwise. If $\gamma\in\T$, we have $b_{\gamma,\T}(e(\delta))\neq 0$ as shown in the pictures of the last row in Figure~\ref{fig:lam of std}. Then by (3), we have $\gamma\in\R$. So the end arc segment of $\delta$ is as shown in the last two pictures in Figure~\ref{fig:std}. For any arc segment $\eta$ of $\delta$ between two arcs $\gamma_1,\gamma_2\in\R$, let $\gamma'_0=\gamma_1,\gamma'_1,\cdots,\gamma'_{s-1},\gamma'_s=\gamma_2$ be the arcs in $\T$ which $\eta$ crosses in order. By (3), we have $b_{\gamma'_i,\T}(e(\delta))=0$ for any $1\leq i\leq s-1$. Hence $\eta$ cuts out an angle bounded by $\gamma_1$ and $\gamma_2$ (cf. the third picture in Figure~\ref{fig:shear}). So $\delta$ is $\R$-standard.
\end{proof}

\begin{remark}\label{rmk:two local tri}
    Let $\R$ be a partial ideal triangulation of $\surf$ and $\gamma\in\R$ which is not the folded side of a self-folded triangle of $\R$. Let $\delta$ be an $\R$-standard (resp. $\R$-co-standard) tagged arc which is neither in $\R^\times$ nor adjoint to some arc in $\R^\times$, and let $L=e(\delta)$ (resp. $L=e^{op}(\delta)$). We denote by $\gamma\cap \alpha=\{(t_1,t_2) \mid \gamma(t_1)=\alpha(t_2) \}$ the set of intersections between $\gamma$ and $\alpha$, where $\alpha=\delta$ or $L$. Note that $\gamma\cap\delta$ contains $\cap(\gamma,\delta)$ (see Definition~\ref{def:intersection}) as a subset. There is an injective map 
    $$\begin{array}{rccc}
        E: 
        &\gamma\cap\delta & \to & \gamma\cap L\\
        &\jiaodian & \mapsto & q \\
         & 
    \end{array}$$
    as shown in the last two rows of Figure~\ref{fig:lam of std} (for the case $L=e(\delta)$). Any intersection $q\in \gamma\cap L$ which is not in the image of $E$ does not contribute to $b_{\gamma,\R}(L)$, i.e., $b_{q,\gamma,\R}(L)=0$. For any $\jiaodian\in\gamma\cap\delta$ and its corresponding $q\in\gamma\cap L$, we have the following equivalences.
    \begin{itemize}
        \item $b_{q,\gamma,\R}(L)>0$ if and only if either $\jiaodian\in\surfi$ and we are in the situation shown in the first picture of the first row of Figure~\ref{fig:p/n int}, or $\jiaodian\in\PP$ with tagging $-1$ (resp. $\jiaodian\in\MM$ or $\PP$ with tagging $1$) and $\gamma$ is the first arc in $\R$ next to $\delta$ anticlockwise around $\jiaodian$, see 
        the first picture of the second (resp. third) row of Figure~\ref{fig:p/n int}. In each case, we call $\jiaodian$ \emph{positive}.
        \item $b_{q,\gamma,\R}(L)<0$ if and only if either $\jiaodian\in\surfi$ and we are in the situation shown in the second picture of the first row of Figure~\ref{fig:p/n int}, or $\jiaodian\in\MM$ or $\PP$ with tagging $1$ (resp. $\jiaodian\in\PP$ with tagging $-1$) and $\gamma$ is the first arc in $\R$ next to $\delta$ clockwise around $\jiaodian$, see 
        the second picture of the second (resp. third) row of Figure~\ref{fig:p/n int}. In each case, we call $\jiaodian$ \emph{negative}.
    \end{itemize}
    Any positive or negative intersection $\jiaodian\in\gamma\cap\delta$ is called \emph{alternative}.

    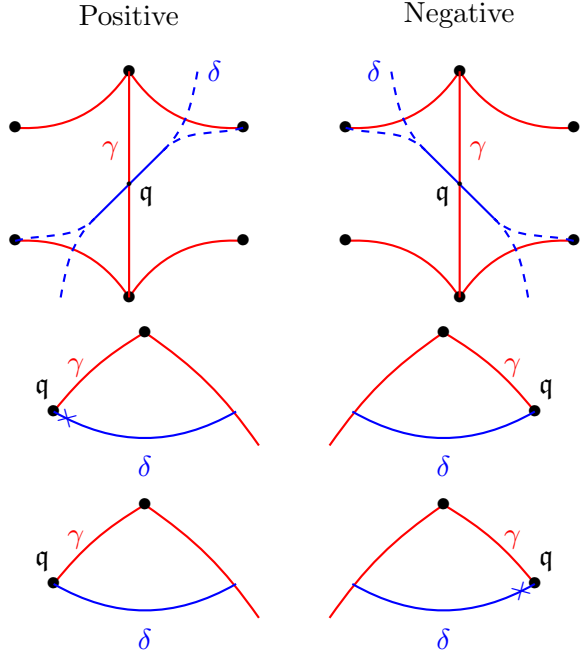
\begin{figure}[htpb]
	\begin{tikzpicture}[scale=1.5]
		\draw[red,thick,bend right](-1,.5)node[black]{$\bullet$}to(0,1)node[black]{$\bullet$}to(1,0.5)node[black]{$\bullet$} (1,-0.5)node[black]{$\bullet$}to(0,-1)node[black]{$\bullet$}to(-1,-.5)node[black]{$\bullet$};
		\draw[red,thick](0,1)to(0,-1);
  
		\draw[blue,thick](-.3,-.3)to(.3,.3);
        \draw[blue,thick,dashed](.3,.3)to[out=45,in=-100](.6,1) (-.6,-1)to[out=80,in=-135](-.3,-.3) (1,.5)to[out=-165,in=45](.3,.3) (-1,-.5)to[out=15,in=-135](-.3,-.3);
		
        \draw (0,1.5)node{Positive};
		\draw[red] (0,.3)node[left]{$\gamma$};
		\draw[blue] (.75,1)node{$\delta$};
		\draw (.15,-.1)node{$\jiaodian$};
		\draw (0,0)node{$\boldsymbol{\cdot}$};
        
	\end{tikzpicture}
	\qquad
	\begin{tikzpicture}[scale=1.5]
		\draw[red,thick,bend right](-1,.5)node[black]{$\bullet$}to(0,1)node[black]{$\bullet$}to(1,0.5)node[black]{$\bullet$} (1,-0.5)node[black]{$\bullet$}to(0,-1)node[black]{$\bullet$}to(-1,-.5)node[black]{$\bullet$};
		\draw[red,thick](0,1)to(0,-1);
		\draw[blue,thick](-.3,.3)to(.3,-.3);
        \draw[blue,thick,dashed](-.3,.3)to[out=135,in=-80](-.6,1) (.6,-1)to[out=100,in=-45](.3,-.3) (1,-.5)to[out=165,in=-45](.3,-.3) (-1,.5)to[out=-15,in=135](-.3,.3);
  
        \draw (0,1.5)node{Negative};
		\draw[red] (0,.3)node[right]{$\gamma$};
		\draw[blue] (-.75,1)node{$\delta$};
		\draw (-.15,-.1)node{$\jiaodian$};
		\draw (0,0)node{$\boldsymbol{\cdot}$};
	\end{tikzpicture}
	
	\begin{tikzpicture}[xscale=1.5,yscale=1.5]
		\draw[red,thick,bend right=10](0,1)tonode[left]{$\gamma$}(-.8,.3)node[black]{$\bullet$};
		\draw[blue](-.68,.24)node[rotate=15]{$+$};
		\draw[red,thick,bend left=10](0,1)to(1,0);
		\draw[blue,thick,bend right](-.8,.3)to(.8,.3);
		\draw[blue,thick](0,0)[below]node{$\delta$};
		\draw(0,1)node{$\bullet$}(-.9,.5)node[black]{$\jiaodian$};
	\end{tikzpicture}
	\qquad
	\begin{tikzpicture}[xscale=1.5,yscale=1.5]
		\draw[red,thick,bend right=10](0,1)to(-1,0);
		\draw[red,thick,bend left=10](0,1)tonode[right]{$\gamma$}(.8,.3)node[black]{$\bullet$};
		\draw[blue,thick,bend right](-.8,.3)to(.8,.3);
		\draw[blue,thick](0,0)[below]node{$\delta$};
		\draw(0,1)node{$\bullet$}(.9,.5)node[black]{$\jiaodian$};
	\end{tikzpicture}

	\begin{tikzpicture}[xscale=1.5,yscale=1.5]
		\draw[red,thick,bend right=10](0,1)tonode[left]{$\gamma$}(-.8,.3)node[black]{$\bullet$};
		\draw[red,thick,bend left=10](0,1)to(1,0);
		\draw[blue,thick,bend right](-.8,.3)to(.8,.3);
		\draw[blue,thick](0,0)[below]node{$\delta$};
		\draw(0,1)node{$\bullet$}(-.9,.5)node[black]{$\jiaodian$};
	\end{tikzpicture}
 \qquad
    \begin{tikzpicture}[xscale=1.5,yscale=1.5]
		\draw[red,thick,bend right=10](0,1)to(-1,0);
		\draw[red,thick,bend left=10](0,1)tonode[right]{$\gamma$}(.8,.3)node[black]{$\bullet$};
		\draw[blue](.68,.24)node[rotate=-15]{$+$};
		\draw[blue,thick,bend right](-.8,.3)to(.8,.3);
		\draw[blue,thick](0,0)[below]node{$\delta$};
		\draw(0,1)node{$\bullet$}(.9,.5)node[black]{$\jiaodian$};
	\end{tikzpicture}
	\caption{Alternative intersections of (co-)standard tagged arcs}\label{fig:p/n int}
\end{figure}
\end{remark}

\begin{definition}\label{def:dissection}
    Let $\R$ be a partial tagged triangulation of $\surf$. An \emph{$\R$-dissection} (resp. \emph{$\R$-co-dissection}) is a maximal collection $\U$ of $\R$-standard (resp. $\R$-co-standard) arcs such that $\Int(\gamma_1,\gamma_2)=0$ for any $\gamma_1,\gamma_2\in\U$. We denote by $D(\R)$ (resp. $D^{op}(\R)$) the set of $\R$-dissections (resp. $\R$-co-dissections).
\end{definition}

By definition, any $\R$-dissection is also a partial tagged triangulation of $\surf$.

\begin{theorem}\label{thm:arc and rigid}
    Let $\R$ be a partial tagged triangulation of $\surf$. The bijection $X$ in Theorem~\ref{thm:QZ} restricts to bijections
    \[X:\stTA{\R}\to\r(X(\R)\ast X(\R)[1])\ \emph{and}\ X:\cstTA{\R}\to\r(X(\R)[-1]\ast X(\R)),\]
    which induce bijections
    \begin{equation}\label{eq:bi-st}
        \begin{array}{rccc}
     & D(\R) & \to & \mr(X(\R)\ast X(\R)[1])\\
            & \U & \mapsto & \bigoplus_{\delta\in\U}X(\delta) 
    \end{array}
    \end{equation}
    and
    \begin{equation}\label{eq:bi-cst}
    \begin{array}{rccc}
	 & D^{op}(\R) & \to & \mr(X(\R)[-1]\ast X(\R))\\
            & \U & \mapsto & \bigoplus_{\delta\in\U}X(\delta) 
    \end{array}
    \end{equation}
    respectively. Moreover, for any $\delta\in\stTA{\R}$ and any $\gamma\in\R$, we have
    \begin{equation}\label{eq:ind=b1}
        [\ind_{X(\R)}X(\delta):X(\gamma)]=-b_{\gamma,\R}(e(\delta)),
    \end{equation}
    and for any $\delta'\in\cstTA{\R}$ and any $\gamma\in\R$, we have
    \begin{equation}\label{eq:ind=b2}
        [\ind_{X(\R)[-1]}X(\delta'):X(\gamma)[-1]]=-b_{\gamma,\R}(e^{op}(\delta')).
    \end{equation}
\end{theorem}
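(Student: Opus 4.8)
The plan is to read off the arc-level bijections from Proposition~\ref{lem:=}, to promote them to the level of dissections by extending $X$ additively and matching the two partial orders (inclusion of collections versus the direct-summand relation), to establish the index formula \eqref{eq:ind=b1} by completing $\R$ to a triangulation and restricting indices, and to deduce all the co-standard assertions from the standard ones via the tagged rotation $\rho$. For the arc-level statements, Proposition~\ref{lem:=} asserts exactly that $X(\delta)\in X(\R)\ast X(\R)[1]$ if and only if $\delta$ is $\R$-standard, so the bijection $X\colon\TA(\surf)\to\ind\operatorname{rigid}\C(\surf)$ of Theorem~\ref{thm:QZ} restricts to a bijection from $\stTA{\R}$ onto the set of indecomposable rigid objects lying in $X(\R)\ast X(\R)[1]$. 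For the co-standard side I would first record that $\delta$ is $\R$-co-standard if and only if $\rho(\delta)$ is $\R$-standard: by Proposition~\ref{lem:=} the former means $e^{op}(\delta)$ shears $\R$, while $e^{op}(\delta)=e(\rho(\delta))$ by Remark~\ref{rmk:erho}. Combining this with $X(\rho(\delta))=X(\delta)[1]$ (Theorem~\ref{thm:QZ}~(1)) and the identity $(X(\R)\ast X(\R)[1])[-1]=X(\R)[-1]\ast X(\R)$ (apply the shift $[-1]$ to the defining triangles) shows that $X$ restricts to a bijection from $\cstTA{\R}$ onto the indecomposable rigid objects in $X(\R)[-1]\ast X(\R)$.

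To obtain \eqref{eq:bi-st} I would extend $X$ additively. Since $R\ast R[1]$ is closed under direct sums and, by \cite[Proposition~2.1]{IY}, under direct summands, a basic object $\bigoplus_{\delta\in\U}X(\delta)$ lies in $X(\R)\ast X(\R)[1]$ precisely when every $\delta\in\U$ is $\R$-standard, and by Theorem~\ref{thm:QZ}~(2) it is rigid precisely when $\Int(\gamma,\delta)=0$ for all $\gamma,\delta\in\U$. Thus $X$ induces a bijection between collections of pairwise compatible $\R$-standard arcs and basic rigid objects of $X(\R)\ast X(\R)[1]$ that is order-preserving for inclusion of collections and the direct-summand relation; passing to maximal elements yields \eqref{eq:bi-st}, since the maximal such collections are precisely the $\R$-dissections, while a basic rigid object of $X(\R)\ast X(\R)[1]$ is maximal under the summand relation if and only if it is maximal rigid with respect to $X(\R)\ast X(\R)[1]$ (immediate from the definitions). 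The co-dissection statement \eqref{eq:bi-cst} then follows by applying $\rho$, which interchanges $\R$-co-standard and $\R$-standard arcs and preserves intersection numbers (by Theorem~\ref{thm:QZ}~(1)--(2) and the shift-invariance of $\dim_\k\Hom$), together with the fact that the shift $[-1]$ maps $\mr(X(\R)\ast X(\R)[1])$ bijectively onto $\mr(X(\R)[-1]\ast X(\R))$.

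For the index formula \eqref{eq:ind=b1} I would take any tagged triangulation $\T$ with $\R\subseteq\T$, so that $X(\T)$ is cluster tilting and $X(\R)$ is a direct summand of it. If $\delta$ is $\R$-standard then an $X(\R)$-presentation of $X(\delta)$—which exists by Proposition~\ref{lem:=}—is in particular an $X(\T)$-presentation, so, the index being independent of the chosen presentation, $\ind_{X(\R)}X(\delta)$ is carried to $\ind_{X(\T)}X(\delta)$ by the embedding $K_0^{\mbox{sp}}(X(\R))\hookrightarrow K_0^{\mbox{sp}}(X(\T))$. Reading off the $X(\gamma)$-coordinate for $\gamma\in\R$ and applying Proposition~\ref{prop:index} together with \eqref{eq:R=T} gives $[\ind_{X(\R)}X(\delta):X(\gamma)]=[\ind_{X(\T)}X(\delta):X(\gamma)]=-b_{\gamma,\T}(e(\delta))=-b_{\gamma,\R}(e(\delta))$. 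Finally, for \eqref{eq:ind=b2} I would set $\eta:=\rho(\delta')\in\stTA{\R}$, so $X(\delta')=X(\eta)[-1]$; shifting an $X(\R)$-presentation of $X(\eta)$ down by one degree identifies $\ind_{X(\R)[-1]}X(\delta')$ with the image of $\ind_{X(\R)}X(\eta)$ under the shift isomorphism $K_0^{\mbox{sp}}(X(\R))\cong K_0^{\mbox{sp}}(X(\R)[-1])$, whence $[\ind_{X(\R)[-1]}X(\delta'):X(\gamma)[-1]]=[\ind_{X(\R)}X(\eta):X(\gamma)]=-b_{\gamma,\R}(e(\eta))=-b_{\gamma,\R}(e^{op}(\delta'))$ by \eqref{eq:ind=b1} and Remark~\ref{rmk:erho}.

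The real content has already been carried by Proposition~\ref{lem:=}, whose figure-by-figure verification does the surface-geometry work, and by Proposition~\ref{prop:index}; what remains is essentially bookkeeping. The points that need care are: checking that an $\add X(\R)$-presentation genuinely qualifies as an $\add X(\T)$-presentation so that the two indices are computed compatibly; noting that $e(\delta)$ does shear $\R$ (as needed to invoke \eqref{eq:R=T}, and guaranteed by Proposition~\ref{lem:=}); and routing the co-standard statements through $\rho$, since the equivalence with membership in $X(\R)[-1]\ast X(\R)$ is not literally part of Proposition~\ref{lem:=}, using $e^{op}(\delta)=e(\rho(\delta))$ and the identity $X(\rho(-))=X(-)[1]$ of Theorem~\ref{thm:QZ}~(1). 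I do not expect a genuine obstacle beyond keeping these translations straight.
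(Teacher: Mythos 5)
Your proposal is correct and follows essentially the same route as the paper, which deduces the bijections from the equivalence $(1)\Leftrightarrow(4)$ of Proposition~\ref{lem:=} and the index formulas from Proposition~\ref{prop:index} together with \eqref{eq:R=T}. The extra details you supply — routing the co-standard statements through $\rho$ via $e^{op}(\delta)=e(\rho(\delta))$, and checking that an $\add X(\R)$-presentation serves as an $\add X(\T)$-presentation — are exactly the steps the paper leaves implicit under ``dually'' and ``which induce''.
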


\begin{proof}
    By the equivalence between (1) and (4) in Proposition~\ref{lem:=}, we get the required bijections. The last assertion then follows from Proposition~\ref{prop:index} and the equality~\eqref{eq:R=T}.
\end{proof}

One consequence of the above theorem is the rank of a dissection/co-dissection.

\begin{corollary}
    Let $\R$ be a partial tagged triangulation of $\surf$, and $\U$ a set of $\R$-standard (resp. $\R$-co-standard) arcs such that $\Int(\gamma_1,\gamma_2)=0$ for any $\gamma_1,\gamma_2\in\U$. Then $\U$ is an $\R$-dissection (resp. $\R$-co-dissection) if and only if $|\U|=|\R|$.
\end{corollary}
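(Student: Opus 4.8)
The plan is to transport the statement through the bijection $X$ of Theorem~\ref{thm:arc and rigid} and then apply the rank criterion of Proposition~\ref{prop:rank}. I will carry out the $\R$-standard case in detail; the $\R$-co-standard case is handled identically after replacing $X(\R)\ast X(\R)[1]$ by $X(\R)[-1]\ast X(\R)$, which is again a two-term subcategory, namely $R'\ast R'[1]$ with $R'=X(\R)[-1]$ (so that $|R'|=|X(\R)|=|\R|$), and after using \eqref{eq:bi-cst} in place of \eqref{eq:bi-st}.

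First I would set $U=\bigoplus_{\delta\in\U}X(\delta)$ and verify that $U\in\r(X(\R)\ast X(\R)[1])$, i.e. that $U$ is a basic rigid object of the two-term subcategory with respect to $X(\R)$. Basicness, together with $|U|=|\U|$, follows from the injectivity of $X$ on tagged arcs (Theorem~\ref{thm:QZ}), which makes the summands $X(\delta)$, $\delta\in\U$, pairwise non-isomorphic indecomposables. Rigidity follows from Theorem~\ref{thm:QZ}~(2): the hypothesis $\Int(\gamma_1,\gamma_2)=0$ for all $\gamma_1,\gamma_2\in\U$ gives $\dim_\k\Hom(X(\gamma_1),X(\gamma_2)[1])=0$, hence $\Hom(U,U[1])=0$. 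Finally, since each $\delta\in\U$ is $\R$-standard, the equivalence (1)$\Leftrightarrow$(4) of Proposition~\ref{lem:=} gives $X(\delta)\in X(\R)\ast X(\R)[1]$, and as $X(\R)\ast X(\R)[1]$ is closed under finite direct sums (being defined by triangles) we conclude $U\in X(\R)\ast X(\R)[1]$, so $U\in\r(X(\R)\ast X(\R)[1])$.

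Next comes the equivalence itself. For the forward direction, if $\U$ is an $\R$-dissection then the bijection \eqref{eq:bi-st} of Theorem~\ref{thm:arc and rigid} identifies $U$ with an element of $\mr(X(\R)\ast X(\R)[1])$, so Proposition~\ref{prop:rank} yields $|\U|=|U|=|X(\R)|=|\R|$, the last equality because $X(\R)=\bigoplus_{\gamma\in\R}X(\gamma)$ is basic with $|\R|$ indecomposable summands by Theorem~\ref{thm:QZ}~(3). For the converse, if $|\U|=|\R|$ then $|U|=|\U|=|\R|=|X(\R)|$, so Proposition~\ref{prop:rank} forces the rigid object $U$ to lie in $\mr(X(\R)\ast X(\R)[1])$; by surjectivity of \eqref{eq:bi-st} there is an $\R$-dissection $\U'$ with $\bigoplus_{\delta\in\U'}X(\delta)=U=\bigoplus_{\delta\in\U}X(\delta)$, and comparing indecomposable summands and invoking the injectivity of $X$ gives $\U'=\U$, so $\U$ is an $\R$-dissection.

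I do not expect a serious obstacle here: essentially all the work has already been done in Proposition~\ref{lem:=} and Theorem~\ref{thm:arc and rigid}. The only point that needs a little care is the assembly carried out in the second paragraph, namely that the purely combinatorial hypotheses on $\U$ (being $\R$-standard, with all pairwise intersection numbers zero) really do amount to the single statement $U\in\r(X(\R)\ast X(\R)[1])$; once that is in place, the conclusion is a direct application of Proposition~\ref{prop:rank} and the bijections \eqref{eq:bi-st} and \eqref{eq:bi-cst}.
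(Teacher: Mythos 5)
Your proof is correct and follows exactly the route the paper takes: the paper's own proof is the one-line observation that the statement ``follows directly from Theorem~\ref{thm:arc and rigid} and Proposition~\ref{prop:rank}.'' Your write-up simply fills in the details of that reduction (rigidity via Theorem~\ref{thm:QZ}~(2), membership in the two-term subcategory via Proposition~\ref{lem:=}, and the rank criterion), all of which is sound.
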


\begin{proof}
    This follows directly from Theorem~\ref{thm:arc and rigid} and Proposition~\ref{prop:rank}.
\end{proof}

The following result tells us that dissection and co-dissection are dual notions.
	
\begin{corollary}\label{cor:dual}
    Let $\R$ and $\U$ be two partial tagged triangulations of $\surf$. Then $\U$ is an $\R$-dissection if and only if $\R$ is a $\U$-co-dissection.
\end{corollary}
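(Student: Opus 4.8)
The plan is to transport the statement into the cluster category $\C(\surf)$ via the bijection $X$ of Theorem~\ref{thm:QZ}, and then invoke the categorical duality Corollary~\ref{cor:obj dual}. Since $\R$ and $\U$ are partial tagged triangulations, Theorem~\ref{thm:QZ}~(3) tells us that $X(\R)=\bigoplus_{\gamma\in\R}X(\gamma)$ and $X(\U)=\bigoplus_{\delta\in\U}X(\delta)$ are basic rigid objects in $\C(\surf)$. In particular the two-term subcategories $X(\R)\ast X(\R)[1]$ and $X(\U)[-1]\ast X(\U)$ are well defined, so these two objects may legitimately play the roles of $R$ and $U$ in the results of Section~\ref{sec:category}.

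First I would rewrite both sides of the claimed equivalence using Theorem~\ref{thm:arc and rigid}. By the bijection \eqref{eq:bi-st}, $\U$ is an $\R$-dissection if and only if $X(\U)\in\mr(X(\R)\ast X(\R)[1])$. Applying the bijection \eqref{eq:bi-cst} with the partial tagged triangulation $\U$ in place of $\R$, we get that $\R$ is a $\U$-co-dissection if and only if $X(\R)\in\mr(X(\U)[-1]\ast X(\U))$. Then Corollary~\ref{cor:obj dual}, applied to the rigid objects $X(\R)$ and $X(\U)$ of $\C(\surf)$, asserts exactly that $X(\U)\in\mr(X(\R)\ast X(\R)[1])$ if and only if $X(\R)\in\mr(X(\U)[-1]\ast X(\U))$. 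Chaining these three equivalences yields the corollary.

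I do not expect a genuine obstacle here: the entire content is carried by Theorem~\ref{thm:arc and rigid} and Corollary~\ref{cor:obj dual}, and the proof is essentially a two-line composition of known equivalences. The only point meriting a line of care is the verification that $X(\R)$ and $X(\U)$ are (basic) rigid, so that the dual two-term subcategory $X(\U)[-1]\ast X(\U)$ is meaningful; this is immediate from Theorem~\ref{thm:QZ}~(3). As an alternative that avoids \eqref{eq:bi-cst}, one could argue by ranks: $\U$ is an $\R$-dissection iff every arc of $\U$ is $\R$-standard and $|\U|=|\R|$ (the corollary preceding this one, together with Proposition~\ref{lem:=}), which under $X$ and Proposition~\ref{prop:rank} becomes the manifestly $\R$-$\U$-symmetric condition $|X(\U)|=|X(\R)|$ with $X(\U)$ rigid in $X(\R)\ast X(\R)[1]$; but the route through Corollary~\ref{cor:obj dual} is cleaner and I would present that one.
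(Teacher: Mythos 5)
Your proposal is correct and matches the paper's own proof: translate both conditions into the cluster category via the bijections of Theorem~\ref{thm:arc and rigid} and then apply Corollary~\ref{cor:obj dual}. Nothing further is needed.
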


\begin{proof}
    By Theorem~\ref{thm:arc and rigid}, $\U\in D(\R)$ if and only if $X(\U)\in\mr(X(\R)\ast X(\R)[1])$, and $\R\in D^{op}(\U)$ if and only if $X(\R)\in\mr(X(\U)[-1]\ast X(\U))$. So the statement follows by Corollary~\ref{cor:obj dual}.
\end{proof}

The following lemma is useful in Section~\ref{sec:eg}.

\begin{lemma}\label{lem:neq0}
    Let $\R$ be a partial tagged triangulation of $\surf$, and $\U$ an $\R$-dissection. Then for any $l\in\U$, there exists $\gamma\in\R$ such that $b_{l,\U}(e^{op}(\gamma))\neq 0$. Moreover, if $b_{l,\U}(e^{op}(\gamma))> 0$ (resp. $<0$), then for any $\gamma'\in\R$, we have $b_{l,\U}(e^{op}(\gamma'))\geq 0$ (resp. $\leq 0$).
\end{lemma}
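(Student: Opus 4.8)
The plan is to translate everything into the categorical language of Section~\ref{sec:category} via the bijection $X$ of Theorem~\ref{thm:QZ}, prove the corresponding statement about indices there, and then translate back. Write $R:=X(\R)$ and $U:=X(\U)$. By Theorem~\ref{thm:arc and rigid}, $\U\in D(\R)$ means $U\in\mr(R\ast_\D R[1])$, and by Corollary~\ref{cor:dual} (or Corollary~\ref{cor:obj dual}) this is equivalent to $R\in\mr(U[-1]\ast U)$. For $l\in\U$, let $L:=X(l)$, which is an indecomposable direct summand of $U$. The first claim—that there exists $\gamma\in\R$ with $b_{l,\U}(e^{op}(\gamma))\neq0$—is exactly, after applying the dual form of \eqref{eq:ind=b2} (i.e. the co-dissection version of Theorem~\ref{thm:arc and rigid} read with the roles of $\R$ and $\U$ swapped, legitimate by Corollary~\ref{cor:dual}), the statement that $[\ind_{U[-1]}R : L[-1]]\neq0$ fails to vanish on every indecomposable summand $G:=X(\gamma)$ of $R$. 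But by Remark~\ref{rem:total1}, $[\ind_{U[-1]}R:L[-1]]\neq0$ if and only if $[\ind_{U[-1]}G:L[-1]]\neq0$ for some indecomposable summand $G$ of $R$; and $[\ind_{U[-1]}R:L[-1]]\neq0$ holds by Lemma~\ref{cor:non-zero} applied to $R\in\mr(U[-1]\ast U)$ with the indecomposable summand $L$ of $U$. This gives the desired $\gamma$.

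For the "moreover" part, suppose $b_{l,\U}(e^{op}(\gamma))>0$ for some $\gamma\in\R$; I must show $b_{l,\U}(e^{op}(\gamma'))\ge0$ for all $\gamma'\in\R$. The key observation is that, by the co-dissection analogue of \eqref{eq:ind=b2}, the sign of $b_{l,\U}(e^{op}(\gamma))$ is determined entirely by the local picture at the intersections $\jiaodian\in l\cap\gamma$ as catalogued in Remark~\ref{rmk:two local tri} and Figure~\ref{fig:p/n int}: each alternative intersection is either positive or negative, and $b_{l,\U}(e^{op}(\gamma))=\sum_{\jiaodian}b_{\mathfrak{q},l,\U}(e^{op}(\gamma))$ where $\mathfrak{q}=E(\jiaodian)$. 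By Remark~\ref{rem:total}, applied to the laminate $L'=e^{op}(\gamma')$ and the arc $l$ of the partial tagged triangulation $\U$, if any intersection contributes $+1$ then every intersection contributes $\ge0$; so it suffices to rule out the existence of a negative intersection in $l\cap\gamma'$ for any $\gamma'\in\R$, given that $l\cap\gamma$ contains a positive one. Equivalently, in index language (via Remark~\ref{rem:total1} and \eqref{eq:ind=b2} dualized), $[\ind_{U[-1]}G:L[-1]]>0$ for one summand $G$ of $R$ forces $[\ind_{U[-1]}G':L[-1]]\ge0$ for every summand $G'$; but this is immediate from Proposition~\ref{prop:DK1} / Remark~\ref{rem:total1}: the coefficient $[\ind_{U[-1]}R:L[-1]]$ is a single integer $t$, and $[\ind_{U[-1]}G:L[-1]]$ is (by the definition of $\ind_R$ and the minimality of the $U[-1]$-presentation) the multiplicity of $L[-1]$ in $R^1_R$ or minus the multiplicity in $R^0_R$—and by Proposition~\ref{prop:DK1} these cannot both occur, so all nonzero local contributions have the same sign as $t$.

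Concretely, I would organize the write-up as follows. Step~1: record that $\R\in D^{op}(\U)$ by Corollary~\ref{cor:dual}, so $R\in\mr(U[-1]\ast U)$ and the co-dissection form of Theorem~\ref{thm:arc and rigid} gives $[\ind_{U[-1]}R:L'[-1]]=-b_{l',\U}(\cdot)$-type formulas with $\U$ in the role of the ambient partial tagged triangulation—here I must be slightly careful to apply \eqref{eq:ind=b2} with the pair $(\U,\gamma)$ rather than $(\R,\delta)$, reading $\gamma\in\R$ as an arbitrary tagged arc and using that $\gamma$ is automatically $\U$-co-standard since $X(\gamma)\in X(\U)[-1]\ast X(\U)$ by Proposition~\ref{lem:=}(4) applied to $R\in\mr(U[-1]\ast U)$. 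Step~2: apply Lemma~\ref{cor:non-zero} to get $[\ind_{U[-1]}R:L[-1]]\neq0$, hence some $\gamma\in\R$ with $b_{l,\U}(e^{op}(\gamma))\neq0$ via Remark~\ref{rem:total1}. Step~3: the sign statement follows because $[\ind_{U[-1]}R:L[-1]]$ is a single integer and, by Remark~\ref{rem:total1}, each $[\ind_{U[-1]}G':L[-1]]$ for an indecomposable summand $G'$ of $R$ has that same sign (or is zero). I expect the main obstacle to be purely bookkeeping: making sure the index formula \eqref{eq:ind=b2} is invoked with the correct partial tagged triangulation and that $\gamma,\gamma'$ are indeed $\U$-co-standard so that \eqref{eq:ind=b2} applies to them—this is exactly where Proposition~\ref{lem:=}, Corollary~\ref{cor:dual} and Remark~\ref{rmk:two local tri} must be combined cleanly, rather than any genuinely hard new argument.
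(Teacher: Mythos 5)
Your proposal is correct and follows essentially the same route as the paper's proof: reduce via Corollary~\ref{cor:dual} to $R=X(\R)\in\mr(X(\U)[-1]\ast X(\U))$, apply the dualized index formula \eqref{eq:ind=b2} so that $b_{l,\U}(e^{op}(\gamma))=-[\ind_{X(\U)[-1]}X(\gamma):X(l)[-1]]$, get non-vanishing of the total coefficient from Lemma~\ref{cor:non-zero}, and deduce both the existence and the sign statement from Remark~\ref{rem:total1} (i.e.\ Proposition~\ref{prop:DK1}). The only difference is cosmetic (the paper writes the total coefficient as the sum $\sum_{\gamma\in\R}b_{l,\U}(e^{op}(\gamma))$ explicitly), and your minor sign-bookkeeping slips do not affect the argument.
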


\begin{proof}
    By Corollary~\ref{cor:dual}, $\R$ is a $\U$-co-dissection. In particular, any $\gamma\in\R$ is $\U$-co-standard. Then for any $l\in\U$, by \eqref{eq:ind=b2}, we have
    $$\sum_{\gamma\in\R}b_{l,\U}(e^{op}(\gamma))=-[\ind_{X(\U)[-1]}X(\R):X(l)[-1]],$$
    which is not zero by Lemma~\ref{cor:non-zero}. Hence there exists $\gamma\in\R$ such that $b_{l,\U}(e^{op}(\gamma))\neq 0$. The last assertion then follows by Remark~\ref{rem:total1}.
\end{proof}

We have the following geometric interpretation of a certain subcategory of the module category of a surface rigid algebra.

\begin{theorem}\label{thm:geo}
    Let $\R$ be a partial tagged triangulation of a punctured marked surface $\surf$, and $\Lambda_\R=\End_{\C(\surf)}X(\R)$ the corresponding surface rigid algebra. Then there is a bijection
    $$M:\stTA{\R}\setminus\rho(\R)\to\ind \tr\mod\Lambda_\R,$$
    where $\ind\tr\mod\Lambda_\R$ is the set of (isoclasses of) indecomposable $\tau$-rigid $\Lambda_\R$-modules, such that for any $\gamma_1,\gamma_2\in\stTA{\R}\setminus\rho(\R)$, we have
    $$\Int(\gamma_1,\gamma_2)=\dim_\k\Hom_{\Lambda_\R}(M(\gamma_1),\tau M(\gamma_2))+\dim_\k\Hom_{\Lambda_\R}(M(\gamma_2),\tau M(\gamma_1)).$$
\end{theorem}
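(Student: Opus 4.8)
The plan is to obtain $M$ by composing the bijection of Theorem~\ref{thm:arc and rigid} with the categorical bijection $\Phi$ of Theorem~\ref{thm:CZZ}, and then to deduce the intersection identity from the equality between intersection numbers and $\dim_\k\Hom(-, -[1])$ in $\C(\surf)$. Write $R=X(\R)$, $\Lambda=\Lambda_\R=\End_{\C(\surf)}R$ and $F=\Hom_{\C(\surf)}(R,-)$. First I would note that $R[1]=X(\R)[1]=X(\rho(\R))$ by Theorem~\ref{thm:QZ}(1), so the indecomposable direct summands of $R[1]$ are precisely the $X(\gamma)$ with $\gamma\in\rho(\R)$; in particular $\rho(\R)\subseteq\stTA{\R}$, and by Theorem~\ref{thm:arc and rigid} the bijection $X\colon\stTA{\R}\to\r(R\ast R[1])$ restricts to a bijection from $\stTA{\R}\setminus\rho(\R)$ onto the set of indecomposable rigid objects of $R\ast R[1]$ having no nonzero summand in $\add R[1]$. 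Composing with $\Phi$ then yields $M$: on such an object $U$ one has $\Phi(U)=(FU,0)$, and $FU$ is indecomposable because $F$ induces the equivalence $R\ast R[1]/R[1]\simeq\mod\Lambda$, and $\tau$-rigid because $(FU,0)$ is a $\tau$-rigid pair; conversely $\Phi^{-1}$ carries an indecomposable $\tau$-rigid module $N$, viewed as the pair $(N,0)$, to an indecomposable rigid object of $R\ast R[1]$ with no $\add R[1]$-summand and image $N$ under $F$. Thus $M=F\circ X=\Hom_{\C(\surf)}(X(\R),X(-))$ is a bijection $\stTA{\R}\setminus\rho(\R)\to\ind\tr\mod\Lambda_\R$.

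For the intersection identity, Theorem~\ref{thm:QZ}(2) gives $\Int(\gamma_1,\gamma_2)=\dim_\k\Hom(X(\gamma_1),X(\gamma_2)[1])$, where $\Hom=\Hom_{\C(\surf)}$; writing $Z_i=X(\gamma_i)$, so that $FZ_i=M(\gamma_i)$, it suffices to prove that for rigid $Z_1,Z_2\in R\ast R[1]$ with no summand in $\add R[1]$,
\[\dim_\k\Hom(Z_1,Z_2[1])=\dim_\k\Hom_\Lambda(FZ_1,\tau FZ_2)+\dim_\k\Hom_\Lambda(FZ_2,\tau FZ_1).\]
I would take a minimal $R$-presentation $R^1_1\to R^0_1\to Z_1\xrightarrow{c_1}R^1_1[1]$ of $Z_1$; since $\Hom(R,R[1])=0$ and, by Theorem~\ref{thm:CZZ} and Proposition~\ref{prop:DK1}, applying $F$ turns it into a minimal projective presentation $FR^1_1\to FR^0_1\to FZ_1\to0$. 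Applying $\Hom(-,Z_2[1])$ to the triangle, using the canonical identification $\Hom(R^i_1,Z_2[1])\cong\Hom_\Lambda(FR^i_1,\Hom(R,Z_2[1]))=\Hom_\Lambda(FR^i_1,\tau FZ_2)$ (the first isomorphism because $FR^i_1$ is projective, the equality by Theorem~\ref{thm:CZZ}) and the fact that $c_1$ is a left $\add R[1]$-approximation, the long exact sequence collapses to
\[\dim_\k\Hom(Z_1,Z_2[1])=\dim_\k\Hom_\Lambda(FZ_1,\tau FZ_2)+\dim_\k I,\]
where $I\subseteq\Hom(Z_1,Z_2[1])$ is the subspace of morphisms factoring through $\add R[1]$.

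The crux is then to establish $\dim_\k I=\dim_\k\Hom_\Lambda(FZ_2,\tau FZ_1)$. For this I would fix a minimal $R$-presentation $R^1_2\xrightarrow{a_2}R^0_2\to Z_2\to R^1_2[1]$ of $Z_2$ and shift it to a triangle $R^1_2[1]\xrightarrow{a_2[1]}R^0_2[1]\xrightarrow{p}Z_2[1]\to R^1_2[2]$; since $\Hom(R',R^1_2[1])=0$ for every $R'\in\add R$ by rigidity of $R$, the morphism $p$ is a \emph{right} $\add R[1]$-approximation of $Z_2[1]$, so $I$ equals the image of $\Hom(Z_1,p)$ and its dimension is controlled by the long exact sequence of $\Hom(Z_1,-)$ on this triangle. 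Dualizing the relevant terms by the $2$-Calabi-Yau isomorphism $\Hom(Z_1,R'[1])\cong D\Hom(R',Z_1[1])\cong D\Hom_\Lambda(FR',\tau FZ_1)$ — so that $\Hom(Z_1,a_2[1])$ becomes the $\k$-dual of $\Hom_\Lambda(Fa_2,\tau FZ_1)$ — and feeding in the projective presentation $FR^1_2\xrightarrow{Fa_2}FR^0_2\to FZ_2\to0$, the dimension count telescopes to the desired equality; combined with the previous paragraph this finishes the proof. I expect this last step to be the only real obstacle: the homological input is routine, but one has to check carefully that the connecting maps on the two sides of the $2$-Calabi-Yau duality are genuine $\k$-linear transposes of one another, so that their kernels and cokernels have matching dimensions. (Alternatively, the displayed identity is exactly the symmetry of the ``$E$-invariant'' of a rigid object in a $2$-Calabi-Yau category; if a reference at this generality is available it may be cited directly, reducing the argument to the construction of $M$ in the first step.)
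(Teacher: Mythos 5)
Your proposal is correct and follows essentially the same route as the paper: $M$ is defined as $\Hom_{\C(\surf)}(X(\R),X(-))$ via Theorems~\ref{thm:arc and rigid} and \ref{thm:CZZ}, and the intersection formula reduces via Theorem~\ref{thm:QZ}(2) to splitting $\Hom_{\C(\surf)}(X(\gamma_1),X(\gamma_2)[1])$ into the morphisms factoring through $\add X(\R)[1]$ and their complement. The only difference is that the paper disposes of what you call the ``only real obstacle'' --- the identification of the factoring part $[R[1]](X_1,X_2[1])$ with $D\Hom_{\Lambda_\R}(M(\gamma_2),\tau M(\gamma_1))$ --- by citing \cite[Lemma~2.3]{CZZ} together with Theorem~\ref{thm:CZZ}, rather than re-deriving it from minimal $R$-presentations and the 2-Calabi-Yau duality as you sketch; your sketch is sound, but the citation renders the naturality check you flag unnecessary.
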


\begin{proof}
    For any $\gamma\in\stTA{\R}\setminus\rho(\R)$, define $M(\gamma)=\Hom_{\C(\surf)}(X(\R),X(\gamma))$. Then by Theorem~\ref{thm:arc and rigid} and Theorem~\ref{thm:CZZ}, $M$ is a bijection from $\stTA{\R}\setminus\rho(\R)$ to $\ind \tr\mod\Lambda$. 
	
    Let $X_i=X(\gamma_i)$ and $M_i=M(\gamma_i)$, $i=1,2$. Then we have
	\[\begin{aligned}
		\Hom_{\C(\surf)}(X_1,X_2[1])&{}=\Hom_{\C(\surf)/R[1]}(X_1,X_2[1])\oplus[R[1]](X_1,X_2[1])\\
		&\cong\Hom_{\C(\surf)/R[1]}(X_1,X_2[1])\oplus D\Hom_{\C(\surf)/R[1]}(X_2,X_1[1])\\
		&\cong\Hom_{\Lambda_R}(M_1,\tau M_2)\oplus D\Hom_{\Lambda_R}(M_2,\tau M_1),
	\end{aligned}\]
	where the first isomorphism is due to \cite[Lemma~2.3]{CZZ} and the last one is due to Theorem~\ref{thm:CZZ}. By Theorem~\ref{thm:QZ}, we have $\Int(\gamma_1,\gamma_2)=\dim_\k\Hom_{\C(\surf)}(X_1,X_2[1])$. Hence we get the required formula.
\end{proof}

\subsection{Skew-gentle algebras as surface rigid algebras}\label{subsec:sg}
In this subsection, we show that skew-gentle algebras are surface rigid algebras. So the geometric model in Theorem~\ref{thm:geo} is a generalization of a weak version of our previous work \cite{HZZ}.

\begin{definition}\label{def:sg}
    A triple $(Q,Sp,I)$ of a quiver $Q$, a subset $Sp\subseteq Q_0$ and a set $I$ of paths of length 2 in $Q$ is called \emph{skew-gentle} if $(Q^{sp},I^{sp})$ satisfies the following conditions, where $Q^{sp}_0=Q_0, Q^{sp}_1=Q_1\cup\{\epsilon_i\mid i\in Sp\}$ with $\epsilon_i$ a loop at $i$ and $I^{sp}=I\cup\{\epsilon^2_i\mid i\in Sp\}$.
    \begin{itemize}
        \item Each vertex in $Q^{sp}_0$ is the start of at most two arrows in $Q_1^{sp}$, and is the terminal of at most two arrows in $Q_1^{sp}$;
        \item For each arrow $\alpha\in Q^{sp}_1$, there is at most one arrow $\beta\in Q^{sp}_1$ (resp. $\gamma\in Q^{sp}_1$) such that $\alpha\beta\in I^{sp}$ (resp. $\alpha\gamma\notin I^{sp}$);
        \item For each arrow $\alpha\in Q^{sp}_1$, there is at most one arrow $\beta\in Q^{sp}_1$ (resp. $\gamma\in Q^{sp}_1$) such that $\beta\alpha\in I^{sp}$ (resp. $\gamma\alpha\notin I^{sp}$).
    \end{itemize}

    A finite dimensional algebra $\Lambda$ is said to be \emph{skew-gentle} if $\Lambda\cong\k Q^{sp}/\<I^{sg}\>$ for some skew-gentle triple $(Q,Sp,I)$, where $\<I^{sg}\>$ is the ideal generated by $I^{sg}=I\cup\{\epsilon^2_i-\epsilon_i\mid i\in Sp\}$. 
\end{definition}

A partial ideal triangulation of $\surf$ is called \emph{admissible} if each puncture is contained in a self-folded triangle.
\begin{definition}[{\cite[Definition~2.1]{BCS},\cite[Definition~1.8]{HZZ}}]
	Let $\R$ be an admissible partial ideal triangulation of $\surf$. Denote by $\R_0$ the subset of $\R$ such that $\R\setminus\R_0$ consists of the folded sides of self-folded triangles of $\R$, and by $\R_1$ the subset of $\R_0$ consisting of the non-folded sides of self-folded triangles of $\R$. The \emph{tiling algebra} of $\R$ is defined to be $\Lambda^t_\R=\k Q_\R/\<I^t_\R\>$, where the quiver $Q_\R=((Q_\R)_0,(Q_\R)_1,s,t)$ and the relation set $I^t_\R$ are given by the following. 
	\begin{itemize}
		\item The vertices in $(Q_\R)_0$ are (indexed by) the arcs in $\R_0$.
		\item There is an arrow $\alpha\in (Q_\R)_1$ from $i$ to $j$ whenever the corresponding arcs $i$ and $j$ share an endpoint $p_{\alpha}\in \MM$ such that $j$ follows $i$ anticlockwise immediately in $\R_0$. Note that by this construction, each vertex in $(Q_\R)_0$ admits at most one loop.
		\item The relation set $I^t_\R=I^t_{\R,1}\cup I^t_{\R,2}$, where
		\begin{itemize}
		    \item $I^t_{\R,1}$ consists of squares of loops in $(Q_\R)_1$, and
			\item $I^t_{\R,2}$ consisting of all $\alpha\beta$ if $p_{\beta}\neq p_{\alpha}$, or the endpoints of the curve (corresponding to) $t(\beta)=s(\alpha)$ coincide and we are in one of the situations in Figure~\ref{fig:I2 loop}.
			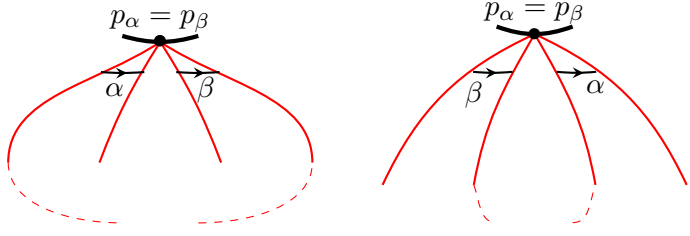
\begin{figure}[htpb]\centering
				\begin{tikzpicture}[xscale=1,yscale=.8]
					\draw[ultra thick,bend right=20](-.5,2.1)to(.5,2.1);
					\draw[red,thick](0,2)to[out=-140,in=90](-2,0);
					\draw[red,thick](2,0)to[out=90,in=-40](0,2);
					\draw[red,dashed](-2,0)to[out=-90,in=180](-.5,-1);
					\draw[red,dashed](.5,-1)to[out=0,in=-90](2,0); 
					\draw(0,2)node{$\bullet$}(0,.8);
					\draw[red,thick,bend right=5](0,2)to(-.8,0);
					\draw[red,thick,bend left=5](0,2)to(.8,0);
					\draw[thick,->-=.6,>=stealth,bend right=5](-.78,1.5)to(-.21,1.5);
					\draw[thick,-<-=.6,>=stealth,bend left=5](.78,1.5)to(.21,1.5);
					\draw(-.6,1.2)node{$\alpha$}(.6,1.2)node{$\beta$}; 
					\draw(0,2)node{$\bullet$}(0,.8);
     \draw(0,2)node[above]{$p_\alpha=p_\beta$};
				\end{tikzpicture}
				\qquad
				\begin{tikzpicture}[xscale=1,yscale=1]
					\draw[ultra thick,bend right=20](-.5,2.1)to(.5,2.1);
					\draw[red,thick](0,2)to[out=-120,in=80](-.8,0);
					\draw[red,dashed](-.8,0)to[out=-70,in=180](-.5,-.5);
					\draw[red,dashed](.5,-.5)to[out=0,in=-110](.8,0);
					\draw[red,thick](.8,0)to[out=100,in=-60](0,2);
					\draw[red,thick,bend right=20](0,2)to(-2,0);
					\draw[red,thick,bend left=20](0,2)to(2,0);
					\draw[thick,->-=.6,>=stealth,bend right=5](-.81,1.5)to(-.29,1.5);
					\draw[thick,-<-=.6,>=stealth,bend left=5](.81,1.5)to(.29,1.5);
					\draw(-.8,1.2)node{$\beta$}(.8,1.3)node{$\alpha$};
					\draw(0,2)node{$\bullet$};
     \draw(0,2)node[above]{$p_\alpha=p_\beta$};
				\end{tikzpicture}
				\caption{Relations in $I^t_{\R,2}$, the case $p_\beta=p_\alpha$ and $s(\alpha)=t(\beta)$ is a loop}
				\label{fig:I2 loop}
			\end{figure}
		\end{itemize}
	\end{itemize}
    
    Let $I^{s\text{-}t}_{\R}$ be the subset obtained from $I^t_{\R}$ by replacing $\epsilon^2$ with $\epsilon^2-\epsilon$ for each loop $\epsilon$ at a vertex in $\R_1$. The \emph{skew-tiling algebra} of $\R$ is defined to be $\Lambda^{s\text{-}t}_\R=\k Q_\R/\<I^{s\text{-}t}_\R\>$.
\end{definition}

By definition, skew-tiling algebras are obtained from tiling algebras by specializing nilpotent loops at vertices in $\R_1$ to be idempotents.

\begin{remark}\label{rmk:surf}
    In the original definition of tiling algebras introduced in \cite{BCS}, there are no punctures on the surface. However, after replacing each puncture by a boundary component with a marked point, and replacing each self-folded triangle of $\R$ by a monogon with the corresponding new boundary component in its interior, our definition coincides with the original one.
\end{remark}

\begin{theorem}[{\cite[Theorem~1]{BCS} and \cite[Corollary~1.12]{HZZ}}]\label{thm:sg}
	A finite dimensional algebra is a (skew-)gentle algebra if and only if it is a (skew-)tiling algebra.
\end{theorem}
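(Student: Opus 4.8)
The plan is to deduce the two biconditionals packaged into the statement from the gentle/tiling correspondence \cite[Theorem~1]{BCS} and the skew-gentle/skew-tiling correspondence \cite[Corollary~1.12]{HZZ}, after reconciling the conventions used there with the ones fixed here. The bridge is Remark~\ref{rmk:surf}: given an admissible partial ideal triangulation $\R$ of $\surf$, replacing each puncture by a boundary component carrying a single marked point and each self-folded triangle by a monogon enclosing that new component turns $(\surf,\R)$ into a dissection of an unpunctured marked surface in the sense of \cite{BCS}, and under this identification the tiling algebra $\Lambda^t_\R$ defined above agrees with the one of loc.\ cit. Once this is in place, punctures play no role in the gentle/tiling half and enter the skew half only through which loops are to become idempotent.

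For the gentle case I would first check that every tiling algebra is gentle, which is a local verification around each marked point: the construction of $Q_\R$ produces at most two arrows starting (resp.\ ending) at each vertex, the relations in $I^t_\R$ are paths of length two, and the defining conditions of a gentle algebra hold because $I^t_{\R,1}$ consists of squares of loops while $I^t_{\R,2}$ records exactly the forbidden turns at marked points (including the cases where $s(\alpha)=t(\beta)$ is a loop). Conversely, that every gentle algebra occurs as some $\Lambda^t_\R$ is the geometric realization of \cite{BCS}, which builds a marked surface and a dissection directly from the combinatorics of $(Q,I)$. Combining the two inclusions with the convention match above settles the gentle statement.

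For the skew case, the key observation is that, by definition, $\Lambda^{s\text{-}t}_\R=\k Q_\R/\langle I^{s\text{-}t}_\R\rangle$ is obtained from $\Lambda^t_\R$ by replacing each square-zero loop relation $\epsilon^2$ at a vertex of $\R_1$ with the idempotent relation $\epsilon^2-\epsilon$, and symmetrically $\k Q^{sp}/\langle I^{sg}\rangle$ is obtained from the gentle algebra $\k Q^{sp}/\langle I^{sp}\rangle$ by the same specialization along the loops $\epsilon_i$ with $i\in Sp$. So it suffices to upgrade the gentle correspondence to an equivalence between \emph{gentle algebras equipped with a distinguished subset of their square-zero loops} and \emph{tiling algebras $\Lambda^t_\R$ equipped with the subset $\R_1$ of loops coming from non-folded sides of self-folded triangles}, matching $Sp$ with $\R_1$; transporting the specialization $\epsilon^2\mapsto\epsilon^2-\epsilon$ through this refined bijection then yields both implications of the skew statement. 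This refined bijection is precisely \cite[Corollary~1.12]{HZZ}.

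The hard part will be this refinement: carrying out the geometric realization of \cite{BCS} compatibly with the distinguished set $Sp$. One must show the surface can always be chosen so that the loops in $Sp$ correspond exactly to the non-folded sides of self-folded triangles --- equivalently, so that precisely the monogons indexed by $Sp$ receive an extra marked point --- and not merely up to inserting or deleting inessential square-zero loops elsewhere in $(Q,I)$. Verifying that such a choice always exists and is consistent with the rest of the quiver and relations is the only non-formal step; the local ``at most two arrows'' and relation checks, and the bookkeeping of the specialization, are routine.
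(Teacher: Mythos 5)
The paper gives no proof of this statement at all: it is imported verbatim from \cite[Theorem~1]{BCS} and \cite[Corollary~1.12]{HZZ}, with the convention mismatch (punctures and self-folded triangles versus the unpunctured setting of \cite{BCS}) handled exactly by Remark~\ref{rmk:surf}. Your outline locates all the substantive content in those same two references and correctly identifies the matching of $Sp$ with $\R_1$ as the content of \cite[Corollary~1.12]{HZZ}, so it is consistent with the paper's treatment and adds nothing that would need independent verification here.
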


In the following, we show that the skew-gentle algebras form a special class of surface rigid algebras.

\begin{theorem}\label{thm:sg is end}
	Let $\R$ be an admissible partial ideal triangulation of $\surf$ and $\Lambda_\R=\End_{\C(\surf)}(X(\R))$ the corresponding surface rigid algebra. Then there is an algebra isomorphism
	\[\Lambda_\R\cong\Lambda^{s\text{-}t}_\R.\]
	In particular, skew-gentle algebras are surface rigid algebras.
\end{theorem}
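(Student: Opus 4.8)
## Proof proposal for Theorem~\ref{thm:sg is end}

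The plan is to produce an explicit isomorphism $\Lambda_\R\cong\Lambda^{s\text{-}t}_\R$ by first recalling the known description of the endomorphism algebra as a quiver with potential, and then matching its Jacobian algebra with the skew-tiling algebra. First I would invoke Theorem~\ref{thm:QZ} to fix a tagged triangulation $\T$ of $\surf$ containing $\R$ (using Remark~\ref{rmk:good completion}, choosing $\T$ with $\kappa_\T(p)\geq 0$ for all $p$, so that $\T$ is an ideal triangulation containing $\R$). Then $X(\R)$ is a direct summand of the cluster tilting object $X(\T)=T^\T$, and $\Lambda^\T=\End_{\C(\surf)}X(\T)=\k Q^\T/\langle\partial W^\T\rangle$ is the Jacobian algebra of the quiver with potential $(Q^\T,W^\T)$ associated to $\T$ in \cite{FST,LF1,LF3}. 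Since $\R$ is admissible, each puncture lies in a self-folded triangle whose non-folded side is, by Definition~\ref{def:ideal tri}, in $\R$ together with its folded side; this lets me identify $\R_0\subseteq\R$ (the non-folded-side arcs of $\R$) with a vertex subset of $Q^\T$, and $\R_1\subseteq\R_0$ with the non-folded sides of self-folded triangles of $\R$.

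The key step is the computation $\Lambda_\R\cong e\Lambda^\T e$ where $e$ is the idempotent of $\Lambda^\T$ supported on $\R_0$ — more precisely, on the summand $X(\R_0)$ obtained from $X(\R)$ after discarding the summands corresponding to folded sides. Indeed, for an idempotent $e$ corresponding to a direct summand $R'$ of a cluster tilting object $T$, one has $\End_{\C}(R')\cong e(\End_\C T)e$. So I would first reduce to understanding $e\Lambda^\T e$, then show $e\Lambda^\T e\cong\Lambda^{s\text{-}t}_\R$. For the latter I would go through the two combinatorial translations explicitly: (i) the arrows of $e Q^\T e$ between vertices of $\R_0$ are exactly the arrows of $Q_\R$ of the skew-tiling algebra, because an arrow of $Q^\T$ restricted to a self-folded configuration collapses a $3$-cycle through the folded side into a single loop at the non-folded side (this is the standard ``reduction at a self-folded triangle'' phenomenon, cf. the relation between $Q^\T$ and the quiver of the tiling algebra in \cite{BCS,HZZ,QZ}), and (ii) the relations $\partial W^\T$, after projecting to the sub-idempotent algebra, become precisely $I^t_{\R}$ — zero-relations from the triangle terms of $W^\T$, together with $\epsilon^2=0$ for the loops created at non-folded sides of self-folded triangles of $\R$ — except that the folded-side reduction forces those particular loops to satisfy $\epsilon^2=\epsilon$ rather than $\epsilon^2=0$. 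This last point is exactly where the ``skew'' enters: idempotent-truncating the Jacobian algebra at a self-folded triangle turns the nilpotent loop into an idempotent, matching the passage from $I^t_\R$ to $I^{s\text{-}t}_\R$ in the definition of the skew-tiling algebra.

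Concretely, after establishing the isomorphism at the level of quivers, I would track a $\k$-basis: a basis of $e\Lambda^\T e$ is given by $e$-to-$e$ paths in $\Lambda^\T$, and using the explicit potential $W^\T$ (sum of $3$-cycles around ordinary triangles plus the cycles around punctures) one checks that reducing the summand at each folded side replaces the length-$2$ path through that folded side by a loop at the non-folded side, and the relation $\partial_a W^\T$ dual to the connecting arrow becomes the idempotent relation $\epsilon^2=\epsilon$ on that loop. Comparing with the generators and relations of $\Lambda^{s\text{-}t}_\R=\k Q_\R/\langle I^{s\text{-}t}_\R\rangle$ as in Theorem~\ref{thm:sg} and Remark~\ref{rmk:surf}, the two presentations coincide, giving $\Lambda_\R\cong\Lambda^{s\text{-}t}_\R$. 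The ``in particular'' clause then follows from Theorem~\ref{thm:sg} (every skew-gentle algebra is a skew-tiling algebra $\Lambda^{s\text{-}t}_\R$ for a suitable admissible $\R$, hence is of the form $\Lambda_\R$) together with Definition~\ref{def:surf alg}.

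The main obstacle I expect is the careful bookkeeping in step (ii): verifying that \emph{every} relation of $\partial W^\T$ either dies, survives as an element of $I^t_\R$, or — for the distinguished loops at vertices of $\R_1$ — turns into the idempotent relation $\epsilon^2=\epsilon$, and that no spurious relations or arrows appear. The self-folded triangles that share a puncture-adjacent marked point with another triangle produce the configurations of Figure~\ref{fig:I2 loop}, and one must check the corresponding potential terms reduce to the relations $I^t_{\R,2}$ exactly; handling these local cases (a few pictures) is the technical heart of the argument, but it is a finite check localized around each puncture, and the needed combinatorics is already implicit in \cite{QZ,HZZ,BCS}.
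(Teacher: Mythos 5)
There is a genuine gap at what you call the key step. You truncate $\Lambda^\T$ at the idempotent supported on $\R_0$, i.e.\ you discard the summands of $X(\R)$ indexed by folded sides of self-folded triangles. But $\Lambda_\R=\End_{\C(\surf)}X(\R)$ is the endomorphism algebra of \emph{all} of $X(\R)$, and by Definition~\ref{def:ideal tri} the set $\R$ contains the folded side of every self-folded triangle whose non-folded side it contains; the correct identification is therefore $\Lambda_\R\cong e^\R\Lambda^\T e^\R$ with $e^\R=\sum_{\gamma\in\R}e^\gamma$ running over all of $\R$, not over $\R_0$. The two truncations genuinely differ: $e^\R\Lambda^\T e^\R$ is basic with $|\R|=|\R_0|+|\R_1|$ primitive idempotents, which is what $\Lambda^{s\text{-}t}_\R$ has (each idempotent loop $\epsilon$ at a vertex of $\R_1$ splits $e_\gamma$ into $\epsilon$ and $e_\gamma-\epsilon$), whereas your algebra $e_{\R_0}\Lambda^\T e_{\R_0}$ has only $|\R_0|$ of them, so it cannot be isomorphic to $\Lambda^{s\text{-}t}_\R$ as soon as $\R_1\neq\emptyset$. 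Relatedly, the mechanism you propose for producing $\epsilon^2=\epsilon$ cannot work: a loop created in an idempotent truncation by composing arrows through a discarded vertex lies in the radical, hence is nilpotent, and a nonzero nilpotent element is never idempotent. In the skew-tiling presentation the loop $\epsilon$ is not a collapsed path through the folded side; under the isomorphism it corresponds to one of the two vertex idempotents $e^\gamma,e^{\gamma'}$ after the regrouping $e_\gamma=e^\gamma+e^{\gamma'}$ --- which is precisely why the folded-side summand must be retained.

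For comparison, the paper's proof keeps $e^\R$ over all of $\R$, quotes \cite[Proposition~4.4]{QZ} for the isomorphism $e^\R\Lambda^\T e^\R\cong e_\R\Lambda^{s\text{-}t}_\T e_\R$ (this is exactly the step where a folded/non-folded pair of vertices is traded for one vertex carrying an idempotent loop), and then performs the reduction from $\T$ to $\R$ entirely on the tiling-algebra side: passing to the associated unpunctured surface as in Remark~\ref{rmk:surf}, it applies \cite[Theorem~2.8]{BCS} to get $e_\R\Lambda^t_\T e_\R\cong\Lambda^t_\R$, and uses admissibility (which gives $\R_1=\T_1$) to see that specializing the same set of nilpotent loops to idempotents commutes with this truncation. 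Your plan of matching quivers and relations by hand could in principle replace those two citations, but only after the idempotent is corrected; as written, the computation is set up for the wrong algebra.
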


\begin{proof}
    Let $\T$ be an admissible triangulation containing $\R$. By \cite[Theorem~3.5]{A1}, there is an algebra isomorphism 
    \[\psi:\End_{\C(\surf)}X(\T)\cong\Lambda^\T\]
    sending the identity $\operatorname{id}_{X(\gamma)}$ of $X(\gamma)$ to the idempotent $e^\gamma$ of $\Lambda^\T$ corresponding to $\gamma\in Q_0^\T$, for any $\gamma\in\T$.	Let $e^\R=\sum_{\gamma\in\R}e^\gamma\in\Lambda^\T$ and $e_{\R}=\sum_{\gamma\in\R_0}e_\gamma\in\Lambda^{s\text{-}t}_\T$, where $e_\gamma$ is the idempotent of $\Lambda^{s\text{-}t}_\T$ corresponding to $\gamma\in\R_0$. So we have
    \[\Lambda_\R=\End_{\C(\surf)}X(\R))=\operatorname{id}_{R}\End_{\C(\surf)}X(\T))\operatorname{id}_{R}\cong e^\R\Lambda^\T e^{\R}\cong e_{\R}\Lambda^{s\text{-}t}_\T e_{\R},\]
    where the first isomorphism is induced by $\psi$, and the last isomorphism is due to \cite[Proposition~4.4]{QZ}.

    Let $\surf^t$ be the unpunctured marked surface obtained from $\surf$ by replacing each puncture by a boundary component with one marked point. By Remark~\ref{rmk:surf}, $\Lambda^t_\T$ and $\Lambda^t_\R$ are exactly tiling algebras defined in \cite{BCS} on $\surf^t$. So we have $e_{\R}\Lambda^t_\T e_\R\cong\Lambda^t_\R$ by \cite[Theorem~2.8]{BCS}. Denote by $\T_1$ the subset of $\T$ consisting of all non-folded sides of self-folded triangles. Since $\R$ is admissible, we have $\R_1=\T_1$. So $\Lambda^{s\text{-}t}_\T$ and $\Lambda^{s\text{-}t}_\R$ are obtained from $\Lambda^t_\T$ and $\Lambda^t_\R$ respectively by specializing the same set of nilpotent loops to be idempotents. Hence $e_{\R}\Lambda^{s\text{-}t}_\T e_\R\cong\Lambda^{s\text{-}t}_\R$, which implies the isomorphism $\Lambda_\R\cong \Lambda^{s\text{-}t}_\R$. 
    
    The last assertion then follows from Theorem~\ref{thm:sg}.
\end{proof}

\section{Connectedness of exchange graphs}\label{sec:eg}

Throughout this section, let $\R$ be a partial tagged triangulation of a punctured marked surface $\surf$ and $\Lambda_\R=\End_{\C(\surf)}X(\R)$ the corresponding surface rigid algebra.

\subsection{Flips of dissections}

Recall from Definition~\ref{def:tag arc} that $\TA(\surf)$ denotes the set of tagged arcs on $\surf$. For any tagged arc $\eta\in\TA(\surf)$, let $$\TA(\surf)_{\{\eta\}}=\{\gamma\in\TA(\surf)\setminus\{\eta\}\mid \Int(\gamma,\eta)=0\},$$
and let $\surf/\{\eta\}$ be the punctured marked surface obtained from $\surf$ by cutting along $\eta$  (cf. \cite[Section~5.2]{QZ}). Then there is a natural bijection
$$F_{\{\eta\}}:\TA(\surf)_{\{\eta\}}\to\TA(\surf/{\{\eta\}}),$$
sending $\gamma$ to the tagged arc on $\surf/\{\eta\}$ obtained from $\gamma$ by forgetting the taggings at each puncture which is an endpoint of $\eta$ (and hence becomes a marked point on the boundary of $\surf/{\{\eta\}}$), unless $\gamma$ is homotopic to $\eta$, where $F_{\{\eta\}}(\gamma)$ is the tagged arc shown in Figure~\ref{fig:cut}. Note that for any $\gamma_1,\gamma_2\in\TA(\surf)_{\{\eta\}}$, we have $\Int(\gamma_1,\gamma_2)=0$ if and only if $\Int(F_{\{\eta\}}(\gamma_1),F_{\{\eta\}}(\gamma_2))=0$. Therefore, $F_{\{\eta\}}$ induces a bijection from the set of partial tagged triangulations of $\surf$ containing $\eta$ to the set of partial tagged triangulations of $\surf/\{\eta\}$.
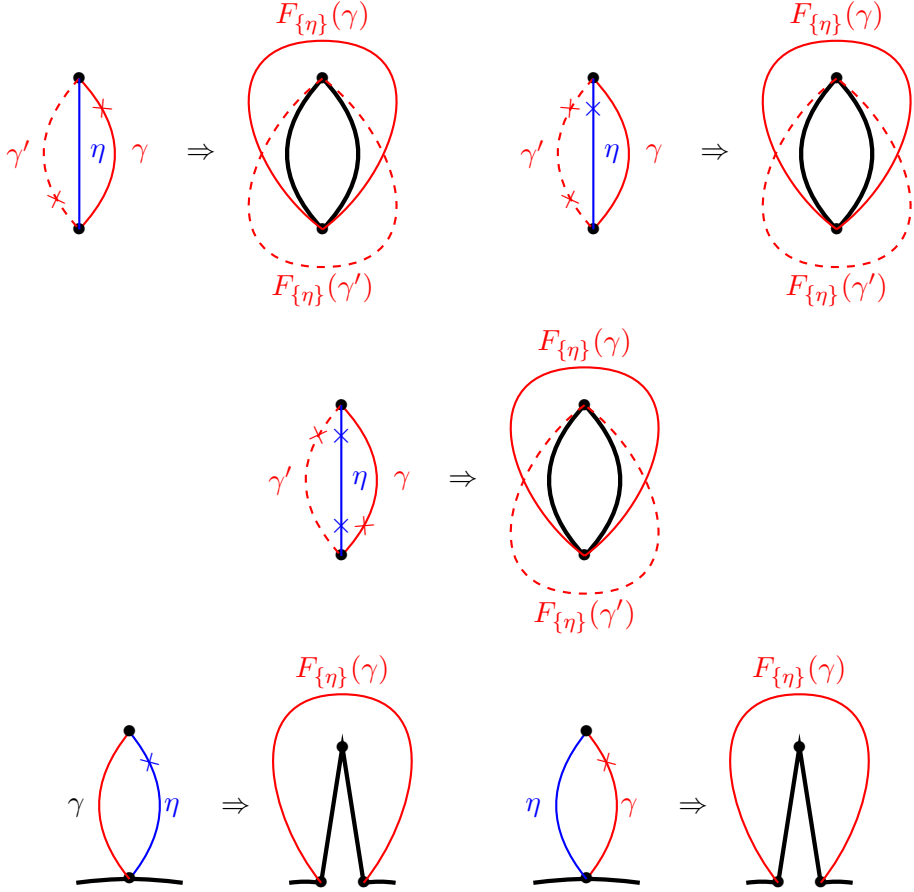
\begin{figure}[htpb]\centering
	\begin{tikzpicture}[xscale=1.6,yscale=1]
		\draw[red,thick,bend right=30](-3,-1)node[black]{$\bullet$}to(-3,1)node[black]{$\bullet$};
		\draw[red,thick,dashed,bend right=30](-3,1)to(-3,-1);
		\draw[blue,thick](-3,1)tonode[right]{$\eta$}(-3,-1);
		\draw(-2,0)node{$\Rightarrow$}(-2.81,.6)node[rotate=40,red]{$\times$}(-3.19,-.6)node[red,rotate=20]{$\times$}(-2.5,0)node[red]{$\gamma$}(-3.5,0)node[red]{$\gamma'$}(-1,-1.8)node[red]{$F_{\{\eta\}}(\gamma')$}(-1,1.8)node[red]{$F_{\{\eta\}}(\gamma)$};
		\draw[ultra thick,bend right=30](-1,1)node[black]{$\bullet$}to(-1,-1)node[black]{$\bullet$}to(-1,1);
		\draw[red,thick](-1,-1)to[out=130,in=180](-1,1.5)to[out=0,in=50](-1,-1);
		\draw[red,thick,dashed](-1,1)to[out=-130,in=180](-1,-1.5)to[out=0,in=-50](-1,1);
	\end{tikzpicture}
	\qquad
	\begin{tikzpicture}[xscale=1.6,yscale=1]
		\draw[red,thick,bend right=30](-3,-1)node[black]{$\bullet$}to(-3,1)node[black]{$\bullet$};
		\draw[red,thick,dashed,bend right=30](-3,1)to(-3,-1);
		\draw[blue,thick](-3,1)tonode[right]{$\eta$}(-3,-1)(-3,.6)node{$\times$};
		\draw(-2,0)node{$\Rightarrow$}(-3.19,.6)node[rotate=40,red]{$\times$}(-3.19,-.6)node[red,rotate=20]{$\times$}(-2.5,0)node[red]{$\gamma$}(-3.5,0)node[red]{$\gamma'$}(-1,-1.8)node[red]{$F_{\{\eta\}}(\gamma')$}(-1,1.8)node[red]{$F_{\{\eta\}}(\gamma)$};
		\draw[ultra thick,bend right=30](-1,1)node[black]{$\bullet$}to(-1,-1)node[black]{$\bullet$}to(-1,1);
		\draw[red,thick](-1,-1)to[out=130,in=180](-1,1.5)to[out=0,in=50](-1,-1);
		\draw[red,thick,dashed](-1,1)to[out=-130,in=180](-1,-1.5)to[out=0,in=-50](-1,1);
	\end{tikzpicture}
	\qquad

	\begin{tikzpicture}[xscale=1.6,yscale=1]
		\draw[red,thick,bend right=30](-3,-1)node[black]{$\bullet$}to(-3,1)node[black]{$\bullet$};
		\draw[red,thick,dashed,bend right=30](-3,1)to(-3,-1);
		\draw[blue,thick](-3,1)tonode[right]{$\eta$}(-3,-1)(-3,.6)node{$\times$}(-3,-.6)node{$\times$};
		\draw(-2,0)node{$\Rightarrow$}(-3.19,.6)node[rotate=-20,red]{$\times$}(-2.81,-.6)node[red,rotate=-40]{$\times$}(-2.5,0)node[red]{$\gamma$}(-3.5,0)node[red]{$\gamma'$}(-1,-1.8)node[red]{$F_{\{\eta\}}(\gamma')$}(-1,1.8)node[red]{$F_{\{\eta\}}(\gamma)$};
		\draw[ultra thick,bend right=30](-1,1)node[black]{$\bullet$}to(-1,-1)node[black]{$\bullet$}to(-1,1);
		\draw[red,thick](-1,-1)to[out=130,in=180](-1,1.5)to[out=0,in=50](-1,-1);
		\draw[red,thick,dashed](-1,1)to[out=-130,in=180](-1,-1.5)to[out=0,in=-50](-1,1);
	\end{tikzpicture}
	
	\begin{tikzpicture}[yscale=1,xscale=1.4]
		\draw[ultra thick,bend left=10](-.5,-1)to(.5,-1);
		\draw[blue,thick,bend right=30](0,-.95)node[black]{$\bullet$}to(0,1);
		\draw[red,thick,bend right=30](0,1)node[black]{$\bullet$}to(0,-.95);
		\draw(1,0)node{$\Rightarrow$}(.19,.6)node[rotate=30,blue]{$\times$}(-.5,0)node{$\gamma$}(.4,0)node[blue]{$\eta$}(2,1.8)node[red]{$F_{\{\eta\}}(\gamma)$};
		\draw[ultra thick,bend left=10](1.5,-1)to(1.8,-1)node{$\bullet$};
		\draw[ultra thick,bend left=10](2.2,-1)node{$\bullet$}to(2.5,-1);
		\draw[ultra thick](1.8,-1)to(2,.8)node{$\bullet$}to(2.2,-1);
		\draw[red,thick](1.8,-1)to[out=120,in=180](2,1.5)to[out=0,in=60](2.2,-1);
	\end{tikzpicture}
	\qquad
	\begin{tikzpicture}[yscale=1,xscale=1.4]
		\draw[ultra thick,bend left=10](-.5,-1)to(.5,-1);
		\draw[red,thick,bend right=30](0,-.95)node[black]{$\bullet$}to(0,1);
		\draw[blue,thick,bend right=30](0,1)node[black]{$\bullet$}to(0,-.95);
		\draw(1,0)node{$\Rightarrow$}(.19,.6)node[rotate=30,red]{$\times$}(-.5,0)node[blue]{$\eta$}(.4,0)node[red]{$\gamma$}(2,1.8)node[red]{$F_{\{\eta\}}(\gamma)$};
		\draw[ultra thick,bend left=10](1.5,-1)to(1.8,-1)node{$\bullet$};
		\draw[ultra thick,bend left=10](2.2,-1)node{$\bullet$}to(2.5,-1);
		\draw[ultra thick](1.8,-1)to(2,.8)node{$\bullet$}to(2.2,-1);
		\draw[red,thick](1.8,-1)to[out=120,in=180](2,1.5)to[out=0,in=60](2.2,-1);
	\end{tikzpicture}
	\caption{The bijection $F_{\{\eta\}}$}
	\label{fig:cut}
\end{figure}

For any partial tagged triangulation $\N$ of $\surf$, set
$$\TA(\surf)_\N=\{\gamma\in\TA(\surf)\setminus\N\mid \Int(\gamma,\eta)=0,\text{ for any $\eta\in\N$}\}.$$
Assume that we have defined a punctured marked surface $\surf/{\N'}$ and a bijection $F_{\N'}:\TA(\surf)_{\N'}\to\TA(\surf/\N')$ for some $\N'\subset\N$ with $\N=\N'\cup\{\eta\}$. Then $F_{\N'}$ restricts to a bijection $F_{\N'}|_{\TA(\surf)_{\N}}:\TA(\surf)_{\N}\to\TA(\surf/\N')_{F_{\N'}(\eta)}$. So we can define inductively the punctured marked surface
$$\surf/\N=(\surf/\N')/F_{\N'}(\eta)$$
and the bijection
$$F_\N=F_{F_{\N'}(\eta)}\circ F_{\N'}|_{\TA(\surf)_{\N}}:\TA(\surf)_{\N}\to\TA(\surf/\N).$$

Recall from Theorem~\ref{thm:IY} that $\C(\surf)_{X(\N)}={}^\bot X(\N)[1]/\<\add X(\N)\>$ is a triangulated category.

\begin{lemma}
    Let $\N$ be a partial tagged triangulation of $\surf$. Then there is a triangle equivalence
    \begin{equation}\label{eq:tri equiv}
        \xi:\C(\surf)_{X(\N)}\to\C(\surf/\N),
    \end{equation}
    such that 
    \begin{equation}\label{eq:cut}
    \xi(X(\gamma))=X_{\surf/\N}(F_\N(\gamma))
    \end{equation}
    holds for any $\gamma\in\TA(\surf)_\N$, where $X_{\surf/\N}$ is the bijection in Theorem~\ref{thm:QZ} for $\surf/\N$.
\end{lemma}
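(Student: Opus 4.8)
The plan is to exhibit $\C(\surf)_{X(\N)}$ and $\C(\surf/\N)$ as the generalised cluster category of one and the same quiver with potential, and then to pin down the identity \eqref{eq:cut} by a connectedness argument on flip graphs. Fix a tagged triangulation $\T$ of $\surf$ with $\N\subseteq\T$, set $\T'=\T\setminus\N$, and let $\T/\N:=F_\N(\T')$, a tagged triangulation of $\surf/\N$. By Theorem~\ref{thm:QZ}(3), $X(\T)$ is a cluster tilting object of $\C(\surf)$ having $X(\N)$ as a direct summand, and $X(\T')\in{}^\perp X(\N)[1]$ since $X(\T)$ is rigid; hence, by Theorem~\ref{thm:IY} and \cite{IY}, $\C(\surf)_{X(\N)}$ is an algebraic, Hom-finite, Krull--Schmidt, $2$-Calabi--Yau triangulated category in which $\overline{X(\T')}$ is a cluster tilting object. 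On the other side, $X_{\surf/\N}(\T/\N)$ is a cluster tilting object of $\C(\surf/\N)$ whose endomorphism algebra is the Jacobian algebra $\Lambda^{\T/\N}$ of $(Q^{\T/\N},W^{\T/\N})$.

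The crucial, and I expect hardest, step is to produce an isomorphism
\[
\End_{\C(\surf)_{X(\N)}}\bigl(\overline{X(\T')}\bigr)\;\cong\;\Lambda^{\T/\N}
\]
sending $\operatorname{id}_{\overline{X(\gamma)}}$ to the idempotent at the vertex $F_\N(\gamma)$ for each $\gamma\in\T'$. The left-hand algebra is the quotient of $\operatorname{id}_{X(\T')}\bigl(\End_{\C(\surf)}X(\T)\bigr)\operatorname{id}_{X(\T')}$ by the ideal of morphisms factoring through $\add X(\N)$, i.e. the restriction of the Jacobian algebra $\Lambda^\T$ at the vertices of $\N$, so the claim is precisely the compatibility of Iyama--Yoshino reduction at $X(\N)$ with cutting $\surf$ along $\N$. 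For $|\N|=1$ this is the surface interpretation of Calabi--Yau reduction as cutting along an arc, in the spirit of Marsh--Palu \cite{MP} in the unpunctured case, now using the punctured cutting construction of \cite[Section~5.2]{QZ}; the general case follows by cutting the arcs of $\N$ one at a time, matching iterated reduction $\C(\surf)_{X(\N'\cup\{\eta\})}\simeq\bigl(\C(\surf)_{X(\N')}\bigr)_{\overline{X(\eta)}}$ with iterated cutting $\surf/\N=(\surf/\N')/F_{\N'}(\eta)$. Granting this, Amiot's recognition theorem \cite{A1} yields a triangle equivalence $\xi\colon\C(\surf)_{X(\N)}\xrightarrow{\;\simeq\;}\C(Q^{\T/\N},W^{\T/\N})=\C(\surf/\N)$ sending $\overline{X(\T')}$ to $X_{\surf/\N}(\T/\N)$ and inducing the displayed algebra isomorphism; in particular \eqref{eq:cut} holds for all $\gamma\in\T'$.

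It then remains to extend \eqref{eq:cut} to all $\gamma\in\TA(\surf)_\N$. By Theorem~\ref{thm:QZ}(2) together with $2$-Calabi--Yau duality, for an indecomposable rigid $X(\gamma)$ one has $X(\gamma)\in{}^\perp X(\N)[1]$ exactly when $\gamma\in\N\cup\TA(\surf)_\N$; consequently the images $\overline{X(\gamma)}$, $\gamma\in\TA(\surf)_\N$, are exactly the indecomposable rigid objects of $\C(\surf)_{X(\N)}$, and are pairwise non-isomorphic. Thus $\xi$ induces a bijection $\Theta\colon\TA(\surf)_\N\to\TA(\surf/\N)$ with $\xi(\overline{X(\gamma)})=X_{\surf/\N}(\Theta(\gamma))$, and it suffices to check $\Theta=F_\N$. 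Both $\Theta$ and $F_\N$ carry the arc set $\T''\setminus\N$ of any tagged triangulation $\T''\supseteq\N$ to a tagged triangulation of $\surf/\N$, and both intertwine flips with flips: for $F_\N$ this is inherent in the cutting construction, and for $\Theta$ it follows from $\xi$ commuting with mutation of cluster tilting objects, from Proposition~\ref{prop:reduction} relating mutation in $\C(\surf)_{X(\N)}$ to mutation in $\C(\surf)$, and from the correspondence between mutation of cluster tilting objects and flips of tagged triangulations on both surfaces (Theorem~\ref{thm:QZ}). Since $\Theta$ and $F_\N$ agree on $\T'$, and the flip graph of tagged triangulations of $\surf/\N$ is connected (Theorem~\ref{thm:QZ}(4)), they agree on $\T''\setminus\N$ for every such $\T''$; as any $\gamma\in\TA(\surf)_\N$ lies in some tagged triangulation containing $\N$, we conclude $\Theta=F_\N$, which is \eqref{eq:cut}.
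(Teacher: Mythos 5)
Your overall strategy (fix a tagged triangulation $\T\supseteq\N$, produce the equivalence so that \eqref{eq:cut} holds on $\T\setminus\N$, then propagate to all of $\TA(\surf)_\N$ by flip-connectedness) matches the paper's, and your second half --- the bijection $\Theta$ on indecomposable rigids, flip-equivariance of both $\Theta$ and $F_\N$, agreement on one triangulation, and connectedness of the flip graph of $\surf/\N$ --- is a correct and welcome expansion of what the paper compresses into ``by Theorem~\ref{thm:QZ}''. The problem is the central step.

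You propose to (i) identify $\End_{\C(\surf)_{X(\N)}}(\overline{X(\T')})$ with $\Lambda^{\T/\N}$, and (ii) invoke ``Amiot's recognition theorem'' from \cite{A1} to upgrade this algebra isomorphism to a triangle equivalence $\C(\surf)_{X(\N)}\simeq\C(Q^{\T/\N},W^{\T/\N})$. Step (i) is actually the easy part once one knows that $(Q^{\T/\N},W^{\T/\N})$ is the vertex-deleted QP: writing $e$ for the idempotent of $\T'$ and $e_\N$ for that of $\N$, the ideal of morphisms factoring through $\add X(\N)$ is $e\Lambda^\T e_\N\Lambda^\T e$, and $e\Lambda^\T e/e\Lambda^\T e_\N\Lambda^\T e\cong\Lambda^\T/\langle e_\N\rangle\cong\operatorname{Jac}(Q^{\T/\N},W^{\T/\N})$; no appeal to \cite{MP} or to an arc-by-arc surface argument is needed. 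Step (ii) is where the gap lies: there is no recognition theorem in \cite{A1} (nor elsewhere in the generality you need) that produces a triangle equivalence of generalized cluster categories from an isomorphism of the endomorphism algebras of cluster-tilting objects alone. The Keller--Reiten recognition theorem applies to acyclic quivers; for Jacobian algebras of arbitrary QPs the endomorphism algebra does not determine the cluster category, and one needs dg-level input. The paper sidesteps this entirely by citing Keller's theorem on deformed Calabi--Yau completions (\cite[Theorem~7.4]{K}), which works with the Ginzburg dg algebras and shows directly that the canonical projection $\Lambda^\T\to\Lambda^{F_\N(\T\setminus\N)}$ induces a triangle equivalence between the Iyama--Yoshino reduction $\C(\surf)_{X(\N)}$ and $\C(Q^{\T/\N},W^{\T/\N})$. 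As written, your argument therefore rests on a nonexistent theorem at its key juncture; replacing step (ii) by the citation of \cite[Theorem~7.4]{K} (which also subsumes step (i)) repairs it and recovers the paper's proof.
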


\begin{proof}
    Let $\T$ be a tagged triangulation containing $\N$. Then $F_{\N}(\T\setminus\N)$ is a tagged triangulation of $\surf/\N$ and the corresponding quiver with potential $(Q^{F_{\N}(\T\setminus\N)},W^{F_{\N}(\T\setminus\N)})$ can be obtained from the quiver with potential $(Q^{\T},W^{\T})$ by deleting the vertices corresponding to the tagged arcs in $\N$ and the incident arrows. Then by \cite[Theorem~7.4]{K}, the canonical projection $\pi:\Lambda^{\T}\to\Lambda^{F_{\N}(\T\setminus\N)}$ induces the triangle equivalence~\eqref{eq:tri equiv} such that \eqref{eq:cut} holds for any $\gamma\in\T\setminus\N$. Then by Theorem~\ref{thm:QZ}, \eqref{eq:cut} holds for any $\gamma\in\TA(\surf)_\N$.
\end{proof}

Recall from Construction~\ref{cons:tag to ideal} that for any partial tagged triangulation $\N$, there is an associated partial ideal triangulation $\N^\circ$. For any $l\in\N$, denote by $l^{\circ_\N}$ the arc in $\N^\circ$ corresponding to $l$, which is sometimes simply denoted by $l^\circ$ when there is no confusion arising.

\begin{remark}\label{rmk:cut}
    Let $\N$ be a partial tagged triangulation of $\surf$. By the construction, $\surf/\N$ is obtained from $\surf$ by cutting along arcs in $\N^\circ$.
\end{remark}

\begin{definition}\label{def:relrot}
    Let $\N$ be a partial tagged triangulation of $\surf$. The \emph{tagged rotation with respect to $\N$} is defined to be the bijection
    $$\rho_\N=F_\N^{-1}\circ\rho_{\surf/\N}\circ F_\N:\TA(\surf)_{\N}\to\TA(\surf)_{\N},$$
    where $\rho_{\surf/\N}$ is the tagged rotation on $\surf/\N$. 
\end{definition}

To better understand the tagged rotation with respect to a partial tagged triangulation, we introduce the following notion of flip of a partial ideal triangulation.

\begin{definition}\label{def:flip2}
    Let $\V$ be a partial ideal triangulation of $\surf$ and $v\in\V$ not the folded side of a self-folded triangle of $\V$. Recall that $B$ is the set of boundary segments. We denote
    \begin{itemize}
        \item $v_s$ (resp. $v_t$) the arc in $\V\cup B$ the first arc in $(\V\setminus\{v\})\cup B$ next to $v$ around $v(0)=v_s(0)$ (resp. $v(1)=v_t(1)$) in the clockwise order, and
        \item $v^s$ (resp. $v^t$) the first arc in $(\V\setminus\{v\})\cup B$ next to $v$ around $v(0)=v^s(0)$ (resp. $v(1)=v^t(1)$) in the anticlockwise order.
    \end{itemize}
     See Figures~\ref{fig:angle by two} and \ref{fig:angle by one} for these notations. We remark that some of $v_s,v_t,v^s,v^t$ may not exist.
    
    The negative (resp. positive) flip $f^-_{\V}(v)$ (resp. $f^+_{\V}(v)$) of $v$ with respect to $\V$ is defined to be the arc obtained from $v$ by moving $v(0)$ along $v_s$ (resp. $v^s$), if exists, to $v_s(1)$ (resp. $v^s(1)$) and moving $v(1)$ along $v_t$ (resp. $v^t$), if exists, to $v_t(0)$ (resp. $v^t(0)$). The positive flip $f^+_v(\V)$ and the negative flip $f^-_v(\V)$ of $\V$ at $v$ are defined respectively to be
    $$f^+_v(\V)=(\V\setminus\{v\})\cup\{f^+_{\V}(v)\},\ f^-_v(\V)=(\V\setminus\{v\})\cup\{f^-_{\V}(v)\}.$$
    We also denote by $\theta_s$ (resp. $\theta^s$, $\theta_t$ and $\theta^t$) the angle between a segment of $v$ near $v(0)$ (resp. $v(0)$, $v(1)$ and $v(1)$) and a segment of $v_s$ (resp. $v^s$, $v_t$ and $v^t$) near $v_s(0)$ (resp. $v^s(0)$, $v_t(1)$ and $v^t(1)$).
\end{definition}

\begin{lemma}\label{lem:flipcirc}
    Let $\U=\N\cup\{l\}$ be a partial tagged triangulation of $\surf$ such that $l^\circ$ is not the folded side of a self-folded triangle of $\U^\circ$. Then $(\N\cup\{\rho^{-1}_\N(l)\})^\circ=f^-_{l^\circ}(\U^\circ)$ and $(\N\cup\{\rho_\N(l)\})^\circ=f^+_{l^\circ}(\U^\circ)$.
\end{lemma}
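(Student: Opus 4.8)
The plan is to reduce the statement to the local picture around the arc $l$ and its endpoints, and then trace through the three constructions involved: the tagged rotation with respect to $\N$ (Definition~\ref{def:relrot}), the passage to partial ideal triangulations via Construction~\ref{cons:tag to ideal}, and the cutting operation $\surf\rightsquigarrow\surf/\N$ (Remark~\ref{rmk:cut}). I would only treat the negative case $(\N\cup\{\rho^{-1}_\N(l)\})^\circ=f^-_{l^\circ}(\U^\circ)$ in detail; the positive case is entirely analogous, replacing clockwise by anticlockwise and $\rho^{-1}$ by $\rho$.

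First I would unwind the definition. By Definition~\ref{def:relrot}, $\rho^{-1}_\N(l)=F_\N^{-1}(\rho^{-1}_{\surf/\N}(F_\N(l)))$. By Remark~\ref{rmk:cut}, $\surf/\N$ is obtained from $\surf$ by cutting along the arcs in $\N^\circ$; under this cut, each puncture $p\in\PP_\N$ with $\kappa_\N(p)\le 0$ becomes a marked point on the boundary, while the arc $l$ is sent by $F_\N$ to $F_\N(l)$, whose underlying arc is $l^\circ$ (after forgetting taggings at endpoints lying in $\N$). The key observation is that the ordinary tagged rotation $\rho^{-1}$ on $\surf/\N$ acts on $F_\N(l)$ by moving each endpoint of $F_\N(l)$ that lies in $\MM_{\surf/\N}$ one marked point \emph{clockwise} (since $\rho$ moves anticlockwise, $\rho^{-1}$ moves clockwise) and toggling the tagging at punctured ends. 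Under $F_\N^{-1}$, ``the next marked point clockwise in $\surf/\N$'' corresponds precisely to moving $l(0)$ along the first arc in $(\U^\circ\setminus\{l^\circ\})\cup B$ that is next to $l^\circ$ clockwise around $l(0)$ — because the boundary of $\surf/\N$ near the image of that endpoint is glued from exactly the arcs of $\N^\circ$ incident there, together with the boundary segments. This is exactly the arc $v_s$ in Definition~\ref{def:flip2} with $v=l^\circ$; similarly the endpoint $l(1)$ moves along $v_t$. I would make this precise by fixing a small disc neighbourhood of each endpoint of $l$, listing the arcs of $\U^\circ\cup B$ around it in cyclic order, and checking that cutting along the $\N^\circ$-arcs and re-gluing identifies the clockwise-successor marked point with the endpoint of $v_s$ (resp. $v_t$). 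One must also handle the endpoints of $l$ that land on $\partial S$ already in $\surf$ (where the cut does nothing locally and $v_s$ is just the next boundary segment), and the degenerate cases where $v_s$ or $v_t$ does not exist (the endpoint is not moved), which matches the convention in Definition~\ref{def:flip2}.

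The second ingredient is compatibility of $\rho^{-1}_\N$ with the operation $\gamma\mapsto\gamma^{\circ_\U}$. Here one must be careful about taggings: $\rho^{-1}_\N$ toggles taggings at punctures in $\N$, but such punctures become marked points in $\surf/\N$, so this toggling is invisible after applying $F_\N$, and hence the underlying arc of $\rho^{-1}_\N(l)$ in $\surf$ is obtained from $l$ purely by the endpoint moves described above. Taking the partial ideal triangulation associated to $\N\cup\{\rho^{-1}_\N(l)\}$ via Construction~\ref{cons:tag to ideal} then replaces $\rho^{-1}_\N(l)$ by its underlying arc together with the arcs of $\N^\circ$, which is exactly $f^-_{l^\circ}(\U^\circ)=(\U^\circ\setminus\{l^\circ\})\cup\{f^-_{\U^\circ}(l^\circ)\}$; note $\N^\circ=\U^\circ\setminus\{l^\circ\}$ since $l^\circ$ is not the folded side of a self-folded triangle of $\U^\circ$, so removing $l$ from $\U$ and then taking $\circ$ gives the same partial ideal triangulation as removing $l^\circ$ from $\U^\circ$.

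The main obstacle I anticipate is the bookkeeping around punctured endpoints of $l$ when $\kappa_\U$ at that puncture is negative or zero: in those cases Construction~\ref{cons:tag to ideal} either changes taggings or introduces a self-folded triangle, so one must verify that the ``first arc clockwise'' in $\U^\circ$ (which may be a folded side) is correctly matched with the marked point produced in $\surf/\N$ and with the convention for $v_s,v_t$ in Definition~\ref{def:flip2} — recalling that Definition~\ref{def:flip2} requires $v=l^\circ$ itself not to be a folded side, but its neighbours $v_s,v_t$ are permitted to be. A clean way to organize this is to first prove the lemma when $\N$ is a partial ideal triangulation (so $\U^\circ=\U^\times{}^\circ$ and no tagging subtleties arise at endpoints outside $\N$), using Remark~\ref{rmk:good completion} to pass to a tagged triangulation completing $\N$ with all remaining taggings $+1$, and then deduce the general case by the identity $\gamma^{\circ_\N}=\gamma^{\circ_\T}$ from that remark. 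I would close by remarking that the positive flip statement follows by the symmetry exchanging clockwise/anticlockwise, or equivalently by applying the negative case to $\rho_\N(l)$ in place of $l$ and using $\rho_\N\circ\rho^{-1}_\N=\mathrm{id}$.
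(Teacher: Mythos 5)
Your first half follows the paper's own route: using Remark~\ref{rmk:cut} you identify $l^\circ_s$ and $l^\circ_t$ with the boundary segments of $\surf/\N$ adjacent to $F_\N(l)$ in the clockwise order, so that $\rho^{-1}_{\surf/\N}(F_\N(l))$ is homotopic to $f^-_{\U^\circ}(l^\circ)$; this is exactly how the paper argues, and your extra care about endpoints already on $\partial S$ and about non-existent $v_s,v_t$ is consistent with Definition~\ref{def:flip2}.

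The gap is in the final step. You assert that applying Construction~\ref{cons:tag to ideal} to $\U'=\N\cup\{\rho^{-1}_\N(l)\}$ ``replaces $\rho^{-1}_\N(l)$ by its underlying arc together with the arcs of $\N^\circ$.'' This fails precisely when $\rho^{-1}_\N(l)$ is homotopic to some $h\in\N$ with the opposite tagging at a shared puncture $p$ (a case that is genuinely reachable, e.g.\ when $\rho^{-1}_{\surf/\N}(F_\N(l))$ is the loop around the boundary component created by cutting along $h$): then $\kappa_{\U'}(p)=0$, Construction~\ref{cons:tag to ideal} replaces the pair $\{h,\rho^{-1}_\N(l)\}$ by a self-folded triangle, so $h^{\circ_{\U'}}$ is no longer $h^{\circ_\N}$ but the enclosing loop, while the underlying arc of $\rho^{-1}_\N(l)$ coincides with $h^{\circ_\N}$. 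Your naive union $\N^\circ\cup\{\text{underlying arc}\}$ then equals $\N^\circ$ and misses the loop $f^-_{\U^\circ}(l^\circ)$ entirely. The set equality ${\U'}^\circ=f^-_{l^\circ}(\U^\circ)$ does still hold, but only because the $\circ$-representatives of $h$ and $\rho^{-1}_\N(l)$ swap roles; the paper's proof devotes its entire second paragraph to exactly this case distinction. Note also that your anticipated ``obstacle'' is located at the endpoints of $l$ in $\U^\circ$ (taggings of $\U$), whereas the real danger sits at the endpoints of the \emph{new} arc in ${\U'}^\circ$; and the proposed workaround via Remark~\ref{rmk:good completion} changes neither $\U'$ nor ${\U'}^\circ$ (and does not turn a general $\N$ into a partial ideal triangulation), so it does not dispose of this case. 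Adding the explicit two-case analysis --- $\rho^{-1}_\N(l)$ is, or is not, the folded side of a self-folded triangle of ${\U'}^\circ$ --- closes the gap.
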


\begin{proof}
    We only prove the first equality while the second can be proved similarly. Since $l^\circ$ is not the folded side of a self-folded triangle of $\U^\circ$, we have $\U^\circ=\N^\circ\cup\{l^\circ\}$ and that $F_\N(l)$ is homotopic to $l^\circ$. By Remark~\ref{rmk:cut}, $\surf/\N$ is obtained from $\surf$ by cutting the surface along arcs in $\N^\circ$. Then $l^\circ_s$ and $l^\circ_t$ (if exist) are the boundary segments of $\surf/\N$ next to $F_\N(l)$ around $l^\circ(0)$ and $l^\circ(1)$ respectively in the clockwise order. Hence, by definition, $\rho^{-1}_{\surf/\N}(F_\N(l))$ is homotopic to $f^-_{U^\circ}(l^\circ)$. 
    
    Denote $\U'=\N\cup\{\rho^{-1}_\N(l)\}$. If $\rho^{-1}_\N(l)$ is not the folded side of a self-folded triangle of ${\U'}^\circ$, then  $${\U'}^\circ=\N^\circ\cup\{\rho^{-1}_\N(l)^{\circ_{\U'}}\}=\N^\circ\cup\{F_{\N}(\rho^{-1}_\N(l))\}=\N^\circ\cup \{f^-_{U^\circ}(l^\circ)\}=f^-_{l^\circ}(U^\circ).$$
    If $\rho^{-1}_\N(l)$ is the folded side of a self-folded triangle $\Delta$ of $\U'^{\circ}$, then there is $h\in\N$ such that $h^{\circ_{\U'}}$ is the non-folded side of $\Delta$. So $h^{\circ_{\U'}}$ is homotopic to $f^-_{U^\circ}(l^\circ)$ and $\rho^{-1}_\N(l)^{\circ_{\U'}}$ is homotopic to $h^{\circ_\N}=h^{\circ_\U}$. Thus, we also have ${\U'}^\circ=f^-_{l^\circ}(U^\circ)$.
\end{proof}

By Lemma~\ref{lem:neq0}, for any $l$ in an $\R$-dissection $\U$, either there exists $\gamma\in\R$ such that $b_{l,\U}(e^{op}(\gamma))>0$ or there exists $\gamma\in\R$ such that $b_{l,\U}(e^{op}(\gamma))<0$ but not both. This ensures the well-definedness of the following notion of flip of dissections.

\begin{definition}\label{def:flip}
    Let $\U$ be an $\R$-dissection. For any $l\in\U$, the \emph{flip} $f^{\R}_l(\U)$ of $\U$ at $l$ is defined to be
    $$f^{\R}_l(\U)=\begin{cases}
    (\U\setminus\{l\})\cup\{\rho_{\U\setminus\{l\}}(l)\}&\text{if $b_{l,\U}(e^{op}(\gamma))>0$ for some $\gamma\in\R$,}\\
    (\U\setminus\{l\})\cup\{\rho^{-1}_{\U\setminus\{l\}}(l)\}&\text{if $b_{l,\U}(e^{op}(\gamma))<0$ for some $\gamma\in\R$.}
    \end{cases}$$
    We denote by $f^{\R}_{\U}(l)$ the tagged arc in $f_l^{\R}(\U)$ but not in $\U$.
\end{definition}

We will simply denote $f_{\U}(l)=f_{\U}^{\R}(l)$ and $f_{l}(\U)=f_{l}^{\R}(\U)$ when there is no confusion arising. By Lemma~\ref{lem:flipcirc}, for any $l\in \U$ such that $l^\circ$ is not the folded side of a self-folded triangle of $\U^\circ$, we have
\begin{equation}\label{eq:flip}
    f_l(\U)^\circ=\begin{cases}
f^+_{l^\circ}(\U^\circ) &\text{if $b_{l,\U}(e^{op}(\gamma))>0$ for some $\gamma\in\R$,}\\
f^-_{l^\circ}(\U^\circ)&\text{if $b_{l,\U}(e^{op}(\gamma))<0$ for some $\gamma\in\R$.}
\end{cases}
\end{equation}

\begin{remark}
    In the case that $\R$ is a tagged triangulation, the $\R$-dissections are exactly the tagged triangulations of $\surf$, and the flip in Definition~\ref{def:flip} coincides with the flip of tagged triangulations introduced in \cite{FST}.
\end{remark}

\begin{proposition}\label{prop:flip}
	For any $\R$-dissection $\U$ and any $l\in\U$, we have that $f_l(\U)$ is the unique $\R$-dissection such that $\U\setminus f_l(\U)=\{l\}$. Moreover, under the bijection~\eqref{eq:bi-st} in Theorem~\ref{thm:arc and rigid}, the flip of $\R$-dissections is compatible with the mutation of maximal rigid objects with respect to $X(\R)\ast X(\R)[1]$.
\end{proposition}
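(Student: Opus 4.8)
The plan is to transport everything through the bijection $X:D(\R)\to\mr(X(\R)\ast X(\R)[1])$ of Theorem~\ref{thm:arc and rigid} and reduce the statement to the mutation theory developed in Section~\ref{sec:category}, in particular Corollary~\ref{cor:mut} and Theorem~\ref{thm:completion}. Write $R=X(\R)$ and, for an $\R$-dissection $\U$, write $U=X(\U)=\bigoplus_{\delta\in\U}X(\delta)\in\mr(R\ast R[1])$ and $Y=X(l)$ for the chosen summand. By Corollary~\ref{cor:mut} there is a unique object $\mu_Y(U)\in\mr(R\ast R[1])$ containing $U\setminus Y$ and not isomorphic to $U$, and it equals $\mu_Y^+(U)$ if $[\ind_{U[-1]}R:Y[-1]]>0$ and $\mu_Y^-(U)$ if $[\ind_{U[-1]}R:Y[-1]]<0$. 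Since $X$ restricts to a bijection $\stTA{\R}\setminus? \to \ldots$ — more precisely, since $X$ induces the bijection~\eqref{eq:bi-st} — the object $\mu_Y(U)$ corresponds to a unique $\R$-dissection $\U'$ with $\U\setminus\U'=\{l\}$. So the uniqueness part is immediate once we identify $f_l(\U)$ with this $\U'$; the whole content is the identification $X(f_l(\U))\cong\mu_{X(l)}(X(\U))$, which simultaneously gives the ``compatibility with mutation'' assertion.

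The key computation is the sign dictionary. By \eqref{eq:ind=b2} of Theorem~\ref{thm:arc and rigid}, applied with the pair of partial tagged triangulations $(\U,\R)$ — legitimate because, by Corollary~\ref{cor:dual}, $\R$ is a $\U$-co-dissection, so each $\gamma\in\R$ is $\U$-co-standard — we have
\[
  [\ind_{X(\U)[-1]}X(\gamma):X(l)[-1]]=-b_{l,\U}(e^{op}(\gamma))
\]
for every $\gamma\in\R$, and hence by Remark~\ref{rem:total1},
\[
  [\ind_{U[-1]}R:Y[-1]]>0 \iff b_{l,\U}(e^{op}(\gamma))<0 \text{ for some } \gamma\in\R,
\]
and dually with the inequalities reversed. (Lemma~\ref{lem:neq0} guarantees exactly one of these holds, so there is no ambiguity.) Thus the case $b_{l,\U}(e^{op}(\gamma))>0$ in Definition~\ref{def:flip} is exactly the case $\mu_{Y}(U)=\mu_Y^-(U)=N\oplus Y\langle1\rangle_N$ (using Remark~\ref{rem:mu} with $N=U\setminus Y$), and the case $b_{l,\U}(e^{op}(\gamma))<0$ is the case $\mu_Y(U)=\mu_Y^+(U)=N\oplus Y\langle-1\rangle_N$.

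It remains to match the geometric operations $\rho_{\U\setminus\{l\}}^{\pm1}(l)$ with these left/right mutations, i.e.\ to show $X(\rho_{\U\setminus\{l\}}(l))\cong X(l)\langle1\rangle_{X(\U\setminus\{l\})}$ and $X(\rho^{-1}_{\U\setminus\{l\}}(l))\cong X(l)\langle-1\rangle_{X(\U\setminus\{l\})}$. Here I would use the triangle equivalence $\xi:\C(\surf)_{X(\N)}\to\C(\surf/\N)$ of \eqref{eq:tri equiv} with $\N=\U\setminus\{l\}$: under $\xi$, Calabi--Yau reduction turns the shift $\langle1\rangle_{X(\N)}$ into the ordinary shift $[1]$ of $\C(\surf/\N)$, and by \eqref{eq:cut} the object $X(l)$ maps to $X_{\surf/\N}(F_\N(l))$, which is a rigid object in the genuine cluster category $\C(\surf/\N)$. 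By Theorem~\ref{thm:QZ}(1) the shift $[1]$ on $\C(\surf/\N)$ corresponds to the tagged rotation $\rho_{\surf/\N}$ of $\surf/\N$, and $\rho_\N=F_\N^{-1}\circ\rho_{\surf/\N}\circ F_\N$ by Definition~\ref{def:relrot}; chasing these identifications gives $X(l)\langle1\rangle_{X(\N)}\cong X(\rho_\N(l))$ and dually for $\langle-1\rangle$. Putting the three pieces together yields $X(f_l(\U))\cong\mu_{X(l)}(X(\U))$, whence $f_l(\U)\in D(\R)$, $\U\setminus f_l(\U)=\{l\}$, uniqueness follows from Corollary~\ref{cor:mut}, and the compatibility with mutation is exactly this identity under the bijection~\eqref{eq:bi-st}.

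The main obstacle is the last paragraph: one must be careful that the equivalence $\xi$ really does intertwine the reduced shift $\langle1\rangle_{X(\N)}$ on $\C(\surf)_{X(\N)}$ with the suspension on $\C(\surf/\N)$ in the way needed, and that the $R$-presentation (index) data is preserved — this is essentially the content of Proposition~\ref{prop:reduction} applied iteratively summand-by-summand over $\N$, so I would invoke Proposition~\ref{prop:reduction} (with $G$ running through the indecomposable summands of $X(\N)$) to reduce the mutation statement in $\mr(X(\R)\ast_{\C(\surf)}X(\R)[1])$ to the corresponding statement in $\mr(X(\R)\ast_{\C(\surf)_{X(\N)}}X(\R)\langle1\rangle)$, and only then pass through $\xi$ to the honest cluster category $\C(\surf/\N)$ where the flip of (partial) tagged triangulations is already understood via Theorem~\ref{thm:QZ} and the classical theory of \cite{FST,QZ}.
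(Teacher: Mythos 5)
Your proposal is correct and follows essentially the same route as the paper's proof: reduce via the bijection~\eqref{eq:bi-st} to Corollary~\ref{cor:mut}, translate the sign condition through \eqref{eq:ind=b2} and Remark~\ref{rem:total1}, and identify $X(\rho_{\N}^{\pm1}(l))$ with $X(l)\langle\pm1\rangle_{X(\N)}$ by passing through the triangle equivalence $\xi$ and Theorem~\ref{thm:QZ}(1). The only cosmetic difference is your final appeal to Proposition~\ref{prop:reduction}; the paper does not need it here, since $\xi$ being a triangle equivalence already intertwines $\langle1\rangle_{X(\N)}$ with the suspension of $\C(\surf/\N)$.
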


\begin{proof}
    Let $\N=\U\setminus\{l\}$. 
    It suffices to show $X(f_l(\U))$ is the unique (up to isomorphism) basic maximal rigid object with respect to $X(\R)\ast X(\R)[1]$ such that $X(f_l(\U))$ contains $X(\N)$ as a direct summand and is not isomorphic to $X(\U)$. By Corollary~\ref{cor:mut} and Remark~\ref{rem:total1}, this is equivalent to showing
    $$X(f_l(\U))=\begin{cases}
    \mu_{X(l)}^+(X(\U)) & \text{if $[\ind_{X(\U)[-1]}X(\gamma):X(l)[-1]]>0$ for some $\gamma\in\R$,}\\
    \mu_{X(l)}^-(X(\U)) & \text{if $[\ind_{X(\U)[-1]}X(\gamma):X(l)[-1]]<0$ for some $\gamma\in\R$.}
    \end{cases}$$
    Then by \eqref{eq:indb2}, it suffices to show $$X((\U\setminus\{l\})\cup\{\rho_{\N}(l)\})=\mu_{X(l)}^-(X(\U))$$
    and
    $$X((\U\setminus\{l\})\cup\{\rho^{-1}_{\N}(l)\})=\mu_{X(l)}^+(X(\U)).$$
    By Remark~\ref{rem:mu}, this is equivalent to showing that 
    $$X(\rho_{\N}(l))=X(l)\<1\>_{X(\N)}\text{ and }X(\rho^{-1}_{\N}(l))=X(l)\<-1\>_{X(\N)},$$ 
    where $\<1\>_{X(\N)}$ and $\<-1\>_{X(\N)}$ are the suspension functor of $\C(\surf)_\N$ and its inverse, respectively, see Theorem~\ref{thm:IY}.
   
    Using \eqref{eq:cut}, we have $$\xi(X(\rho_\N(l)))=X_{\surf/\N}(\rho_{\surf/\N}(F_\N(l)))=X_{\surf/\N}(F_\N(l))[1]=\xi(X(l)\<1\>_{X(\N)}).$$
    Hence $X(\rho_\N(l))=X(l)\langle1\rangle_{X(\N)}$. Similarly, we have $X(\rho^{-1}_\N(l))=X(l)\langle-1\rangle_{X(\N)}$. 
\end{proof}

\begin{definition}
	The exchange graph $\eg(\R)$ of $\R$-dissections has $\R$-dissections as vertices and has flips as edges.
\end{definition}

\begin{lemma}\label{lem:graphs}
    There is an isomorphism of graphs $\eg(\R)\cong\operatorname{EG}(\st \Lambda_\R)$.
\end{lemma}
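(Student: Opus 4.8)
The plan is to obtain the graph isomorphism by composing bijections that are already in place. First, Theorem~\ref{thm:arc and rigid} gives a bijection $X\colon D(\R)\to\mr(X(\R)\ast X(\R)[1])$, and Theorem~\ref{thm:CZZ} gives a bijection $\Phi$ whose restriction to $\mr(X(\R)\ast X(\R)[1])$ lands bijectively on the set of basic $\tau$-tilting pairs over $\Lambda_\R$. Since basic support $\tau$-tilting modules are identified with basic $\tau$-tilting pairs, the composite $\Theta:=\Phi\circ X$ is a bijection between the vertex set $D(\R)$ of $\eg(\R)$ and the vertex set of $\operatorname{EG}(\st\Lambda_\R)$. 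It then remains to check that $\Theta$ both preserves and reflects adjacency, i.e. that two $\R$-dissections $\U,\U'$ are joined by a flip if and only if $\Theta(\U)$ and $\Theta(\U')$ are joined by a mutation.

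For the direction "flip $\Rightarrow$ mutation", suppose $\U'=f_l(\U)$ for some $l\in\U$. By Proposition~\ref{prop:flip} we have $X(\U')=\mu_{X(l)}(X(\U))$, and by Corollary~\ref{cor:mut} this equals $\mu^\varepsilon_{X(l)}(X(\U))$ for a definite sign $\varepsilon\in\{+,-\}$, which moreover lies in $\mr(X(\R)\ast X(\R)[1])\subseteq X(\R)\ast X(\R)[1]$. Hence the hypothesis of Theorem~\ref{thm:mutation compatible} is met, so $\Theta(\U')=\Phi(\mu^\varepsilon_{X(l)}(X(\U)))=\mu^\varepsilon_{\Phi(X(l))}(\Theta(\U))$, which is by definition an edge of $\operatorname{EG}(\st\Lambda_\R)$.

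For the direction "mutation $\Rightarrow$ flip", suppose $\Theta(\U)=(M,P)$ and $\Theta(\U')=(M',P')$ share an almost complete $\tau$-tilting pair $(N,Q)$ as a common direct summand. The bijection $\Phi$ of Theorem~\ref{thm:CZZ} respects decompositions into indecomposables (the part having no common summand with $X(\R)[1]$ through the equivalence $X(\R)\ast X(\R)[1]/X(\R)[1]\simeq\mod\Lambda_\R$, and the part in $\add X(\R)[1]$ onto indecomposable projectives), so $V:=\Phi^{-1}(N,Q)$ is a common direct summand of $X(\U)$ and $X(\U')$ with $|V|=|X(\R)|-1$. Writing $X(\U)=V\oplus X(l)$ for the unique $l\in\U$ with $X(l)$ not a summand of $V$, Corollary~\ref{cor:mut} identifies $\mu_{X(l)}(X(\U))$ as the unique object of $\mr(X(\R)\ast X(\R)[1])$ containing $V$ as a direct summand and not isomorphic to $X(\U)$; since $X(\U')$ also lies in $\mr(X(\R)\ast X(\R)[1])$, contains $V$, and is not isomorphic to $X(\U)$ (because $\Theta$ is a bijection and $\Theta(\U)\neq\Theta(\U')$), we get $X(\U')=\mu_{X(l)}(X(\U))=X(f_l(\U))$ by Proposition~\ref{prop:flip}, and hence $\U'=f_l(\U)$. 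This gives the desired isomorphism of graphs. Most of the work has already been absorbed into Theorem~\ref{thm:arc and rigid}, Theorem~\ref{thm:mutation compatible} and Proposition~\ref{prop:flip}, so the argument is essentially a matter of assembling them; the one point requiring care is that Theorem~\ref{thm:mutation compatible} applies only when the mutated object stays inside $X(\R)\ast X(\R)[1]$, which is precisely what Corollary~\ref{cor:mut} guarantees for the one-sided mutation selected by the sign of the index.
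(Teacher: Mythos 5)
Your proof is correct and follows essentially the same route as the paper, which simply cites Theorem~\ref{thm:CZZ}, Theorem~\ref{thm:arc and rigid}, Theorem~\ref{thm:mutation compatible} and Proposition~\ref{prop:flip} to get the vertex bijection and the compatibility of flips with mutations. You have merely spelled out the details (in particular the "mutation $\Rightarrow$ flip" direction via the uniqueness statement of Corollary~\ref{cor:mut}) that the paper leaves implicit.
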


\begin{proof}
    By Theorem~\ref{thm:CZZ} and Theorem~\ref{thm:arc and rigid}, there is a bijection between support $\tau$-tilting $A$-modules and $\R$-dissections, such that, by Theorem~\ref{thm:mutation compatible} and Proposition~\ref{prop:flip}, the mutations of support $\tau$-tilting modules are compatible with the flips of $\R$-dissections. Thus we can get the required isomorphism.
\end{proof}

A subset $\R_1$ of $\R$ is said to be \emph{connected} if there is no decomposition $\R_1=\R'\cup\R''$ such that any arc in $\R'$ and any arc in $\R''$ have no endpoints in common. A \emph{connected component} of $\R$ is a maximal connected subset of $\R$.

\begin{definition}\label{def:connect}
    We call $\R$ \emph{connects to the boundary} if each connected component of $\R$ contains at least one arc incident to a marked point in $\MM$.
\end{definition}

For any $\eta\in\R$, we denote by $\R/\{\eta\}$ the partial tagged triangulation $F_{\{\eta\}}(\R\setminus\{\eta\})$ of $\surf/\{\eta\}$.

\begin{lemma}\label{lem:condition}
    If $\R$ connects to the boundary, then so does $\R/\{\eta\}$.
\end{lemma}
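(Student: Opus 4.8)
The plan is to reduce the statement to a purely local/combinatorial claim about how cutting along a single arc affects the incidence pattern of the remaining arcs, and in particular their incidences with marked points on the boundary. First I would recall, via Remark~\ref{rmk:cut} applied to the one-element partial tagged triangulation $\{\eta\}$, that $\surf/\{\eta\}$ is obtained from $\surf$ by cutting along the arc $\eta^\circ$; this replaces $\eta^\circ$ by either one or two boundary segments (two if $\eta^\circ$ is a genuine arc cut out of the interior, creating new marked points at the images of the endpoints of $\eta$; the punctured-endpoint cases are recorded in Figure~\ref{fig:cut}). Crucially, every new boundary component created lies in $\MM_{\surf/\{\eta\}}$, so any arc of $\R/\{\eta\} = F_{\{\eta\}}(\R\setminus\{\eta\})$ that had an endpoint equal to an endpoint of $\eta$ now has an endpoint on the new boundary, i.e. is incident to a marked point.

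Next I would analyse how connected components behave under $\gamma\mapsto F_{\{\eta\}}(\gamma)$. Since $F_{\{\eta\}}$ preserves compatibility (stated just before the definition of $F_{\{\eta\}}$), and since endpoints are only ever identified or relocated, never merged across disjoint arcs in a way that destroys adjacency, the partition of $\R\setminus\{\eta\}$ into connected components can only get \emph{finer} under $F_{\{\eta\}}$: a connected component $\C$ of $\R$ either survives (if it does not meet $\eta$) or, if some arc of $\C$ shares an endpoint with $\eta$, may split into several connected components of $\R/\{\eta\}$, each of which contains at least one arc that was incident to an endpoint of $\eta$ and hence, after cutting, is incident to a marked point. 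Therefore every new component arising from a split automatically connects to the boundary, and every surviving component connects to the boundary because $\C$ did. The one case needing care is when $\eta\in\R$ itself is the only arc in its component incident to $\MM$, or when removing $\eta$ could disconnect a component of $\R$: here I would observe that the arcs of that component adjacent to $\eta$ acquire a marked-point endpoint after cutting, which supplies the required boundary incidence for each resulting piece, so connectedness to the boundary is restored even though connectedness itself may fail.

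The key steps, in order, are: (1) identify $\surf/\{\eta\}$ with the surface cut along $\eta^\circ$ and record that all endpoints of $\eta$ become marked points on the boundary of $\surf/\{\eta\}$; (2) show the connected components of $\R/\{\eta\}$ refine those of $\R\setminus\{\eta\}$ under $F_{\{\eta\}}$, using that $F_{\{\eta\}}$ preserves intersection numbers and only relocates endpoints; (3) for each connected component $\D$ of $\R/\{\eta\}$, locate an arc of $\D$ incident to a marked point of $\MM_{\surf/\{\eta\}}$, distinguishing whether $\D$ is the image of a full component of $\R$ disjoint from $\eta$ (use the hypothesis on $\R$ directly) or a piece of a component that met $\eta$ (use that some arc of it touched an endpoint of $\eta$, now a boundary marked point). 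I expect step~(2)---pinning down precisely how the component structure changes when $\eta$ is removed and then cut along, especially in the punctured-endpoint subcases of Figure~\ref{fig:cut} where $\eta^\circ$ may itself be the folded side of a self-folded triangle---to be the main obstacle, since one must check that no component loses all of its marked-point incidences and that the bookkeeping of $F_{\{\eta\}}$ on endpoints at punctures of $\PP_\R$ with $\kappa_\R(p)<0$ does not introduce a pathological case; the rest is a direct application of Remark~\ref{rmk:cut} and the definition of connecting to the boundary.
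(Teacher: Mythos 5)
Your proof is correct and follows essentially the same route as the paper: decompose $\R$ into connected components, observe that any arc sharing an endpoint with $\eta$ acquires a marked-point endpoint on the new boundary after cutting, and use connectedness of the component containing $\eta$ to find such an arc in each piece of $\R\setminus\{\eta\}$. Your extra care in step~(2) — that cutting can further split components when two arcs meet an endpoint of $\eta$ from opposite sides, and that each resulting piece still contains an arc touching an endpoint of $\eta$ — is a point the paper's proof passes over silently, and it is resolved exactly as you suggest.
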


\begin{proof}
    Let $\R=\R'\cup\R''$ with $\R'$ the connected component of $\R$ containing $\eta$. Let $\R'\setminus\{\eta\}=\R_1\cup\cdots\cup\R_s$ with $\R_i,1\leq i\leq s$, connected components of $\R'\setminus\{\eta\}$. Since $\R'$ is connected, for each $1\leq i\leq s$, there exists $\gamma_i\in\R_i$ having a common endpoint with $\eta$. So $F_{\{\eta\}}(\gamma_i)$ has a marked point as an endpoint. Hence $F_{\{\eta\}}(\R\setminus\{\eta\})= F_{\{\eta\}}(\R_1)\cup\cdots \cup F_{\{\eta\}}(\R_s)\cup F_{\{\eta\}}(\R'')$ also connects to the boundary.
\end{proof}

Denote by $D(\R)_{\{\eta\}}$ the subset of $D(\R)$ consisting of all $\R$-dissections containing $\eta$, and $D(\R/\{\eta\})$ the set of $\R/\{\eta\}$-dissections on $\surf/\{\eta\}$.

\begin{lemma}\label{lem:reduction}
     There is a bijection 
     $$\begin{array}{ccc}
         D(\R)_{\{\eta\}} & \to & D(\R/\{\eta\})  \\
         \U & \mapsto & F_{\{\eta\}}(\U\setminus\{\eta\})
     \end{array}$$
     such that flips on both sides are compatible.
\end{lemma}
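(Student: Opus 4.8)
The plan is to realize the claimed map as a composite of bijections that have already been established, by passing to the categorical side. Write $G=X(\eta)$, which is an indecomposable direct summand of $X(\R)$ because $\eta\in\R$. Since $\{\eta\}$ is a partial tagged triangulation, we have at our disposal the triangulated category $\C(\surf)_{X(\{\eta\})}={}^{\perp}G[1]/\<\add G\>=\D_G$ together with the triangle equivalence $\xi\colon\D_G\to\C(\surf/\{\eta\})$ satisfying $\xi(X(\gamma))=X_{\surf/\{\eta\}}(F_{\{\eta\}}(\gamma))$ for all $\gamma\in\TA(\surf)_{\{\eta\}}$. I also record at the outset that $\eta$ is $\R$-standard, so that $D(\R)_{\{\eta\}}$ is non-empty: this follows from Proposition~\ref{lem:=}, since $X(\eta)$ is a direct summand of $X(\R)\in X(\R)\ast X(\R)[1]$ and two-term subcategories are closed under direct summands.

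First I would translate $\R$-dissections into objects: by Theorem~\ref{thm:arc and rigid} the map $X$ restricts to a bijection from $D(\R)$ onto $\mr(X(\R)\ast X(\R)[1])$, and hence restricts further to a bijection from $D(\R)_{\{\eta\}}$ onto the subset $\ymr(X(\R)\ast X(\R)[1])$ of those objects having $G$ as a direct summand. Next I would apply Proposition~\ref{prop:reduction}, which supplies a bijection $\Psi\colon\ymr(X(\R)\ast X(\R)[1])\to\mr(X(\R)\ast_{\D_G}X(\R)\<1\>)$ sending $U$ to $U\setminus G$, where the image of $X(\R)$ in $\D_G$ is $X(\R\setminus\{\eta\})$. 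Then I would transport along $\xi$: being a triangle equivalence, $\xi$ carries the suspension of $\D_G$ to that of $\C(\surf/\{\eta\})$, sends $X(\R\setminus\{\eta\})$ to $\bigoplus_{\gamma\in\R\setminus\{\eta\}}X_{\surf/\{\eta\}}(F_{\{\eta\}}(\gamma))=X_{\surf/\{\eta\}}(\R/\{\eta\})=:R'$, and thus induces a bijection $\mr(X(\R)\ast_{\D_G}X(\R)\<1\>)\to\mr(R'\ast R'[1])$. Finally I would translate back using the inverse of $X_{\surf/\{\eta\}}$, which by Theorem~\ref{thm:arc and rigid} applied to $\surf/\{\eta\}$ is a bijection $\mr(R'\ast R'[1])\to D(\R/\{\eta\})$. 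Composing the four bijections and tracing an $\R$-dissection $\U$ through them yields $\U\mapsto\bigoplus_{\delta\in\U}X(\delta)\mapsto\bigoplus_{\delta\in\U\setminus\{\eta\}}X(\delta)\mapsto\bigoplus_{\delta\in\U\setminus\{\eta\}}X_{\surf/\{\eta\}}(F_{\{\eta\}}(\delta))\mapsto F_{\{\eta\}}(\U\setminus\{\eta\})$, so the composite is precisely the map in the statement; in particular this re-proves that $F_{\{\eta\}}(\U\setminus\{\eta\})$ really is an $\R/\{\eta\}$-dissection.

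For the compatibility of flips I would chase a single flip through the same four arrows. Fix $\U\in D(\R)_{\{\eta\}}$ and $l\in\U\setminus\{\eta\}$ (so $\Int(l,\eta)=0$, i.e. $l\in\TA(\surf)_{\{\eta\}}$, and $f^{\R}_l(\U)$ again contains $\eta$). By Proposition~\ref{prop:flip}, $X$ sends $f^{\R}_l(\U)$ to the mutation of $X(\U)$ at $X(l)$ in $\mr(X(\R)\ast X(\R)[1])$. By Proposition~\ref{prop:reduction}, $\Psi$ commutes with mutation at the indecomposable summand $X(l)$ of $X(\U)\setminus G$; since $\xi$ is a triangle equivalence it preserves minimal left/right approximations and exchange triangles, hence commutes with mutation at $\xi(X(l))=X_{\surf/\{\eta\}}(F_{\{\eta\}}(l))$; and by Proposition~\ref{prop:flip} applied to $\surf/\{\eta\}$ this last mutation corresponds, under the inverse of $X_{\surf/\{\eta\}}$, to the flip of $F_{\{\eta\}}(\U\setminus\{\eta\})$ at $F_{\{\eta\}}(l)$. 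Therefore the composite bijection intertwines $f^{\R}_l$ with $f^{\R/\{\eta\}}_{F_{\{\eta\}}(l)}$, which is the asserted compatibility.

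The routine groundwork still to be spelled out is that $\xi$, being a triangle equivalence, genuinely preserves each relevant structure — rigidity, the two-term subcategory $R\ast R[1]$ attached to a rigid object $R$, maximal rigidity within it, minimality of approximations, and the left/right exchange triangles defining mutation — and that the suspension of $\D_G$ matches that of $\C(\surf/\{\eta\})$ under $\xi$. The only genuinely delicate point is the bookkeeping in the flip-compatibility chase, keeping straight which of $\mu^{+}$ and $\mu^{-}$ occurs at each stage; but Corollary~\ref{cor:mut}, together with the index identity established inside the proof of Proposition~\ref{prop:reduction}, guarantees that the sign is the same on both sides, so no discrepancy can arise.
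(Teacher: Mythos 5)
Your proof is correct and follows essentially the same route as the paper's: both factor the map as the composite of the bijection from Theorem~\ref{thm:arc and rigid} onto $\xmr{X(\eta)}(X(\R)\ast X(\R)[1])$, the reduction bijection of Proposition~\ref{prop:reduction}, the transport along the triangle equivalence~\eqref{eq:tri equiv}, and Theorem~\ref{thm:arc and rigid} again on $\surf/\{\eta\}$, with flip--mutation compatibility supplied at each stage by Proposition~\ref{prop:flip} and Proposition~\ref{prop:reduction}. The extra details you supply (non-emptiness of $D(\R)_{\{\eta\}}$, the sign bookkeeping for $\mu^{\pm}$) are consistent with, and slightly more explicit than, the paper's argument.
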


\begin{proof}
    The bijection~\eqref{eq:bi-st} in Theorem~\ref{thm:arc and rigid} restricts a bijection 
    $$\begin{array}{ccc}
        D(\R)_{\{\eta\}} & \to & \xmr{X(\eta)}(X(\R)\ast X(\R)[1]), \\
        \U & \mapsto & \bigoplus_{\delta\in\U}X(\delta) 
    \end{array}$$
    such that, by Proposition~\ref{prop:flip}, the flip on the left side is compatible with the mutation on the right side. By Proposition~\ref{prop:reduction}, there is a bijection 
    $$\begin{array}{ccc}
        \xmr{X(\eta)}(X(\R)\ast X(\R)[1]) & \to & \mr(X(\R)\ast_{\C(\surf)_{X(\eta)}} X(\R)\<1\>), \\
        \U & \mapsto & X(\U\setminus\{\eta\})
    \end{array}$$
    such that the mutations on both sides are compatible. The triangle equivalence~\eqref{eq:tri equiv} for $\N=\{\eta\}$ gives rise to a bijection
    $$\begin{array}{ccc}
        \mr(X(\R)\ast_{\C(\surf)_{X(\eta)}} X(\R)\<1\>) & \to & \mr(R'\ast_{\C(\surf/\{\eta\})} R'[1]), \\
        X(\U\setminus\{\eta\}) & \mapsto & X_{\surf/\{\eta\}}(F_{\{\eta\}}(\U\setminus\{\eta\}))
    \end{array}$$
    such that the mutations on both sides are compatible, where $R'=X_{\surf/\{\eta\}}(\R/\{\eta\})$. Combining the above three bijections, we get the required one.
\end{proof}

The following lemma, which is crucial in the proof of the main result, will be proved in the next subsection.

\begin{lemma}\label{lem:main}
	Let $\R$ be a partial tagged triangulation on $\surf$  connecting to the boundary. Then for any $\R$-dissection $\U$, there exists a sequence of flips $f_{l_1},\cdots,f_{l_s}$ such that $\R\cap f_{l_s}\circ\cdots\circ f_{l_1}(\U)\neq\emptyset$.
\end{lemma}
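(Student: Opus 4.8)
**The plan is to reduce to the boundary via a combinatorial argument about where an $\R$-dissection can ``touch'' $\R$, and then induct.** The statement says: given a partial tagged triangulation $\R$ that connects to the boundary, any $\R$-dissection $\U$ can be brought, by a sequence of flips, to one that shares an arc with $\R$. First I would dispose of the trivial case: if $\R\cap\U\neq\emptyset$ already, there is nothing to prove, so assume $\R\cap\U=\emptyset$. The key invariant to control is something like the total intersection number $\sum_{\gamma\in\R}\Int(\gamma,\U^\circ)$, or more precisely the number of arc segments into which $\R$ cuts the arcs of $\U$, or a suitable lexicographic refinement thereof. The goal is to find, for any $\U$ with $\R\cap\U=\emptyset$, some $l\in\U$ whose flip $f_l(\U)$ strictly decreases this invariant; iterating then forces us into the base case $\R\cap\U\neq\emptyset$.

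**Second, I would set up the local picture that makes a flip decrease complexity.** Pick any connected component $\R'$ of $\R$; by the hypothesis it contains an arc $\gamma_0$ incident to a marked point $m\in\MM$. I want to locate an arc $l\in\U$ that is ``closest'' to $\gamma_0$ in the sense that there is an arc segment of $l$ (with respect to $\R^\circ$) cutting out an angle one of whose sides is $\gamma_0$ (or a boundary segment adjacent to $m$), and such that the relevant shear coordinate $b_{l,\U}(e^{op}(\gamma))$ has the correct sign for the flip (negative or positive) to push $l$ off this angle. Here Lemma~\ref{lem:neq0} guarantees that for each $l\in\U$ there is a $\gamma\in\R$ with $b_{l,\U}(e^{op}(\gamma))\neq 0$, and that the nonzero contributions all have the same sign; combined with the description of alternative intersections in Remark~\ref{rmk:two local tri} and the flip formula \eqref{eq:flip}, the flip $f_l(\U)$ of $\U$ at such an $l$ moves the endpoint of $l$ along $v_s$ (or $v^s$) in $\U^\circ$, which — because of the geometry near $m$ and the minimality of the choice of $l$ — removes at least one intersection with $\R^\circ$ without creating new ones. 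To make ``closest'' precise I would order the arc segments of arcs of $\U$ by their combinatorial distance (number of arcs of $\R^\circ$ crossed) from the marked point $m$ of the chosen component, using Definition~\ref{def:dissection} (every $R$-dissection is itself a partial tagged triangulation, so $\U^\circ$ makes sense) and the standardness condition in Definition~\ref{def:standard ideal} which constrains exactly how $\U$-arcs can run through the triangles of $\R^\circ$.

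**Third, I expect the main obstacle to be the bookkeeping of the flip's effect when the relevant arc of $\U^\circ$ is (or becomes) the folded side of a self-folded triangle, or when endpoints at punctures force tagging changes.** Equations \eqref{eq:flip} and Lemma~\ref{lem:flipcirc} only describe $f_l(\U)^\circ$ when $l^\circ$ is not a folded side; one has to handle the remaining cases by hand, using Construction~\ref{cons:tag to ideal} and the compatibility $\gamma^{\circ_\R}=\gamma^{\circ_\T}$ from Remark~\ref{rmk:good completion}. A secondary subtlety is ensuring the invariant genuinely decreases rather than merely not-increases: I would argue that the minimally-chosen $l$ has an end arc segment cutting out an angle against $\gamma_0$ or a boundary segment at $m$, and the flip collapses that end segment entirely, so at least one crossing with $\R^\circ$ disappears; any new crossings created at the other end of $l$ are, by the minimality/distance ordering, strictly further from $m$ and hence do not offset the gain when the invariant is taken lexicographically (first by number of crossings of arcs in $\R'$, then overall). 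Once the invariant strictly decreases at each step and is bounded below by $0$, the sequence of flips must terminate, and it can only terminate at a dissection meeting $\R$, giving the claim.

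**An alternative, cleaner route** would be to pass through the reduction machinery: by Lemma~\ref{lem:reduction} and Lemma~\ref{lem:condition}, if $\R=\R_1\sqcup\R_2$ with $\R_1$ a proper nonempty subset, it suffices to treat each piece, so one may assume $\R$ is connected; then $\R$ has an arc $\gamma_0$ incident to $m\in\MM$, and I would try to flip $\U$ toward the tagged triangulation $\T$ of $\surf$ completing $\R$ as in Remark~\ref{rmk:good completion}, using connectedness of the cluster exchange graph (Theorem~\ref{thm:QZ}(4)) together with Proposition~\ref{prop:flip} to transport a suitable sequence of mutations back to flips of $\U$. I would present the distance/induction argument as the main proof and mention this second route only if the first runs into the self-folded-triangle case too painfully; in either case, the termination of a strictly decreasing nonnegative integer invariant is what closes the argument.
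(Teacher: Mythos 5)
Your high-level strategy --- assume $\R\cap\U=\emptyset$, control the total interior intersection number $\Int^\circ(\R,\U^\circ)$, and induct after finding a flip that decreases it --- is indeed the skeleton of the paper's argument (the paper's base case $\Int^\circ(\R,\U^\circ)=0$ is handled exactly as you suggest, via the boundary-connectedness hypothesis and an arc of $\R$ incident to $\MM$). But the entire difficulty of the lemma is concentrated in the step you leave as a plausibility claim: that a well-chosen $l\in\U$ has a flip which strictly decreases the invariant (or a lexicographic refinement). This is precisely what fails in general, and the paper's proof is long because of it. Two concrete problems with your sketch. First, the direction of the flip at $l$ is not yours to choose: by Definition~\ref{def:flip} and Proposition~\ref{prop:flip}, $f_l(\U)$ is the \emph{unique} other $\R$-dissection containing $\U\setminus\{l\}$, and its direction is dictated by the sign of $b_{l,\U}(e^{op}(\gamma))$. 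So you cannot simply ``push $l$ off'' the angle nearest the marked point; the forced flip may move the other end of $l$ across many arcs of $\R$. Second, the paper selects $l$ to be \emph{maximal} (maximizing $\Int^\circ(\R,l^\circ)$), not closest to a marked point, and even then Lemma~\ref{lem:trans} only yields that the flip is convenient \emph{or neutral}; the neutral case genuinely occurs, and disposing of it requires a secondary induction on the length $m$ of a chain of maximal arcs crossed consecutively by some $\gamma\in\R$ before its first alternative intersection, together with a substantial case analysis (self-folded configurations, loops at the common endpoint $O$, the counts $|\jiaodian(l^\circ)|$, etc.). Your proposed lexicographic invariant (crossings with the chosen component first, then overall) is never shown to be monotone under the forced flip, and nothing in your sketch rules out the neutral behaviour that the paper must work hard to circumvent.

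Your alternative route via Theorem~\ref{thm:QZ}(4) does not work either: connectedness of the cluster exchange graph concerns mutations of cluster tilting objects in $\C(\surf)$, whereas the flips here are mutations of maximal rigid objects \emph{inside the two-term subcategory} $X(\R)\ast X(\R)[1]$; an arbitrary mutation sequence in $\C(\surf)$ leaves this subcategory, and Proposition~\ref{prop:flip} gives no way to transport it back. So the proposal correctly frames the problem but is missing the core argument --- the treatment of neutral flips --- without which the induction does not terminate.
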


The main result in this section is the following connectedness of the graph of $\R$-dissections.

\begin{theorem}\label{thm:connect}
	Let $\R$ be a partial tagged triangulation on $\surf$  connecting to the boundary. Then the graph $\eg(\R)$ is connected.
\end{theorem}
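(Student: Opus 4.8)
The plan is to prove Theorem~\ref{thm:connect} by induction on $|\R|$, reducing to the already-established connectedness of the cluster exchange graph (Theorem~\ref{thm:QZ}~(4)). The base case $\R=\emptyset$ is vacuous (or gives a trivial one-vertex graph), so I would assume $|\R|\geq 1$ and that the statement holds for all partial tagged triangulations with strictly fewer arcs that connect to the boundary. The key tool is Lemma~\ref{lem:main}: given any $\R$-dissection $\U$, a finite sequence of flips carries $\U$ to an $\R$-dissection $\U'$ with $\R\cap\U'\neq\emptyset$; pick $\eta\in\R\cap\U'$. Since flips are edges of $\eg(\R)$, the vertices $\U$ and $\U'$ lie in the same connected component of $\eg(\R)$, so it suffices to show that all $\R$-dissections containing \emph{some} arc of $\R$ lie in a single connected component, and that this component is independent of which $\eta\in\R$ is used.

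First I would fix $\eta\in\R$ and consider the subgraph of $\eg(\R)$ spanned by $D(\R)_{\{\eta\}}$, the $\R$-dissections containing $\eta$. By Lemma~\ref{lem:reduction}, there is a bijection $D(\R)_{\{\eta\}}\to D(\R/\{\eta\})$ compatible with flips, hence an isomorphism of graphs between the full subgraph of $\eg(\R)$ on $D(\R)_{\{\eta\}}$ and $\eg(\R/\{\eta\})$ on the cut surface $\surf/\{\eta\}$. By Lemma~\ref{lem:condition}, $\R/\{\eta\}$ connects to the boundary, and $|\R/\{\eta\}|=|\R|-1$, so the induction hypothesis gives that $\eg(\R/\{\eta\})$ is connected; therefore $D(\R)_{\{\eta\}}$ spans a connected subgraph of $\eg(\R)$. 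Now I must glue these pieces together. For two distinct arcs $\eta,\eta'\in\R$ that share an endpoint or otherwise appear together in a common $\R$-dissection, $D(\R)_{\{\eta\}}$ and $D(\R)_{\{\eta'\}}$ overlap (any dissection containing both $\eta$ and $\eta'$ lies in both), so their spanning subgraphs lie in one component; since $\R$ is connected to the boundary — in particular each connected component of $\R$ is a connected configuration of arcs — one can chain through $\R$ to conclude that $\bigcup_{\eta\in\R}D(\R)_{\{\eta\}}$ spans a single connected subgraph $C$ of $\eg(\R)$. (If $\R$ is disconnected, one still needs every component of $\R$ to be reachable; here the key point is that a common $\R$-dissection $\U_0$ exists — e.g.\ take $\R$ itself completed to an $\R$-dissection — which contains at least one arc from each component of $\R$, witnessing that all the pieces $D(\R)_{\{\eta\}}$ share the vertex $\U_0$.)

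Finally, I would combine the two ingredients. Take an arbitrary $\R$-dissection $\U$. By Lemma~\ref{lem:main} there is a flip sequence $f_{l_s}\circ\cdots\circ f_{l_1}(\U)=\U'$ with $\R\cap\U'\neq\emptyset$, so $\U'\in D(\R)_{\{\eta\}}$ for some $\eta\in\R$, hence $\U'\in C$; and since the flip sequence is a path in $\eg(\R)$, also $\U\in C$. As $\U$ was arbitrary, $C$ is all of $\eg(\R)$, which is therefore connected.

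The main obstacle is the gluing step: verifying that the subgraphs $D(\R)_{\{\eta\}}$, as $\eta$ ranges over $\R$, genuinely overlap enough to form a single component rather than several. This is clean when $\R$ is connected and each $D(\R)_{\{\eta\}}$ is nonempty (which holds since $\R$ itself completes to an $\R$-dissection containing $\eta$), because any two arcs of $\R$ lie together in a common $\R$-dissection; for the disconnected case one leans on the existence of a single $\R$-dissection simultaneously containing arcs from every component of $\R$ — again $\R$ completed to a dissection works, using Proposition~\ref{prop:rank} (equivalently Theorem~\ref{thm:arc and rigid}) to know such a completion exists. The other subtlety, already packaged into Lemma~\ref{lem:main} whose proof is deferred, is ensuring the flip sequence reaching $\R\cap\U'\neq\emptyset$ actually exists; granting that lemma, the argument above is essentially a formal assembly.
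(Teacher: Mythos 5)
Your proposal is correct and follows essentially the same route as the paper: induction on $|\R|$, using Lemma~\ref{lem:main} to flip into some $D(\R)_{\{\eta\}}$ and then Lemmas~\ref{lem:reduction} and~\ref{lem:condition} together with the induction hypothesis to handle that piece. The only (cosmetic) difference is that the paper phrases the induction as ``every $\R$-dissection is flip-connected to the dissection $\R$ itself,'' which makes your gluing step automatic, since $\R$ is already an $\R$-dissection lying in $D(\R)_{\{\eta\}}$ for every $\eta\in\R$ (no completion needed).
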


\begin{proof}
    We call two $\R$-dissections $\U$ and $\U'$ flip-connected to each other if one can be obtained from the other by a sequence of flips. To show $\eg(\R)$ is connected, it is enough to show the assertion that any $\R$-dissection $\U$ is flip-connected to $\R$. We use the induction on $|\R|$. When $|\R|=1$, the assertions follows directly from Lemma~\ref{lem:main}. Suppose the assertion is true when $|\R|\leq n-1$ for some $n>1$, and consider the case when $|\R|=n$. By Lemma~\ref{lem:main}, there exists some $\eta\in\R$ and an $\R$-dissection $\U'$ such that $\eta\in\R\cap\U'$ and $\U'$ is flip-connected to $\U$. By Lemma~\ref{lem:reduction}, $F_{\{\eta\}}(\U'\setminus\{\eta\})$ is an $\R/\{\eta\}$-dissection. By Lemma~\ref{lem:condition}, $\R/\{\eta\}$ is a partial tagged triangulation on $\surf/\{\eta\}$ connecting to the boundary. Since $|\R/\{\eta\}|=n-1$, using the induction hypothesis, $F_{\{\eta\}}(\U'\setminus\{\eta\})$ is flip-connected to $\R/\{\eta\}$. Hence by Lemma~\ref{lem:reduction} again, $\U'$ is flip-connected to $\R$. So $\U$ is flip-connected to $\R$.
\end{proof}

\begin{corollary}\label{cor:main}
	The exchange graph $\operatorname{EG}(\st A)$ of support $\tau$-tilting modules over any surface rigid algebra $A$ is connected. In particular, $\operatorname{EG}(\st A)$ is connected for $A$ a skew-gentle algebra.
\end{corollary}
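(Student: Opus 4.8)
The plan is to transport the connectedness of Theorem~\ref{thm:connect} through the chain of identifications built up in Sections~\ref{sec:category}--\ref{sec:eg}. By Definition~\ref{def:surf alg}, a surface rigid algebra $A$ is isomorphic to $\Lambda_\R=\End_{\C(\surf)}X(\R)$ for some partial tagged triangulation $\R$ of a punctured marked surface $\surf$, and by Theorem~\ref{thm:sg is end} every skew-gentle algebra is of this form, with $\R$ admissible. By Lemma~\ref{lem:graphs} one has $\operatorname{EG}(\st A)\cong\operatorname{EG}(\st\Lambda_\R)\cong\eg(\R)$, so it is enough to realize $A$ as $\Lambda_{\R'}$ for some partial tagged triangulation $\R'$ that connects to the boundary in the sense of Definition~\ref{def:connect}: Theorem~\ref{thm:connect} then gives that $\eg(\R')$, and hence $\operatorname{EG}(\st A)$, is connected.

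First I would reduce to the case of a connected algebra. If $\Lambda_\R\cong B_1\times\cdots\times B_k$ with each $B_j$ indecomposable as an algebra, then $B_j=e_j\Lambda_\R e_j$, where $e_j$ is the sum of those primitive idempotents $\operatorname{id}_{X(\gamma)}$ ($\gamma\in\R$) that lie in $B_j$; hence $B_j=\End_{\C(\surf)}X(\R_j)=\Lambda_{\R_j}$ for a subset $\R_j\subseteq\R$, which is again a partial tagged triangulation, so each $B_j$ is itself a surface rigid algebra. Since support $\tau$-tilting pairs over a product algebra are tuples of support $\tau$-tilting pairs over the factors, with mutation acting in a single coordinate, $\operatorname{EG}(\st(B_1\times\cdots\times B_k))\cong\operatorname{EG}(\st B_1)\times\cdots\times\operatorname{EG}(\st B_k)$, a finite product of connected graphs, which is connected. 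So we may assume $\Lambda_\R$ is connected.

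The remaining, and principal, difficulty is the surface-topological one of producing a realization $\Lambda_{\R'}$ with $\R'$ connecting to the boundary. In the skew-gentle case no extra work is needed: taking $\R$ admissible, each connected component of $\R$ contains the self-folded triangle enclosing some puncture, and following the folded sides of successive self-folded triangles leads in finitely many steps to a marked point of $\MM$, so $\R$ already connects to the boundary. In general, if $\R$ has a connected component (in the shared-endpoint sense) all of whose arcs have both ends at punctures, I would, at a puncture $p$ incident to it, first normalize the taggings of the arcs of $\R$ at $p$ — by a global change of taggings at $p$ (which does not change the isomorphism type of $\Lambda_\R$) and, where opposite taggings at $p$ persist, by passing to self-folded triangles via Construction~\ref{cons:tag to ideal} — and then replace $p$ by a boundary component carrying a single marked point. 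The key point to verify, and where the real work lies, is that this last surgery leaves $\End_{\C(\surf)}X(\R)$ unchanged: locally near such a $p$ only the plainly-tagged, clockwise-spiralling behaviour of elementary laminates is relevant, and this agrees with the behaviour of laminates at a boundary marked point, so that the indices and shear coordinates, and hence all the data entering the bijections of Sections~\ref{sec:pms}--\ref{sec:geo mod} (in particular Theorems~\ref{thm:CZZ} and \ref{thm:arc and rigid}), are preserved. Iterating over all punctures incident to $\R$ yields $\surf'$ and $\R'$ with $\Lambda_{\R'}\cong A$ and $\R'$ connecting to the boundary; Lemma~\ref{lem:graphs} and Theorem~\ref{thm:connect} applied to $\R'$ then complete the proof, and the statement for skew-gentle algebras is the special case recorded above.
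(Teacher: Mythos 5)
Your core argument is exactly the paper's proof, which consists of nothing more than the two citations: Lemma~\ref{lem:graphs} together with Theorem~\ref{thm:connect} for the first assertion, and Theorem~\ref{thm:sg is end} for the skew-gentle case. The product decomposition you begin with is correct but plays no role in the paper's argument. Your observation that an admissible partial ideal triangulation automatically connects to the boundary is also correct --- the non-folded side of a self-folded triangle cannot be based at another puncture, since the two once-punctured monogons would then be forced to nest and one of them would contain two punctures --- and it is precisely what makes the skew-gentle assertion a clean consequence of Theorem~\ref{thm:connect}; the paper leaves this point unsaid.

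Where your write-up genuinely diverges is in trying to discharge the hypothesis of Definition~\ref{def:connect} in Theorem~\ref{thm:connect} for an arbitrary surface rigid algebra. You are right that this hypothesis is not automatic for a general partial tagged triangulation (a single arc joining two punctures already fails it), and the paper's two-line proof does not engage with this point at all. But your proposed repair --- opening each puncture met by a boundary-avoiding component of $\R$ into a new boundary circle --- is exactly the step you concede is unverified, and it is not a routine check: the surgery replaces $\surf$ by a different surface, hence replaces the ambient category $\C(\surf)$ by a different cluster category, and one must prove that the endomorphism algebra of the corresponding rigid object is unchanged. (For full triangulations the analogous statement about Jacobian algebras fails near a puncture, so any such claim must exploit that $\R$ is only partial with uniform taggings at the puncture; nothing in Sections~\ref{sec:pms}--\ref{sec:geo mod} supplies a comparison between the two cluster categories.) As a proof of the first assertion in the generality you aim for, the argument is therefore incomplete at exactly this point; if you settle for what the paper's own proof actually delivers --- connectedness when $\R$ connects to the boundary, which covers all skew-gentle algebras --- then the two citations suffice and the surgery can be dropped.
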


\begin{proof}
	The first assertion follows directly from Lemma~\ref{lem:graphs} and Theorem~\ref{thm:connect}. Then the second assertion holds by Theorem~\ref{thm:sg is end}.
\end{proof}

\subsection{Proof of Lemma~\ref{lem:main}}

This subsection devotes to proving Lemma~\ref{lem:main}. We may assume $\U\cap\R=\emptyset$. Recall that by Corollary~\ref{cor:dual}, $\R$ is a $\U$-co-dissection. In particular, any $\gamma\in\R$ is $\U$-co-standard. So by Proposition~\ref{lem:=}, $e^{op}(\gamma)$ shears $\U$.

We first introduce some notions and notations. For any $\gamma\in\R$ and any $l\in\U$, denote by $\gamma\cap l^\circ\cap\surf^\circ$ the subset of $\gamma\cap l^\circ$ consisting of the intersections not in $\PP\cup\MM$. We also denote $$\Int^\circ(\R,l^\circ)=\sum_{\gamma\in\R}|\gamma\cap l^\circ\cap\surf^\circ|,$$
and
$$\Int^\circ(\gamma,\U^\circ)=\sum_{l\in\U}|\gamma\cap l^\circ\cap\surf^\circ|.$$
An arc $l\in\U$ is said to be 
\begin{itemize}
    \item \emph{maximal} if $\Int^\circ(\R,l^\circ)\geq\Int^\circ(\R,h^\circ)$ for any $h\in\U$,
    \item \emph{double} if $l$ is homotopic to another arc in $\U$, i.e., $l^\circ$ is a side of a self-folded triangle of $\U^\circ$,
    \item \emph{single} if $l$ is not homotopic to any other arc in $\U$.
\end{itemize}
For a maximal $l\in\U$, we also call $l^\circ$ \emph{maximal} in $\U^\circ$.

Denote by $\U^{non}$ the subset of $\U$ consisting of all arcs $l$ such that $l^\circ$ is not the folded side of a self-folded triangle of $\U^\circ$. By definition, any maximal arc in $\U$ is in $\U^{non}$.

Next, we deal with two special cases. Recall that two tagged arcs $\gamma$ and $l$ are called adjoint to each other if $\gamma(0),\gamma(1)\in\PP$ and $l=\rho(\gamma)$.

\begin{lemma}\label{lem:circ0}
	Let $\gamma$ be a tagged arc in $\R$ not adjoint to any arc in $\U$ and such that $\Int^\circ(\gamma,\U^\circ)=0$. Then there exists a sequence of flips $f_{l_1},\cdots,f_{l_s}$ such that $\gamma\in f_{l_s}\circ\cdots\circ f_{l_1}(\U)$.
\end{lemma}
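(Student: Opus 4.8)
The hypotheses say $\gamma\in\R$ has no interior intersections with any $l^\circ$, $l\in\U$, and is not adjoint to any arc in $\U$; we must flip $\U$ finitely many times so that $\gamma$ becomes one of the dissection arcs. The natural plan is to argue by induction on $\Int(\gamma,\U)=\sum_{l\in\U}\Int(\gamma,l)$, the total intersection number (which by Definition~\ref{def:intersection} now only counts tagged intersections at punctures, since $\Int^\circ(\gamma,\U^\circ)=0$). If this number is $0$, then $\gamma$ is compatible with every arc of $\U$, so $\gamma\in\U$ by maximality of $\U$ as a partial tagged triangulation and we are done. Otherwise we must exhibit a single flip $f_l$ that strictly decreases $\Int(\gamma,f_l(\U))$ while keeping the no-interior-intersection property, so that the induction continues.

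\textbf{Choosing the arc to flip.} Since $\gamma^\circ$ and every $l^\circ$ only meet at endpoints (in $\PP\cup\MM$), a tagged intersection of $\gamma$ with some $l\in\U$ forces $\gamma$ and $l$ to share a puncture endpoint $p$ with opposite taggings, and $\gamma^\circ, l^\circ$ to be ``parallel'' near $p$. I would pick such an $l$ at a puncture $p$ where the local configuration of $\U^\circ\cup B$ around $p$ is extremal in the sense that $l^\circ$ is the arc of $\U^\circ$ through which $\gamma^\circ$ would have to be ``rotated'' to untag — concretely, take $l$ so that $b_{l,\U}(e^{op}(\gamma))\neq 0$ (such $l$ exists because $\gamma$ is $\U$-co-standard, so $e^{op}(\gamma)$ shears $\U$, and by Theorem~\ref{thm:arc and rigid} together with Lemma~\ref{cor:non-zero} the shear coordinate vector $b_{\cdot,\U}(e^{op}(\gamma))$ is nonzero). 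The sign of $b_{l,\U}(e^{op}(\gamma))$ then dictates, via Definition~\ref{def:flip}, whether $f_l$ uses $\rho_{\U\setminus\{l\}}$ or its inverse; I expect the correct choice of sign makes $f_{\U}(l)$ move ``towards'' $\gamma$, i.e. resolves the tagged intersection at $p$ between $\gamma$ and $l$ without creating new intersections with $\gamma$ elsewhere. Here I would use the explicit description of flip via tagged rotation with respect to $\N=\U\setminus\{l\}$ (Definition~\ref{def:relrot}) and its interaction with cutting along $\N^\circ$ (Remark~\ref{rmk:cut}, Lemma~\ref{lem:flipcirc}): after cutting, $F_\N(\gamma)$ and $F_\N(l)$ live on $\surf/\N$, $F_\N(l)$ is homotopic to $l^\circ$ which is not a folded side, and the tagged intersection becomes controllable by the local picture of Figure~\ref{fig:one-int}.

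\textbf{The key estimate.} The heart of the argument is showing $\Int(\gamma, f_l(\U))<\Int(\gamma,\U)$ and that $\gamma$ still has no interior intersections with the new dissection. For the first, I would translate intersection numbers into $\Ext^1$-dimensions via Theorem~\ref{thm:QZ}(2): $\Int(\gamma,l)=\dim_\k\Hom(X(\gamma),X(l)[1])$, and $X(f_\U(l))$ is obtained from $X(l)$ by one mutation in $X(\U)\ast X(\U)[1]$ — here the exchange triangle of Theorem~\ref{thm:completion}/Corollary~\ref{cor:mut} (with the roles of $R$ and $U$ interchanged, using Corollary~\ref{cor:obj dual} that $X(\R)\in\mr(X(\U)[-1]\ast X(\U))$) lets me compare $\Hom(X(\gamma),-)$ applied to the old and new arcs. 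Since the flip is chosen on the ``$\gamma$-decreasing'' side, the long exact sequence should give a strict drop in $\dim\Hom(X(\gamma),X(l)[1])$ while leaving $\dim\Hom(X(\gamma),X(h)[1])$ unchanged for $h\in\U\setminus\{l\}$ (those $X(h)$ are unaffected). For the no-interior-intersection part, I note $\Int^\circ(\gamma,f_l(\U)^\circ)$ is governed by the interior component of the index, which via \eqref{eq:ind=b2} is read off from $b$-coordinates that are controlled by the shearing/standardness of $\gamma$ against the flipped dissection; since flipping $\U$ at $l$ only changes the local picture near $l^\circ$, and there $\gamma$ had only a tagged (not interior) intersection, no interior intersections are created.

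\textbf{Main obstacle.} The delicate point is the bookkeeping of taggings: a flip $\rho_{\U\setminus\{l\}}^{\pm1}$ changes taggings at the punctures that are endpoints of $l^\circ$, and I must ensure the resulting arc $f_\U(l)$ genuinely reduces the tagged intersection with $\gamma$ rather than merely shifting it to the other end of $l$ or to $f_\U(l)$'s new position. This is exactly the kind of case analysis carried out in the proof of Proposition~\ref{lem:=} (the subcases (ii)(a)--(d) around Figure~\ref{fig:one-int}), and I would expect to mirror that analysis: enumerate the possible local configurations of $\gamma,l,\U\setminus\{l\}$ around the shared puncture $p$, check in each that the chosen flip strictly decreases $\Int(\gamma,\cdot)$, and invoke the induction hypothesis. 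Once the intersection number reaches $0$, maximality of a dissection (the Corollary after Theorem~\ref{thm:arc and rigid}, comparing ranks) forces $\gamma$ into the dissection.
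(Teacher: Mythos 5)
Your overall plan (induct on a complexity measure, flip a well-chosen $l$ to make progress, conclude by maximality when $\gamma$ becomes compatible with everything) is the right shape, and your base case is sound: if $\Int(\gamma,h)=0$ for all $h\in\U$ then $\gamma$, being $\R$-standard, lies in $\U$ by maximality. But the inductive step has two genuine gaps.

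First, the ``key estimate'' is not established and is in fact the crux. You claim that the correctly signed flip strictly decreases $\Int(\gamma,\cdot)$, and propose to see this from the exchange triangle $Y'\to E\to Y\to Y'[1]$ by applying $\Hom(X(\gamma),-)$. Theorem~\ref{thm:completion} only gives exactness of $\Hom(R,Y')\to\Hom(R,E)\to\Hom(R,Y)\to 0$, i.e.\ control of $\Hom(X(\gamma),-)$, not of $\Hom(X(\gamma),-[1])$; the long exact sequence yields an exchange relation between the old and new $\Ext^1$-dimensions, not a monotone drop. The paper's own proof is evidence that no single flip always makes strict progress on such a measure: it inducts instead on $|\U^\circ(p)|$, the number of ends of arcs of $\U^\circ$ at the one distinguished endpoint $p=\gamma^\U(1)$, and in its case where $l^\circ_t$ is a loop at $p$ this quantity stays \emph{constant} after the flip, so a second flip is needed before the measure decreases. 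Your induction would stall (or worse, the tagged intersection number could increase) exactly in such configurations, and you give no argument ruling this out.

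Second, you choose $l$ only by the condition $b_{l,\U}(e^{op}(\gamma))\neq 0$, whereas the argument really needs the specific arc singled out by the hypothesis $\Int^\circ(\gamma,\U^\circ)=0$: since $\gamma^\U$ is a single arc segment of $\U^\circ$, it cuts out an angle between two arcs, and the relevant $l$ is the one for which $\gamma^\U$, $l^\circ_s$, $l^\circ$ bound a contractible triangle with the prescribed taggings; one then checks $b_{l,\U}(e^{op}(\gamma))<0$, so the flip is the negative one. Relatedly, your claim that the flip creates no new interior intersections with $\gamma$ and preserves non-adjointness (needed to re-enter the induction) is asserted from ``the flip is local'' but not proved; $f_\U(l)$ is obtained by sliding endpoints along $l^\circ_s$ and $l^\circ_t$ and must be checked against $\gamma$ case by case. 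These verifications are the actual content of the lemma, and they are missing.
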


\begin{proof}
    Since $\Int^\circ(\gamma,\U^\circ)=0$, by Definition~\ref{def:standard ideal} and Remark~\ref{rmk:std}, there is $l\in\U^{non}$ such that $\gamma^\U$, $l_s^\circ$ and $l^\circ$ form a contractible triangle clockwise, and with a certain orientation of $\gamma^\U$, we have $\gamma^\U(1)=l^\circ(1)$, $\kappa_{\gamma^\U}(1)=-1$, $\gamma^\U(0)=l_s^\circ(1)$ and $\kappa_{\gamma^\U}(0)=1$ if $\gamma^\U(0)\in\PP$, see Figure~\ref{fig:intcirc=0}. So by Remark~\ref{rmk:two local tri} (see also Figure~\ref{fig:p/n int}) we have $b_{l,\U}(e^{op}(\gamma))<0$. Hence by \eqref{eq:flip}, $f_l(\U)^\circ=f^-_{l^\circ}(\U^\circ)=\U^\circ\setminus\{l^\circ\}\cup\{f^-_{U^\circ}(l^\circ)\}$. Note that $\gamma^\U(0)\neq \gamma^\U(1)$ since $\gamma^\U$ is a tagged arc. 
	\begin{figure}[htpb]
	    \begin{tikzpicture}[xscale=1.8,yscale=1.4]
		\draw[blue,thick,bend right=10,->-=.6,>=stealth](0,1)to(-.8,.3)node[black]{$\bullet$};
		\draw[blue,thick,bend left=10,->-=.6,>=stealth](0,1)to(.8,.3);
		\draw[red,thick,bend right,->-=.6,>=stealth](-.8,.3)to(.8,.3)node[black]{$\bullet$};
		\draw (.8,.3)node[right]{$p$};
		\draw(0,0)[below]node[red]{$\gamma^\U$};
		\draw[red](.68,.24)node[rotate=-15]{$+$};
		\draw(0,1)node{$\bullet$}(-.8,.8)node[blue]{$l_s^\circ$}(.6,.8)node[blue]{$l^\circ$};
	\end{tikzpicture}\quad
	\caption{The case $\Int^\circ(\gamma,\U^\circ)=0$}
	\label{fig:intcirc=0}
	\end{figure}
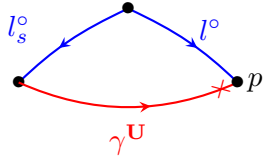
	
    Let $p=\gamma^\U(1)$ and $\U^\circ(p)$ be the set of arc segments of arcs in $\U^\circ$ that have $p$ as an endpoint. We use the induction on $|\U^\circ(p)|$. 
 
    If $|\U^\circ(p)|=1$, then $l^\circ(0)\neq p$ and there is no other $h^\circ\in\U^\circ$ having $p$ as an endpoint. This implies $\Int(\gamma,h)=0$ for any $h\in\U\setminus\{l\}$. Since $\gamma$ is $\R$-standard and $\gamma\neq l$ (due to $\R\cap\U=\emptyset$), by Proposition~\ref{prop:flip}, we have $f_\U(l)=\gamma$, i.e., $\gamma\in f_l(\U)$. 
    
    Suppose the assertion holds for $|\U^\circ(p)|<k$ with $k\geq 0$ and consider the case when $|\U^\circ(p)|=k$. We have the following cases.    
    \begin{enumerate}
        \item $l^\circ(0)=l^\circ(1)$, which contains the case that $l$ is double, see the first picture in Figure~\ref{fig:alt end}. Then one endpoint of $f_\U(l)^\circ$ is $\gamma^\U(0)\neq p$. So $|f_l(\U)^\circ(p)|<k$ and we are done by the induction hypothesis.
		
		\item $l^\circ(0)\neq l^\circ(1)$ and $l^\circ_t(0)\neq l^\circ_t(1)$, see the second picture of Figure~\ref{fig:alt end}. Then $f_\U(l)^\circ$ has $\gamma^\U(0)$ and $l^\circ_t(0)$ as the endpoints, both of which are not $p$. So $|f_l(\U)^\circ(p)|<k$ and we are done by the induction hypothesis.
		
		\item $l^\circ(0)\neq l^\circ(1)$ and $l_t^\circ(0)=l_t^\circ(1)$, see the third picture in Figure~\ref{fig:alt end}. Then exactly one of the endpoints of $f_\U(l)^\circ$ is not $p$. So $|f_l(\U)^\circ(p)|=k$ and we go back to Case (1). Thus, we are done.
	\end{enumerate}
\end{proof}
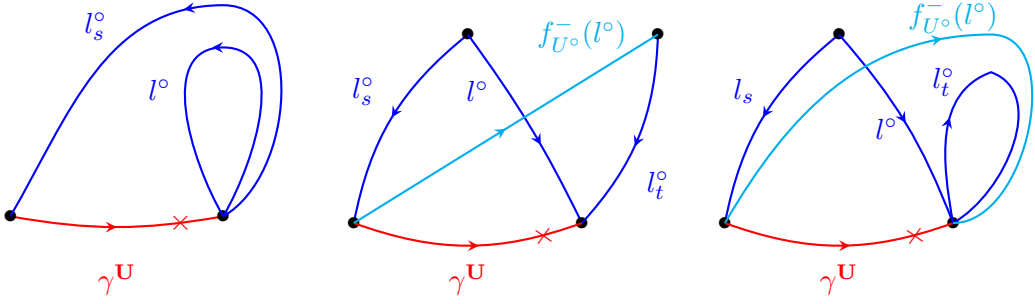
\begin{figure}[htpb]\centering
    \begin{tikzpicture}[scale=2.8]
    	\draw[red,thick,bend right=10,->-=.5,>=stealth](-1,0)node[black]{$\bullet$}to(0,0)node[black]{$\bullet$};
    	\draw[blue,thick,-<-=.5,>=stealth](0,0)to[out=120,in=180](0,.8)to[out=0,in=60](0,0);
    	\draw[blue,thick,-<-=.5,>=stealth](-1,0)to[out=60,in=180](0,1)to[out=0,in=30](0,0);
     \draw(-.5,-.3)node[red]{$\gamma^\U$}(-.6,.9)node[blue]{$l^\circ_s$}(-.3,.6)node[blue]{$l^\circ$}(-.2,-.03)node[red]{$\times$};
    \end{tikzpicture}\quad
    \begin{tikzpicture}[xscale=-2.5,yscale=2.5]
    	\draw[blue,thick,bend left=20,->-=.5,>=stealth](0,1)node[black]{$\bullet$}to(.6,0)node[black]{$\bullet$};
    	\draw[blue,thick,bend right=5,->-=.6,>=stealth](0,1)to(-.6,0)node[black]{$\bullet$};
    	\draw[red,thick,bend right=20,-<-=.5,>=stealth](-.6,0)to(.6,0);
    	\draw[blue,thick,bend left=20,-<-=.5,>=stealth](-.6,0)to(-1,1)node[black]{$\bullet$};
    	\draw[cyan,thick,->-=.5,>=stealth](.6,0)to(-1,1);
    	\draw(0,-.3)node[red]{$\gamma^\U$}(.55,.7)node[blue]{$l^\circ_s$}(-.05,.7)node[blue]{$l^\circ$}(-.4,-.07)node[red,rotate=10]{$\times$}(-.6,1)node[cyan]{$f^-_{U^\circ}(l^\circ)$}(-1,.2)node[blue]{$l^\circ_t$};
    \end{tikzpicture}\quad
	\begin{tikzpicture}[xscale=-2.5,yscale=2.5]
		\draw[blue,thick,bend left=20,->-=.5,>=stealth](0,1)node[black]{$\bullet$}to(.6,0)node[black]{$\bullet$};
		\draw[blue,thick,bend right=10,->-=.5,>=stealth](0,1)to(-.6,0)node[black]{$\bullet$};
		\draw[red,thick,bend right=20,-<-=.5,>=stealth](-.6,0)to(.6,0);
		\draw[blue,thick,->-=.3,>=stealth](-.6,0)to[out=80,in=-20](-.8,.8)to[out=-160,in=150](-.6,0);
		\draw[cyan,thick,->-=.5,>=stealth](.6,0)to[out=120,in=0](-.8,1)to[out=180,in=180](-.6,0);
		\draw(0,-.3)node[red]{$\gamma^\U$}(.5,.7)node[blue]{$l_s$}(-.25,.5)node[blue]{$l^\circ$}(-.4,-.07)node[red,rotate=10]{$\times$}(-.6,1.1)node[cyan]{$f^-_{U^\circ}(l^\circ)$}(-.55,.75)node[blue]{$l^\circ_t$};
	\end{tikzpicture}\quad
	\caption{The cases of flip of $l^\circ$ for $\Int^\circ(\gamma,\U^\circ)=0$}
	\label{fig:alt end}
\end{figure}

\begin{lemma}\label{lem:bu}
    Let $\gamma$ be a tagged arc in $\R$ adjoint to $l\in \U$. Then $l$ is the unique arc in $\U$ such that $b_{l,\U}(e^{op}(\gamma))\neq 0$. Moreover, if $\Int^\circ(\R,l^\circ)\neq 0$, then there exists a sequence of flips $f_{l_1},\cdots,f_{l_s}$ such that $\gamma\in f_{l_s}\circ\cdots\circ f_{l_1}(\U)$.
\end{lemma}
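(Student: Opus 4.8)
The plan is to handle the two assertions separately: the first is a short computation with indices and shear coordinates, and the second a geometric induction in the spirit of the proof of Lemma~\ref{lem:circ0}.

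For the first assertion, since $l$ is adjoint to $\gamma$ we have $l=\rho(\gamma)$, so $X(l)=X(\gamma)[1]$ by Theorem~\ref{thm:QZ}(1), equivalently $X(\gamma)=X(l)[-1]$. As $l\in\U$, the object $X(l)$ is an indecomposable direct summand of $X(\U)$, hence $X(\gamma)=X(l)[-1]$ is an indecomposable direct summand of $X(\U)[-1]$; thus its minimal $X(\U)[-1]$-presentation is the trivial triangle and $\ind_{X(\U)[-1]}X(\gamma)=[X(l)[-1]]$, so that $[\ind_{X(\U)[-1]}X(\gamma):X(h)[-1]]$ is $1$ for $h=l$ and $0$ for $h\in\U\setminus\{l\}$. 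By Corollary~\ref{cor:dual}, $\R$ is a $\U$-co-dissection, so $\gamma$ is $\U$-co-standard, and \eqref{eq:ind=b2} (applied to the partial tagged triangulation $\U$ and the $\U$-co-standard arc $\gamma$) yields $b_{h,\U}(e^{op}(\gamma))=-[\ind_{X(\U)[-1]}X(\gamma):X(h)[-1]]$ for every $h\in\U$. Hence $b_{l,\U}(e^{op}(\gamma))=-1\neq 0$ while $b_{h,\U}(e^{op}(\gamma))=0$ for $h\in\U\setminus\{l\}$, which is the first assertion; and by Definition~\ref{def:flip} this also gives $f^{\R}_l(\U)=(\U\setminus\{l\})\cup\{\rho^{-1}_{\U\setminus\{l\}}(l)\}$, so $f^{\R}_{\U}(l)=\rho^{-1}_{\U\setminus\{l\}}(l)$.

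For the second assertion I would induct on the positive integer $\Int^\circ(\R,l^\circ)$. A guiding observation is that $\gamma$ and $l$ have the same underlying arc, say $c$; since the arcs of $\R$ are pairwise compatible, the inequality $\Int^\circ(\R,l^\circ)\neq 0$ can only be caused by a self-folded triangle of $\R^\circ$ or of $\U^\circ$ near an endpoint of $c$ (otherwise $\gamma^\circ=c=l^\circ$ and $\Int(\gamma,\gamma')=0$ for all $\gamma'\in\R$ would force $\Int^\circ(\R,l^\circ)=0$), so everything is confined to a neighbourhood of that puncture --- exactly the configurations drawn in the last rows of Figure~\ref{fig:lam of std} and in Figure~\ref{fig:p/n int}. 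Writing $\N=\U\setminus\{l\}$, I would then use that by Proposition~\ref{prop:flip} the dissection $f^{\R}_l(\U)=\N\cup\{\rho^{-1}_{\N}(l)\}$ is again an $\R$-dissection (so, by Corollary~\ref{cor:dual}, $\R$ stays a co-dissection of it), and that by \eqref{eq:flip} its underlying partial ideal triangulation is $f^-_{l^\circ}(\U^\circ)$ whenever $l^\circ$ is not the folded side of a self-folded triangle of $\U^\circ$ --- the case where $l^\circ$ is such a folded side being reduced to the previous one by a minor preliminary adjustment (flipping first at the non-folded side of that triangle).

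The step I expect to be the main obstacle is showing that the flip at $l$ makes genuine progress towards $\gamma$: one must verify that either $\rho^{-1}_{\N}(l)=\gamma$ already, so that $s=1$ works, or else, after at most a bounded number of auxiliary flips, one returns to a configuration still covered by the induction hypothesis but with $\Int^\circ(\R,l^\circ)$ strictly smaller, so that the induction terminates with $\gamma$ in the dissection. I would carry this out by a case analysis parallel to Cases~(1)--(3) in the proof of Lemma~\ref{lem:circ0}, organized according to whether $l$ is single or double in $\U$ and, in the double case, whether $l^\circ$ is the loop or the enclosed arc of the self-folded triangle near the distinguished endpoint of $c$ (cf. Figure~\ref{fig:alt end}), checking in each branch against Figure~\ref{fig:p/n int} that the outermost alternative intersection of $l^\circ$ with $\R^\circ$ is destroyed while no new interior intersection with $\R^\circ$ is created. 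Concatenating the flips produced along the way gives the desired sequence $f_{l_1},\dots,f_{l_s}$ with $\gamma\in f_{l_s}\circ\cdots\circ f_{l_1}(\U)$.
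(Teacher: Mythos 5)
Your treatment of the first assertion is correct and is essentially the paper's own argument: the paper writes $e^{op}(\gamma)=e^{op}(\rho^{-1}(l))=e(l)$ (Remark~\ref{rmk:erho}) and quotes Corollary~\ref{cor:shear}, but that corollary is proved by exactly the index computation you perform, and both routes give $b_{l,\U}(e^{op}(\gamma))=-1$ and $b_{h,\U}(e^{op}(\gamma))=0$ for $h\in\U\setminus\{l\}$, hence that the flip at $l$ is the negative one.

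The second assertion is where your proposal has a genuine gap. You propose an induction on $\Int^\circ(\R,l^\circ)$ whose inductive step you yourself flag as ``the main obstacle'', but this induction cannot be run inside the hypotheses of the lemma: the only tagged arc adjoint to $\gamma$ is $l=\rho(\gamma)$ itself, so after the very first flip $f_l$ the arc $\gamma$ is adjoint to nothing in the new dissection, and the lemma's hypothesis is never available again for a second round. Moreover, several of your announced case distinctions are vacuous: if $l$ were single in $\U$, or if $l^\circ$ were the folded side of a self-folded triangle of $\U^\circ$, then $l^\circ$ would be the common underlying arc $c$ of $\gamma$ and $l$, which is compatible with every arc of $\R$ and hence satisfies $\Int^\circ(\R,l^\circ)=0$. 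The hypothesis $\Int^\circ(\R,l^\circ)\neq 0$ therefore forces a single configuration: $l$ is double, $l^\circ$ is the non-folded side (the loop), and its partner $l'\in\U\setminus\{l\}$ has ${l'}^\circ$ homotopic to $c$. This is the observation your sketch is missing, and it makes any induction unnecessary: $l'$ survives into $f_l(\U)$, and $\gamma$ is homotopic to ${l'}^\circ\in f_l(\U)^\circ$, so $\Int^\circ(\gamma,f_l(\U)^\circ)=0$; since $l\notin f_l(\U)$, the arc $\gamma$ is not adjoint to any arc of $f_l(\U)$, and a single application of Lemma~\ref{lem:circ0} to $f_l(\U)$ supplies all the remaining flips. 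One flip at $l$ followed by Lemma~\ref{lem:circ0} is the entire proof; without this reduction your argument does not close.
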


\begin{proof}
    By Remark~\ref{rmk:erho}, $e^{op}(\gamma)=e^{op}(\rho^{-1}(l))=e(l)$. So for any $h\in\U$, $b_{h,\U}(e^{op}(\gamma))=b_{h,\U}(e(l))$, which by Corollary~\ref{cor:shear} is non-zero if and only if $h=l$. Hence $l$ is the unique arc in $\U$ such that $b_{l,\U}(e^{op}(\gamma))\neq 0$. Indeed, we have $b_{l,\U}(e^{op}(\gamma))<0$ by Corollary~\ref{cor:shear}. Hence by \eqref{eq:flip}, $f_l(\U)^\circ=f^-_{l^\circ}(\U^\circ)=\U^\circ\setminus\{l^\circ\}\cup\{f^-_{U^\circ}(l^\circ)\}$.
    
    If $\Int^\circ(\R,l^\circ)\neq0$, since $l$ is adjoint to $\gamma\in\R$, there is a self-folded triangle of $\U^\circ$ such that $l^\circ$ is its non-folded side. Let ${l'}^\circ$ be the corresponding folded side. Then $\gamma$ is homotopic to ${l'}^\circ\in f_l(\U)^\circ$. So $\Int^\circ(\gamma,f_l(\U)^\circ)=0$. Since $l\notin f_l(\U)$, $\gamma$ is not adjoint to any arc in $f_l(\U)$, the assertion follows by Lemma~\ref{lem:circ0}.
\end{proof}

\begin{lemma}\label{lem:alt end}
    Let $\gamma\in\R$. Suppose $\Int^\circ(\gamma,\U^\circ)\neq 0$. Then for any maximal tagged arc $l\in\U$, any alternative intersection between $l^\circ$ and $\gamma$ is interior.
\end{lemma}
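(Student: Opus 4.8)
The plan is to argue by contradiction using the maximality of $l$. Throughout I work on the ideal side: by Corollary~\ref{cor:dual} the arc $\gamma$ is $\U$-co-standard, so $\gamma^\U$ is $\U^\circ$-co-standard, and completing $\U^\circ$ to a genuine ideal triangulation $\T^\circ$ lets me decompose the relevant part of $\surfi$ into triangles. Suppose, for contradiction, that there is an alternative intersection $\jiaodian\in l^\circ\cap\gamma$ with $\jiaodian\in\PP\cup\MM$. By Remark~\ref{rmk:two local tri} (the co-standard case, cf.\ the lower rows of Figure~\ref{fig:p/n int}), $l^\circ$ is the first arc of $\U^\circ$ next to $\gamma^\U$ around $\jiaodian$ in a prescribed rotational direction — anticlockwise or clockwise according to the sign of $\jiaodian$ and the tagging of $\gamma^\U$ there — and the end arc segment $\eta$ of $\gamma^\U$ incident to $\jiaodian$ cuts out an angle $\theta$ of $\U^\circ\cup B$ one of whose two edges is $l^\circ$; write $g$ for the other edge of $\theta$.

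The first step is to use the hypothesis $\Int^\circ(\gamma,\U^\circ)\neq 0$ to pin down the shape of $\eta$. Since $\gamma$ meets $\U^\circ$ in the interior, $\gamma^\U$ is divided by $\U^\circ$ into at least two arc segments, so $\eta$ is a \emph{proper} end segment: it runs from $\jiaodian$, has no interior crossing with $\U^\circ$, and terminates at an interior point $q$ of some arc $m^\circ\in\U^\circ$. Moreover $\eta\subseteq\gamma$ and the arcs of $\R$ are pairwise non-crossing, so $\eta$ has no interior crossing with $\R$ either. Because $l^\circ$ is the first $\U^\circ$-arc next to $\gamma^\U$ around $\jiaodian$, the segment $\eta$ emanates hugging $l^\circ$, hence lies entirely in the single region of $\surfi\setminus\U^\circ$ bounded near $\jiaodian$ by $l^\circ$ and $g$, whose boundary also carries $q$ on $m^\circ$. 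When $\jiaodian\in\PP$, $g$ cannot be a boundary segment (a boundary segment is never incident to a puncture), so $g=h^\circ$ for some $h\in\U$; I would isolate the remaining case $\jiaodian\in\MM$ with $g\in B$ separately, where $\eta$ is forced to re-meet $l^\circ$ and a short direct argument applies.

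The core of the proof is then to exhibit an arc of $\U$ incident to $\jiaodian$ — the natural candidates being $h$ (with $h^\circ=g$) and $m$ — whose underlying arc has \emph{strictly more} interior intersections with $\R$ than $l^\circ$ does, contradicting that $l$ is maximal. Concretely, I would perform a surgery on $l^\circ$ supported inside the region above: follow $l^\circ$ from its endpoint other than $\jiaodian$, and near $\jiaodian$ turn along $\eta$ and cross $m^\circ$ at $q$, finishing inside the next triangle of $\T^\circ$; one checks this yields a genuine tagged arc, that it coincides with an arc already in $\U$ (equivalently that its image in $\U^\circ$ is $m^\circ$ or $g$, matching the flip direction forced by the sign of $b_{l,\U}(e^{op}(\gamma))$ provided by Lemma~\ref{lem:neq0}), that every $\R$-arc meeting $l^\circ$ in the interior still meets this arc in the interior, and that $\gamma$ contributes an extra interior intersection near $q$ that it did not contribute at $\jiaodian$; since $\gamma$ has no interior intersection with the other arcs of $\R$, summing over $\R$ gives the strict inequality. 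The step I expect to be the main obstacle is exactly this bookkeeping: one must run through the possible local configurations at $\jiaodian$ (interior versus boundary $g$; $\jiaodian\in\PP$ versus $\jiaodian\in\MM$; the two taggings; whether $l^\circ$ or $m^\circ$ is the non-folded side of a self-folded triangle) and verify the well-definedness, membership in $\U$, and strict gain of interior $\R$-intersections in each. Remark~\ref{rem:total} (no self-intersections, so all alternative intersections of $\gamma$ with a fixed arc share a sign) together with the explicit catalogue of arc-segment shapes in Definition~\ref{def:standard ideal} and Figures~\ref{fig:cstd}–\ref{fig:p/n int} should keep the casework bounded, but it will be the technical heart of the argument.
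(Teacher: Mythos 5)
Your overall strategy is the paper's: assume an alternative intersection $\jiaodian\in\PP\cup\MM$ between $l^\circ$ and $\gamma$, pass to the end arc segment $\eta$ of $\gamma^\U$ at $\jiaodian$ and the angle $\theta$ it cuts out (one edge being $l^\circ$, the other $g$), and contradict the maximality of $l$ by showing that the arc of $\U^\circ$ underlying $g$ meets $\R$ strictly more often in $\surfi$ than $l^\circ$ does. The two counting claims you reduce this to are correct and are exactly the paper's $\alpha/\beta$ count in its proof. The problems are in how you get there. First, the ``surgery'' you organize the argument around is both unnecessary and, as described, not well defined: the curve obtained by following $l^\circ$ and then turning along $\eta$ past $q$ ``into the next triangle'' has no specified second endpoint, and nothing identifies it with an element of $\U$. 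No construction is needed, because the candidate is simply $g$ itself: since $\Int^\circ(\gamma,\U^\circ)\neq 0$, the end segment $\eta$ terminates at an interior point $q$ of the second edge of $\theta$, which is therefore an arc of $\U^\circ$ and not a boundary segment (an interior point of $\gamma$ cannot lie on $\partial S$). So your deferred case $g\in B$ never occurs, and your reason for excluding it when $\jiaodian\in\PP$ is aimed at the wrong vertex anyway: $g$ is incident to the vertex of $\theta$, i.e.\ the \emph{other} endpoint of $l^\circ$, not to $\jiaodian$.

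Second, the step you flag as ``the technical heart'' and defer to casework closes in one line, and this is the actual content of the paper's proof. An arc segment of an arc in $\R$ that crosses $l^\circ$ enters the disk bounded by $l^\circ$, $g$ and $\eta$; it cannot cross $\eta\subset\gamma$ (arcs of $\R$ are pairwise non-crossing), cannot terminate inside the disk, and by the $\U$-co-standard property can only leave by cutting out the angle at the vertex of $\theta$, hence it exits through $g$. Together with the crossing of $g$ at $q$ contributed by $\eta$ itself --- which contributes nothing to $\Int^\circ(\R,l^\circ)$ because $\jiaodian\notin\surfi$ --- this yields $\Int^\circ(\R,g)\geq\Int^\circ(\R,l^\circ)+1$, the desired contradiction; it is exactly the inequality $\Int(\R,l^\circ_s)\geq\alpha+\beta>\alpha=\Int(\R,l^\circ)$ in the paper. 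So the idea is right and the approach is the same, but as written the proposal replaces the one decisive observation with an ill-posed construction plus an open-ended case analysis.
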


\begin{proof}
    Suppose conversely that $l^\circ$ has an alternative intersection $\jiaodian\in\PP\cup\MM$ with $\gamma$.  
    Let $\eta$ be the end arc segment of $\gamma^\U$ incident to $\jiaodian$. Then by Remark~\ref{rmk:two local tri}, we are in one of the situations shown in Figure~\ref{fig:cor-alt end}. In each case, let $\alpha$ be the number of non-end arc segments of arcs in $\R$ cutting out the same angle as $\eta$ and $\beta>0$ the number of end arc segments of arcs in $\R$ homotopic to $\eta$. Then $\Int(\R,l^\circ)=\alpha$ and $\Int(\R,l^\circ_s)\geq \alpha+\beta$, 
	a contradiction to that $l$ is maximal.
	\begin{figure}[htpb]
	\begin{tikzpicture}[xscale=-1.5,yscale=1.5]
		\draw[blue,thick,bend left=20](0,1)node[black]{$\bullet$}to(.6,-.2);
		\draw[blue,thick,bend right=20](0,1)to(-.6,0)node[black]{$\bullet$};
		\draw[red,thick,bend right=20](-.6,0)to(.6,0)node[left]{$\beta$};
		\draw[red,thick,bend right=20](-.4,.7)to(.4,.7);
		
		\draw(0,-.4)node[red]{$\eta$}(.7,.4)node[blue]{$l^\circ_s$}(-.7,.4)node[blue]{$l^\circ$}(-.45,-.06)node[red,rotate=15]{$\times$}(.5,.7)node[red]{$\alpha$}(-.8,0)node{$\jiaodian$};
	\end{tikzpicture}\qquad
	\begin{tikzpicture}[xscale=1.5,yscale=1.5]
		\draw[blue,thick,bend left=20](0,1)node[black]{$\bullet$}to(.6,-.2);
		\draw[blue,thick,bend right=20](0,1)to(-.6,0)node[black]{$\bullet$};
		\draw[red,thick,bend right=20](-.6,0)to(.6,0)node[right]{$\beta$};
		\draw[red,thick,bend right=20](-.4,.7)to(.4,.7);
		
		\draw(0,-.4)node[red]{$\eta$}(.75,.4)node[blue]{$(l^s)^\circ$}(-.7,.4)node[blue]{$l^\circ$}(-.5,.7)node[red]{$\alpha$}(-.8,0)node{$\jiaodian$};
	\end{tikzpicture}
\caption{The case $l^\circ$ has an alternative intersection in $\PP\cup\MM$ with some $\gamma\in\R$ satisfying $\Int^\circ(\gamma,\U^\circ)\neq0$.}
\label{fig:cor-alt end}
\end{figure}
\end{proof}

We need the following notions similarly as in \cite{DP}. For any arc $l\in\U$, we say flipping $l$ is \emph{convenient} if $\Int^\circ(\R,f_l(\U)^\circ)<\Int^\circ(\R,\U^\circ)$, or \emph{neutral} if $\Int^\circ(\R,f_l(\U)^\circ)=\Int^\circ(\R,\U^\circ)$, where
$$\Int^\circ(\R,\U^\circ):=\sum_{l\in\U}\Int^\circ(\R,l^\circ)=\sum_{\gamma\in\R}\Int^\circ(\gamma,\U^\circ).$$
In what follows, for any $\gamma\in\R$ and a positive integer $n$, an $n$-arc segment of $\gamma$ is a segment of $\gamma$ formed by $n$ arc segments. 

\begin{lemma}\label{lem:trans}
    Let $l$ be a maximal arc in $\U$ such that there is a negative (resp. positive) interior intersection $\jiaodian$ between $l^\circ$ and some arc $\gamma\in\R$. Then flipping $l$ is either convenient or neutral. Moreover, if flipping $l$ is neutral, then
    \begin{enumerate}
        \item $l$ is single,
        \item each alternative intersection in $\surfi$ between $l^\circ$ and an arc in $\R$ is not an endpoint of an end arc segment of the arc divided by $\U^\circ$, and
        \item both $l^\circ_s$ and $l^\circ_t$ (resp. both $(l^s)^\circ$ and $(l^t)^\circ$) exist and are maximal if $\jiaodian$ is negative (resp. positive).
    \end{enumerate} 
\end{lemma}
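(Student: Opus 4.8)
The plan is to reduce the statement to a local analysis near $l^\circ$ of how the flip changes the interior intersection number with $\R$, and then to read off the three consequences from the cases in which that local estimate is tight. First I would pin down which flip occurs: since $\jiaodian$ is a negative interior intersection between $l^\circ$ and $\gamma\in\R$, Remark~\ref{rmk:two local tri} identifies its contribution to $b_{l,\U}(e^{op}(\gamma))$ as $-1$; by Remark~\ref{rem:total} every contribution of $\gamma$ is then $\le 0$, and by Lemma~\ref{lem:neq0} the same sign governs every arc of $\R$, so in fact \emph{every} alternative interior intersection of $l^\circ$ with an arc of $\R$ is negative and $b_{l,\U}(e^{op}(\gamma))<0$. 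Hence by \eqref{eq:flip} the flip is the negative one, $f_l(\U)^\circ=f^-_{l^\circ}(\U^\circ)=(\U^\circ\setminus\{l^\circ\})\cup\{l'\}$ with $l':=f^-_{\U^\circ}(l^\circ)$, which is defined because a maximal arc lies in $\U^{non}$. Since only the summand $\Int^\circ(\R,l^\circ)$ of $\Int^\circ(\R,\U^\circ)$ is affected by the flip, the whole statement reduces to proving $\Int^\circ(\R,l')\le\Int^\circ(\R,l^\circ)$, with equality precisely in the neutral case; the positive case is proved verbatim with clockwise and anticlockwise (hence $l^\circ_s,l^\circ_t$ and $(l^s)^\circ,(l^t)^\circ$) interchanged, so I would write out only the negative case.

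The geometric input is that $l'$ is homotopic to a curve whose central portion runs parallel to $l^\circ$ and which, near the end $l^\circ(0)$, follows alongside $l^\circ_s$ (dragging that end to $l^\circ_s(1)$), and near $l^\circ(1)$ follows alongside $l^\circ_t$ (dragging that end to $l^\circ_t(0)$), the $l^\circ_s$- or $l^\circ_t$-portion being absent when the corresponding neighbour does not exist. I would then, for each $\gamma'\in\R$, split $\gamma'\cap l'\cap\surfi$ into crossings with the central portion, which (after passing to minimal position, which only removes crossings) inject into $\gamma'\cap l^\circ\cap\surfi$, crossings with the $l^\circ_s$-portion, which inject into $\gamma'\cap l^\circ_s\cap\surfi$, and crossings with the $l^\circ_t$-portion, which inject into $\gamma'\cap l^\circ_t\cap\surfi$. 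The crucial point --- this is where being a \emph{flip} rather than an arbitrary perturbation matters, and where negativity of $\jiaodian$ is used --- is a cancellation at the two corners: every crossing $l'$ acquires near $l^\circ(0)$ beyond those of $l^\circ$ near $l^\circ(0)$ is matched injectively to a \emph{distinct} alternative interior crossing of $l^\circ$ with an $\R$-arc at the $l^\circ(0)$-corner that the flip removes, and symmetrically at $l^\circ(1)$, while at least the crossing $\jiaodian$ itself is removed. Summing over $\gamma'\in\R$ and using maximality of $l$ in the form $\Int^\circ(\R,l^\circ_s)\le\Int^\circ(\R,l^\circ)$ and $\Int^\circ(\R,l^\circ_t)\le\Int^\circ(\R,l^\circ)$, the contributions of the two end portions are absorbed and one obtains $\Int^\circ(\R,l')\le\Int^\circ(\R,l^\circ)$. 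I expect this corner bookkeeping to be the main obstacle: one must match the cancelled crossings of $l^\circ$ against the new crossings of $l'$ across all local configurations, including those with $l^\circ_s$ or $l^\circ_t$ a boundary segment and the degenerate ones (loops, self-folded triangles, $l^\circ_s=l^\circ_t$, tagged ends at punctures). This is the surface analogue of the crossing-count arguments of \cite{DP}, complicated here by punctures and taggings, and is best organised along the configuration lists already used in the proofs of Lemmas~\ref{lem:circ0} and \ref{lem:alt end} (cf. Figures~\ref{fig:alt end} and \ref{fig:cor-alt end}).

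Finally I would extract the equality conclusions by tracking when the inequality above is tight. A direct inspection of the double configuration (the first picture of Figure~\ref{fig:alt end}) shows the flip is then strictly convenient, so neutrality forces $l$ to be single, giving (1). Equality also forces each of the three injections to be a bijection for every $\gamma'\in\R$; in particular no alternative interior crossing of $l^\circ$ with an $\R$-arc may be an endpoint of an end arc segment of that arc in its decomposition by $\U^\circ$ --- such a crossing would be removed by the flip with no new crossing of $l'$ to replace it, as in the configurations producing a boundary endpoint --- which is (2). And equality forces $\Int^\circ(\R,l^\circ_s)=\Int^\circ(\R,l^\circ)=\Int^\circ(\R,l^\circ_t)$; in particular $l^\circ_s$ and $l^\circ_t$ must both exist (if, say, $l^\circ_s$ were absent, the $l^\circ(0)$-end of $l'$ would coincide with that of $l^\circ$ while the removed crossings would still make the count strictly drop) and, lying in $\U^\circ\cup B$ with maximal interior intersection number with $\R$, they are arcs of $\U$ which are again maximal, which is (3). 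For the positive case one replaces $l^\circ_s,l^\circ_t$ by $(l^s)^\circ,(l^t)^\circ$ throughout.
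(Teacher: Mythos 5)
Your overall architecture is the same as the paper's: fix the sign of the flip via Remark~\ref{rmk:two local tri}, Remark~\ref{rem:total} and Lemma~\ref{lem:neq0}, reduce to the single inequality $\Int^\circ(\R,f^-_{\U^\circ}(l^\circ))\le\Int^\circ(\R,l^\circ)$, decompose the crossings of the flipped arc into a central portion parallel to $l^\circ$ and two end portions running along $l^\circ_s$ and $l^\circ_t$, and feed in maximality through $\Int^\circ(\R,l^\circ_s),\Int^\circ(\R,l^\circ_t)\le\Int^\circ(\R,l^\circ)$. That much is right, and the quantities you describe are exactly the paper's $\zeta$, $\beta_1$, $\beta_2$ (crossings of $f^-_{\U^\circ}(l^\circ)$) versus $\alpha_1,\alpha_2,\zeta,\zeta_1,\zeta_2$ and the end-segment families $\delta_1,\delta_2$ (crossings of $l^\circ$).

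The gap is in the step you yourself flag as ``the main obstacle,'' and the mechanism you propose for it does not work. You claim the new crossings acquired near $l^\circ(0)$ inject into the removed alternative crossings of $l^\circ$ \emph{at the same corner}. That is false in general: in the paper's neutral case the identities are $\beta_1=\alpha_2+\zeta_2$ and $\beta_2=\alpha_1+\zeta_1$, i.e.\ the new crossings along the $l^\circ_t$-portion are balanced against removed crossings at the \emph{opposite} end $l^\circ(0)$; a same-corner matching would force $\alpha_1=\alpha_2$, which is not a consequence of neutrality. The correct balance comes only from the global double count $2\Int^\circ(\R,l^\circ)-\Int^\circ(\R,l^\circ_s)-\Int^\circ(\R,l^\circ_t)\ge0$, and even that is insufficient on its own: for arcs of $\R$ whose \emph{end} arc segment terminates at a point of $\MM\cup\PP$ inside one of the corner angles $\theta_s$, $\theta_t$ (the families $\delta_1,\delta_2$), the maximality bound yields only $\Int^\circ(\R,f^-_{\U^\circ}(l^\circ))\le\Int^\circ(\R,l^\circ)+\delta_1+\delta_2$, which is the wrong direction. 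One must separately invoke that arcs in $\R$ are pairwise non-crossing, so that a nonzero $\delta_2$ (say) blocks the corner and forces $\beta_2=\zeta=\zeta_2=0$, whence the flip is strictly convenient. Since conclusion~(2) of the lemma is precisely the statement $\delta_1=\delta_2=0$ in the neutral case, this is the crux of the proof rather than routine bookkeeping, and your one-line derivation of (2) (``removed with nothing to replace it'') presupposes the inequality you have not yet established. Two further points: the configuration in which the two ends of $l^\circ$ bound a common angle (so $l^\circ_s=l^\circ_t$, including the double case) needs its own count with $3$-arc segments, and your appeal to the first picture of Figure~\ref{fig:alt end} for the double case is off target --- that figure concerns Lemma~\ref{lem:circ0}; the paper instead derives $\zeta+\zeta_1+\zeta_2=0$ (resp.\ $\zeta=0$) in the double neutral case and contradicts the existence of the negative intersection $\jiaodian$.
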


\begin{proof}
    We only deal with the case that $\jiaodian$ is negative since the proof when $\jiaodian$ is positive is similar. By \eqref{eq:flip}, $l$ is flip-convenient if $\Int^\circ(\R,f^-_{\U^\circ}(l^\circ))<\Int^\circ(\R,l^\circ)$, or flip-neutral if $\Int^\circ(\R,f^-_{\U^\circ}(l^\circ))=\Int^\circ(\R,\U^\circ)$. We may assume that $f^-_{\U^\circ}(l^\circ)$ is not homotopic to any arc in $\R$, because otherwise  $\Int^\circ(\R,f^-_{\U^\circ}(l^\circ))=0<\Int^\circ(\R,l^\circ)$. By Remark~\ref{rmk:two local tri}, we are in the situation shown in the second picture of the first row of Figure~\ref{fig:p/n int} with replacing $\gamma$ by $l^\circ$ and replacing $\delta$ by some arc $\gamma\in\R$. There are the following cases.
	\begin{itemize}
	    \item[(a)] Suppose that there is no angle of $\U^\circ$ formed by $l^\circ$ and itself. In this case, the four angles $\theta_s,\theta^s,\theta_t,\theta^t$ (which are defined in Definition~\ref{def:flip2} for $v=l^\circ$) are different. We shall use the following notations, see the first picture in Figure~\ref{fig:angle by two}. 
	    \begin{itemize}
	    \item Let $\alpha_1$ (resp. $\alpha_2$) be the number of 2-arc segments of arcs in $\R$ that cut out the angles $\theta_t$ and $\theta^t$ (resp. $\theta_s$ and $\theta^s$) and whose arc segment cutting out $\theta_t$ (resp. $\theta_s$) does not have an endpoint in $\MM\cup\PP$. 
		\item Let $\delta_1$ (resp. $\delta_2$) be the number of 2-arc segments of arcs in $\R$ that cut out the angles $\theta_t$ and $\theta^t$ (resp. $\theta_s$ and $\theta^s$) and whose arc segment cutting out $\theta_t$ (resp. $\theta_s$) has an endpoint in $\MM\cup\PP$, see the dashed ones in the left picture of Figure~\ref{fig:angle by two}.
		\item Let $\beta_1$ (resp. $\beta_2$) be the number of arc segments of arcs in $\R$ that cross $l^\circ_t$ (resp. $l^\circ_s$) and cut out the angle at $l^\circ_t(0)$ (resp. $l^\circ_s(1)$) clockwise from $l^\circ_t$ (resp. $l^\circ_s$).
		\item Let $\zeta$ be the number of 2-arc segments of arcs in $\R$ that cut out the angles $\theta_s$ and $\theta_t$ and are not incident to any point in $\PP\cup\MM$.
		\item Let $\zeta_1$ (resp. $\zeta_2$) be the number of 2-arc segments of arcs in $\R$ that cut out the angles $\theta_s$ and $\theta_t$ and have one endpoint in $\MM\cup\PP$.
	\end{itemize}
    \begin{figure}[htpb]\centering
        \begin{tikzpicture}[xscale=-3,yscale=3]
		\draw[blue,thick,-<-=.45,>=stealth](0,1)node[black]{$\bullet$}to(.7,.7)node[black]{$\bullet$};
		\draw[blue,thick](.7,.7)to(1,0);
		\draw(0.7,-.7)node[black]{$\bullet$};
		\draw[blue,thick,-<-=.6,>=stealth](.7,-.7)to(0,-1)node[black]{$\bullet$};
		\draw[blue,thick,-<-=.6,>=stealth](0,1)to(0,-1);
		\draw[blue,thick,->-=.45,>=stealth](0,-1)to(-.7,-.7)node[black]{$\bullet$};
		\draw[blue,thick](-.7,-.7)to(-1,0);
		\draw[blue,thick,-<-=.5,>=stealth](0,1)to(-.7,.7)node[black]{$\bullet$};
		\draw[cyan,thick](-.7,-.7)to(.7,.7);
        \draw[cyan,dotted,bend left=20,->-=1,>=stealth](-.45,-.25)to(-.25,-.25);
  
		\draw[pink,thick](-.2,1.1)to[out=-90,in=180](0,.8)to[out=0,in=-90](.2,1.1);
		\draw[purple,thick](-.2,-1.1)to[out=90,in=180](0,-.8)to[out=0,in=90](.2,-1.1);
		\draw[pink](.25,1.15)node{$\alpha_1$};
		\draw[purple](-.25,-1.15)node{$\alpha_2$};
		\draw[red,thick](-.6,-1.1)to(.6,1.1);
		\draw[red,thick](-.7,-.7)to(.5,1.1)node[above]{$\zeta_1$};
		\draw[red,thick](.7,.7)to(-.5,-1.1)node[below]{$\zeta_2$};
		\draw[purple,thick](-.63,-.83)to[out=45,in=-45](-.6,-.6)to[out=135,in=45](-.83,-.63);
		\draw[pink,thick](.63,.83)to[out=-135,in=135](.6,.6)to[out=-45,in=-135](.83,.63);
		\draw[purple,thick,bend right=60](.7,-.7)to(-.3,-1.1);
		\draw[purple,dotted,thick,bend left=60](-.7,-.7)to(.3,-1.1);
		\draw[purple,dotted,thick,bend right=30](.7,-.7)to(-.7,-.7);
		\draw[pink,thick,bend right=60](-.7,.7)to(.3,1.1);
		\draw[pink,dotted,thick,bend left=60](.7,.7)to(-.3,1.1);
		\draw[pink,dotted,thick,bend right=30](-.7,.7)to(.7,.7);
		\draw(.65,1.1)node[red]{$\zeta$}(-.9,-.62)node[purple]{$\beta_2$}(-.07,.12)node[blue]{$l^\circ$}(-.65,-.25)node[cyan]{$f^-_{\U^\circ}(l^\circ)$}(.9,.62)node[pink]{$\beta_1$}(.55,-.95)node[blue]{$(l^s)^\circ$}(-.48,-.71)node[blue]{$l_s^\circ$}(-.55,.95)node[blue]{$(l^t)^\circ$}(.48,.7)node[blue]{$l_t^\circ$}(-.5,.6)node[pink,rotate=8]{$\times$}(.5,-.6)node[purple,rotate=10]{$\times$}(-.93,.7)node{$(l^t)^\circ(0)$}(.93,-.7)node{$(l^s)^\circ(1)$}(.78,.82)node{$l^\circ_t(0)$}(-.78,-.82)node{$l^\circ_s(1)$}(0,1.1)node{$l^\circ(1)$}(0,-1.1)node{$l^\circ(0)$}(-.35,.43)node[pink]{$\delta_1$}(.35,-.43)node[purple]{$\delta_2$};
		\draw[orange] (.06,.88)node{$\theta_t$}(-.06,.88)node{$\theta^t$}(.06,-.9)node{$\theta^s$}(-.06,-.88)node{$\theta_s$};
	\end{tikzpicture}\quad
	\begin{tikzpicture}[xscale=5,yscale=7]
		\draw[blue,thick,-<-=.5,>=stealth](0,1)to[out=-40,in=0](0,.2)to[out=180,in=-140](0,1);
		\draw[blue,thick](0,1)node[black]{$\bullet$}to(0,.4)node[black]{$\bullet$};
		\draw[blue,thick](0,.7)node{$(l^s)^\circ=l_t^\circ$};
		\draw[blue,thick,-<-=.5,>=stealth](0,1)to[out=-20,in=90](.5,.2)node[black]{$\bullet$};
		\draw[blue,thick,->-=.4,>=stealth](0,1)to[out=-160,in=90](-.5,.2)node[black]{$\bullet$};
		\draw[cyan,thick](0,.4)to(-.5,.2);
        \draw[cyan,dotted,->-=1,>=stealth](-.28,.2)to(-.28,.28);
  
        \draw[purple,thick](-.3,.9)to[out=-60,in=180](0,.8);
		\draw[purple,thick](-.45,.8)to(0,.4) (-.05,.445)node[rotate=30]{$\times$};
		\draw[purple,thick](-.25,.75)node{$\alpha_2$};
		\draw[purple,thick](-.5,.2)to(0,.6);
		\draw[pink,thick](.3,.9)to[out=-120,in=0](0,.8);
		\draw[pink,thick](.25,.75)node{$\alpha_1$};
		\draw[pink,thick](.5,.2)to(0,.6);
		\draw[pink,thick](.42,.26)node[rotate=-55]{$\times$};
		\draw[red,thick](-.6,.55)node[left]{$\zeta$}to[out=-20,in=-180](0,.35)to[out=0,in=-90](.05,.4)to[out=90,in=0](0,.45);
		\draw[red,thick](-.5,.2)to[out=0,in=-160](0,.3)to[out=20,in=-90](.1,.4)to[out=90,in=0](0,.5) (0,.25)node{$\zeta_1$};
		\draw[red,thick](-.6,.65)to(0,.4);
		\draw(-.65,.65)node[red]{$\zeta_2$}(-.55,.4)node[blue]{$l^\circ_s$}(.58,.4)node[blue]{$(l^t)^\circ$}(-.28,.17)node[cyan]{$f^-_{\U^\circ}(l^\circ)$}(.15,.2)node[blue]{$l^\circ$}(0,1.05)node{$(l^s)^\circ(0)=l^\circ_t(1)$}(.5,.15)node{$(l^t)^\circ(0)$}(-.62,.2)node{$l_s^\circ(1)$};
		\draw[purple,thick](-.53,.25)to[out=0,in=90](-.45,.2)to[out=-90,in=0](-.53,.15) (-.52,.12)node[left]{$\beta_2$};
		\draw[orange](-.13,.9)node{$\theta_s$} (-.037,.9)node{$\theta^s$} (.037,.9)node{$\theta_t$} (.13,.9)node{$\theta^t$};
	\end{tikzpicture}
    \caption{The case that no angle of $\U^\circ$ is formed by the two ends of $l^\circ$}
    \label{fig:angle by two}
    \end{figure}
 
	Then we have the following.
	\[\Int^\circ(\R,l^\circ)=\alpha_1+\alpha_2+\delta_1+\delta_2+\zeta+\zeta_1+\zeta_2,\]
	\[\Int^\circ(\R,f^-_{\U^\circ}(l^\circ))=\beta_1+\beta_2+\zeta,\]
	\[\Int^\circ(\R,l^\circ_t)=\alpha_1+\beta_1+\zeta+\zeta_1,\]
	\[\Int^\circ(\R,l^\circ_s)=\alpha_2+\beta_2+\zeta+\zeta_2.\]
    There are the following subcases.
    \begin{enumerate}
        \item Both $\delta_1$ and $\delta_2$ are non-zero. Since arcs in $\R$ do not cross each other in the interior, we have $\beta_1=\beta_2=0$. So
        \[\Int^\circ(\R,l^\circ)-\Int^\circ(\R,f^-_{\U^\circ}(l^\circ))=\alpha_1+\alpha_2+\delta_1+\delta_2+\zeta_1+\zeta_2>0.\]
        \item Exactly one of $\delta_1$ and $\delta_2$ is not zero. Without loss of generality, suppose $\delta_1=0$ and $\delta_2\neq0$. Since arcs in $\R$ do not cross each other in the interior, we have $\beta_2=\zeta=\zeta_2=0$, which implies $\zeta_1\neq 0$ by the existence of a negative intersection $\jiaodian$. So
        \[\Int^\circ(\R,l^\circ)-\Int^\circ(\R,f^-_{\U^\circ}(l^\circ))=\Int^\circ(\R,l^\circ)-\Int^\circ(\R,l^\circ_t)+\alpha_1+\zeta_1>0.\]
        \item Both $\delta_1$ and $\delta_2$ are zero. So
        \[\Int^\circ(\R,l^\circ)-\Int^\circ(\R,f^-_{\U^\circ}(l^\circ))=2\Int^\circ(\R,l^\circ)-\Int^\circ(\R,l^\circ_s)-\Int^\circ(\R,l_t^\circ)\geq0.\]
    \end{enumerate}
    Hence flipping $l$ is either convenient or neutral. When flipping $l$ is neutral, we have $\delta_1=\delta_2=0$ and $\Int^\circ(\R,l^\circ)=\Int^\circ(\R,l^\circ_s)=\Int^\circ(\R,l_t^\circ)$, which implies that both  $l_s$ and $l_t$ exist and are maximal, and $\alpha_2+\zeta_2=\beta_1$ and $\alpha_1+\zeta_1=\beta_2$. Since at least one of $\beta_1$ and $\zeta_2$ (resp. $\beta_2$ and $\zeta_1$) is zero, we have $\zeta_1=\zeta_2=0$, $\alpha_2=\beta_1$ and $\alpha_1=\beta_2$. If in addition, $l$ is double, then with the chosen orientations, we have $(l^s)^\circ=l_t^\circ$ (with opposite orientations), see the second picture of Figure~\ref{fig:angle by two}. Since the arc segment crossing $l_t^\circ$ and cutting out the angle at $l^\circ_t(0)$ clockwise from $l^\circ_t$ is a loop enclosing the puncture $l^\circ_t(0)$, we have $\beta_1=0$. So $\alpha_2=0$, which implies $\zeta=0$. So we have $\zeta+\zeta_1+\zeta_2=0$ which contradicts with that $\jiaodian$ is negative.
	\item[(b)] 
    Otherwise, by Definition~\ref{def:standard ideal}, $\gamma$ cuts out an angle $\theta$ formed by the different end segments of $l^\circ$, see the first picture in Figure~\ref{fig:angle by one}. In this case, we have $l^\circ_s=l^\circ_t$ and $(l^s)^\circ=(l^t)^\circ$, both with opposite orientations. The three angles $\theta,\theta_s$ and $\theta^t$ are different. We shall use the following notations, see the first picture in Figure~\ref{fig:angle by one}.
    \begin{itemize}
	    \item Let $\alpha$ be the number of 3-arc segments of arcs in $\R$ that cut out the angles $\theta_s$, $\theta$ and $\theta^t$ in order. 
		\item Let $\beta$ be the number of arc segments of arcs in $\R$ that cross $l^\circ_s$ and cut out the angle at $l^\circ_s(1)$ clockwise from $l^\circ_s$.
		\item Let $\zeta$ be the number of 3-arc segments of arcs in $\R$ that cut out the angles $\theta_s$, $\theta$ and $\theta_s$ in order and are not incident to any point in $\PP\cup\MM$.
		\item Let $\zeta_1$ be the number of 3-arc segments of arcs in $\R$ that cut out the angles $\theta_s$, $\theta$ and $\theta_s$ in order and such that $l^\circ$ is the first or last arc in $\U$ they cross.
	\end{itemize}
    Then
    \[\Int^\circ(\R,l^\circ)=2\alpha+2\zeta+2\zeta_1,\]
    \[\Int^\circ(\R,f^-_{\U^\circ}(l^\circ))=2\zeta+2\beta,\]
    \[\Int^\circ(\R,l^\circ_s)=\alpha+\beta+2\zeta+\zeta_1.\]
    So we have
    \[\begin{array}{rl}
         &\Int^\circ(\R,l^\circ)-\Int^\circ(\R,f^-_{\U^\circ}(l^\circ))  \\
         =&2\alpha+2\zeta_1-2\beta\\
         =&2(\Int^\circ(\R,l^\circ)-\Int^\circ(\R,l^\circ_s))\geq0
    \end{array}.\]
    Hence flipping $l$ is either convenient or neutral. When flipping $l$ is neutral, we have $\Int^\circ(\R,l^\circ)=\Int^\circ(\R,l^\circ_s)$, which implies that $l_s=l_t$ exists and is maximal, and $\alpha+\zeta_1=\beta$. Since at least one of $\beta$ and $\zeta_1$ is zero, we have $\zeta_1=0$ and $\alpha=\beta$. So $\zeta\neq 0$ by the existence of a negative intersection $\jiaodian$. If in addition, $l$ is double, then $l^\circ_s=(l^t)^\circ$ (with opposite orientations) be the folded side that is enclosed by $l^\circ$, see the second picture of Figure~\ref{fig:angle by one}. Then any arc in $\R$ cuts out one of $\theta_s$, $\theta$ and $\theta^t$ can only cut out these three angles (by the $\U$-co-standard property) and hence does not exist. This contradicts $\zeta\neq 0$.
\end{itemize}
\end{proof}
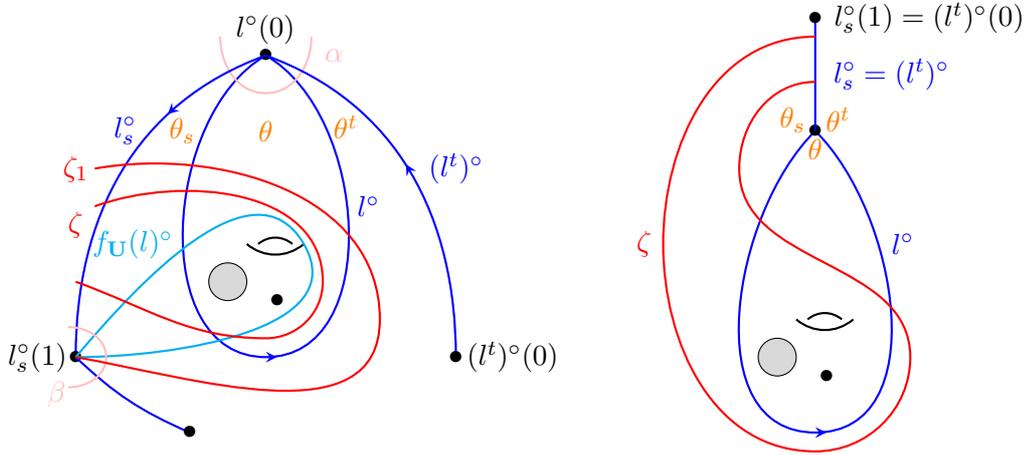
\begin{figure}[htpb]\centering
	\begin{tikzpicture}[scale=5]
	    \draw[blue,thick,-<-=.5,>=stealth](0,1)node[black]{$\bullet$}to[out=-30,in=0](0,.2)to[out=180,in=-150](0,1);
		\draw[fill=gray!30](-.1,.4)circle(.05);
		\draw[thick](-.05,.5)to[bend right=40](.1,.5) (-.02,.5)to[bend left=40](.07,.5) (.03,.35)node{$\bullet$};
		\draw[orange](-.22,.8)node{$\theta_s$}(.21,.81)node{$\theta^t$}(0,.8)node{$\theta$};
		\draw[pink,thick](-.12,1.05)to[out=-90,in=180](0,.9)to[out=0,in=-90](.12,1.05);
		\draw[blue,thick,-<-=.5,>=stealth](0,1)to[out=-20,in=90](.5,.2)node[black]{$\bullet$};
		\draw[blue,thick,->-=.3,>=stealth](0,1)to[out=-160,in=90](-.5,.2)node[black]{$\bullet$};
		\draw[blue,thick](-.5,.2)to[bend right=10](-.2,0)node[black]{$\bullet$};
		
		\draw[cyan,thick](-.5,.2)to[out=50,in=120](.1,.5)to[out=-60,in=0](-.5,.2);
		
		\draw[red,thick](-.45,.6)to[out=20,in=90](.15,.4)to[out=-90,in=0](0,.25)to[out=180,in=-20](-.5,.4);
		\draw[pink,thick](-.52,.28)to[out=0,in=90](-.42,.2)to[out=-90,in=0](-.52,.12) (-.55,.1)node{$\beta$}; 
        \draw[red,thick](-.5,.2)to[out=-10,in=-90](.3,.3)to[out=90,in=10](-.45,.7) (-.5,.7)node{$\zeta_1$};
        \draw(.65,.2)node{$(l^t)^\circ(0)$}(0,1.07)node{$l^\circ(0)$}(-.6,.2)node{$l^\circ_s(1)$};
		\draw(-.5,.55)node[red]{$\zeta$}(-.37,.8)node[blue]{$l^\circ_s$}(.5,.7)node[blue]{$(l^t)^\circ$}(-.35,.5)node[cyan]{$f_\U(l)^\circ$}(.18,1)node[pink]{$\alpha$}(.27,0.6)node[blue]{$l^\circ$}(0,-.15)node{};
	\end{tikzpicture}\qquad
    \begin{tikzpicture}[scale=5]
	    \draw[blue,thick,-<-=.5,>=stealth](0,.8)node[black]{$\bullet$}to[out=-45,in=0](0,0)to[out=180,in=-135](0,.8);
		\draw[fill=gray!30](-.1,.2)circle(.05);
		\draw[thick](-.05,.3)to[bend right=40](.1,.3) (-.02,.3)to[bend left=40](.07,.3) (.03,.15)node{$\bullet$};
        \draw[blue,thick](0,.8)to(0,1.1)node[black]{$\bullet$};
        \draw(0,-.15)node{};
		\draw[orange](-.06,.83)node{$\theta_s$}(.06,.83)node{$\theta^t$}(0,.75)node{$\theta$}; 
		\draw[red,thick](0,.93)to[out=180,in=90](-.2,.7)to[out=-90,in=90](.25,.2)to[out=-90,in=0](0,-.05)to[out=180,in=-90](-.4,.5)to[out=90,in=180](0,1.05);
		\draw(.2,.95)node[blue]{$l^\circ_s=(l^t)^\circ$}(-.45,.5)node[red]{$\zeta$}(.23,.5)node[blue]{$l^\circ$}(.3,1.1)node{$l^\circ_s(1)=(l^t)^\circ(0)$};
	\end{tikzpicture}
	\caption{The case that one angle of $\U^\circ$ is formed by the two ends of $l^\circ$}
	\label{fig:angle by one}
\end{figure}

Now we are ready to complete the proof of Lemma~\ref{lem:main}.

    We use induction hypothesis on $\Int^\circ(\R,\U^\circ)$. For the starting case $\Int^\circ(\R,\U^\circ)=0$, since $\R$ connects to the boundary, there is a tagged arc $\gamma'\in\R$ which has an endpoint in $\MM$. By definition, $\gamma'$ is not adjoint to any arc in $\U$. So the assertion holds by Lemma~\ref{lem:circ0}. 

    Suppose the assertion holds when $\Int^\circ(\R,\U^\circ)\leq k-1$ for some $k\geq 1$, and consider the case when $\Int^\circ(\R,\U^\circ)=k$. 
	
    Let $l$ be a maximal arc in $\U$. Then $\Int^\circ(\R,l^\circ)\neq 0$. By Lemma~\ref{lem:neq0}, the set
    $$\R(l):=\{\gamma\in\R\mid b_{l,\U}(e^{op}(\gamma))\neq 0\}$$
    is non-empty. For any $\gamma\in\R(l)$, if $\gamma$ is adjoint to $\U$, then by Lemma~\ref{lem:bu}, the assertion follows; if $\gamma$ is not adjoint to $\U$ and $\Int^\circ(\gamma,\U^\circ)=0$, then the assertion follows by Lemma~\ref{lem:circ0}. Hence we only need to deal with the following case. 
    \begin{itemize}
        \item[($\ast$)] For any maximal $l\in\U$ and any $\gamma\in\R(l)$, we have $\Int^\circ(\gamma,\U^\circ)\neq 0$. 
    \end{itemize}
    Note that in this case, each $\gamma\in\R(l)$ has an alternative intersection with $l^\circ$ by Remark~\ref{rmk:two local tri} and any such alternative intersection is interior by Lemma~\ref{lem:alt end}. So by Lemma~\ref{lem:trans}, each maximal arc in $\U$ is either flip-convenient or flip-neutral. We use the induction on the minimum number $m$ satisfying that there exists $\gamma\in\R$ which crosses $l_1^\circ,\cdots,l_m^\circ\in\U^\circ$ in succession, at $\jiaodian_1,\cdots,\jiaodian_m$ respectively, such that
    \begin{enumerate}
        \item $\jiaodian_1,\cdots,\jiaodian_m$ are interior, with only $\jiaodian_m$ is alternative, and
        \item $l_1^\circ,\cdots,l_m^\circ$ are maximal in $\U$, but the previous arc $l_0^\circ$  (if exists) in $\U^\circ$ that $\gamma$ crosses before $l_1^\circ$ is not.
    \end{enumerate}
    Note that $m$ is well-defined because some arc in $\R$ has an interior alternative intersection with $\widetilde{l}^\circ$ for a maximal arc $\widetilde{l}\in\U$ and hence we can track along $\gamma$ to get a sequence satisfying the conditions above (although $\widetilde{l}$ may be one of $l_1,\cdots,l_m$), see Figure~\ref{fig:H}. 

    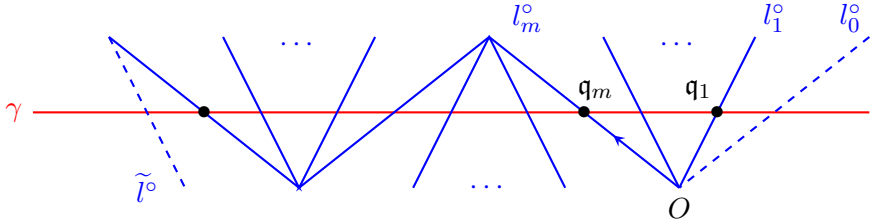
\begin{figure}[htpb]\centering
    \begin{tikzpicture}[xscale=5,yscale=-1]
    \draw[red,thick](-.7,0)node[left]{$\gamma$}to(1.5,0);
	\draw[blue,thick](.3,1)to(.5,-1)to(.7,1);
	\draw[blue](.5,1)node{$\cdots$};
	\draw[blue,thick](1,1)to(1.2,-1);
	\draw[blue,thick,-<-=.7,>=stealth](0.5,-1)to(1,1);
        \draw[blue,thick](1,1)to(0.8,-1);
	\draw(1,-.9)node[blue]{$\cdots$};
        \draw(1,1)node[below]{$O$};
	\draw[blue,thick](-0.5,-1)to(0,1)to(-0.2,-1);
	\draw[blue,thick](0.5,-1)to(0,1)to(0.2,-1);
	\draw[blue,thick,dashed](-.5,-1)to(-.3,1);
	\draw[blue,thick,dashed](1.5,-1)to(1,1);
	\draw(0,-.9)node[blue]{$\cdots$};
	\draw(-.4,1)node[blue]{$\widetilde{l}^\circ$}(1.45,-1.25)node[blue]{$l^\circ_0$}(1.25,-1.25)node[blue]{$l^\circ_1$}(.6,-1.25)node[blue]{$l^\circ_m$};
	\draw(-.25,0)node{$\bullet$}(1.1,0)node{$\bullet$}(.75,0)node{$\bullet$}(.78,-.3)node{$\jiaodian_m$}(1.05,-.3)node{$\jiaodian_1$};
	\end{tikzpicture}
	\caption{The conditions for the definition of $m$}\label{fig:H}
    \end{figure}
 
    We denote $l=l_m$ and without loss of generality, assume that $\jiaodian_m$ is negative. For the starting case $m=1$, if flipping $l$ is neutral, then by Lemma~\ref{lem:trans}, $\jiaodian_1$ is not an endpoint of an end arc segment of $\gamma$ divided by $\U^\circ$, and both $l_s^\circ$ and $l_t^\circ$ exist and are maximal. However, in this case, either $l_s^\circ$ or $l_t^\circ$ is $l_0^\circ$ which is not maximal, a contradiction. So flipping $l$ is convenient and the assertion holds by the induction hypothesis on $\Int^\circ(\R,\U^\circ)$. 
    
    Suppose the assertion holds when $m<k'$ for some $k'>1$ and consider the case when $m=k'$. Let $O$ be the common endpoint of $l_1^\circ,\cdots,l_m^\circ$ and fix the orientation of $l_m^\circ$ with $O$ as the starting point, see Figure~\ref{fig:H}. Since if flipping $l$ is convenient then the assertion follows by the induction hypothesis on $\Int^\circ(\R,\U^\circ)$, in what follows, we assume that flipping $l$ is neutral. By Lemma~\ref{lem:trans}, $l$ is single, $\jiaodian_m$ is not an endpoint of an end arc segment of $\gamma$ divided by $\U^\circ$, and both $l^\circ_s$ and $l^\circ_t$ exist and maximal. It follows that the next intersection $\jiaodian_{m+1}$ of $\gamma$ with $\U^\circ$ after $\jiaodian_m$ is also interior and with a maximal arc $l_{m+1}^\circ\in\U^\circ$. For any $(l')^\circ\in\U^\circ$, let 
    $$\jiaodian((l')^\circ)=\{\jiaodian_i\in (l')^\circ\mid 1\leq i\leq m\}.$$
    Note that we have $|\jiaodian((l')^\circ)|\leq 2$, and the equality holds only if $(l')^\circ$ is a loop at $O$. Depending on $l^\circ$ being whether $l^\circ_{m+1}$, $l^\circ_{m-1}$ or not, there are the following three cases.

    \begin{figure}[htpb]\centering
    \begin{tikzpicture}[scale=2.2]
        \draw[blue,thick,-<-=.5,>=stealth](0,-1)node[black]{$\bullet$}to[out=165,in=-123](-1.4,.4)to[out=57,in=180](-.97,.65)to[out=0,in=105](0,-1);
        \draw[blue,thick,-<-=.5,>=stealth](0,-1)to[out=15,in=-57](1.4,.4)to[out=123,in=0](.97,.65)to[out=180,in=75](0,-1);
        \draw[blue,thick](0,-1)to(-1,-1.5) (0,-1)to(1,-1.5);
        \draw[thick,fill=gray!30](-.5,.03)circle(.12) (-.9,.11)node{$\bullet$};
        \draw[thick,bend left=40](-1,.3)to(-.6,.3) (-.65,.32)to(-.95,.32);
        \draw[orange,thick](-.4,-1.2)to[out=90,in=-135](-.3,-.84)to[out=45,in=170](0.05,-.74) (-.5,-1.05)node{$\alpha$};
        \draw[orange](-.23,-1.03)node{$\theta^t$}(-.1,-.85)node{$\theta$}(0,-.6)node{$\theta_s$};
        \draw[orange,thick,bend right=10](.2,-1.09)to(.2,-.94)(.3,-1.02)node{$\beta$};
        
        \draw[red,thick,->-=.15,>=stealth](.7,-1.35)to[out=90,in=-45](.41,.37)to[out=135,in=0](-.65,.7)to[out=180,in=90](-1.3,.17)to[out=-90,in=180](.1,-.35)to[out=0,in=90](.4,-1.4) (.8,-1.2)node{$\gamma$};
        \draw[red,thick](.36,.28)to[out=140,in=0](-.6,.6)to[out=180,in=90](-1.2,.17)to[out=-90,in=175](.17,-.25) (0,.31)node{$\zeta$};
        
        \draw(.72,-.75)node[black]{$\bullet$}(.85,-.77)node{$\jiaodian_i$} (.42,.37)node[black]{$\bullet$}(.67,.37)node{$\jiaodian_{m-1}$} (.14,-.36)node[black]{$\bullet$}(.38,-.26)node{$\jiaodian_{m+2}$} (-.15,-.36)node[black]{$\bullet$}(-.34,-.45)node{$\jiaodian_{m+1}$}(-.9,.65)node[black]{$\bullet$}(-.9,.8)node{$\jiaodian_m$}  (-1,-.8)node[blue]{$l^\circ=l^\circ_{m+1}$}(.9,.8)node[blue]{$l^\circ_{m-1}=l^\circ_i$};
    \end{tikzpicture}\qquad
    \begin{tikzpicture}[scale=2.2]
            \draw[blue,thick,->-=.4,>=stealth](0,-1)node[black]{$\bullet$}to[out=160,in=-90](-1.2,.3)to[out=90,in=180](-.8,.7)to[out=0,in=90](0,-1);
            \draw[blue,thick,bend right=10](0,-1)to(1,.7)node[black]{$\bullet$};
            \draw[blue,thick](0,-1)to(1,-1);
            \draw[red,thick,->-=.15,>=stealth](.8,-1.1)to[out=90,in=0](-.25,.8)to[out=180,in=90](-1.1,.2)to[out=-90,in=180](-.1,-.3)to[out=0,in=90](.4,-1.1);
            \draw[thick,fill=gray!30](-.5,.1)circle(.15);
            \draw[thick,bend left=40](-.85,.45)to(-.45,.45) (-.5,.47)to(-.8,.47);
            \draw[cyan,thick](1,.7)to[out=-130,in=0](-.5,-.2)to[out=180,in=-90](-1,.4)to[out=90,in=165](1,.7) (.15,1.05)node{$f^-_{\U^\circ}(l^\circ)$};

            \draw(.67,-.1)node[black]{$\bullet$}(.95,-.1)node{$\jiaodian_{m-1}$} (-.05,-.3)node[black]{$\bullet$}(.17,-.25)node{$\jiaodian_{m+1}$}(-.71,.69)node[black]{$\bullet$}(-.35,.7)node{$\jiaodian_m$} (-.65,-.9)node[blue]{$l^\circ=l^\circ_{m+1}$}(1.1,.3)node[blue]{$l^\circ_{m-1}$}(.9,-.8)node[red]{$\gamma$};
        \end{tikzpicture}
        \caption{The case $l^\circ=l_{m+1}^\circ$}
        \label{fig:self-angle}
    \end{figure}
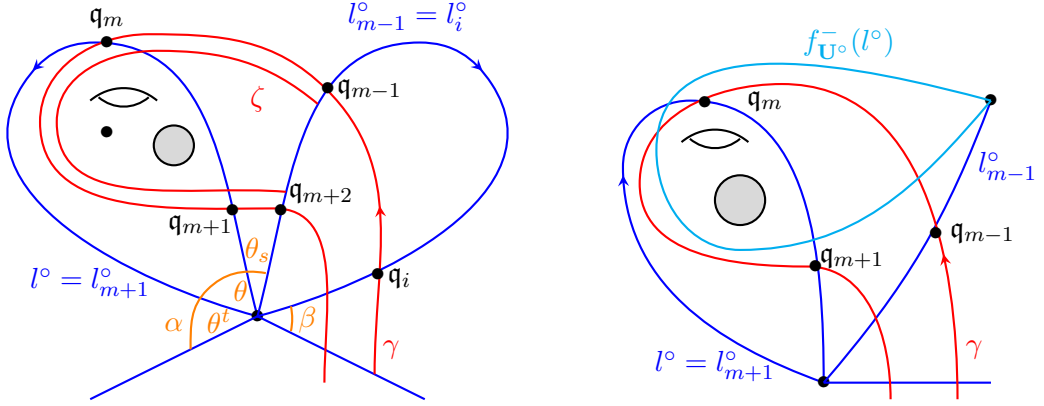

    \begin{enumerate}
        \item Suppose $l^\circ=l_{m+1}^\circ$. Then $l^\circ$ cuts out an angle $\theta$ by its two ends, $|\jiaodian(l^\circ)|=1$ and $l^\circ_t=l^\circ_s=l^\circ_{m-1}$. If $|\jiaodian(l^\circ_s)|=2$, then there is $1\leq i\leq m-2$ such that $l^\circ_{m-1}=l^\circ_i$, see the first picture in Figure~\ref{fig:self-angle}. Similarly as in the case (b) in the proof of Lemma~\ref{lem:trans} (cf. the first picture of Figure~\ref{fig:angle by one}), we denote
    \begin{itemize}
        \item $\zeta$ the number of 3-arc segments of arcs in $\R$ that cut out the angle $\theta_s$, $\theta$ and $\theta_s$ in order and are not incident to any point in $\PP\cup\MM$.
        \item $\alpha$ the number of 3-arc segments of arcs in $\R$ that cut out the angle $\theta^t, \theta$ and $\theta_s$, and
        \item $\beta$ the number of arc segments of arcs in $\R$ that cross $l^\circ_s$ and cut out the angle at $l^\circ_s(1)$ clockwise from $l^\circ_s$.
    \end{itemize}
    Since any arcs in $\R$ have no intersections with each other, we have $\beta\geq\alpha+1$ in this case. So
        \[\Int(\R,l^\circ)=2\zeta+2\alpha\]
        and \[\Int(\R,l^\circ_s)\geq2\zeta+\alpha+\beta\geq 2\zeta+2\alpha+1>\Int(\R,l^\circ),\]
        which contradicts with $l^\circ$ is maximal. So $|\jiaodian(l_s^\circ)|=1$ in this case, see the second picture in Figure~\ref{fig:self-angle}. Then after flipping $l$, the value $m$ decreases. If we are no longer in the case ($\ast$), then we are already done. If we are still in the case ($\ast$), then applying the induction hypothesis on $m$, we get the assertion.

    \begin{figure}[htpb]\centering
    \begin{tikzpicture}[scale=2.1]
        \draw[blue,thick,->-=.4,>=stealth](0,-1)node{$\bullet$}to[out=165,in=-123](-1.4,.4)to[out=57,in=180](-.97,.65)to[out=0,in=105](0,-1);
        \draw[blue,thick](0,-1)to[out=15,in=-57](1.4,.4)to[out=123,in=0](.97,.65)to[out=180,in=75](0,-1);
        \draw[blue,thick](0,-1)to(-1,-1.5) (0,-1)to(1,-1.5);
        \draw[thick,fill=gray!30](-.5,.03)circle(.12) (-.9,.11)node{$\bullet$};
        \draw[thick,bend left=40](-1,.3)to(-.6,.3) (-.65,.32)to(-.95,.32);
        \draw[orange,thick](-.4,-1.2)to[out=90,in=-135](-.3,-.84)to[out=45,in=170](0.05,-.74) (-.35,-.75)node{$\alpha$};
        \draw[orange,thick,bend right=10](.2,-1.1)to(.2,-.95);
        \draw[orange](-.23,-1.03)node{$\theta^t$}(-.1,-.85)node{$\theta$}(0,-.6)node{$\theta_s$}(.3,-1.03)node{$\beta$};
        
        \draw[red,thick,->-=.15,>=stealth](.4,-1.2)to[out=90,in=0](.1,-.35)to[out=180,in=-90](-1.3,.17)to[out=90,in=180](-.65,.7)to[out=0,in=140](.42,.37) (-.2,.8)node{$\gamma$};
        \draw[red,thick](.36,.28)to[out=140,in=0](-.6,.6)to[out=180,in=90](-1.2,.17)to[out=-90,in=175](.17,-.25) (0,.31)node{$\zeta$};
        
        \draw(.4,-.86)node[black]{$\bullet$}(.55,-.9)node{$\jiaodian_i$} (.42,.37)node[black]{$\bullet$}(.67,.37)node{$\jiaodian_{m+1}$} (.14,-.36)node[black]{$\bullet$}(.38,-.26)node{$\jiaodian_{m-2}$} (-.15,-.36)node[black]{$\bullet$}(-.34,-.45)node{$\jiaodian_{m-1}$}(-.92,.65)node[black]{$\bullet$}(-.9,.8)node{$\jiaodian_m$}  (-1,-.8)node[blue]{$l^\circ=l^\circ_{m-1}$}(.9,.8)node[blue]{$l^\circ_{m+1}=l^\circ_{m-2}=l^\circ_i$};
    \end{tikzpicture}\quad
    \begin{tikzpicture}[scale=2.1]
        \draw[blue,thick,->-=.4,>=stealth](0,-1)node[black]{$\bullet$}to[out=160,in=-90](-1.2,.3)to[out=90,in=180](-.8,.7)to[out=0,in=90](0,-1);
        \draw[blue,thick,bend right=10](0,-1)to(1,.7)node[black]{$\bullet$};
        \draw[blue,thick](0,-1)to(1,-1);
        \draw[red,thick,->-=.6,>=stealth](.6,-1.2)to[out=90,in=0](0,-.4)to[out=180,in=-90](-1.4,.3)to[out=90,in=180](-.33,.97)to[out=0,in=140](.87,.4)node[black]{$\bullet$};
        \draw[thick,fill=gray!30](-.5,.1)circle(.15) (-.8,.13)node{$\bullet$};
        \draw[thick,bend left=40](-.85,.45)to(-.45,.45) (-.5,.47)to(-.8,.47);
        \draw[cyan,thick](1,.7)to[out=-130,in=0](-.5,-.2)to[out=180,in=-90](-1,.4)to[out=90,in=165](1,.7) (.85,1)node{$f^-_{\U^\circ}(l^\circ)$};

        \draw(.35,-.57)node[black]{$\bullet$}(.6,-.57)node{$\jiaodian_{m-2}$} (-.04,-.42)node[black]{$\bullet$}(.17,-.27)node{$\jiaodian_{m-1}$}(-1.08,-.23)node[black]{$\bullet$}(-1.15,-.3)node{$\jiaodian_m$} (1.05,.2)node{$\jiaodian_{m+1}$} (-.65,-.9)node[blue]{$l^\circ=l^\circ_{m-1}$}(1.1,-.3)node[blue]{$l^\circ_{m+1}=l^\circ_{m-2}$}(.4,-1.1)node[red]{$\gamma$};
    \end{tikzpicture}
    \caption{The case $l^\circ=l_{m-1}^\circ$}
    \label{fig:self-angle2}
    \end{figure}
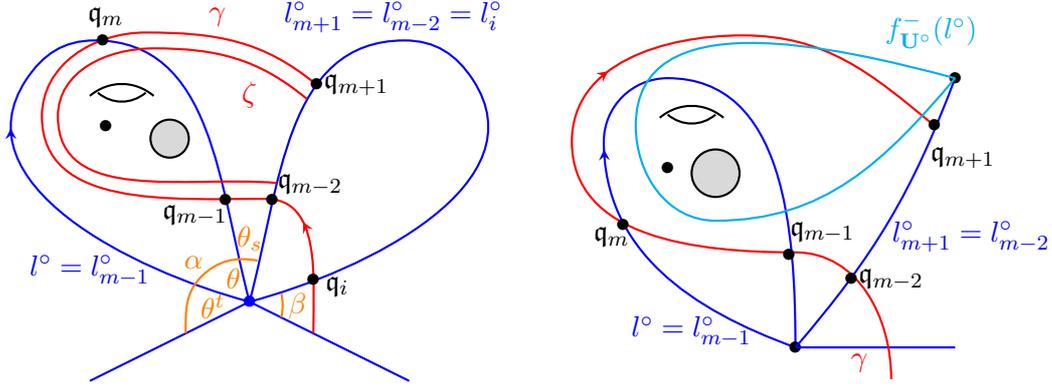

    \item Suppose $l^\circ=l^\circ_{m-1}$. Then $l^\circ$ cuts out an angle $\theta$ by its two ends and $|\jiaodian(l^\circ)|=2$. So $l_t^\circ=l_s^\circ=l_{m+1}^\circ=l_{m-2}^\circ$. Since flipping $l$ is neutral, by Lemma~\ref{lem:trans}, $l_{m+1}^\circ=l_s^\circ$ is maximal. But $l^\circ_0$ (if exists) is not maximal, so we have $m-2\geq 1$. Then $|\jiaodian(l_t^\circ)|\geq 1$. The same argument in the above case $l^\circ=l^\circ_{m+1}$ can be used in the current case (see Figure~\ref{fig:self-angle2}) to get the required assertion.

    \item Suppose that $l^\circ$ is neither $l_{m+1}^\circ$ nor $l_{m-1}^\circ$. Then $l^\circ_t=l^\circ_{m+1}$, $l^\circ_s=l^\circ_{m-1}$ and $|\jiaodian(l_t^\circ)|\leq|\jiaodian(l^\circ)|$. We shall use the notations in case (a) in the proof of Lemma~\ref{lem:trans}, i.e., we denote 
    \begin{itemize}
        \item $\alpha_1$ (resp. $\alpha_2$) the number of 2-arc segments of arcs in $\R$ that cut out the angles $\theta_t$ and $\theta^t$ (resp. $\theta_s$ and $\theta^s$) and whose arc segment cutting out $\theta_t$ (resp. $\theta_s$) do not have an endpoint in $\MM\cup\PP$. 
        \item $\beta_1$ (resp. $\beta_2$) the number of arc segments of arcs in $\R$ that cross $l^\circ_t$ (resp. $l^\circ_s$) and cut out the angle at $l^\circ_t(0)$ (resp. $l^\circ_s(1)$) clockwise from $l^\circ_t$ (resp. $l^\circ_s$).
	\item $\zeta$ the number of 2-arc segments of arcs in $\R$ that cut out the angles $\theta_s$ and $\theta_t$ and are not incident to any point in $\PP\cup\MM$.
    \end{itemize}
    There are the following cases.
    \begin{enumerate}
    \begin{figure}[htpb]\centering
    \begin{tikzpicture}[scale=2]
        \draw[blue,thick,->-=.4,>=stealth](0,-1)node[black]{$\bullet$}to[out=165,in=-123](-1.4,.4)to[out=57,in=180](-.97,.65)to[out=0,in=105](0,-1);
        \draw[blue,thick,->-=.4,>=stealth](0,-1)to[out=15,in=-57](1.4,.4)to[out=123,in=0](.97,.65)to[out=180,in=75](0,-1);
        
        \draw[blue,thick](0,-1)to(-1,-1.5) (0,-1)to(1,-1.5);
		\draw[blue,thick](0,-1)to(-.8,0) (0,-1)to(-1.1,0);
        
        \draw[orange](-.22,-.55)node{$\theta^t$}(0,-.55)node{$\theta_t$}(-.4,-.75)node{$\theta_s$}(-.2,-1.03)node{$\theta^s$};
        \draw[pink,thick](-.4,-1.2)to[out=90,in=-150](-.21,-.76)(-.52,-1.05)node{$\alpha_2$};
        \draw[pink,thick,dotted](-.21,-.76)to[out=30,in=180](0,-.7)to[out=0,in=110](.3,-.9);
        \draw[orange,thick,bend left=20](.3,-.9)to(.3,-1.16) (.44,-1)node{$\beta_1$};
        \draw[pink,thick,bend left=40](-.38,-.51)to(.12,-.45) (-.35,-.39)node{$\alpha_1$};

        \draw[red,thick,->-=.15,>=stealth](.6,-1.4)to[out=90,in=0](0.1,-.28)to[out=180,in=-90](-1.4,.25)to[out=90,in=180](-.6,.8)to[out=0,in=160](.59,.56)node[black]{$\bullet$};
        \draw[red,thick](-1,-.1)to[out=160,in=-90](-1.25,.3)to[out=90,in=180](-.6,.65)to[out=0,in=160](.48,.43) (.7,-1.2)node{$\gamma$};

        \draw (.55,-.8)node[black]{$\bullet$}(.67,-.86)node{$\jiaodian_i$} (.16,-.3)node[black]{$\bullet$}(.4,-.2)node{$\jiaodian_{k-1}$} (-.16,-.3)node[black]{$\bullet$}(-.35,-.2)node{$\jiaodian_k$}(-.8,-.26)node[black]{$\bullet$}(-.95,.2)node[blue]{$l^\circ_{m-1}$} (-1.3,.53)node[black]{$\bullet$}(-1.15,.8)node{$\jiaodian_m$} (.75,.42)node{$\jiaodian_{m+1}$} (-1,-.9)node[blue]{$l^\circ=l^\circ_k$}(.85,.8)node[blue]{$l^\circ_{m+1}=l^\circ_{k-1}=l^\circ_i$} (0,.4)node[red]{$\zeta$};
    \end{tikzpicture}\quad
    \begin{tikzpicture}[scale=2]
      \draw[blue,thick,->-=.4,>=stealth](0,-1)node[black]{$\bullet$}to[out=135,in=-60](-1,0)to[out=120,in=180](0,1)to[out=0,in=60](1,0)to[out=-120,in=45](0,-1);
      \draw[blue,thick](0,-1)to(-1.2,-1.5) (0,-1)to(1.2,-1.5);
      \draw[blue,thick,bend left=10](0,-1)to(-.5,.2) (.5,.2)to(0,-1);
      
      \draw[blue,thick](0,-1)to[out=0,in=-165](1.4,-1)to[out=15,in=-45](1.4,.4);
      \draw[blue,thick,dashed](1.4,.4)to[out=135,in=75](0,.4);
      \draw[blue,thick,-<-=.25,>=stealth](0,.4)to[out=-105,in=90](0,-1);

      \draw[orange](-.4,-.5)node{$\theta_s$}(-.2,-1)node{$\theta^s$}(.36,-.55)node{$\theta^t$}(.24,-.91)node{$\theta_t$};
      \draw[pink,thick](-.4,-1.18)to[out=90,in=-150](-.19,-.73)(-.52,-1.05)node{$\alpha_2$};
      \draw[pink,thick,dotted](-.19,-.73)to[out=30,in=180](0,-.7);
      \draw[pink,thick,bend left=30](.22,-.65)to(.43,-1.02) (.53,-.85)node{$\alpha_1$};
      \draw[blue,thick,bend right=5](0,-1)to(.3,.2);
      \draw[orange,thick,bend left=10](-.02,-.58)to(.12,-.58) (.07,-.37)node{$\beta_1$};

      \draw[red,thick,->-=.5,>=stealth](.7,-1.3)to[out=70,in=0](0,-.2)to[out=180,in=-60](-.8,0)to[out=120,in=180](0,.8)to[out=0,in=140](1.65,0)node[black]{$\bullet$} (-.95,.2)node{$\gamma$};
      \draw[red,thick](-.42,-.1)to[out=180,in=-60](-.6,0)to[out=120,in=180](0,.6)to[out=0,in=120](1.73,-.5) (1.4,-.4)node{$\zeta$};
      
      \draw(.73,-1.05)node[black]{$\bullet$}(.95,-.95)node{$\jiaodian_{k-1}$} 
      (.56,-.5)node[black]{$\bullet$}(.75,-.55)node{$\jiaodian_k$} 
      (.35,-.32)node[black]{$\bullet$}(-.06,-.2)node[black]{$\bullet$} 
      (0.05,-.1)node{$\jiaodian_i$}(-.4,-.22)node[black]{$\bullet$} 
      (.95,.52)node[black]{$\bullet$}(.78,.51)node{$\jiaodian_m$}(1.85,.1)node{$\jiaodian_{m+1}$};

      \draw[blue](-1,-.5)node{$l^\circ=l^\circ_k$}(-.3,0)node{$l^\circ_s$}(1.3,-1.2)node{$l^\circ_{k-1}=l^\circ_i$};
    \end{tikzpicture}
    \caption{The case $|\jiaodian(l)|=|\jiaodian(l_t)|=2$}
    \label{fig:jiaodian=2}
    \end{figure}
        \item $|\jiaodian(l^\circ)|=2$, i.e., there is $1\leq k\leq m-2$ such that $l^\circ=l_k^\circ$. If $k=1$, then $l^\circ_0$ exists and is the same as $l^\circ_{m+1}$. Note that $l^\circ_0$ is not maximal but $l^\circ_{m+1}$ is maximal, a contradiction. So we have $k\neq 1$ and $l^\circ_{m+1}=l^\circ_{k-1}$. Then $|\jiaodian(l^\circ_t)|\geq 1$. If $|\jiaodian(l^\circ_t)|=2$, then there is $1\leq i< m$, $i\neq k-1,k$, such that $l^\circ_{k-1}=l^\circ_i$, see the pictures of Figure~\ref{fig:jiaodian=2}, where the left is for $k<i<m$ and the right is for $1\leq i<k-1$. Since arcs in $\R$ do not cross each other in the interior, we have $\beta_1\geq\alpha_2+1$. Hence 
        \[\Int(\R,l^\circ)=\alpha_1+\alpha_2+\zeta,\]
        and
        \[\Int(\R,l^\circ_t)\geq\alpha_1+\beta_1+\zeta\geq\alpha_1+\alpha_2+\zeta+1>\Int(\R,l^\circ),\]
        contradicts to $l^\circ$ is maximal. So in this case, we have $|\jiaodian(l^\circ_t)|=1$, see Figure~\ref{fig:l1t2}, where the left picture is for $|\jiaodian(l_{m-1}^\circ)|=1$ and the right one is for $|\jiaodian(l_{m-1}^\circ)|=2$. Then after flipping $l$, the value $m$ decreases 2 for the left picture and decreases 1 for the right picture. If we are no longer in the case $(\ast)$, then we are already done. If we are still in the case $(\ast)$, then applying the induction hypothesis on $m$, we get the assertion.
        \begin{figure}[htpb]\centering
        \begin{tikzpicture}[scale=2.1]
            \draw[blue,thick,->-=.5,>=stealth](0,-1)node[black]{$\bullet$}to[out=170,in=-90](-1.2,.13)to[out=90,in=180](-.4,.8)to[out=0,in=90](.38,.22)to[out=-90,in=60](0,-1);
            \draw[blue,thick,bend right=10](0,-1)to(1,.7)node[black]{$\bullet$};
            \draw[blue,thick](0,-1)to(1,-1);

            \draw[red,thick,->-=.5,>=stealth](.6,-1.2)to[out=90,in=0](0,-.4)to[out=180,in=-90](-1.4,.3)to[out=90,in=180](-.33,.97)to[out=0,in=140](.87,.4)node[black]{$\bullet$} (.7,-.85)node{$\gamma$};
            \draw[blue,thick,bend left=10](0,-1)to(-.7,.1)node[black]{$\bullet$};
            \draw[cyan,thick,bend left=20](-.7,.1)to(1,.7)(.85,.9)node{$f^-_{\U^\circ}(l^\circ)$};

            \draw(.35,-.57)node[black]{$\bullet$}(.6,-.57)node{$\jiaodian_{k-1}$} (.25,-.48)node[black]{$\bullet$}(.2,-.3)node{$\jiaodian_k$}(-.47,-.4)node{$\bullet$}(-.3,-.26)node{$\jiaodian_{m-1}$} (-1.15,-.18)node[black]{$\bullet$}(-1.22,-.3)node{$\jiaodian_m$} (1.05,.2)node{$\jiaodian_{m+1}$} (-.5,.55)node[blue]{$l^\circ=l^\circ_k$} (1.1,-.3)node[blue]{$l^\circ_{m+1}=l^\circ_{k-1}$};
        \end{tikzpicture}\quad
        \begin{tikzpicture}[scale=2.1]
            \draw[blue,thick,->-=.5,>=stealth](0,-1)node[black]{$\bullet$}to[out=170,in=-90](-1.2,.13)to[out=90,in=180](-.4,.8)to[out=0,in=90](.38,.22)to[out=-90,in=60](0,-1);
            \draw[blue,thick,bend right=10](0,-1)to(1,.7)node[black]{$\bullet$};
            \draw[blue,thick](0,-1)to(1,-1);

            \draw[red,thick,->-=.6,>=stealth](.6,-1.2)to[out=90,in=0](0,-.4)to[out=180,in=-90](-1.4,.3)to[out=90,in=180](-.33,.97)to[out=0,in=140](.87,.4)node[black]{$\bullet$};
            \draw[blue,thick](0,-1)to[out=150,in=-140](-.9,.3)to[out=40,in=180](-.7,.39);
            \draw[blue,thick,dashed](-.7,.39)to[out=0,in=100](0,-1)(-.7,.39)to[out=0,in=120](.8,-.3)to[out=-60,in=0](0,-1);
            \draw[cyan,thick,dashed](1,.7)to[out=130,in=0](.4,.97)to[out=180,in=95](0,-1) (1.03,1)node{$f^-_{\U^\circ}(l^\circ)$} (1,.7)to[out=-30,in=90](1.4,.1)to[out=-90,in=0](0,-1);

            \draw(.35,-.57)node[black]{$\bullet$}(.6,-.55)node{$\jiaodian_{k-1}$} (.25,-.48)node[black]{$\bullet$}(.2,-.3)node{$\jiaodian_k$}(-.92,-.28)node{$\bullet$}(-.75,-.16)node{$\jiaodian_{m-1}$} (-1.15,-.18)node[black]{$\bullet$}(-1.22,-.3)node{$\jiaodian_m$} (1.07,.25)node{$\jiaodian_{m+1}$} (-.5,.6)node[blue]{$l^\circ=l^\circ_k$} (1.25,0)node[blue]{$l^\circ_{m+1}=l^\circ_{k-1}$}(.45,-1.1)node[red]{$\gamma$};
        \end{tikzpicture}
        \caption{The case $|\jiaodian(l)|=2$, $|\jiaodian(l_t)|=1$}
        \label{fig:l1t2}
        \end{figure}
        \item $|\jiaodian(l^\circ)|=1$. Then $|\jiaodian(l_{m+1}^\circ)|\leq 1$. If $|\jiaodian(l_{m+1}^\circ)|=1$, then there is $1\leq i\leq m-1$ such that $l_{m+1}^\circ=l_i^\circ$, see Figure~\ref{fig:jiaodian neq}. Since arcs in $\R$ do not cross each other in the interior, we have $\beta_1\geq\alpha_2+1$. Hence 
        \[\Int(\R,l^\circ)=\alpha_1+\alpha_2+\zeta,\]
        and
        \[\Int(\R,l^\circ_t)\geq\alpha_1+\beta_1+\zeta\geq\alpha_1+\alpha_2+\zeta+1>\Int(\R,l^\circ),\]
        contradicts to $l^\circ$ is maximal. So in this case, we have $|\jiaodian(l^\circ_{m+1})|=0$. There are the following two subcases
        \begin{enumerate}
            \item $|\jiaodian(l^\circ_{m-1})|=1$, see the first picture in Figure~\ref{fig:final}. Then after flipping $l$, the value $m$ decreases $1$. If we are no longer in the case $(\ast)$, then we are already done. If we are still in the case $(\ast)$, then applying the induction hypothesis on $m$, we get the assertion.
            \item $|\jiaodian(l^\circ_{m-1})|=2$, i.e., there is $1\leq i< m-1$ such that $l_{m-1}^\circ=l_i^\circ$, see the second picture in Figure~\ref{fig:final}. Then after flipping $l$, the value $m$ does not change. If we are no longer in the case $(\ast)$, then we are already done. If we are still in the case $(\ast)$, then we go back to cases (2) and (3.a). So we are also done.
        \end{enumerate}
    \end{enumerate}
    
    \end{enumerate}

    \begin{figure}[htpb]\centering
    \begin{tikzpicture}[scale=2.2]
		\draw[blue,thick,-<-=.5,>=stealth](0,1)node[black]{$\bullet$}to(0,-1)node[black]{$\bullet$};
        \draw[blue,thick](0,-1)to(-1.2,-.2) (0,-1)to(1.2,-.2) (0,-1)to(-1.2,-1.5) (0,-1)to(1.2,-1.5) (0,-1)to(-.5,-.3) (0,1)to(.6,.8);
        \draw[blue,thick,-<-=.5,>=stealth](0,1)to[out=-180,in=90](-1.5,0)to[out=-90,in=180](0,-1);
        \draw[orange](-.1,.9)node{$\theta_t$}(.1,-.75)node{$\theta_s$}(.15,.83)node{$\theta^t$}(-.1,-.6)node{$\theta^s$};
        \draw[pink,thick,bend right=60](-.4,.95)to(.4,.88) (.4,.72)node{$\alpha_1$};
        \draw[pink,thick,bend right=50](.33,-.8)to(-.33,-.5) (-.2,-.35)node{$\alpha_2$};
        \draw[pink,thick,dotted](.33,.-.8)to[out=-80,in=0](0,-1.2)to[out=180,in=-70](-.4,-.95);
        \draw[orange,thick,bend left=10](-.3,-.97)to(-.25,-.83) (-.4,-.84)node{$\beta_1$};
        
        \draw[red,thick](-1,.78)to[out=-90,in=160](0,-.2)to[out=-20,in=110](.5,-.67);
        \draw[red,thick,-<-=.8,>=stealth](-.7,.89)node[black]{$\bullet$}to[out=-90,in=160](0,.2)node[black]{$\bullet$}to[out=-20,in=110](.8,-.45)node[black]{$\bullet$}to[out=-70,in=0](0,-1.4)to[out=180,in=-110](-.8,-.45)node[black]{$\bullet$};

        \draw(-.81,-.85)node[black]{$\bullet$}(-.75,1.05)node{$\jiaodian_{m+1}$}(.15,.25)node{$\jiaodian_m$}(1.1,-.55)node{$\jiaodian_{m-1}$}(-.9,-.97)node{$\jiaodian_i$}(-.7,-.33)node{$\jiaodian_{i-1}$}(-.5,.2)node[red]{$\zeta$}(-.13,.5)node[blue]{$l^\circ$}(.9,-.2)node[blue]{$l^\circ_{m-1}$}(-1.3,.8)node[blue]{$l^\circ_{m+1}$}(-1.2,-.8)node[blue]{$l^\circ_i$} (.8,-1.1)node[red]{$\gamma$};
	\end{tikzpicture}    
	\caption{The case $|\jiaodian(l^\circ)|=1$, $|\jiaodian(l_t^\circ)|=1$}
	\label{fig:jiaodian neq}
\end{figure}
\begin{figure}[htpb]\centering
	\begin{tikzpicture}[scale=2.2]
        \draw[blue,thick](.5,-1)node[black]{$\bullet$}to(-.5,.7)node[black]{$\bullet$};
        \draw[blue,thick,bend right=5](-.5,.7)to(-.6,-1)node[black]{$\bullet$} (.5,-1)to(.5,.7)node[black]{$\bullet$} (.5,-1)to(1.2,0);
        \draw[cyan,thick,bend left=10](-.6,-1)to(.5,.7) (-.18,-.7)node{$f^-_{\U^\circ}(l^\circ)$};
        \draw[red,thick,bend right=10,->-=.4,>=stealth](1,-.6)to(-1,.4) (-.85,.5)node{$\gamma$};
           
		\draw(.55,-.3)node[black]{$\bullet$}(.8,-.15)node{$\jiaodian_{m-1}$} (-.16,.09)node[black]{$\bullet$}(-.05,.23)node{$\jiaodian_m$} (-.56,.24)node[black]{$\bullet$}(-.8,.2)node{$\jiaodian_{m+1}$} (.75,.2)node[blue]{$l^\circ_{m-1}$} (-.2,.5)node[blue]{$l^\circ$} (-.85,-.2)node[blue]{$l^\circ_{m+1}$};
	\end{tikzpicture}\qquad
    \begin{tikzpicture}[xscale=-2.2,yscale=2.2]
        \draw[blue,thick,bend left=5](0,-1)to(.2,.7)node[black]{$\bullet$}; 
        \draw[blue,thick](0,-1)to(-1,-1)(.2,.7)to(.7,-.3)node[black]{$\bullet$};
        \draw[red,thick,bend left=30,->-=.4,>=stealth](-.6,-1.2)to(.7,.5) (-.7,-1.1)node{$\gamma$};
        \draw[blue,thick](0,-1)to[out=150,in=-140](-.9,.3)to[out=30,in=100](0,-1);
        \draw[blue,thick,dashed](0,-1)node[black]{$\bullet$}to(-.7,-.2);
        \draw(-.68,.47)node[blue]{$l^\circ_{m-1}$};
        \draw[cyan,thick](.7,-.3)to[out=130,in=0](-.7,.7)to[out=180,in=90](-1.2,.1)to[out=-90,in=170](0,-1) (-.3,.78)node{$f^-_{\U^\circ}(l^\circ)$};
        \draw(-.48,-.7)node[black]{$\bullet$}(-.2,-.17)node[black]{$\bullet$}(-.47,-.1)node{$\jiaodian_{m-1}$} (.13,.35)node{$\bullet$}(-0.02,.3)node{$\jiaodian_m$} (.39,.33)node[black]{$\bullet$}(.65,.3)node{$\jiaodian_{m+1}$} (.4,-.5)node[blue]{$l^\circ$};
    \end{tikzpicture}
\caption{The case $|\jiaodian(l^\circ)|=1$, $|\jiaodian(l_{m+1}^\circ)|=0$.}
\label{fig:final}
\end{figure}

\end{document}